\newcommand{\restr}{\rvert}   
\newcommand{\norm}[1]{\left\lVert#1\right\rVert}   
\newcommand{\demph}[1]{{\it #1}}
\DeclareMathOperator{\kernel}{Kern}
\DeclareMathOperator{\Hom}{Hom}
\DeclareMathOperator{\Mat}{M} 
\DeclareMathOperator{\id}{Id}
\DeclareMathOperator{\ev}{ev} 
\newcommand{\op}{\ensuremath{\text{\rm op}}}
\newcommand{\Cont}{{\mathscr C}} 
\DeclareMathOperator{\Lin}{L} 
\DeclareMathOperator{\Ad}{Ad}
\DeclareMathOperator{\Komp}{K} 
\DeclareMathOperator{\KTh}{K}
\DeclareMathOperator{\KK}{KK}
\newcommand{\C}{\ensuremath{{\mathbb C}}}
\newcommand{\E}{\ensuremath{{\mathbb E}}}
\newcommand{\M}{\ensuremath{{\mathbb M}}}
\newcommand{\N}{\ensuremath{{\mathbb N}}}
\newcommand{\Z}{\ensuremath{{\mathbb Z}}}
\newcommand{\mA}{\ensuremath{{\mathcal A}}}
\newcommand{\mC}{\ensuremath{{\mathcal C}}}
\newcommand{\mE}{\ensuremath{{\mathcal E}}}
\newcommand{\mF}{\ensuremath{{\mathcal F}}}
\newcommand{\mG}{\ensuremath{{\mathcal G}}}
\newcommand{\mK}{\ensuremath{{\mathcal K}}}
\newcommand{\mM}{\ensuremath{{\mathcal M}}}
\newcommand{\mT}{\ensuremath{{\mathcal T}}}
\newcommand{\fA}{\ensuremath{{\mathfrak A}}}
\newcommand{\fB}{\ensuremath{{\mathfrak B}}}
\theoremstyle{plain}
\newtheorem {theorem} {Theorem}[section]
\newtheorem {lemma}[theorem] {Lemma}
\newtheorem {proposition} [theorem]{Proposition}
\newtheorem {corollary} [theorem]{Corollary}
\newtheorem* {theorem*} {Theorem}
\newtheorem* {proposition*} {Proposition}
\newtheorem* {lemma*}{Lemma}
\theoremstyle{definition}
\newtheorem {definition} [theorem]{Definition}
\newtheorem {deflemma} [theorem]{Definition and Lemma}
\newtheorem {remark} [theorem]{Remark}
\newtheorem {question} [theorem]{Question}
\newtheorem {Xample}[theorem] {Example}
\DeclareMathOperator{\KKban}{KK^{\ban}}
\DeclareMathOperator{\Eban}{\E^{\ban}}
\DeclareMathOperator{\ban}{ban}
\DeclareMathOperator{\kk}{kk}
\DeclareMathOperator{\kkban}{kk^{\ban}}
\DeclareMathOperator{\kkbanfunc}{kk^{\ban}}
\newcommand{\LazyAnd}{\quad \text{and} \quad}
\newcommand{\unital}[1]{\widetilde{#1}} 
\newcommand{\zylinder}{{\mathrm Z}} 
\newcommand{\cone}{{\mathrm C}}  
\newcommand{\suspension}{{\mathrm \Sigma}}  
\newcommand{\hotimes}{{\hat{\otimes}}} 
\newcommand{\cT}{\mT}
\DeclareMathOperator{\Mban}{\M^{\ban}} 
\DeclareMathOperator{\Moritaban}{Mor^{\ban}} 
\DeclareMathOperator{\Moritabancat}{\cat{Mor}^{\ban}} 
\DeclareMathOperator{\colim}{colim}
\newcommand{\cat}[1]{\text{\rm \bf  #1}}
\newcommand{\Ab}{\ensuremath{\cat{Ab}}}
\newcommand{\BanSp}{\ensuremath{\cat{BanSp}}}
\newcommand{\BanAlg}{\ensuremath{\cat{BanAlg}}}
\newcommand{\Cpt}{\ensuremath{\cat{Cpt}}}
\newcommand{\CW}{\ensuremath{\cat{CW}}}
\DeclareMathOperator{\pt}{pt.}
\DeclareMathOperator{\SW}{\Sigma Ho}
\DeclareMathOperator{\SWban}{\Sigma Ho^{\ban}}
\DeclareMathOperator{\SigmaHo}{\Sigma Ho}
\newcommand{\SWbancat} {\ensuremath{\mathbf{\Sigma}\cat{Ho}^{\ban}}}
\DeclareMathOperator{\EHo}{EHo}
\DeclareMathOperator{\EHoban}{EHo^{\ban}}
\DeclareMathOperator{\EHobanfunc}{EHo^{\ban}}
\newcommand{\EHobancat} {\ensuremath{\cat{EHo}^{\ban}}}
\DeclareMathOperator{\can}{can}
\begin{document}

\title{$kk$-Theory for Banach Algebras I:\\ The Non-Equivariant Case}
\author{Walther Paravicini}
\date{\today}
\maketitle

\begin{abstract}
\noindent kk$^{\text{ban}}$ is a bivariant K-theory for Banach algebras that has reasonable homological properties, a product and is Morita invariant in a very general sense. We define it here by a universal property and ensure its existence in a rather abstract manner using triangulated categories. The definition ensures that there is a natural transformation from Lafforgue's theory KK$^{\text{ban}}$ into it so that one can take products of elements in KK$^{\text{ban}}$ that lie in kk$^{\text{ban}}$.

\medskip 

\noindent {\it Keywords:} bivariant K-theory, Kasparov theory, kk-theory, Banach algebras, Morita invariance, triangulated categories

\medskip

\noindent AMS 2010 {\it Mathematics subject classification:} Primary 19K35 ; Secondary 46M18, 46M20, 46H99
\end{abstract}

\noindent Cuntz's bivariant $\KTh$-theory $\kk$ is defined on the category of bornological (or locally convex) algebras \cite{Cuntz:97, CMR:07}. Restricting $\kk$ to the category of Banach algebras leads to a theory that has nice homological properties and a product but lacks Morita invariance. Yet Morita invariance (in Lafforgue's Banach algebraic sense presented in \cite{Paravicini:07:Morita:richtigerschienen}) is important if we want to compare $\kk$ to the bivariant $\KTh$-theory $\KKban$ for Banach algebras introduced by Lafforgue \cite{Lafforgue:02}; note that this theory is Morita invariant in the second component \cite{Paravicini:07:Morita:richtigerschienen}. In order to admit a natural transformation from $\KKban$ to $\kk$, it is thus necessary that the right-hand side be Morita invariant. We hence define a suitable version of $\kk$ for Banach algebras, called $\kkban$. 

A first attempt to define such a theory would be to adapt the definitions of \cite{Cuntz:97} or \cite{CMR:07} directly. This means to substitute the algebras of ``compact operators'' that are used as stabilisations in the several definitions of $\kk$ with a limit over all possible  Morita equivalences of Banach algebras. But the relation of the non-commutative suspension functor $J$ used by Cuntz and this stabilisation is somewhat unclear and this ansatz seems to lead to a dead end.

In principle, it is still possible -- though somewhat forced -- to adapt Cuntz's general theory by using its triangulated structure. 
But it seems cleaner and more conceptual to use the so-called Spanier-Whitehead construction -- without furthur ado -- as laid out in a slightly different framework in the appendix of \cite{DellAmbrogio:08}. It does not make use of Cuntz's non-commutative suspension functor $J$ but just of the ordinary suspension functor which we call $\Sigma$. Several paragraphs of the present article are transferred rather directly from \cite{DellAmbrogio:08} into our context. The functor $J$ is analysed a posteriori in Paragraph \ref{Subsection:FunctorJ}.

In this article, we define several theories with more and more desirable properties: A theory $\SWban$ which comes right out of the Spanier-Whitehead construction and can be thought of as a stable homotopy category of Banach algebras. As a ``quotient'' of this theory we obtain a theory $\EHoban$ which is comparable to the theory $\SigmaHo$ of \cite{CMR:07} and has long exact sequences in both variables for semi-split extensions of Banach algebras. Finally, we define $\kkban$ as a ``quotient'' of $\EHoban$; it is a Morita invariant theory. The notion of Morita equivalence that we use here is an extension of Lafforgue's notion to possibly degenerate Banach algebras (i.e., Banach algebras $B$ such that the span of $B\cdot B$ is not dense in $B$).

We establish some properties of $\kkban$, namely its action on $\KTh$-theory, Bott periodicity and the fact that $\kkban(\C,B) \cong \KTh_0(B)$ for every Banach algebra $B$.

We also construct a natural transformation from $\KKban$ to $\kkban$ in the even as well as in the odd case. Moreover, we show that the canonical natural transformation from $\KK$ to $\kkban$ respects the Kasparov product.

The next step will be to generalise these constructions to the equivariant case. The definition of an equivariant version of $\kkban$ itself is straightforward but in order to construct the natural transformation on $\KK^{\ban}_G$ one has to take several technical subtleties into account. So we postpone the equivariant case to an upcoming article to keep the exposition clearer. With an equivariant theory it will be possible to present the Bost conjecture with Banach algebra coefficients in a well-adapted environment.


Another extension of the present results would be to consider other categories than Banach algebras. The work of Dell'Ambrogio \cite{DellAmbrogio:08} is already formulated quite generally, though not suited without changes for Banach algebras; I've tried to give the basic definitions in this article in a way which can easily be rephrased in a very general setting of some type of ``exact categories of algebras''. Such a general framework would not only be aesthetically pleasing but also helpful when considering categories such as $\Cont_0(X)$-Banach algebras etc.

I would like to thank Joachim Cuntz for his ample supply of ideas, Andreas Thom for hinting me to the thesis of Ivo Dell'Ambrogio, and Ivo for a pleasant and instructive morning in Z\"{u}rich. I would also like to thank Siegfried Echterhoff for his support and Martin Grensing for his careful reading and his helpful suggestions. This work was supported by the SFB 878 of the Deutsche Forschungsgemeinschaft in M\"unster.

\section*{Notation}

Let $\BanSp$ denote the category of Banach spaces and continuous linear maps and let $\Ab$ denote the category of abelian groups and group homomorphisms. Let $\BanAlg$ denote the category of Banach algebras and continuous homomorphisms. A \demph{short exact sequence in $\BanAlg$} or \demph{extension} is a sequence
\[
\xymatrix{
A\ar@{>->}[r]^{i} & B \ar@{->>}[r]^{p}  & C
}
\]
where $i$ is injective (with closed image), $p$ is surjective and $i(A) = \kernel (p)$. 

Let $\mE_{\max}$ denote the class of all such short exact sequences. Let $\mE_{\min}$ denote the class of all short exact sequences which have a bounded linear split; these extensions are called \demph{semi-split extensions}.

\medskip 

For every locally compact space $X$ and every Banach space (or Banach algebra) $A$ define $AX$ as the Banach space (Banach algebra) $\Cont_0(X,  A)$; if $x\in X$, then $\ev_x^A\colon A X \to A$ denotes the evaluation homomorphism at $x$.

Define, for every Banach algebra $A$,
\begin{eqnarray*}
\zylinder{A} &:= & A[0,1]\\
\cone A & := & A[0,1[\\
\Sigma A & := & A]0,1[ 
\end{eqnarray*}

In the case $A = \C$ we just write $\zylinder$,  $\cone$ and $\suspension$ for the Banach algebras $\zylinder \C$, $\cone \C$ and $\suspension \C$, respectively.

The cone extension is defined as
\[
\xymatrix{
\Sigma A\ar@{>->}[r] & \cone A \ar@{->>}[r]^{\ev^A_0}  & A.
}
\] 
It is semi-split.

If $\varphi \colon A \to B$ is a continuous homomorphism of Banach algebras then the \demph{mapping cone} $\cone_\varphi$ of $\varphi$ is defined as the pullback in the diagram
$$
\xymatrix{
\cone_{\varphi} \ar[r]^-{\epsilon(\varphi)} \ar[d] & A \ar[d]^{\varphi} \\
\cone B \ar[r]^-{\ev^B_0} & B.
}
$$
This square fits into a diagram
$$
\xymatrix{
\Sigma B \ar@{>->}[r]^-{\iota(\varphi)} \ar[d]_= &\cone_{\varphi} \ar@{->>}[r]^-{\epsilon(\varphi)} \ar[d] & A \ar[d]^{\varphi} \\
\Sigma B \ar@{>->}[r] &\cone B \ar@{->>}[r]^-{\ev^B_0} & B.
}
$$
As the lower line is semi-split, also the upper line is semi-split; it is called the \demph{cone extension of $\varphi$}.

Two parallel morphisms $\varphi_0, \varphi_1 \colon A \to B$ are said to be \demph{homotopic} if there exists a homotopy from $\varphi_0$ to $\varphi_1$, i.e., a morphism $\varphi\colon A \to B[0,1]$ such that $\ev_0^B \circ \varphi = \varphi_0$ and $\ev_1^B \circ \varphi = \varphi_1$.

Note that homotopy is an equivalence relation on $\Hom(A,B)$, compatible with composition.


Let $B$ be a Banach algebra. As in \cite{Lafforgue:02}, a \demph{(Banach) $B$-pair} $E$ is a pair $E=(E^<,E^>)$, where $E^<$ is a left Banach $B$-module and $E^>$ is a right Banach $B$-module, endowed with a bilinear bracket $\langle \cdot,\cdot\rangle\colon E^< \times E^> \to B$ satisfying the following conditions:
\begin{itemize}
\item $\forall b\in B\ \forall e^< \in E^<\ \forall e^> \in E^>:\ \langle b e^<, e^>\rangle = b \langle e^<, e^>\rangle$.
\item $\forall b\in B\ \forall e^< \in E^<\ \forall e^> \in E^>:\ \langle e^<, e^>b\rangle = \langle e^<, e^>\rangle b$.
\item $\forall e^< \in E^<\ \forall e^> \in E^>:\ \norm{\langle e^<,e^> \rangle} \leq \norm{e^<}\norm{e^>}$.
\end{itemize}

A Banach pair is a Banach algebraic replacement for a Hilbert module over a C$^*$-algebra. There are notions of ``adjoinable operators'' $\Lin(E)$ and ``compact operators'' $\Komp(E)$ if $E$ is a Banach $B$-pair, see \cite{Lafforgue:02}. If $A$ and $B$ are Banach algebras, then a \demph{Banach $A$-$B$-pair} is a Banach $B$-pair together with a morphism of Banach algebras from $A$ to $\Lin(E)$ (so $A$ ``acts on $E$ from the left''). 

\medskip 
If $B$ is a Banach algebra, then we mean by $BB$ or $B^2$ the closed linear span in $B$ of all twofold products of elements of $B$. Expressions like $BE^<$ and $\langle E^<,E^>\rangle$ should be interpreted similarly where $E$ is a Banach $B$-pair.

A Banach algebra $B$ is called \demph{non-degenerate} if $BB = B$. A Banach $B$-pair $E$ is called \demph{full} if $\langle E^<,E^>\rangle = B$; it is called \demph{non-degenerate} if $BE^< = E^<$ and $E^>B = E^>$.

If $E$ and $F$ are Banach spaces, then $E\otimes F$ denotes the \demph{completed projective tensor product} $E \hotimes_{\pi} F$ of $E$ and $F$. If $A$ and $B$ are Banach algebras, then $A \otimes B$ is again a Banach algebra in a canonical way. 

\section{Morita equivalences for Banach algebras}

\subsection{Morita equivalences for non-degenerate Banach algebras}

\begin{definition}[Morita equivalence]\label{Definition:MoritaEquivalence:nondegenerate} Let $A$, $B$ be non-degenerate Banach algebras. A \demph{Morita equivalence} between $A$ and $B$ is a pair $\left({_BE^<_A},{_AE^>_B}\right)$ endowed with a bilinear bracket $\langle\cdot,\cdot \rangle_B\colon E^< \times E^> \to B$ and a bilinear bracket ${_A}\langle\cdot,\cdot\rangle\colon E^>\times E^< \to A$ satisfying the following conditions:
\begin{enumerate}
\item $(E^<,E^>)$ with $\langle\cdot,\cdot \rangle_B$ is an $A$-$B$-pair.
\item $(E^>,E^<)$ with ${_A}\langle\cdot,\cdot\rangle$ is a $B$-$A$-pair.
\item The two brackets are compatible:
\[
\langle e^<,e^>\rangle_B f^< = e^< {_A}\langle e^>,f^<\rangle \LazyAnd e^> \langle f^<,f^>\rangle_B = {_A}\langle
e^>,f^<\rangle f^>
\]
for all $e^<, f^< \in E^<$ and $e^>,f^> \in E^>$.
\item The pairs $(E^<,E^>)$ and $(E^>,E^<)$ are full and non-degenerate.
\end{enumerate}
$A$ and $B$ are called \demph{Morita equivalent} if there is a Morita equivalence between $A$ and $B$.
\end{definition}

A Morita equivalence $E$ between $A$ and $B$ gives rise to a \demph{linking algebra} $L=A\oplus E^> \oplus E^< \oplus B$ which is usually represented as
\[
L=\left(\begin{array}{cc} A& E^>\\ E^< & B\end{array}\right)
\]
with a multiplication that operates as the multiplication of two-by-two matrices; note that the above definition implies that all the binary operations $A \times E^>\to E^>$,\ \ $E^< \times E^> \to B$ etc.\ are continuous and non-degenerate in the sense that $AE^> = E^>$ etc. In particular, it is easy to see that if $A$ is Morita equivalent to $B$ in the above sense, then both, $A$ and $B$, have to be non-degenerate Banach algebras, automatically, even if we initially exclude this condition. So this notion of Morita equivalence is limited to non-degenerate Banach algebras and has to be reformulated in order to give something useful for degenerate algebras.

On the other hand, Morita equivalence in the sense given above turns out to be an equivalence relation on the sub-category of non-degenerate Banach algebras which has the following remarkable property:

\begin{theorem}
Let $A$ and $B$ be non-degenerate Banach algebras. If $A$ is Morita equivalent to $B$, then $\KTh_*(A) \cong \KTh_*(B)$. 
\end{theorem}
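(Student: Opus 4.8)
The plan is to compare $A$ and $B$ through the \emph{linking algebra} $L=\left(\begin{smallmatrix} A & E^> \\ E^< & B\end{smallmatrix}\right)$ of a fixed Morita equivalence $E$ between $A$ and $B$; as noted in the remark following Definition~\ref{Definition:MoritaEquivalence:nondegenerate}, $L$ is a non-degenerate Banach algebra and all the binary operations occurring in it are continuous and non-degenerate. It carries an idempotent multiplier $p$ (projection onto the first row and column) with $pLp=A$ and $(1-p)L(1-p)=B$, so $A$ and $B$ sit in $L$ as the two diagonal corners via the inclusions $\iota_A$ and $\iota_B$. Both $p$ and $1-p$ are moreover \emph{full}: the closed ideal $\overline{LpL}$ generated by $A$ contains the four ``entries'' $\overline{A\cdot A}=A$, $\overline{A\cdot E^>}=E^>$, $\overline{E^<\cdot A}=E^<$ and $\overline{E^<\cdot E^>}=\langle E^<,E^>\rangle_B=B$ (each a non-degeneracy or fullness condition from Definition~\ref{Definition:MoritaEquivalence:nondegenerate}, the first being automatic non-degeneracy of $A$), so $\overline{LpL}=L$, and symmetrically $\overline{L(1-p)L}=L$. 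The theorem therefore reduces to the claim that $(\iota_A)_*$ and $(\iota_B)_*$ are isomorphisms on $\KTh_*$; one then takes $(\iota_B)_*\circ(\iota_A)_*^{-1}$.

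So it suffices to show that the inclusion of a full corner $A=pLp$ into a non-degenerate Banach algebra $L$ induces an isomorphism on $\KTh_*$. Here purely ring-theoretic Morita theory is not enough: the imprimitivity bimodules $E^>$, $E^<$ need not be finitely generated over $B$ resp.\ $A$ (already for $A=\Komp(H)$, $B=\C$, $E^>=H$), so tensoring with them does not preserve the finitely generated projective modules underlying the algebraic picture of $\KTh_0$, and one must use the homotopical features of topological $K$-theory. The route I would follow mirrors L.\,G.\ Brown's full-corner theorem for $C^*$-algebras, with Hilbert modules replaced by Lafforgue's Banach pairs: the Morita equivalence gives a class $[E]\in\KKban_0(A,L)$ and its reversed partner a class $[\overline E]\in\KKban_0(L,A)$ with $[E]\cdot[\overline E]=1_A$ and $[\overline E]\cdot[E]=1_L$; applying $\KKban_0(\C,-)$ --- which computes $\KTh_0$ in the generality needed here --- together with multiplicativity of the product then shows $(\iota_A)_*$ is invertible. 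Equivalently, one simply invokes the Morita invariance of $\KKban$ in its second variable from \cite{Paravicini:07:Morita:richtigerschienen}, specialised to first variable $\C$. To go from $\KTh_0$ to all of $\KTh_*$, note that the construction is compatible with suspension --- $\Sigma L$ is precisely the linking algebra of the Morita equivalence $\Sigma E$ (with pointwise brackets) between $\Sigma A$ and $\Sigma B$ --- so the $\KTh_1$-case follows by applying the $\KTh_0$-case to the suspensions, and Bott periodicity takes care of the rest.

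The main obstacle is precisely this full-corner step: turning the algebraic Morita data into a statement about the \emph{topological} $K$-theory of the possibly non-unital Banach algebras $A$, $B$, $L$. One has to check that the Banach-pair constructions involved are functorial and interact correctly with the bornological quotients underlying $\hotimes$, that unitisations and the passage to $M_n$ are handled coherently, and that the resulting maps $\KTh_*(A)\rightleftarrows\KTh_*(L)$ genuinely agree with $(\iota_A)_*^{\pm1}$ up to stable homotopy of idempotents and of invertibles. The structural inputs that drive the corner argument --- homotopy invariance of $\KTh_*$, half-exactness on the semi-split extensions $\mE_{\min}$, stability under $M_n$ --- are standard, but their assembly into the full-corner isomorphism is the technical heart, and this is why it is cleanest to import the dedicated treatment of Banach-algebraic Morita equivalence from \cite{Paravicini:07:Morita:richtigerschienen}.
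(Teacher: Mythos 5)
Your proposal is correct and follows essentially the same route as the paper, which simply cites \cite{Paravicini:07:Morita:richtigerschienen}: the key input in both cases is the Morita invariance of $\KKban$ in its second variable, specialised to $\C$ (or $\Sigma\C$) in the first variable, together with Lafforgue's identification $\KKban(\C,B)\cong\KTh_0(B)$. One small caution: your intermediate formulation via inverse classes $[E]\cdot[\overline E]=1_A$, $[\overline E]\cdot[E]=1_L$ presupposes a composition product in $\KKban$ that is not available in general, so the argument must really be run through the action of Morita cycles on $\KKban(\C,\cdot)$ as in the cited paper --- which is exactly the alternative you offer.
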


See \cite{Paravicini:07:Morita:richtigerschienen} for a proof of this fact first observed by Vincent Lafforgue; it relies on \cite{Lafforgue:02} and the bivariant $\KTh$-theory $\KKban$ introduced therein. In fact, the main result of \cite{Paravicini:07:Morita:richtigerschienen} is more general:

\begin{theorem}
Let $A$, $B$ and $C$ be non-degenerate Banach algebras. If $A$ is Morita equivalent to $B$, then $\KKban(C,A) \cong \KKban(C,B)$. 
\end{theorem}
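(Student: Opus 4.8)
The plan is to transport $\KKban$-cycles along the Morita equivalence and to use condition~(3) of Definition~\ref{Definition:MoritaEquivalence:nondegenerate} to invert this operation. Write $E = ({_BE^<_A}, {_AE^>_B})$ for the given Morita equivalence, let $\mathcal{E} := (E^<, E^>)$ be the underlying $A$-$B$-pair and let $\widetilde{\mathcal{E}} := (E^>, E^<)$ be the $B$-$A$-pair obtained by interchanging the two legs and the two brackets; by the symmetry built into Definition~\ref{Definition:MoritaEquivalence:nondegenerate}, $\widetilde{\mathcal{E}}$ is a Morita equivalence between $B$ and $A$. The first point to establish is that the left action of $A$ on $\mathcal{E}$ lands in $\Komp(\mathcal{E})$: the second identity in condition~(3) says that ${_A}\langle e^>, f^<\rangle \in A$ acts on $\mathcal{E}^> = E^>$ precisely as the rank-one operator $\Theta_{e^>, f^<}$, and the first identity gives the analogous statement on $\mathcal{E}^< = E^<$; since $\widetilde{\mathcal{E}}$ is full these elements are dense in $A$, so $A \to \Lin(\mathcal{E})$ takes values in $\Komp(\mathcal{E})$ (in fact it is an isomorphism onto $\Komp(\mathcal{E})$, using fullness of $\mathcal{E}$ and non-degeneracy of $A$, but only the inclusion is needed). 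Symmetrically $B$ acts on $\widetilde{\mathcal{E}}$ through $\Komp(\widetilde{\mathcal{E}})$.

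Next I would define the transport homomorphism. Let $(\mathcal{F}, T)$ represent a class in $\KKban(C,A)$, so $\mathcal{F}$ is a ($\Z/2$-graded) $C$-$A$-pair and $T \in \Lin(\mathcal{F})$ satisfies $[T,c], (T^2-1)c \in \Komp(\mathcal{F})$ for all $c \in C$, together with the remaining conditions of Lafforgue's definition. Form the $C$-$B$-pair $\mathcal{F} \hotimes_A \mathcal{E}$ equipped with the operator $T \hotimes 1$. Because $\mathcal{E}$ carries no operator there is no connection to choose, and the previous paragraph, together with the standard fact that $\Komp(\mathcal{F}) \hotimes 1 \subseteq \Komp(\mathcal{F} \hotimes_A \mathcal{E})$ (which uses non-degeneracy of $\mathcal{E}$), shows that $(\mathcal{F} \hotimes_A \mathcal{E}, T \hotimes 1)$ is again a $\KKban$-cycle, now for $(C,B)$. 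A homotopy over $A[0,1]$, tensored with $\mathcal{E}[0,1]$, becomes a homotopy over $B[0,1]$; the construction clearly respects direct sums and takes degenerate cycles to degenerate cycles; hence it descends to a group homomorphism $\Phi \colon \KKban(C,A) \to \KKban(C,B)$. Tensoring instead with $\widetilde{\mathcal{E}}$ gives a homomorphism $\Psi \colon \KKban(C,B) \to \KKban(C,A)$.

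It remains to check that $\Psi$ and $\Phi$ are mutually inverse. By associativity of the interior tensor product, $\Psi(\Phi([\mathcal{F},T])) = [\mathcal{F} \hotimes_A (\mathcal{E} \hotimes_B \widetilde{\mathcal{E}}), T \hotimes 1]$, so it suffices to produce an isomorphism of $A$-$A$-pairs $\mathcal{E} \hotimes_B \widetilde{\mathcal{E}} \xrightarrow{\cong} (A,A)$, and this is exactly what condition~(3) supplies. Both legs of $\mathcal{E} \hotimes_B \widetilde{\mathcal{E}}$ equal $E^> \hotimes_B E^<$, and $e^> \otimes e^< \mapsto {_A}\langle e^>, e^<\rangle$ is $B$-balanced and $A$-bilinear by the pair axioms for $\widetilde{\mathcal{E}}$, intertwines the brackets by a short computation from~(3), has dense range because $\widetilde{\mathcal{E}}$ is full, and is bounded below by non-degeneracy; so it assembles into a bicontinuous (indeed isometric) isomorphism of $A$-$A$-pairs. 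Since tensoring $(\mathcal{F},T)$ with the identity pair $(A,A)$ recovers $(\mathcal{F},T)$ up to canonical isomorphism, $\Psi \circ \Phi = \id$, and symmetrically $\Phi \circ \Psi = \id$; hence $\Phi$ is an isomorphism $\KKban(C,A) \cong \KKban(C,B)$. The odd groups $\KKban_1$ are treated identically: the same $\mathcal{E}$ induces a Morita equivalence between the suspensions $\Sigma A$ and $\Sigma B$, or one uses the odd cycle model directly.

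The step I expect to be the main obstacle lies in the second and third paragraphs: extracting from the bare axioms the identification of $A$ with a subalgebra of $\Komp(\mathcal{E})$, verifying that each of the somewhat delicate conditions in Lafforgue's definition of a $\KKban$-cycle is genuinely preserved under $- \hotimes_A \mathcal{E}$, and confirming that the bracket map above is a bicontinuous --- not merely algebraic --- isomorphism of Banach pairs. It is at exactly these points that fullness and non-degeneracy of the Morita equivalence (condition~(4)) are indispensable, and that the precise form of Lafforgue's cycle axioms enters.
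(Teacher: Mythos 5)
There is a genuine gap, and it sits exactly at the point you flagged as delicate. Your strategy (transport cycles by $-\hotimes_A \mathcal{E}$, invert with $-\hotimes_B \widetilde{\mathcal{E}}$) is the right one and is essentially the one used in the reference the paper cites for this theorem, but your argument that $\Psi\circ\Phi=\id$ is wrong as stated. You claim that the bracket map $E^>\hotimes_B E^< \to A$, $e^>\otimes e^<\mapsto {_A}\langle e^>,e^<\rangle$, is ``bounded below by non-degeneracy'' and hence a bicontinuous (even isometric) isomorphism of pairs, and likewise that $\mathcal{F}\hotimes_A(A,A)$ recovers $\mathcal{F}$ up to canonical isomorphism. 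Neither holds for the completed projective tensor product of Banach modules: non-degeneracy gives only \emph{dense range} of such multiplication maps, not injectivity or openness. The paper itself points this out explicitly in the section on Morita morphisms: ``$B\otimes_B B$ is, in general, not isomorphic to $B$ as a $B$-$B$-bimodule.'' This failure is precisely the feature that distinguishes the Banach setting from the C$^*$-setting (where Cohen factorisation plus the Hilbert-module structure do give $E\otimes_B B\cong E$), so the step cannot be waved through.

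The repair is the one the paper's framework is built around: one does not show that $\mathcal{E}\hotimes_B\widetilde{\mathcal{E}}$ \emph{is} $(A,A)$, but that it is \emph{homotopic} to $(A,A)$ as a cycle, so that $\Psi\circ\Phi$ and $\Phi\circ\Psi$ are the identity only on the level of homotopy classes --- which is all that is needed, since $\KKban$ is defined up to homotopy. Concretely, one passes through the linking algebra $L$ of $E$ and uses rotation-type homotopies (this is the content of Lemma~\ref{Lemma:ConcurrentEquivalences} and of Section~5 of the cited reference \cite{Paravicini:07:Morita:richtigerschienen}), or equivalently one observes that $\Moritaban(E)$ is invertible in the Morita category and that $\KKban(C,\cdot)$ factors through it. Your first two paragraphs (the action of $A$ through $\Komp(\mathcal{E})$, the preservation of the cycle conditions under $-\hotimes_A\mathcal{E}$, and compatibility with homotopies) are fine; only the inversion step needs to be replaced by a homotopy argument.
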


\subsection{Morita equivalences for possibly degenerate Banach algebras}

Because we want to be able to define $\kkban$ also for Banach algebras which might not be non-degenerate, we have to extend the concept of Morita equivalences to those Banach algebras. An example for a degenerate Banach algebra is the non-unital tensor algebra $T_1X$ for a Banach space $X$ as introduced in Definition \ref{Definition:BanachTensorAlgebra}.

Let $A$ and $B$ be Banach algebras. The following definition is inspired by a suggestion of Joachim Cuntz.

\begin{definition}\label{Definition:MoritaEquivalence:degenerate} Let $A$, $B$ be Banach algebras. A \demph{Morita equivalence} between $A$ and $B$ is a pair $\left({_BE^<_A},{_AE^>_B}\right)$ endowed with a bilinear bracket $\langle\cdot,\cdot \rangle_B\colon E^< \times E^> \to B$ and a bilinear bracket ${_A}\langle\cdot,\cdot\rangle\colon E^>\times E^< \to A$ satisfying the conditions 1., 2. and 3. of Definition \ref{Definition:MoritaEquivalence:nondegenerate} as well as 
\begin{itemize}
\item[4.'] there are $k,l\in \N$ satisfying $A^k \subseteq {_A}\langle E^>,E^<\rangle$ and $B^l\subseteq \langle E^<, E^>\rangle_B$.
\end{itemize}
%
$A$ and $B$ are called \demph{Morita equivalent} if there is a Morita equivalence between $A$ and $B$.
\end{definition}

Note that this definition is equivalent to Definition \ref{Definition:MoritaEquivalence:nondegenerate} in case that $A$ and $B$ are non-degenerate (with the slight change that if $E$ is a Morita equivalence in the sense of \ref{Definition:MoritaEquivalence:degenerate} then $(BE^<A, AE^>B)$ is a Morita equivalence in the sense of  \ref{Definition:MoritaEquivalence:nondegenerate}).

\begin{proposition}
Morita equivalence is an equivalence relation on the class of all Banach algebras.
\end{proposition}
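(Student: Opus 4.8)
The plan is to verify the three defining properties of an equivalence relation—reflexivity, symmetry, and transitivity—for the relation from Definition \ref{Definition:MoritaEquivalence:degenerate}.

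For \emph{reflexivity}, given a Banach algebra $A$, I would take $E^< = E^> = A$ with both brackets given by multiplication in $A$; conditions 1., 2. and 3. are then just associativity and the submultiplicativity of the norm, and for 4.' one takes $k = l = 2$, since by definition $A^2 = {_A}\langle A, A\rangle$. For \emph{symmetry}, if $\left({_BE^<_A}, {_AE^>_B}\right)$ is a Morita equivalence between $A$ and $B$, then the pair $\left({_AE^>_B}, {_BE^<_A}\right)$ (with the roles of the two brackets exchanged) is a Morita equivalence between $B$ and $A$; this is immediate from the symmetry already built into the conditions.

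The substance is in \emph{transitivity}. Given a Morita equivalence $E = (E^<, E^>)$ between $A$ and $B$ and a Morita equivalence $F = (F^<, F^>)$ between $B$ and $C$, I would form the composite pair $G^> := E^> \otimes_B F^>$ and $G^< := F^< \otimes_B E^<$, where $\otimes_B$ denotes the balanced completed projective tensor product over $B$ (the usual composition of Banach pairs, as in \cite{Lafforgue:02}). The two brackets on $G$ are defined by $\langle f^< \otimes e^<, e^> \otimes f^>\rangle_C := \langle f^<, \langle e^<, e^>\rangle_B f^>\rangle_C$ and symmetrically for the $A$-valued bracket; one checks these are well-defined on the balanced tensor products using condition 3. for $E$ and for $F$, and that they satisfy properties 1., 2. and 3. — these are routine but slightly tedious module-bilinearity and compatibility computations, which I would not spell out in full. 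The norm estimate in property 3. follows from the estimates for $E$ and $F$ together with the universal property of the projective tensor norm.

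The main obstacle — and the only genuinely non-formal point — is establishing condition 4.' for the composite, i.e.\ producing natural numbers $m, n$ with $A^m \subseteq {_A}\langle G^>, G^<\rangle$ and $C^n \subseteq \langle G^<, G^>\rangle_C$. Here one uses that, by 4.' for $E$, there is $k$ with $A^k \subseteq {_A}\langle E^>, E^<\rangle$, and by 4.' for $F$ there is $l$ with $B^l \subseteq \langle F^<, F^>\rangle_C$ and also $B^{l'} \subseteq {_B}\langle F^>, F^<\rangle$ for some $l'$; combining these, an element of a sufficiently high power $A^{m}$ can be written using the $E$-bracket as ${_A}\langle e^>, b \, e^<\rangle$ with $b \in B^{l'}$, then $b$ is expanded via the $F$-brackets, and the compatibility condition 3. is used to move factors across so that the result lands in ${_A}\langle E^> \otimes_B F^>, F^< \otimes_B E^<\rangle$. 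The bookkeeping of exponents is the delicate part: one must choose $m$ large enough to absorb both $k$ (from $E$) and the power of $B$ needed to invoke fullness of $F$ on the other side, and symmetrically for $C^n$; I expect $m = n$ can be taken to be something like $k + l + l'$ or a small explicit function of $k, l, l'$, and I would track the precise constant only as far as needed to close the argument.
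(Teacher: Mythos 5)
Your proposal is correct and follows exactly the route of the paper's own (one-line) proof: reflexivity via $E^<=E^>=A$ with $k=l=2$, symmetry by swapping the two brackets, and transitivity via the balanced completed projective tensor product over $B$. You in fact supply more detail than the paper does, in particular the bookkeeping needed to verify condition 4.' for the composite, which the paper leaves entirely to the reader.
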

\begin{proof}
Every Banach algebra $A$ is clearly Morita equivalent to itself, use $E^<=E^>=A$ and $k=l=2$. Symmetry is also obvious. Transitivity can be shown using the balanced tensor product of Banach modules.
\end{proof}

\begin{Xample}
\begin{enumerate}\item Let $B$ be a Banach algebra, degenerate or not, and $n\in \N$. Then the matrix algebra $\Mat_n(B)$ of $n\times n$-matrices with entries in $B$ is Morita equivalent to $B$. A Morita equivalence is given by $(B^{1\times n},B^{n \times 1})$ with the obvious inner products. Note that, for example, the $B$-valued inner product takes it values in $BB$.
\item If $B$ is a Banach algebra and $n\in \N$, then $B^n$ and $B$ are Morita equivalent. A Morita equivalence is given by $(B, B^{n-1})$. Of course, there are other ways to analyse this situation: $B^n$ is an ideal in $B$, and the quotient is nilpotent.
\item Let $E$ be a Banach $B$-pair where $B$ is some Banach algebra. Then $E$ is a Morita equivalence between $\Komp_B(E)$ and $B':=\langle E^<, E^>\rangle_B$. Note that even if $B$ is non-degenerate and so is $E$, $\Komp_B(E)$ does not have to be. This happens if, in addition, $E$ is full, i.e., if $B=B'$. However, if $E$ is not full, we have still a Morita equivalence between $\Komp_B(E)$ and the ideal $B'$ of $B$.
\end{enumerate}
\end{Xample}

If $A$ and $B$ are Morita equivalent, then we want to make sure that they are also Morita equivalent to the linking algebra connecting them. This is not completely obvious and the proof given here highlights the kind of methods one might want to use when manipulating Morita equivalences in the sense introduced above.

\begin{proposition}\label{Proposition:MoritaEquivarianceOfLinkingAlgebra}
Let $E=(E^<, E^>)$ be a Morita equivalence between $A$ and $B$. Form its linking algebra $L$ as above. Define $F:=E\oplus B$, that is, $F^>:= E^>\oplus B \subseteq L$  and $F^< := E^< \oplus B \subseteq L$. Note that $F$ is, in a straightforward manner, a Banach $B$-pair that comes with a left-action of $L$ and a compatible $L$-valued inner product coming from multiplication in $L$. There is an $m$ in $\N$ such that $L^m \subseteq F^>F^<$ and $B^2\subseteq F^<F^>$. So $F$ is a Morita equivalence between $L$ and $B$. There is a similar Morita equivalence between $L$ and $A$.
\end{proposition}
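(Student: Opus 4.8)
The plan is to verify directly that $F = E \oplus B$, with the structure maps inherited from the linking algebra $L$, satisfies conditions 1.--3. of Definition \ref{Definition:MoritaEquivalence:nondegenerate} and condition 4.' of Definition \ref{Definition:MoritaEquivalence:degenerate}. First I would make the module structures and brackets explicit. Writing elements of $L$ as matrices $\left(\begin{smallmatrix} a & \eta^> \\ \eta^< & b\end{smallmatrix}\right)$, the space $F^> = E^> \oplus B$ sits inside $L$ as the "right column" and $F^< = E^< \oplus B$ as the "bottom row"; the left $L$-action on $F^>$ is left matrix multiplication, the right $B$-action on $F^>$ is multiplication by $\left(\begin{smallmatrix} 0 & 0 \\ 0 & b\end{smallmatrix}\right)$, and similarly for $F^<$. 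The $B$-valued bracket $F^< \times F^> \to B$ and the $L$-valued bracket $F^> \times F^< \to L$ are read off from the corresponding entries of the matrix products, namely $\langle (e^<,b_1),(e^>,b_2)\rangle_B = \langle e^<,e^>\rangle_B + b_1 b_2$ and ${}_L\langle (e^>,b_2),(e^<,b_1)\rangle = \left(\begin{smallmatrix} {}_A\langle e^>,e^<\rangle & e^> b_1 \\ b_2 e^< & b_2 b_1\end{smallmatrix}\right)$. Once these are written down, conditions 1., 2. and 3. are immediate consequences of the associativity of multiplication in $L$ (equivalently, of the bracket compatibility in the original Morita equivalence $E$), and the norm estimate for the brackets follows from submultiplicativity of the norm on $L$; I would state this and not belabor it.

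The substantive point is condition 4.', i.e. producing $m \in \N$ with $L^m \subseteq F^> F^<$ and an exponent (which I will take to be $2$) with $B^2 \subseteq F^< F^>$. The second inclusion is easy: $B^2 = B\cdot B \subseteq \langle E^<,E^>\rangle_B \cdot (\text{nothing needed})$ — more simply, $\overline{B\cdot B}$ is spanned by products $b_1 b_2 = \langle (0,b_1),(0,b_2)\rangle_B \in F^< F^>$, so $B^2 \subseteq F^< F^>$ directly. For the first inclusion I would argue entrywise. The key input is condition 4.' for $E$: there are $k,l$ with $A^k \subseteq {}_A\langle E^>,E^<\rangle$ and $B^l \subseteq \langle E^<,E^>\rangle_B$. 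Now a product of elements of $F^> F^<$ of the form ${}_L\langle e^>, e^<\rangle$ with $e^> \in E^>$, $e^< \in E^<$ has upper-left corner in ${}_A\langle E^>,E^<\rangle$; products ${}_L\langle (0,b), (0,b')\rangle$ give lower-right corner $bb'$; products ${}_L\langle e^>, (0,b)\rangle = \left(\begin{smallmatrix} 0 & e^> b \\ 0 & 0\end{smallmatrix}\right)$ give the upper-right corner, using $E^> B = E^>$; and symmetrically for the lower-left corner. So $\overline{\mathrm{span}}\, F^> F^<$ contains $\left(\begin{smallmatrix} {}_A\langle E^>,E^<\rangle & E^> \\ E^< & B^2\end{smallmatrix}\right)$, whence it contains $\left(\begin{smallmatrix} A^k & E^> \\ E^< & B^2\end{smallmatrix}\right)$. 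Multiplying sufficiently many such matrices together (a product of at least $\max(k,2)$ of them, absorbing the off-diagonal entries and using $A A^k \subseteq A^k$, $E^< A^k \subseteq E^<$ etc., together with $E^< E^> \subseteq B^2$ from fullness-type identities) one sees that a fixed power $L^m$ lands in $\overline{\mathrm{span}}\, F^> F^<$; tracking the exponents carefully, $m = 2\max(k,2) + 1$ or so will do.

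The main obstacle is precisely this bookkeeping of exponents in the last step: one must be careful that the off-diagonal parts of $L^m$ (entries like $a \eta^> b + \ldots$) are genuinely reached, which forces one to use the non-degeneracy identities $A E^> = E^>$, $E^> B = E^>$, $E^< E^> = B^2$ (valid because $E$ is a Morita equivalence, hence full and non-degenerate after cutting down — or directly from conditions 3. and 4.'), and then to iterate enough times that the diagonal blocks reach $A^k$ and $B^l$ simultaneously. I would organize this by first showing $\overline{\mathrm{span}}(F^> F^<)$ is an ideal of $L$ containing the four "building blocks" above, then showing this ideal contains $L \cdot (\text{building blocks}) \cdot L$, and finally invoking $A^k \subseteq {}_A\langle E^>,E^<\rangle$, $B^l \subseteq \langle E^<,E^>\rangle_B$ to conclude $L^m \subseteq \overline{\mathrm{span}}(F^> F^<)$ for $m$ large. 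The Morita equivalence between $L$ and $A$ is obtained by the symmetric construction $F' := E \oplus A$ with $F'^> = E^> \oplus A$, $F'^< = E^< \oplus A$, so no separate argument is needed beyond remarking the symmetry.
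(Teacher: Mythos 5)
Your setup is fine as far as it goes: the explicit module structures and brackets, the verification of conditions 1.--3., the easy inclusion $B^2 \subseteq F^<F^>$, and the observation that $\overline{\mathrm{span}}(F^>F^<)$ is a closed two-sided ideal of $L$ are all correct. The gap is in the step $L^m \subseteq F^>F^<$, and it is not mere bookkeeping. You invoke the identities $AE^> = E^>$, $E^>B = E^>$ and $E^<E^> \subseteq B^2$, asserting that they hold ``because $E$ is a Morita equivalence, hence full and non-degenerate after cutting down --- or directly from conditions 3.\ and 4.'\,''. They do not. This proposition lives in the setting of Definition~\ref{Definition:MoritaEquivalence:degenerate}, where condition 4 of Definition~\ref{Definition:MoritaEquivalence:nondegenerate} (fullness and non-degeneracy) has been deliberately dropped and replaced by 4.'; the whole point of this subsection is to cover degenerate algebras such as $T_1X$, for which these identities fail. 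Cutting $E$ down to $AE^>B$ does not help either, since $F$ is built from the original $E^<$ and $E^>$. Consequently your claim that $\overline{\mathrm{span}}(F^>F^<)$ contains the matrix with $E^>$ and $E^<$ in the off-diagonal corners is false in general: the off-diagonal corners of $F^>F^<$ are only $E^>B$ and $BE^<$.

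The proposition is nevertheless true, but precisely because the off-diagonal corners of $L^m$ are themselves contained in $E^>B$ and $BE^<$ once $m$ is large enough --- e.g.\ $A^{m-1}E^> \subseteq A^{m-1-k}(A^kE^>) \subseteq A^{m-1-k}E^>(E^<E^>) \subseteq E^>B$ for $m-1 \geq k$ --- and similarly the $B$- and $A$-corners of $L^m$ land in $B^2$ and $E^>E^<$. Establishing this is exactly the content of the paper's proof: one expands a product of $m$ homogeneous factors from $A$, $B$, $E^<$, $E^>$ and performs a case analysis on how many factors of each type occur, using only $A^k \subseteq E^>E^<$ and $B^l \subseteq E^<E^>$, to locate a consecutive sub-product lying in one of the generators of the ideal $\overline{\mathrm{span}}(F^>F^<)$. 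Your substitute for this --- ``multiplying sufficiently many building-block matrices together'' --- does not yield the containment: products of elements of an ideal remain in the ideal, which says nothing about where $L^m$ sits. So the combinatorial case analysis you defer as bookkeeping is the actual proof, and the shortcut you propose in its place rests on hypotheses the proposition does not grant.
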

\begin{proof}
Choose $k,l\in \N$ such that $A^k\subseteq E^>E^<$ and $B^l\subseteq E^<E^>$. We first analyse the term $L^m \cap B$ for given $m$. We have to show that it is contained in $F^<F^>\cap B = B^2$ for large enough $m$. So we consider a non-vanishing product of $m$ elements of $L$ that lies in $B$. Without loss of generality we can assume that all factor are contained in $A$, $B$, $E^<$ or $E^>$. Because the product does not vanish, subsequent factors have to match in the sense that a factor in $B$ is not followed by a factor in $E^>$ or $A$ etc.. 

Now we can distinguish several cases depending on where the factors are from, keeping in mind that the final result has to be an element of $B$. 

If two or more of the factors are in $B$ we can take the product of the remaining factors between these factors, and all together this will give at least to factors in $B$ so we are done. If two or more of the factors are in $E^<$ then there are the same number of factors in $E^>$ and we can multiply everything between one factor in $E^<$ and the next one in $E^>$, including the elements of $E^<$ and $E^>$ at the endpoints, to get at least two factors in $B$. If $2k$ more of the factors are in $A$, then we can multiply $k$ of them (and what is between them) to obtain at least two elements in $E^>E^<$. Then we are in the preceding case. So $m=1+1+1+(2k-1)+1$ will do.

A similar reasoning shows that there is an $m\in \N$ such that $L^m\cap E^< \subseteq BE^< = F^>F^<\cap E^<$ and $L^m \cap E^> \subseteq E^>B= F^>F^<\cap E^>$ and $L^m \cap A\subseteq E^> E^< = F^>F^< \cap A$. This shows that there is an $m\in \N$ such that $L^m \subseteq F^>F^<$.
\end{proof}

\subsection{Morita equivalences and Morita morphisms}

Morita equivalences of Banach algebras fit nicely in the category of ``Morita morphisms'', an easy-to-define category that can be thought of as homotopy classes of compositions of ordinary homomorphisms and Morita equivalences. This category was introduced and used in \cite{Paravicini:07:Morita:richtigerschienen} for the case of non-degenerate Banach algebras, and in order to avoid a lengthy discussion we only give here the definitions, the necessary results and some remarks on how to prove them in the setting of possibly degenerate Banach algebras.

So let $A$ and $B$ be Banach algebras.

\begin{definition}[Morita cycle, {\it{cf.}} Definition 5.7 of \cite{Paravicini:07:Morita:richtigerschienen}:]
A \demph{Morita cycle} $(F,\varphi)$ from $A$ to $B$ is a Banach $B$-pair $F$ together with a homomorphism $\varphi\colon A \to \Lin_B(F)$ such that there is a $k\in \N$ satisfying $\varphi(A)^k \subseteq \Komp_B(F)$; we will sometimes simply write $F$ for this cycle and suppress the left action $\varphi$ in the notation. The class of all Morita cycles from $A$ to $B$ will be denoted by $\Mban(A,B)$.
\end{definition}

One can compose Morita cycles using the tensor product: If $F\in \Mban(A,B)$ is a Morita cycle from $A$ to $B$ and $F'\in \Mban(B,C)$ is a Morita cycle from $B$ to $C$, then $F\otimes_B F'$ is a Morita cycle from $A$ to $C$. If $\varphi \colon A \to B$ is a homomorphism of Banach algebras, then the Banach $B$-pair $(B,B)$ equipped with the left action of $A$ induced from $\varphi$ is a Morita cycle from $A$ to $B$, denoted by $\Mban(\varphi)$. And if $E$ is a Morita equivalence from $A$ to $B$, then it can also be regarded as a Morita cycle $\Mban(E)$ from $A$ to $B$.

However, if we consider the the composition of Morita cycles up to isomorphism, it does not give us a category; for example, $B \otimes_B B$ is, in general, not isomorphic to $B$ as a $B$-$B$-bimodule. But up to homotopy, everything works fine and Morita equivalences give isomorphisms in the resulting category:

\begin{definition}[Homotopy of Morita cycles]\label{Definition:HomotopyOfMoritaCycles}
Let $A$ and $B$ be Banach algebras and let $(F_0,\varphi_0), (F_1, \varphi_1)$ be Morita cycles from $A$ to $B$. A homotopy from $F_0$ to $F_1$ is a Morita cycle $(F, \varphi)$ from $A$ to $B[0,1]$ such that $F$ is also a $\Cont[0,1]$-Banach $B[0,1]$-pair such that $\varphi(a)$ is $\Cont[0,1]$-linear for all $a\in A$ and such that the fibre of $(F,\varphi)$ at $0$ and $1$ are isomorphic to $(F_0,\varphi_0)$ and $(F_1, \varphi_1)$, respectively. We write $(F_0,\varphi_0) \sim(F_1, \varphi_1)$ in this case.
\end{definition}

That $F$ is a $\Cont[0,1]$-Banach $B[0,1]$-pair means that the actions of $B[0,1]$ on $F^<$ and $F^>$ extend to unital actions of $\unital{B}[0,1]$ and that $F$ has a $B[0,1]$-valued and $\unital{B}[0,1]$-bilinear inner product, see \cite{Paravicini:10:GreenJulg:submitted} and compare the Remarque following Th\'{e}or\`{e}me 1.2.8 of \cite{Lafforgue:02}.

Note that in the above definition, the fibre of $(F,\varphi)$ over some $t\in [0,1]$ is constructed as follows: The spaces $F^<$ and $F^>$ are unital Banach modules over the algebra $\Cont[0,1]$ so it makes sense to speak of their fibres $F_t^<$ and $F_t^>$, respectively. These spaces form a $B$-pair $F_t$ in a canonical way, where we identify $B$ with the fibre over $t$ of the $\Cont[0,1]$-Banach algebra $B[0,1]$. If $a\in A$, then $\varphi(a)$ is a $\Cont[0,1]$-linear operator on $F$, so it factors to an operator on $F_t$, call it $\varphi_t(a)$; it is clear that $(F_t, \varphi_t)$ is a Morita cycle from $A$ to $B$.

The proofs of the subsequent statements of this section can be carried out as in the non-degenerate case, see Section 5.4 of \cite{Paravicini:07:Morita:richtigerschienen}, with some minor changes; for example, in the proof of the statement corresponding to Lemma 5.18 of \cite{Paravicini:07:Morita:richtigerschienen} you have to consider $S\in \Komp_B(E)^3$ instead of $S\in \Komp_B(E)$. 

\begin{lemma}\label{Lemma:MoritabanHomotopies}
Let $A$, $B$ and $C$ be Banach algebras.
\begin{enumerate}
\item if $\varphi \colon A \to B$ is a continuous homomorphism of Banach algebras and if $(F,\psi)\in \Mban(B,C)$, then we have a homotopy
$$
\Mban(\varphi) \otimes_{\psi} F = (B \otimes_{\psi} F, \varphi \otimes 1) \ \sim \ (F, \psi \circ \varphi)=: \varphi^*(F,\psi);
$$
\item if $(F,\varphi) \in \Mban(A,B)$ and $\psi \colon B \to C$ is a continuous homomorphism of Banach algebras, then we have a homotopy
$$
F \otimes_{B} \Mban(\psi) = F \otimes_\psi C \ \sim \ F \otimes_{\unital{\psi}} \unital{C} =: \psi_*(F, \varphi).
$$
\end{enumerate}
\end{lemma}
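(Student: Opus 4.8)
The plan is, in each of the two cases, to write down the evident comparison morphism of Banach $A$-$C$-pairs between the two sides and to verify that it is a homotopy equivalence of Morita cycles, along the lines of Section~5.4 of \cite{Paravicini:07:Morita:richtigerschienen}, while keeping careful track of the powers that possible degeneracy forces on us.

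For (1) I would first reduce to $\varphi = \id_B$: precomposition of the left action, $(G,\rho)\mapsto(G,\rho\circ\varphi)=:\varphi^*(G,\rho)$, commutes with $(-)\otimes_B F$, sends $\Mban(\id_B)\otimes_B F$ to $\Mban(\varphi)\otimes_B F$ and $(F,\psi)$ to $(F,\psi\circ\varphi)$, and carries homotopies of Morita cycles from $B$ to $C$ to homotopies of Morita cycles from $A$ to $C$. So it suffices to produce a homotopy $\Mban(\id_B)\otimes_B F\sim(F,\psi)$. The candidate morphism is the multiplication map $\mu=(\mu^<,\mu^>)$, $\mu^>(b\otimes f^>)=b\cdot f^>$, $\mu^<(f^<\otimes b)=f^<\cdot b$; since the $B$-valued inner product on $\Mban(\id_B)=(B,B)$ is just multiplication and the $B$-action on $F$ is adjointable, $\mu$ intertwines the left $B$-actions and preserves the $C$-valued inner products, so it is a morphism of Banach $B$-$C$-pairs factoring through the sub-pair $F_0:=\bigl(\overline{F^<\cdot B},\overline{B\cdot F^>}\bigr)$. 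A short computation with rank-one operators shows that for $b_0,\dots,b_{k+1}\in B$ the operator $\psi(b_0)\psi(b_1)\cdots\psi(b_k)\psi(b_{k+1})$ preserves $F_0$ and restricts there to a compact operator, so $\psi(B)^{k+2}\subseteq\Komp_C(F_0)$ and $F_0$ is itself a Morita cycle from $B$ to $C$; this jump from $k$ to $k+2$ is exactly the adjustment announced before the lemma (the restriction to $F_0$ of a compact operator on $F$ need not be compact).

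It then remains to see that $\mu$ is a homotopy equivalence of Morita cycles from $B$ to $C$. The defect is degenerate --- the left $B$-action vanishes on the cokernel of $\mu^>$, and $\Mban(\id_B)\otimes_B F$ is assembled from $F_0$ and a piece on which the $B$-action is equally negligible --- so, as in the proof of Lemma~5.18 of \cite{Paravicini:07:Morita:richtigerschienen}, one finishes with a rotation homotopy in a $2\times 2$-matrix pair that absorbs these contractible pieces, the only novelty being that the auxiliary operator $S$ must be chosen in $\Komp_C(F)^3$ rather than in $\Komp_C(F)$ so that the factorisations the argument needs remain available over a degenerate algebra. Part (2) runs along the same lines, now tensoring on the right with $\Mban(\psi)$: the map $F\otimes_\psi C\to F\otimes_{\unital{\psi}}\unital{C}$ coming from $C\hookrightarrow\unital{C}$ is a morphism of Banach $A$-$C$-pairs --- one checks that the $\unital{C}$-valued inner product of $F\otimes_{\unital{\psi}}\unital{C}$ in fact takes values in $C$, so it really is a Morita cycle from $A$ to $C$ --- and the same rotation argument makes it a homotopy equivalence; the passage to $\unital{\psi}$, $\unital{C}$ is precisely the device that records the part of $F$ killed by $\psi$ or by degeneracy.

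The step I expect to be the real obstacle is producing these homotopies: because of degeneracy none of the comparison maps is an isomorphism in general, so all of the content is in the rotation arguments, and each manipulation performed on the way (replacing $F$ by $F_0$, forming mapping cylinders, tensoring) must be matched by a fresh verification of ``$\psi(B)^m\subseteq\Komp_C(-)$'' that only survives at the cost of enlarging $m$ --- this is the systematic reason for the ``$\Komp^3$ in place of $\Komp$'' phenomenon. In (2) the one extra nuisance is bookkeeping: checking that the canonical unital $\unital{B}$-module structure that every Banach $B$-module carries is compatible with the balanced tensor product and with $\unital{\psi}$.
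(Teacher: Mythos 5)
Your proposal follows the same route the paper itself takes: it defers the substance to Section~5.4 of \cite{Paravicini:07:Morita:richtigerschienen} (the comparison morphisms plus the rotation homotopy of Lemma~5.18 there) and identifies exactly the one modification the paper flags for the degenerate case, namely replacing $S\in\Komp(\cdot)$ by $S\in\Komp(\cdot)^3$ so that the needed factorisations persist. Your sketch is consistent with the intended argument and in fact supplies more detail (the reduction to $\varphi=\id_B$, the sub-pair $F_0$, the power bookkeeping) than the paper, which states the lemma without a written-out proof.
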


Because homotopy is an equivalence relation on the class of all Morita cycles from $A$ to $B$, we have everything in place for the following definition and theorem:

\begin{definition}[Morita morphism]
A \demph{Morita morphism} is a homotopy class of Morita cycles. If $\varphi \colon A \to B$ is a continuous homomorphism of Banach algebras, then $\Moritaban(\varphi)\in \Moritaban(A,B)$ denotes the homotopy class of $\Mban(\varphi) \in \Mban(A,B)$; similarly, if $E$ is a Morita equivalence between $A$ and $B$, then $\Moritaban(E)\in \Moritaban(A,B)$ denotes the homotopy class of the corresponding cycle $\Mban(E)\in \Mban(A,B)$.
\end{definition}

\begin{theorem}
Morita morphisms form a category $\Moritabancat$ (under the flipped tensor product of representatives) such that
\begin{enumerate}
\item the map $\varphi \mapsto \Moritaban(\varphi)$ is a functor from $\BanAlg$ to $\Moritabancat$;
\item  if $E$ is a Morita equivalence from $A$ to $B$ then $\Moritaban(E)$ is an isomorphism from $A$ to $B$.
\end{enumerate}
\end{theorem}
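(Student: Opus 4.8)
The plan is to establish the three pieces of the statement in turn: (a) that the flipped tensor product of Morita cycles descends to a well-defined, associative composition on homotopy classes with identities, so that $\Moritabancat$ is a category; (b) that $\varphi \mapsto \Moritaban(\varphi)$ is functorial; (c) that $\Moritaban(E)$ is invertible when $E$ is a Morita equivalence. Throughout I would lean on Lemma \ref{Lemma:MoritabanHomotopies} and on the fact (already invoked in the excerpt) that homotopy of Morita cycles is an equivalence relation compatible with the tensor product.

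**For (a)**, the first point to check is that $F \otimes_B F'$ is again a Morita cycle: if $\varphi(A)^k \subseteq \Komp_B(F)$ and $\psi(B)^{k'} \subseteq \Komp_C(F')$, then the left action of $A$ on $F \otimes_B F'$ sends $A^{kk'}$ (roughly) into $\Komp_C(F \otimes_B F')$, using that compact operators tensor to compact operators and that the $B$-action on $F'$ factors appropriately through $\Komp$; this is the analogue of the corresponding statement in \cite{Paravicini:07:Morita:richtigerschienen} and needs the cube-power trick mentioned just before the lemma. Well-definedness on homotopy classes follows because a homotopy $F_0 \sim F_1$ over $B[0,1]$ tensored (balanced over $B$) with a fixed $F'$ — or better, with a constant homotopy on $F'$ over $C[0,1]$ — yields a homotopy $F_0 \otimes_B F' \sim F_1 \otimes_B F'$, and symmetrically in the second variable; here one must be slightly careful to tensor over $B[0,1]$ after pushing $F'$ up to a $\Cont[0,1]$-cycle, which is exactly the kind of manipulation the paper says carries over with minor changes. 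Associativity up to homotopy is the associativity of the balanced tensor product of Banach modules (again only valid up to isomorphism, hence the passage to homotopy classes is essential), and the identity on $A$ is the class of $\Mban(\id_A)$, i.e. of $(A,A)$ with the left multiplication action; that this is a two-sided unit up to homotopy is precisely Lemma \ref{Lemma:MoritabanHomotopies} applied with $\varphi = \id$ (combined with the standard homotopy $F \otimes_{\unital\psi} \unital C \sim F$ type arguments used to absorb unitalisations). One also records that the flipped tensor product is what makes composition read in the usual left-to-right order.

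**For (b)**, functoriality amounts to $\Mban(\psi \circ \varphi) \sim \Mban(\varphi) \otimes_B \Mban(\psi)$ for composable homomorphisms $\varphi\colon A\to B$, $\psi\colon B\to C$, together with $\Mban(\id_A) = $ the identity cycle. The composite cycle is $(B \otimes_\psi C, \varphi \otimes 1)$, and Lemma \ref{Lemma:MoritabanHomotopies}(2) (or a direct computation: $B \otimes_\psi C \cong \overline{\psi(B)C}$-type reasoning, then the degenerate-module homotopy) identifies its homotopy class with that of $(C, \psi\varphi)$, which is $\Mban(\psi\varphi)$. This is essentially bookkeeping once the lemma is in hand.

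**For (c)**, given a Morita equivalence $E$ between $A$ and $B$, the natural candidate for the inverse of $\Moritaban(E)$ is $\Moritaban(\flip E)$, where $\flip E = (E^>, E^<)$ with the two brackets swapped — this is a Morita equivalence between $B$ and $A$ by the symmetry built into Definition \ref{Definition:MoritaEquivalence:degenerate}. One must then show $E \otimes_B \flip E \sim \Mban(\id_A)$ and $\flip E \otimes_A E \sim \Mban(\id_B)$. The compatibility condition 3. of Definition \ref{Definition:MoritaEquivalence:nondegenerate} gives a natural map $E^> \otimes_B E^< \to A$ (and $E^< \otimes_A E^> \to B$) induced by the inner product, and condition 4.' — the existence of $k,l$ with $A^k \subseteq {_A}\langle E^>,E^<\rangle$ and $B^l \subseteq \langle E^<,E^>\rangle_B$ — is exactly what forces these maps to be ``isomorphisms up to the Morita-cycle tolerance'', so that $E \otimes_B \flip E$ and $(A,A)$ become homotopic as Morita cycles from $A$ to $A$ (one typically writes down an explicit homotopy interpolating between the tensor-product pair and the standard pair, as in Section 5.4 of \cite{Paravicini:07:Morita:richtigerschienen}).

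**The main obstacle** I expect is exactly this last point in the \emph{possibly degenerate} setting: in the non-degenerate case fullness and non-degeneracy make $E^> \otimes_B E^< \cong A$ on the nose-up-to-homotopy, but here one only has $A^k \subseteq {_A}\langle E^>,E^<\rangle$, so the relevant maps have dense-ish but not full image, and one has to feed the powers $A^k$, $B^l$ through the compatibility identities carefully — this is the degenerate-algebra analogue of the $\Komp_B(E)^3$ adjustment the paper flags before Lemma \ref{Lemma:MoritabanHomotopies}, and it is where the ``kind of methods one might want to use when manipulating Morita equivalences'' (as in the proof of Proposition \ref{Proposition:MoritaEquivarianceOfLinkingAlgebra}) really have to be deployed rather than merely cited.
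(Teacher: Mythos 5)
Your outline matches the paper's intended argument: the paper gives no proof of this theorem beyond deferring to Section 5.4 of \cite{Paravicini:07:Morita:richtigerschienen} ``with some minor changes'' for the degenerate case, and your decomposition --- composition well-defined and associative up to homotopy via the balanced tensor product, units and functoriality from Lemma \ref{Lemma:MoritabanHomotopies}, invertibility of $\Moritaban(E)$ via the flipped equivalence $\flip{E}$ using condition 4.' --- is exactly that argument. You also correctly locate the only genuinely new technical work in this setting (producing the homotopy $E\otimes_B\flip{E}\sim\Mban(\id_A)$ when one only has $A^k\subseteq{_A}\langle E^>,E^<\rangle$ rather than fullness), which is precisely the point the paper itself leaves to the cited reference.
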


\noindent The following definition and lemma will be useful in the subsequent sections:

\begin{definition}\label{Definition:ConcurrentHomomorphismOfMoritaEQuivalences}
Let $\chi \colon A \to A'$ and $\psi \colon B \to B'$ be homomorphisms of Banach algebras. Let $E$ be a Morita equivalence between $A$ and $B$ and let $E'$ be a Morita equivalence between $A'$ and $B'$. Then a \demph{concurrent homomorphism} of Morita equivalences from $E$ to $E'$ (with coefficient maps $\chi$ and $\psi$) is a pair $\Phi = (\Phi^<, \Phi^>)$ such that $\Phi^> \colon E^> \to (E')^>$ and $\Phi^< \colon E^< \to (E')^<$ are continuous linear maps compatible with all products that come with the Morita equivalence, i.e.,
\smallskip

\begin{center}
\begin{tabular}{rclcrcl}
$\Phi^>(ae^>)$ &=& $\chi(a) \Phi^>(e^>)$, && $\Phi^<(e^< a)$ &=& $\Phi^<(e^<) \chi(a)$, \\ 
$\Phi^>(e^>b)$ &= & $\Phi^>(e^>) \psi(b)$, && $\Phi^<(be^<)$ &=& $\psi(b) \Phi^<(e^<)$,\\
  $\psi(\langle e^<, e^>\rangle_B)$ & =&$\langle \Phi^<(e^<), \Phi^>(e^>)\rangle_{B'}$&and& $\chi({_A}\langle e^>, e^<\rangle)$ &=&  ${_{A'}}\langle \Phi^>(e^>), \Phi^<(e^<) \rangle$
\end{tabular}
\end{center}
\smallskip

\noindent for all $a\in A$, $b\in B$, $e^< \in E^<$ and $e^> \in E^>$. 

We also write ${_{\chi}} \Phi_\psi$ for $\Phi$ to remind us of the ``coefficient maps'' that come with it.
\end{definition}

\begin{lemma}\label{Lemma:ConcurrentEquivalences}
In the situation of the preceding definition, the mapping cone of $\Phi$ induces a homotopy that yields the equation
$$
\Moritaban(\psi) \circ \Moritaban(E) = \Moritaban(E') \circ \Moritaban(\chi).
$$
\end{lemma}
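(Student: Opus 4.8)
The plan is to represent both sides of the asserted equation by explicit Morita cycles and to join them by a homotopy built from a mapping cylinder. By the composition law in $\Moritabancat$ together with Lemma~\ref{Lemma:MoritabanHomotopies}, the morphism $\Moritaban(\psi)\circ\Moritaban(E)$ is the homotopy class of $\Mban(E)\otimes_B\Mban(\psi)\sim\psi_*(\Mban(E))$, and $\Moritaban(E')\circ\Moritaban(\chi)$ is the class of $\Mban(\chi)\otimes_{A'}\Mban(E')\sim\chi^*(\Mban(E'))$; here $\psi_*(\Mban(E))$ is the $A$-$B'$-pair $E\otimes_{\unital{\psi}}\unital{B'}$ and $\chi^*(\Mban(E'))$ is the pair $E'$ equipped with the left $A$-action $a\mapsto\varphi_{E'}(\chi(a))$, where $\varphi_{E'}\colon A'\to\Lin_{B'}(E')$ is the action belonging to $E'$. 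So it suffices to exhibit a homotopy $\psi_*(\Mban(E))\sim\chi^*(\Mban(E'))$.

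First I would record that $\Phi$ induces a morphism $\tilde\Phi=(\tilde\Phi^<,\tilde\Phi^>)$ of $A$-$B'$-pairs from $\psi_*(\Mban(E))$ to $\chi^*(\Mban(E'))$ via $\tilde\Phi^>(e^>\otimes b')=\Phi^>(e^>)\,b'$ and $\tilde\Phi^<(b'\otimes e^<)=b'\,\Phi^<(e^<)$. Well-definedness, boundedness, $B'$-linearity, $A$-linearity and compatibility with the $B'$-valued inner products are each a direct translation of one of the defining relations of a concurrent homomorphism in Definition~\ref{Definition:ConcurrentHomomorphismOfMoritaEQuivalences}; for instance, inner-product compatibility is exactly the identity $\psi(\langle e^<,e^>\rangle_B)=\langle\Phi^<(e^<),\Phi^>(e^>)\rangle_{B'}$.

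Next, I would form the mapping cylinder $F$ of $\tilde\Phi$ (this is the ``mapping cone of $\Phi$'' referred to in the statement): put
\[
F^>=\bigl\{(\eta,e^>)\in\Cont([0,1],(E')^>)\times\psi_*(\Mban(E))^>\ :\ \eta(0)=\tilde\Phi^>(e^>)\bigr\},
\]
and $F^<$ analogously, with $\Cont[0,1]$-module structures $f\cdot(\eta,e)=(f\eta,f(0)e)$, the evident $B'[0,1]$-module structures making $F$ a Banach $B'[0,1]$-pair, the $B'[0,1]$-valued inner product $\langle(\eta,e^<),(\eta',e^>)\rangle_{B'[0,1]}(t)=\langle\eta(t),\eta'(t)\rangle_{B'}$, and the left $A$-action $a\cdot(\eta,e)=(\varphi_{E'}(\chi(a))\,\eta,\,\varphi_0(a)\,e)$ with $\varphi_0$ the action on $\psi_*(\Mban(E))$. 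It is routine that $F$ is a $\Cont[0,1]$-Banach $B'[0,1]$-pair with $\Cont[0,1]$-linear $A$-action, and that $(\eta,e)\mapsto e$ and $(\eta,e)\mapsto\eta(1)$ identify the fibres $F_0\cong\psi_*(\Mban(E))$ and $F_1\cong\chi^*(\Mban(E'))$ as Morita cycles from $A$ to $B'$.

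The only substantial point — and the step I expect to be the main obstacle — is to check that $F$ really is a Morita cycle from $A$ to $B'[0,1]$, i.e.\ that $\varphi(A)^k\subseteq\Komp_{B'[0,1]}(F)$ for some $k$. Choose $k\in\N$ with $A^k\subseteq{}_A\langle E^>,E^<\rangle$, which is possible because $E$ is a Morita equivalence. For $a={}_A\langle e^>,f^<\rangle$ I would show, using compatibility condition~3 for $E$ and for $E'$ together with the concurrent-homomorphism relation $\chi({}_A\langle e^>,f^<\rangle)={}_{A'}\langle\Phi^>(e^>),\Phi^<(f^<)\rangle$, that $\xi^>:=(\mathrm{const}_{\Phi^>(e^>)},\,e^>\otimes 1)$ lies in $F^>$, that $\xi^<:=(\mathrm{const}_{\Phi^<(f^<)},\,1\otimes f^<)$ lies in $F^<$, and that $\varphi(a)$ acts on $F$ precisely as the rank-one operator $\theta_{\xi^>,\xi^<}\in\Komp_{B'[0,1]}(F)$. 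By linearity and continuity it then follows that $\varphi(a)\in\Komp_{B'[0,1]}(F)$ for every $a\in{}_A\langle E^>,E^<\rangle\supseteq A^k$, so $F$ is a Morita cycle; consequently $F$ is a homotopy $\psi_*(\Mban(E))\sim\chi^*(\Mban(E'))$, and the asserted equality $\Moritaban(\psi)\circ\Moritaban(E)=\Moritaban(E')\circ\Moritaban(\chi)$ follows.
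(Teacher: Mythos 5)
Your proposal is correct and follows exactly the route the paper intends (the statement itself points to the mapping cone/cylinder of $\Phi$; the paper defers the details to Section~5.4 of \cite{Paravicini:07:Morita:richtigerschienen}): reduce both compositions to $\psi_*(\Mban(E))$ and $\chi^*(\Mban(E'))$ via Lemma~\ref{Lemma:MoritabanHomotopies}, connect them by the cylinder of the induced pair morphism $\tilde\Phi$, and verify the cycle condition. Your treatment of the only genuinely delicate point in the possibly degenerate setting --- using condition 4.' to get $\varphi(A)^k\subseteq\Komp_{B'[0,1]}(F)$ by exhibiting $\varphi({_A}\langle e^>,f^<\rangle)$ as the rank-one operator $\theta_{\xi^>,\xi^<}$ and then passing to closed linear spans --- is exactly the required adaptation.
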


\subsection{Morita invariant functors}

\begin{definition}
Let $\mC$ be a category and $\mF \colon \BanAlg \to \mC$ a functor.
\begin{itemize}
\item The functor $\mF$ is called \demph{homotopy invariant} if $\mF(\varphi) = \mF(\psi)$ whenever $\varphi$ and $\psi$ are homotopic continuous homomorphisms of Banach algebras.
\item The functor $\mF$ is called \demph{Morita invariant} if, whenever $E$ is a Morita equivalence between Banach algebras $A$ and $B$ and if 
$$
L = \left(\begin{matrix}A & E^>\\ E^< & B\end{matrix}\right).
$$
denotes the corresponding linking algebra, then the inclusion map $\iota$ of $B$ into $L$ induces an isomorphism $\mF(\iota)\colon \mF(B) \cong \mF(L)$.
\end{itemize}
\end{definition}

\begin{Xample}
The canonical functor $\Moritaban \colon \BanAlg \to \Moritabancat$ is homotopy invariant as well as Morita invariant.
\end{Xample}
\begin{proof}
Homotopy invariance is more or less built in the definition of $\Moritabancat$; we thus consider only Morita invariance: What we want to show is that the inclusion $\iota$ of $B$ in the linking algebra $L$ that comes with the equivalence $E$ is an isomorphism under $\Moritaban$. Recall from Proposition~\ref{Proposition:MoritaEquivarianceOfLinkingAlgebra} the pair $E \oplus B$ constitutes a Morita equivalence between $L$ and $B$. Let $\Phi\colon B \to E \oplus B$ be the canonical inclusion of Morita equivalence ${_B}B_B$ into the Morita equivalence ${_L}(E\oplus B)_B$; it is a concurrent homomorphism of Morita equivalences from $B$ to $E \oplus B$ with coeffient maps $\iota$ and $\id_B$. It follows from Lemma~\ref{Lemma:ConcurrentEquivalences} that
$$
\Moritaban(E\oplus B) \circ \Moritaban(\iota)  = \Moritaban(\id_B) \circ \Moritaban(B) = 1_B.
$$
We hence have $\Moritaban(\iota) = \Moritaban(E\oplus B)^{-1}$, so $\Moritaban(\iota)$ is an isomorphism.
\end{proof}

\noindent In the rest of this section, we are going to show the following theorem; if you translate it into the C$^*$-world, many steps can be greatly simplified (e.g., if $E$ is a Morita equivalence from $A$ to $B$, then $\Komp_B(E)$ is isomorphic to $A$ and $A^k=A$ for all $k\in \N$), but in the Banach world, it seems adequate to carry out the technical steps with some care.

\begin{theorem}\label{Theorem:LiftToMoritaban}
The functor $\Moritaban \colon \BanAlg \to \Moritabancat$ is the universal homotopy invariant and Morita invariant functor on $\BanAlg$: 
Let $\mC$ be a category and $\mF \colon \BanAlg \to \mC$ a functor. Then $\mF$ is homotopy invariant and Morita invariant if and only if $\mF$ factors through  $\Moritaban \colon \BanAlg \to \Moritabancat$. The factorisation, if it exists, is unique. 
\end{theorem}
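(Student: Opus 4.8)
The plan is to prove the two implications of the biconditional separately; the forward implication together with uniqueness is formal, and the reverse implication carries all the weight.

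\textbf{The ``if'' direction and uniqueness.} Suppose $\mF = \bar\mF\circ\Moritaban$ for a functor $\bar\mF\colon\Moritabancat\to\mC$. Homotopy invariance and Morita invariance are conditions on the images under a functor of certain morphisms and objects; $\Moritaban$ satisfies both (the Example asserting that the canonical functor is homotopy and Morita invariant), and these conditions are preserved under post-composition with $\bar\mF$, so $\mF$ satisfies them too. For uniqueness, note that $\Moritaban$ is the identity on objects, so $\bar\mF$ is forced on objects; and it will be forced on morphisms once we know — as the construction below shows — that every morphism of $\Moritabancat$ is a finite composite of morphisms $\Moritaban(\varphi)$ for ordinary homomorphisms $\varphi$ and $\Moritaban(E)$ for Morita equivalences $E$: on the first kind $\bar\mF$ must equal $\mF$, and on the second kind it is pinned down by functoriality and the requirement that $\mF$ invert the linking-algebra inclusions.

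\textbf{Construction of $\bar\mF$ on objects and equivalences.} Now assume $\mF$ is homotopy and Morita invariant; put $\bar\mF:=\mF$ on objects. For a Morita equivalence $E$ from $A$ to $B$ with linking algebra $L$ and inclusions $\iota_A\colon A\hookrightarrow L$, $\iota_B\colon B\hookrightarrow L$, Morita invariance makes $\mF(\iota_B)$ invertible, and applying it once more to the flipped equivalence (whose linking algebra is again $L$ with the two corners swapped; cf.\ also \ref{Proposition:MoritaEquivarianceOfLinkingAlgebra}) makes $\mF(\iota_A)$ invertible, so we may set $\bar\mF(\Moritaban(E)):=\mF(\iota_B)^{-1}\circ\mF(\iota_A)\colon\mF(A)\to\mF(B)$. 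A concurrent homomorphism ${}_{\chi}\Phi_{\psi}\colon E\to E'$ (Definition \ref{Definition:ConcurrentHomomorphismOfMoritaEQuivalences}) induces a homomorphism of the corresponding linking algebras restricting to $\chi$, $\psi$, $\Phi^<$, $\Phi^>$; pushing the resulting commuting squares of inclusions through $\mF$ gives $\mF(\psi)\circ\bar\mF(\Moritaban(E))=\bar\mF(\Moritaban(E'))\circ\mF(\chi)$, the analogue for $\mF$ of \ref{Lemma:ConcurrentEquivalences}. Specialising this (to $E'=E$, to one of $\chi,\psi$ an identity, or to a trivial equivalence ${}_C C_C$) shows that $\bar\mF(\Moritaban(E))$ depends only on the homotopy class of $E$, is an isomorphism with inverse coming from the flipped equivalence, is compatible with pre- and post-composition by homomorphisms, and is multiplicative for the balanced-tensor-product composition of equivalences.

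\textbf{Reduction of a general Morita cycle.} Let $(F,\varphi)\colon A\to B$ be a Morita cycle with $\varphi(A)^k\subseteq\Komp_B(F)$. Put $D:=\overline{\varphi(A)+\Komp_B(F)}$, a closed subalgebra of $\Lin_B(F)$ in which $\Komp_B(F)$ is a closed ideal and which satisfies $D^k\subseteq\Komp_B(F)$; let $\varphi_0\colon A\to D$ be the corestriction of $\varphi$ and $B':=\langle F^<,F^>\rangle_B$, a closed ideal of $B$. Viewed over $B'$ with left $D$-action, $F$ becomes a Morita equivalence $E_F$ between $D$ and $B'$: conditions 1.--3.\ of \ref{Definition:MoritaEquivalence:degenerate} hold because $F$ is a Banach $B$-pair on which $D$ acts adjointably, and condition 4.$'$ holds with exponent $k$ on the $D$-side (since $D^k\subseteq\Komp_B(F)={}_D\langle F^>,F^<\rangle$) and exponent $1$ on the $B'$-side. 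Writing $\iota\colon B'\hookrightarrow B$ for the inclusion, parts (1) and (2) of \ref{Lemma:MoritabanHomotopies} identify, in $\Moritabancat$, $\Moritaban(F,\varphi)=\Moritaban(\iota)\circ\Moritaban(E_F)\circ\Moritaban(\varphi_0)$ — part (1) rewrites $\Moritaban(\varphi_0)$ followed by $\Moritaban(E_F)$ as the pullback of $E_F$ along $\varphi_0$, i.e.\ as $F$ with left action $\varphi$, and part (2) absorbs the ideal inclusion $\iota$. This in particular establishes the generation statement used for uniqueness, and lets us define $\bar\mF(\Moritaban(F,\varphi)):=\mF(\iota)\circ\bar\mF(\Moritaban(E_F))\circ\mF(\varphi_0)$.

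\textbf{Well-definedness, functoriality, and the main obstacle.} One then checks that $\bar\mF(\Moritaban(F,\varphi))$ is independent of the representative $(F,\varphi)$ and of the auxiliary choices $D$, $B'$, $E_F$: a homotopy of Morita cycles yields a homotopy of the equivalences $E_F$ together with a concurrent homomorphism linking the homomorphism pieces, so the compatibilities proved in the second step apply. Functoriality of $\bar\mF$ reduces, via \ref{Lemma:MoritabanHomotopies} applied to a composite of cycles, to the multiplicativity already established on homomorphisms and on equivalences; and $\bar\mF\circ\Moritaban=\mF$ holds by construction, which together with the uniqueness argument completes the proof. The one genuinely delicate point is this reduction of an arbitrary Morita cycle and the attendant bookkeeping — in particular recognising, in the step where $\iota$ is absorbed, a $B'$-module as its $B$-counterpart, which is where one uses that homotopies of cycles carry $\unital{B}[0,1]$-unital actions. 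In the $C^*$-world all of this is vacuous, since there $\Komp_B(E)\cong A$ and $A^k=A$ for a Morita equivalence, so neither $D$ nor the ideal $B'$ is needed and everything is visibly natural; over possibly degenerate Banach algebras one must carry $D=\overline{\varphi(A)+\Komp_B(F)}$, keep track of the exponents in condition 4.$'$, and cope with failures of non-degeneracy — precisely the ``care'' anticipated before the statement. Everything else is formal and is driven by \ref{Lemma:MoritabanHomotopies}, \ref{Lemma:ConcurrentEquivalences} and \ref{Proposition:MoritaEquivarianceOfLinkingAlgebra}.
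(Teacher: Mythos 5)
Your strategy is sound but genuinely different from the paper's. The paper never factors a Morita cycle $(F,\varphi)$ inside $\Moritabancat$; it defines $\overline{\mF}(F,\varphi)$ directly as the zig-zag $\mF(\iota_B)^{-1}\circ\mF(\iota_{\Komp(F)}\circ\varphi\restr_{A^k})\circ\mF(\iota_{A^k})^{-1}$ through the generalised linking algebra $L=\left(\begin{smallmatrix}\Komp(F)&F^>\\ F^<&B\end{smallmatrix}\right)$, and the failure of $F$ to be full over $B$ is absorbed by Lemma \ref{Lemma:IncusionsIntoNonLinkingAlgebras}: $L$ is embedded into the honest linking algebra of $F\oplus B$ and a rotation homotopy shows $\mF(\iota_B)$ is still invertible. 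You instead cure non-fullness by corestricting to the ideal $B'=\langle F^<,F^>\rangle_B$, so that $F$ becomes a genuine Morita equivalence $E_F$ between $D=\overline{\varphi(A)+\Komp_B(F)}$ and $B'$, and you pay for this with the identification $F\otimes_{\unital{B'}}\unital{B}\sim F$ when re-absorbing $\iota\colon B'\hookrightarrow B$ (which does follow from the mapping-cylinder homotopies underlying Lemmas \ref{Lemma:MoritabanHomotopies} and \ref{Lemma:ConcurrentEquivalences}, so your flagging of this point is appropriate). Your route has the advantage of making explicit that $\Moritabancat$ is generated by classes of homomorphisms and of equivalences, which cleanly yields uniqueness; the paper's route avoids the balanced-tensor-product bookkeeping entirely.

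One step is justified too thinly: multiplicativity of $\overline{\mF}$ on composable Morita \emph{equivalences} does not follow by ``specialising'' the concurrent-homomorphism square to $E'=E$, to an identity coefficient map, or to a trivial equivalence. Given ${_A}E_B$ and ${_B}E'_C$ you must compare three linking algebras at once. The repair stays within your toolkit: $G:=(E\otimes_B E')\oplus E'$ is a Morita equivalence between $L(E)$ and $C$ (a variant of Proposition \ref{Proposition:MoritaEquivarianceOfLinkingAlgebra}), and the summand inclusions $E\otimes_B E'\hookrightarrow G$ and $E'\hookrightarrow G$ are concurrent homomorphisms over $(\iota_A,\id_C)$ and $(\iota_B,\id_C)$; your compatibility square applied to these gives $\overline{\mF}(E\otimes_B E')=\overline{\mF}(G)\circ\mF(\iota_A)$ and $\overline{\mF}(E')=\overline{\mF}(G)\circ\mF(\iota_B)$, whence the product formula. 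This is precisely the role of the three-by-three algebra $D$ in the paper's Lemma \ref{Lemma:FonMoritabanMultiplicative}. The same caution applies to your final reduction of functoriality on general cycles: composing two of your three-fold factorisations produces middle terms of the form (homomorphism)$\circ$(equivalence), which are cycles rather than equivalences, so the induction must be organised via such enlarged linking algebras to avoid presupposing what is being proved.
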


If a functor factors through $\Moritaban \colon \BanAlg \to \Moritabancat$ then it clearly has to be homotopy invariant and Morita invariant because $\Moritaban$ is. So we have to show the opposite direction: Let $\mF \colon \BanAlg \to \mC$ be a homotopy invariant and Morita invariant functor. In a series of lemmas we show that $\mF$ factorises as claimed above and that the factorisation $\overline{\mF}$ is unique. 

The basic idea of the proof is that every Morita morphism can be written as the composition of an ordinary homomorphism and the inverse of an ordinary homomorphism which is also invertible under $\mF$. Establishing this fact settles uniqueness of the factorisation and essentially also existence. 

For technical reasons, we do not start with the analysis of general Morita cycles right away but consider Morita equivalences first. This helps us to see that, for every Banach algebra $A$ and every $k\in \N$, the subalgebra $A^k$ of $A$ satisfies $\mF(A^k) \cong \mF(A)$, canonically. If one only wants to treat non-degenerate Banach algebras, one can attack general Morita morphisms right from the start.

\begin{definition}
Let $E$ be a Morita equivalence between $A$ and $B$ with linking algebra $L$. Let $\iota_A\colon A \to L$ and $\iota_B\colon B \to L$ be the canonical inclusions of $A$ and $B$ into the linking algebra $L$, respectively. Define
$$
\overline{\mF}(E):= \mF(\iota_A)\cdot \mF(\iota_B)^{-1} \quad \in \quad \mC(\mF(A),\mF(B)).
$$
\end{definition}

\begin{lemma}\label{Lemma:ConcurrentHomomorphismOfMoritaEquivalences}
Let ${_{\chi}} \Phi_\psi$ be a concurrent homomorphism between Morita equivalences ${_A}E_B$ and ${_{A'}}E'_{B'}$. Then the following diagram commutes in $\mC$
$$
\xymatrix{
\mF(A) \ar[rr]^{\overline{\mF}(E)} \ar[d]_{\mF(\chi)} && \mF(B) \ar[d]^{\mF(\psi)}\\
\mF(A') \ar[rr]^{\overline{\mF}(E')} && \mF(B')
}
$$
\end{lemma}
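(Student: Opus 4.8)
The plan is to realise the concurrent homomorphism as an honest homomorphism of the associated linking algebras, and then to obtain the claimed commutativity by a short diagram chase after applying $\mF$.

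First I would build from ${_{\chi}}\Phi_\psi$ a continuous homomorphism of Banach algebras
\[
L(\Phi)\colon L=\begin{pmatrix} A & E^>\\ E^< & B\end{pmatrix}\longrightarrow L'=\begin{pmatrix} A' & (E')^>\\ (E')^< & B'\end{pmatrix},\qquad \begin{pmatrix} a & e^>\\ e^< & b\end{pmatrix}\longmapsto \begin{pmatrix} \chi(a) & \Phi^>(e^>)\\ \Phi^<(e^<) & \psi(b)\end{pmatrix},
\]
between the linking algebras of ${_A}E_B$ and ${_{A'}}E'_{B'}$. This map is obviously linear and bounded (by the bounds of $\chi$, $\psi$, $\Phi^<$ and $\Phi^>$), and it is multiplicative precisely because of the six identities in Definition~\ref{Definition:ConcurrentHomomorphismOfMoritaEQuivalences}: comparing $L(\Phi)(x)L(\Phi)(y)$ with $L(\Phi)(xy)$ entry by entry in the two-by-two matrix product, the two diagonal entries are taken care of by $\psi(\langle e^<,e^>\rangle_B)=\langle\Phi^<(e^<),\Phi^>(e^>)\rangle_{B'}$ and $\chi({_A}\langle e^>,e^<\rangle)={_{A'}}\langle\Phi^>(e^>),\Phi^<(e^<)\rangle$, while the two off-diagonal entries are taken care of by the four module-compatibility identities for $\Phi^<$ and $\Phi^>$.

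Second, since $L(\Phi)$ acts entrywise it intertwines the corner inclusions: writing $\iota_A,\iota_B$ and $\iota_{A'},\iota_{B'}$ for the canonical inclusions into $L$ and $L'$, we get commuting squares of Banach algebra homomorphisms
\[
L(\Phi)\circ\iota_A=\iota_{A'}\circ\chi\qquad\text{and}\qquad L(\Phi)\circ\iota_B=\iota_{B'}\circ\psi.
\]
Applying $\mF$ and using that $\mF$ is Morita invariant, so that $\mF(\iota_B)$ and $\mF(\iota_{B'})$ are isomorphisms, the identity $\overline{\mF}(E)=\mF(\iota_B)^{-1}\circ\mF(\iota_A)$ (and likewise for $E'$) yields
\begin{align*}
\overline{\mF}(E')\circ\mF(\chi)
&=\mF(\iota_{B'})^{-1}\circ\mF(\iota_{A'})\circ\mF(\chi)\\
&=\mF(\iota_{B'})^{-1}\circ\mF\bigl(L(\Phi)\bigr)\circ\mF(\iota_A)\\
&=\mF(\psi)\circ\mF(\iota_B)^{-1}\circ\mF(\iota_A)\\
&=\mF(\psi)\circ\overline{\mF}(E),
\end{align*}
which is exactly the commutativity of the square in the statement.

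I do not expect a genuine obstacle here. The only point that needs care is the purely formal verification that $L(\Phi)$ is multiplicative, i.e.\ keeping track of which of the six compatibility identities feeds which slot of the matrix product; everything else is bookkeeping with bounded linear maps. It is perhaps worth emphasising that this lemma uses only the Morita invariance of $\mF$ (and only via the inclusions of the ``lower-right corner''), not its homotopy invariance, and that the morphisms $\mF(\iota_A)$, $\mF(\iota_{A'})$ are never inverted in the argument.
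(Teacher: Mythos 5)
Your proposal is correct and follows essentially the same route as the paper: both construct the induced homomorphism $\Lambda=L(\Phi)$ of linking algebras from the six compatibility identities, observe that it intertwines the corner inclusions with $\chi$ and $\psi$, and then apply $\mF$ to the resulting commutative diagram. Your remark that only $\mF(\iota_B)$ and $\mF(\iota_{B'})$ need to be inverted is a small (correct) sharpening of the paper's phrasing, but not a different argument.
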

\begin{proof}
Let $L$ be the linking algebra of $E$ and let $L'$ be the linking algebra of $L'$. The conditions on $\Phi$ imply that there is a canonical homomorphism $\Lambda \colon L \to L'$ making the following diagram in $\BanAlg$ commutative:
$$
\xymatrix{
A \ar@{^{(}->}[r] \ar[d]_{\chi} & L\ar[d]_{\Lambda} & B \ar@{_{(}->}[l] \ar[d]^{\psi} \\
A' \ar@{^{(}->}[r]   & L' & B' \ar@{_{(}->}[l] 
}
$$
Pushing this diagram to $\mC$ with the functor $\mF$, the horizontal arrows all become isomorphisms, and the exterior (commuting) square happens to be the square the commutativity of which is asserted by the lemma.
\end{proof}

\begin{corollary}
Let $\mF$ be a homotopy invariant and Morita invariant functor on $\BanAlg$. Let $k\in \N$. Then the inclusion of $A^k$ into $A$ induces an isomorphism $\mF(A^k) \cong \mF(A)$.
\end{corollary}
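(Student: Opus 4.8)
I would like to apply Lemma~\ref{Lemma:ConcurrentHomomorphismOfMoritaEquivalences} to a carefully chosen concurrent homomorphism whose coefficient map is exactly the inclusion $\iota\colon A^k\hookrightarrow A$. Write $E$ for the pair with $E^<=E^>=A^k$ in which all module actions and both brackets ${}_{A^k}\langle\cdot,\cdot\rangle$ and $\langle\cdot,\cdot\rangle_A$ are induced by the multiplication of $A$; the relevant products always lie in $A^{2k}$, which is contained both in $A$ and in $A^k$, since $A^m\subseteq A^j$ whenever $m\geq j$ (group consecutive factors). Granting this, conditions 1.--3.\ of Definition~\ref{Definition:MoritaEquivalence:degenerate} are merely associativity and submultiplicativity of the norm, and condition 4.'\ holds because $A^{2k}\subseteq A^{2k}={}_{A^k}\langle E^>,E^<\rangle=\langle E^<,E^>\rangle_A$ while $(A^k)^2=A^{2k}$; thus $E=:{}_{A^k}(A^k)_A$ is a Morita equivalence between $A^k$ and $A$. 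The tautological Morita equivalence $E'={}_A A_A$ (all structures from multiplication, with powers $k=l=2$) is one between $A$ and $A$.

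First I would verify that $\iota$, used simultaneously for $\Phi^<\colon E^<\to (E')^<$ and for $\Phi^>\colon E^>\to (E')^>$, defines a concurrent homomorphism ${}_{\iota}\Phi_{\id_A}$ from $E$ to $E'$: every identity in Definition~\ref{Definition:ConcurrentHomomorphismOfMoritaEQuivalences} collapses to the fact that $\iota$ is multiplicative and that the products involved are computed inside $A$. Lemma~\ref{Lemma:ConcurrentHomomorphismOfMoritaEquivalences} then yields the identity
\[
\overline{\mF}\big({}_{A^k}(A^k)_A\big)=\overline{\mF}\big({}_A A_A\big)\circ\mF(\iota)
\]
in $\mC$.

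To finish I would argue that $\overline{\mF}$ of \emph{any} Morita equivalence is an isomorphism. By definition, for an equivalence between $X$ and $Y$ with linking algebra $L$, $\overline{\mF}$ is built from the two corner inclusions $X\hookrightarrow L\hookleftarrow Y$; $\mF$ of the $Y$-corner inclusion is invertible by Morita invariance, and $\mF$ of the $X$-corner inclusion is invertible because the flipped Morita equivalence between $Y$ and $X$ has a linking algebra isomorphic to $L$ via conjugation by the permutation swapping the two diagonal entries (an isomorphism of Banach algebras), under which the $X$-corner of $L$ becomes the $Y$-corner of the flip. Hence both factors on the right-hand side of the identity above are isomorphisms --- indeed $\overline{\mF}\big({}_A A_A\big)$ is the identity of $\mF(A)$, since the two corner inclusions of $\Mat_2(A)$ are homotopic through rotation --- and therefore $\mF(\iota)=\overline{\mF}\big({}_A A_A\big)^{-1}\circ\overline{\mF}\big({}_{A^k}(A^k)_A\big)$ is an isomorphism, which is the assertion of the corollary. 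The only steps needing genuine care are the bookkeeping with powers of the possibly degenerate algebra $A$ when checking condition 4.', and the remark that Morita invariance, although stated only for the $Y$-corner, does apply to the $X$-corner via the flip; I do not expect either to be a real obstacle.
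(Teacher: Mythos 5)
Your argument is correct and follows essentially the same route as the paper: exhibit a Morita equivalence between $A^k$ and $A$, map it by a concurrent homomorphism with coefficient maps $\iota$ and $\id_A$ into the tautological equivalence ${}_A A_A$, and invoke Lemma~\ref{Lemma:ConcurrentHomomorphismOfMoritaEquivalences} together with the invertibility of $\overline{\mF}$ on Morita equivalences. The only (cosmetic) difference is your choice of the pair $(A^k,A^k)$ where the paper uses $(A,A^{k-1})$; your explicit justification that both corner inclusions of a linking algebra become invertible under $\mF$ (via the flipped equivalence) is a point the paper leaves implicit.
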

\begin{proof}
Without loss of generality we assume $k\geq 2$. Consider the Morita equivalence $(A,A^{k-1})$ from $A^k$ to $A$. It maps along a concurrent homomorphism to the Morita equivalence $(A,A)$ between $A$ and $A$. The preceding lemma applied to this concurrent homomorphism gives the desired result by Lemma~\ref{Lemma:ConcurrentEquivalences}.
\end{proof}

\begin{corollary}\label{Corollary:MoritaEquivalenceAandCompacts}
Let $E$ be a Morita equivalence between $A$ and $B$. Then $\mF(A) \cong \mF(\Komp_B(E))$, canonically. 
\end{corollary}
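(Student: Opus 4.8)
The plan is to assemble the isomorphism from a short chain of Morita equivalences passing through the ideal $B':=\langle E^<,E^>\rangle_B$ of $B$; in the Banach setting there is in general no algebra homomorphism relating $A$ and $\Komp_B(E)$, so a direct comparison is impossible and one has to go around. First I would note the elementary facts: $B'$ is closed (by the convention on such spans), it is a two-sided ideal (since $\langle e^<,e^>\rangle_B\,b=\langle e^<,e^>b\rangle_B$ and $b\,\langle e^<,e^>\rangle_B=\langle be^<,e^>\rangle_B$ are again inner products and $B'$ is closed), and condition 4'.\ of Definition~\ref{Definition:MoritaEquivalence:degenerate} gives an $l\in\N$ with $B^l\subseteq B'$.

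Then I would produce three isomorphisms and compose them. \emph{First}, $E$ is a Morita equivalence between $A$ and $B$ and $\mF$ is Morita invariant, so $\overline{\mF}(E)\colon\mF(A)\to\mF(B)$ is an isomorphism. \emph{Second}, $(B',B')$ with its two multiplication brackets $B'\times B'\to B$ and $B'\times B'\to B'$ is a Morita equivalence between $B'$ and $B$: conditions 1.--3.\ are submultiplicativity and associativity, and 4'.\ holds with $(B')^2\subseteq(B')^2$ and, using $B^l\subseteq B'$, with $B^{2l}\subseteq(B')^2$; hence $\overline{\mF}((B',B'))\colon\mF(B')\to\mF(B)$ is an isomorphism. \emph{Third}, $E$ viewed as a Banach $B'$-pair with its natural left action of $\Komp_B(E)$ is a Morita equivalence between $\Komp_B(E)$ and $B'$: the $\Komp_B(E)$-valued bracket on $E^>\times E^<$ sends $(e^>,f^<)$ to the rank-one operator $\theta_{e^>,f^<}$, the compatibility relations 3.\ are exactly the two identities defining $\theta_{e^>,f^<}$, and 4'.\ is automatic because $\langle E^<,E^>\rangle_{B'}=B'$ while the closed span of the $\theta_{e^>,f^<}$ is all of $\Komp_B(E)$; so $\overline{\mF}$ of this equivalence is an isomorphism $\mF(\Komp_B(E))\to\mF(B')$. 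Composing the first with the inverses of the second and third yields $\mF(A)\cong\mF(\Komp_B(E))$.

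As for ``canonically'': the whole construction is made out of $E$ alone, with no arbitrary choices (the integer $l$ never enters any of the maps), so the resulting composite is canonical. If one wants the middle isomorphism pinned down, Lemma~\ref{Lemma:ConcurrentHomomorphismOfMoritaEquivalences}, applied to the evident concurrent homomorphism from $(B',B')$ to the identity Morita equivalence $(B,B)$ on $B$ (coefficient maps the inclusion $B'\hookrightarrow B$ and $\id_B$, with $\Phi^<,\Phi^>$ the inclusions), identifies $\overline{\mF}((B',B'))$ with $\mF$ of the inclusion $B'\hookrightarrow B$ --- which also re-proves the previous corollary by specialising to $B'=B^k$. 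The one place where I expect to have to be careful is the third step: one must remember that $\Komp_B(E)$ acts on $E^>$ on the left and on $E^<$ on the right, so that $(E^<,E^>)$ genuinely is a $\Komp_B(E)$-$B'$-pair in the sense of Definition~\ref{Definition:MoritaEquivalence:degenerate}, and one must match up the compatibility conditions 3.\ with the defining relations of the $\theta_{e^>,f^<}$. None of this is deep, but --- as the remark preceding Theorem~\ref{Theorem:LiftToMoritaban} warns --- in the Banach world the technical steps want doing with some care.
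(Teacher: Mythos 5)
Your proof is correct, but it takes a genuinely different route from the paper's. The paper works on the ``$A$-side'': it is not true that there is no homomorphism to compare with --- one takes the canonical $\varphi\colon A\to\Lin_B(E)$, sets $A':=\overline{\varphi(A)}$, observes the sandwich $(A')^k\subseteq\Komp_B(E)\subseteq A'$ so that the inclusion $\Komp_B(E)\hookrightarrow A'$ becomes an $\mF$-isomorphism by the preceding corollary on powers, and then uses the concurrent homomorphism ${_\varphi}\id_{\id}$ from ${_A}E_B$ to ${_{A'}}E'_B$ together with Lemma~\ref{Lemma:ConcurrentEquivalences} to see that $\mF(\varphi)$ is an isomorphism. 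You instead work on the ``$B$-side'', composing the three Morita equivalences $A\sim B$, $B'\sim B$ and $\Komp_B(E)\sim B'$; all three are legitimate (the last is the paper's own Example on compact operators), and your verifications of condition 4'.\ of Definition~\ref{Definition:MoritaEquivalence:degenerate} are correct. Two remarks. First, both arguments silently use that $\overline{\mF}(E)$ is invertible for a Morita equivalence, which requires applying Morita invariance to \emph{both} corner inclusions of the linking algebra (for the $A$-corner, apply the definition to the flipped equivalence); this is harmless but worth saying once. Second, the paper's version exhibits the isomorphism in the form $\mF(\iota_{\Komp})^{-1}\circ\mF(\varphi)$, which is exactly the shape needed later in Definition~\ref{Definition:LiftOfFtoMoritaban} for general Morita cycles, whereas your composite detours through $\mF(B)$ and $\mF(B')$ and one would still have to check that it agrees with that description if one wanted to use it there; on the other hand, your route recovers the corollary on $\mF(A^k)\cong\mF(A)$ as a by-product rather than needing it as an input.
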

\begin{proof}
Let $\varphi$ denote the canonical map from $A$ to $\Lin_B(E)$. Let $A'$ denote the closure of the image of $\varphi$. Note that $\Komp_B(E) \subseteq A'$. Find $k\in \N$ such that $A^k\subseteq {_A}\langle E^>, E^<\rangle$. Note that $\varphi(A^k) \subseteq \Komp_B(E)$, so $(A')^k\subseteq \Komp_B(E)$. It follows that the inclusion of $\Komp_B(E)$ into $A'$ is an isomorphism under $\mF$.

Moreover, there is an obvious $A'$-valued inner product on $(E^>, E^<)$ which turns $E$ into a Morita equivalence between $A'$ and $B$, let's call it $E'$. Note that there is a concurrent homomorphism ${_\varphi}\id_{\id}$ from ${_A}E_B$ to ${_{A'}}E'_B$, so $\overline{\mF(E)} = \overline{\mF}(E') \circ \mF(\varphi)$ by Lemma~\ref{Lemma:ConcurrentEquivalences}. Since $\overline{\mF}(E)$ and $\overline{\mF}(E')$ are isomorphisms, so is $\mF(\varphi)$. Hence $\mF(A)$ is isomorphic to $\mF(A')$ which is, in turn, isomorphic to $\mF(\Komp_B(E))$. 

\end{proof}

We are now considering the general case of a Morita morphism $(F,\varphi) \in \Mban(A,B)$. There are several ways to construct an image under $\mF$ of $(F,\varphi)$  in $\mC(\mF(A), \mF(B))$. The construction we suggest here first replaces $A$ with $A^k$ for a $k\in \N$ such that $\varphi(A^k)\subseteq \Komp(F)$. Then the image under $\mF$ of the map induced by $\varphi$ from  $A^k$ to
$$
L=\left(\begin{matrix}\Komp(F) & F^>\\ F^< & B\end{matrix}\right)
$$
composed with the inverse of the image under $\mF$ of the inclusion $\iota_B$ of $B$ into $L$ gives the desired morphism in $\mC(\mF(A), \mF(B))$. But there is a subtlety that has to be settled before we proceed: The algebra $L$ is not a linking algebra between $A$ and $B$, so it is not entirely clear that $\mF (\iota_B)$ is an isomorphism. Hence the following lemma:

\begin{lemma}\label{Lemma:IncusionsIntoNonLinkingAlgebras}  Let $B$ be a Banach algebra and let $F$ be any Banach $B$-pair. Then the inclusion $\iota_B$ of $B$ into the Banach algebra
$$
L=\left(\begin{matrix}\Komp(F) & F^>\\ F^< & B\end{matrix}\right)
$$
induces an isomorphism $\mF\left(\iota_B\right)\colon \mF(B) \cong \mF(L)$. 
\end{lemma}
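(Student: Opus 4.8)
The idea is to reduce Lemma~\ref{Lemma:IncusionsIntoNonLinkingAlgebras} to the genuine Morita-invariance axiom for $\mF$, i.e.\ to the case of a linking algebra of an actual Morita equivalence, by replacing the bottom-right corner $B$ with the (possibly non-full) ideal $B' := \langle F^<, F^>\rangle_B$ and the top-left corner $\Komp(F)$ with whatever subalgebra is forced upon us. The key observation (already recorded in the third item of the Example after Proposition~\ref{Proposition:MoritaEquivarianceOfLinkingAlgebra}) is that $F$, viewed as a pair with values in $B'$, \emph{is} a Morita equivalence between $\Komp_{B'}(F) = \Komp_B(F)$ and $B'$; call it $E$. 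Its linking algebra is
$$
L' = \left(\begin{matrix}\Komp(F) & F^>\\ F^< & B'\end{matrix}\right),
$$
and by the Morita-invariance of $\mF$ the inclusion $\iota_{B'}\colon B' \to L'$ induces an isomorphism $\mF(\iota_{B'})$.

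Next I would compare $L'$ with our $L$. There is an evident inclusion homomorphism $\lambda\colon L' \hookrightarrow L$ (the identity on the $\Komp(F)$, $F^>$, $F^<$ entries and the ideal inclusion $B'\hookrightarrow B$ in the corner), fitting into the commutative square whose two horizontal arrows are $\iota_{B'}\colon B'\to L'$ and $\iota_B\colon B\to L$ and whose two vertical arrows are $B'\hookrightarrow B$ and $\lambda$. If I can show that both vertical maps become isomorphisms under $\mF$, then since $\mF(\iota_{B'})$ is an isomorphism, so is $\mF(\iota_B)$, which is exactly the claim. For the left-hand vertical map I use the Corollary stating $\mF(B^k)\cong \mF(B)$ together with the inclusions $B'^{\,l}\subseteq \langle F^<,F^>\rangle_B^{\,l}$: more precisely, $\langle F^<, F^>\rangle_B$ is an ideal containing a power of itself inside $B'$ in the relevant sense, so one argues as in Corollary~\ref{Corollary:MoritaEquivalenceAandCompacts} that the inclusion $B'\hookrightarrow B$ — no, here one must be slightly careful: $B'$ need not contain a power of $B$. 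Instead the cleaner route is to \emph{not} pass through $L$ at all on the $B$-side but to run the whole argument of Corollary~\ref{Corollary:MoritaEquivalenceAandCompacts} in this setting, producing directly an isomorphism $\mF(B)\cong\mF(\Komp(F))\cong \mF(L')\cong\dots$; however that still lands in $L'$, not $L$, so the comparison map $\lambda\colon L'\to L$ really is needed.

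So the honest plan is: (i) build $L'$ and invoke Morita invariance to get $\mF(\iota_{B'})$ an isomorphism; (ii) show $\mF$ of the inclusion $B'\hookrightarrow B$ is \emph{not} what we want — rather, factor $\iota_B$ through $L'$. Concretely, note that $F\oplus B$ (with the construction of Proposition~\ref{Proposition:MoritaEquivarianceOfLinkingAlgebra}, but now over the base $B$ rather than $B'$, observing that $F^<F^> = \langle F^<, F^>\rangle_B = B'$ and $B'\supseteq$ a power of itself) realises $L'\!\oplus\!B$ — wait, this is getting tangled. Let me state the plan cleanly: I will apply Proposition~\ref{Proposition:MoritaEquivarianceOfLinkingAlgebra}-style reasoning to the pair $F\oplus B'$ sitting inside $L'$ to get that $L'$ is Morita equivalent to $B'$, hence (Morita invariance) $\mF(\iota_{B'}^{L'})$ is iso; then separately I compare $\iota_B^L\colon B\to L$ with the chain $B \xrightarrow{\text{concurrent hom}} L' \xrightarrow{\lambda} L$, where the first map comes from the concurrent homomorphism of Morita equivalences sending $_BB_B$ into $_{L'}(F\oplus B')_{B'}$ analysed exactly as in the Example proof after the Morita-invariant-functors definition, and the second is $\lambda$. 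Lemma~\ref{Lemma:ConcurrentHomomorphismOfMoritaEquivalences}/Lemma~\ref{Lemma:ConcurrentEquivalences} make the relevant triangles commute after applying $\mF$, reducing everything to showing $\mF(\lambda)$ is an isomorphism. Finally $\mF(\lambda)$ is handled by yet another application of Morita invariance: $L'$ and $L$ are themselves Morita equivalent (via $F\oplus B$ seen over $B$), and $\lambda$ is, up to the identifications furnished by these equivalences, an inclusion whose image contains a suitable power — i.e.\ one repeats Corollary~\ref{Corollary:MoritaEquivalenceAandCompacts} one more time.

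**Main obstacle.** The genuinely fiddly point, and the one I expect to absorb most of the work, is the non-fullness / degeneracy bookkeeping: because $B'=\langle F^<,F^>\rangle_B$ can fail to contain any power of $B$, one cannot directly quote $\mF(B^k)\cong\mF(B)$ to kill the difference between the $B$-corner and the $B'$-corner. The resolution is to never try to compare $B$ with $B'$ directly, but to route the inclusion $B\hookrightarrow L$ through a concurrent homomorphism of \emph{Morita equivalences} (landing in $L'$) plus the algebra map $\lambda\colon L'\to L$, and then to show $\mF(\lambda)$ is invertible by exhibiting $L'\hookrightarrow L$ as (conjugate to) an inclusion of the form $C^k\hookrightarrow C'$ with $C^k\subseteq C'\subseteq C$ — which is legitimate because $L$ and $L'$ \emph{are} Morita equivalent and one controls the relevant powers via the integers $k,l$ coming from $\varphi(A^k)\subseteq\Komp(F)$ and the defining conditions of the equivalence. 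Everything else — commutativity of the various squares and triangles after applying $\mF$ — is immediate from Lemmas~\ref{Lemma:ConcurrentEquivalences} and \ref{Lemma:ConcurrentHomomorphismOfMoritaEquivalences} and the functoriality of $\mF$.
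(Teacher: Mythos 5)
Your proposal correctly isolates the real difficulty --- that $B':=\langle F^<,F^>\rangle_B$ may be a small ideal of $B$ containing no power of $B$ --- but the resolution you sketch does not survive that difficulty. Two steps break down. First, the ``concurrent homomorphism of Morita equivalences sending ${_B}B_B$ into ${_{L'}}(F\oplus B')_{B'}$'' does not exist: a concurrent homomorphism requires a coefficient homomorphism $\psi\colon B\to B'$, and there is none (the natural map goes the other way, $B'\hookrightarrow B$). For the same reason $\iota_B$ cannot be factored through $\lambda\colon L'\to L$ at all, since $\iota_B(B)=B$ is not contained in $\lambda(L')\cap B=B'$. Second, the claim that $\mF(\lambda)$ is invertible because ``$L'$ and $L$ are themselves Morita equivalent'' is false in general: take $F=0$, so that $L'=0$ while $L=B$; then $\mF(\lambda)$ is the map $\mF(0)\to\mF(B)$, and $0$ is not Morita equivalent to $B$. (In that example the lemma itself is trivially true, which shows that the route through $L'$, not the statement, is at fault.)

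The idea you are missing is that one should never shrink $B$ to $B'$: the pair $F\oplus B$, with the obvious $L$-valued and $B$-valued brackets, is already a genuine Morita equivalence between $L$ and $B$ in the sense of Definition~\ref{Definition:MoritaEquivalence:degenerate}, because the extra summand $B$ guarantees $B^2\subseteq\langle F^<\oplus B,\,F^>\oplus B\rangle_B$, while a bookkeeping argument as in Proposition~\ref{Proposition:MoritaEquivarianceOfLinkingAlgebra} gives $L^m\subseteq{_L}\langle F^>\oplus B,\,F^<\oplus B\rangle$ for a suitable $m$ --- this is precisely what the power condition 4.' is designed to permit. Writing $L''$ for the $3\times 3$ linking algebra of this equivalence, Morita invariance of $\mF$ (applied to the equivalence and to its flip) makes both corner inclusions $\iota''_B\colon B\to L''$ and $\iota''_L\colon L\to L''$ into isomorphisms under $\mF$. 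Since $\iota''_L\circ\iota_B$ is the inclusion of $B$ as the middle diagonal entry of $L''$ and is homotopic to $\iota''_B$ by a standard rotation, homotopy invariance gives $\mF(\iota''_L)\circ\mF(\iota_B)=\mF(\iota''_B)$, whence $\mF(\iota_B)$ is an isomorphism. This is the argument the paper uses.
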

\begin{proof}
Note that $L$ is Morita equivalent to $B$ and a Morita equivalence is given by $E \oplus B$. The linking algebra $L'$ of this Morita equivalence is given by 
$$
L'=\left(\begin{matrix}\Komp(F) & F^> & F^>\\ F^< & B & B\\ E^< & B & B\end{matrix}\right)
$$
where the canonical inclusion $\iota'_B$  of $B$ into $L'$ is the inclusion of $B$ as the lower right-hand corner. The inclusion $\iota'_L$ of $L$ into $L'$ is the inclusion as the upper left-hand 2-by-2-matrices. Now $\iota'_L \circ \iota_B$ is the inclusion of $B$ as the middle entry of $L'$. Note that $\iota'_B$ and $\iota'_L \circ \iota_B$ are homotopic by a standard rotation, so $\mF(\iota'_B) = \mF(\iota'_L \circ \iota_B) = \mF(\iota'_L) \circ \mF(\iota_B)$ by homotopy invariance of $\mF$. Now $\mF(\iota'_B)$ and $\mF(\iota'_L)$ are isomorphisms by Morita invariance of $\mF$, so $\mF(\iota_B)$ is an isomorphism, too. 
\end{proof}

So there is no obstacle left for the following definition:

\begin{definition}\label{Definition:LiftOfFtoMoritaban} Let $A$ and $B$ be Banach algebras and $(F,\varphi) \in \Mban(A,B)$. Let $\iota_B$ and $\iota_{\Komp(F)}$ be the inclusions of $B$ and $\Komp(F)$ into the Banach algebra
$$
L=\left(\begin{matrix}\Komp(F) & F^>\\ F^< & B\end{matrix}\right),
$$
respectively. Let $\iota_{A^k}$ denote the inclusion of $A^k$ into $A$, where $k \in \N$ is such $\varphi(A^k)\subseteq \Komp(F)$. Define 
$$
\overline{\mF}(F,\varphi):= \mF(\iota_B)^{-1} \circ \mF(\iota_{\Komp(F)} \circ \varphi\restr_{A^k}) \circ \mF(\iota_{A^k})^{-1}  
$$

\end{definition}

%

\begin{lemma}\label{Lemma:HomotopyMoritabanAndF}
Let $(F,\varphi)$ be a homotopy of Morita cycles in $\Mban(A,B)$ and let $\ev_t^B\colon B[0,1] \to B$ denote the evaluation at $t\in [0,1]$. Then 
$$
\overline{\mF}(F_t,\varphi_t) = \mF(\ev_t^B) \circ \overline{\mF}(F, \varphi).
$$
In particular, the above construction is invariant under homotopy. 
\end{lemma}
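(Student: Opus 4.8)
The plan is to realise both sides as images under $\mF$ of a single commuting square of Banach algebras, obtained by ``evaluating the homotopy at $t$''.

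To begin, fix $k\in\N$ with $\varphi(A^k)\subseteq\Komp_{B[0,1]}(F)$. Since a rank-one operator $|e^>\rangle\langle e^<|$ on the $\Cont[0,1]$-Banach $B[0,1]$-pair $F$ is automatically $\Cont[0,1]$-linear (the module structures and the inner product being $\Cont[0,1]$-compatible), every element of $\Komp_{B[0,1]}(F)$ is $\Cont[0,1]$-linear and descends to the fibre at $t$; writing $F_t=(F_t^<,F_t^>)$ for this fibre, one obtains a bounded homomorphism $\Komp_{B[0,1]}(F)\to\Komp_B(F_t)$ carrying $\varphi(a)$ to $\varphi_t(a)$, so the same $k$ satisfies $\varphi_t(A^k)\subseteq\Komp_B(F_t)$ for every $t\in[0,1]$. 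Put
$$
\tilde L=\left(\begin{matrix}\Komp_{B[0,1]}(F)&F^>\\ F^<&B[0,1]\end{matrix}\right),\qquad
L_t=\left(\begin{matrix}\Komp_B(F_t)&F_t^>\\ F_t^<&B\end{matrix}\right),
$$
the Banach algebras attached by Definition~\ref{Definition:LiftOfFtoMoritaban} to $(F,\varphi)\in\Mban(A,B[0,1])$ and to the fibre cycle $(F_t,\varphi_t)\in\Mban(A,B)$, respectively. The fibre maps $F^>\to F_t^>$, $F^<\to F_t^<$ and $\Komp_{B[0,1]}(F)\to\Komp_B(F_t)$, together with $\ev_t^B\colon B[0,1]\to B$, are compatible with all the matrix products (once more by the $\Cont[0,1]$-linearity of everything in sight), hence assemble to a bounded homomorphism $\lambda_t\colon\tilde L\to L_t$.

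Write $j:=\iota_{\Komp(F)}\circ\varphi\restr_{A^k}\colon A^k\to\tilde L$ and $j_t:=\iota_{\Komp(F_t)}\circ\varphi_t\restr_{A^k}\colon A^k\to L_t$. Then $\lambda_t$ fits into a diagram in $\BanAlg$
$$
\xymatrix{
A^k\ar[r]^-{j}\ar@{=}[d]&\tilde L\ar[d]^{\lambda_t}&B[0,1]\ar@{_{(}->}[l]_-{\iota_{B[0,1]}}\ar[d]^{\ev_t^B}\\
A^k\ar[r]^-{j_t}&L_t&B\ar@{_{(}->}[l]_-{\iota_B}
}
$$
whose left square commutes because $\lambda_t$ restricts on the compacts corner to the fibre map, which sends $\varphi(a)$ to $\varphi_t(a)$, and whose right square commutes by the definition of $\lambda_t$ on the $B[0,1]$-corner; moreover the inclusion $\iota_{A^k}\colon A^k\hookrightarrow A$ is literally the same on both rows. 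Applying $\mF$, Lemma~\ref{Lemma:IncusionsIntoNonLinkingAlgebras} shows that $\mF(\iota_{B[0,1]})$ and $\mF(\iota_B)$ are isomorphisms, so the image of the right square rearranges to $\mF(\iota_B)^{-1}\circ\mF(\lambda_t)=\mF(\ev_t^B)\circ\mF(\iota_{B[0,1]})^{-1}$, while the image of the left square reads $\mF(\lambda_t)\circ\mF(j)=\mF(j_t)$. Substituting both into Definition~\ref{Definition:LiftOfFtoMoritaban} yields
$$
\overline{\mF}(F_t,\varphi_t)=\mF(\iota_B)^{-1}\circ\mF(\lambda_t)\circ\mF(j)\circ\mF(\iota_{A^k})^{-1}=\mF(\ev_t^B)\circ\overline{\mF}(F,\varphi),
$$
which is the claimed identity; note that $\mF(\lambda_t)$, which need not be invertible, is used only as a connecting morphism. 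For the final sentence, if $(F_0,\varphi_0)\sim(F_1,\varphi_1)$ via a homotopy $(F,\varphi)$, the identity just proved gives $\overline{\mF}(F_i,\varphi_i)=\mF(\ev_i^B)\circ\overline{\mF}(F,\varphi)$ for $i=0,1$, and since $\ev_0^B$ and $\ev_1^B$ are homotopic as homomorphisms $B[0,1]\to B$ — the identity of $B[0,1]$ being a homotopy between them — homotopy invariance of $\mF$ gives $\mF(\ev_0^B)=\mF(\ev_1^B)$, hence $\overline{\mF}(F_0,\varphi_0)=\overline{\mF}(F_1,\varphi_1)$.

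The only real obstacle is bookkeeping: checking that the fibre construction genuinely produces a bounded \emph{algebra} homomorphism $\lambda_t\colon\tilde L\to L_t$ — that is, that passing to the fibre at $t$ is simultaneously compatible with the left and right module actions on $F^<$ and $F^>$, the $B[0,1]$-valued bracket, the action of $\Komp_{B[0,1]}(F)$, and the left action $\varphi$ — and that the algebra $L_t$ written above is indeed the one Definition~\ref{Definition:LiftOfFtoMoritaban} attaches to the fibre cycle $(F_t,\varphi_t)$. Both points rest on the $\Cont[0,1]$-linearity built into Definition~\ref{Definition:HomotopyOfMoritaCycles} and on the fact that rank-one, hence all, compact operators are $\Cont[0,1]$-linear; with those in hand, the diagram chase above completes the argument.
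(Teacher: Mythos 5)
Your proof is correct and follows essentially the same route as the paper: both construct the commuting ladder $A \leftarrow A^k \to \tilde L \leftarrow B[0,1]$ over $A \leftarrow A^k \to L_t \leftarrow B$ via the fibre/evaluation homomorphism $\lambda_t$, apply $\mF$, invert the leftward arrows (using Lemma~\ref{Lemma:IncusionsIntoNonLinkingAlgebras} for the corner inclusions), and read off the identity. Your added details --- the $\Cont[0,1]$-linearity of compact operators justifying $\lambda_t$, and the explicit homotopy $\id_{B[0,1]}$ between $\ev_0^B$ and $\ev_1^B$ for the final claim --- are correct and merely make explicit what the paper leaves implicit.
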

\begin{proof}
Let $t\in [0,1]$. Consider the following diagram, where the central vertical arrow is the obvious evaluation homomorphism in each entry:
$$
\xymatrix{
A \ar@{=}[d] & A^k \ar[l] \ar[r] \ar@{=}[d]& {\left(\begin{matrix}\Komp_{B[0,1]}(F) & F^>\\ F^< & B[0,1] \end{matrix}\right)}\ar[d] & B[0,1] \ar[d]_{\ev_t^B} \ar[l]\\
A & A^k \ar[l] \ar[r]& {\left(\begin{matrix}\Komp_B(F_t) & F_t^>\\ F_t ^< & B \end{matrix}\right)} & B\ar[l]
}
$$
This diagram is clearly commutative an all arrows pointing to the left are isomorphisms after one has applied $\mF$ to them. If you do apply $\mF$, then going from $A$ (or rather $\mF(A)$) in the upper left corner first right and then down, you end up with $\mF(\ev_t^B) \circ \overline{\mF}(F, \varphi)$. If you go first down and the right, you and up with $\overline{\mF}(F_t,\varphi_t)$.
\end{proof}

\begin{lemma}\label{Lemma:FonMoritabanMultiplicative}
Let $A$, $B$ and $C$ be Banach algebras and let $(F, \varphi) \in \Mban(A,B)$ and $(F', \varphi') \in \Mban(B,C)$. Then
$$
\overline{\mF} (F\otimes_B F') = \overline{\mF}(F') \circ \overline{\mF}(F) \ \in \ \mC(\mF(A),\mF(C))
$$
\end{lemma}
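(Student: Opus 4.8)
The plan is to reduce the multiplicativity statement to a single large commutative diagram in $\BanAlg$ that, after applying $\mF$, exhibits $\overline{\mF}(F\otimes_B F')$ as the appropriate composite of $\overline{\mF}(F)$ and $\overline{\mF}(F')$. The first step is to choose the data: pick $k\in\N$ with $\varphi(A^k)\subseteq\Komp_B(F)$ and $k'\in\N$ with $\varphi'(B^{k'})\subseteq\Komp_C(F')$, and form the three linking algebras
\[
L=\left(\begin{matrix}\Komp_B(F) & F^>\\ F^< & B\end{matrix}\right),\quad
L'=\left(\begin{matrix}\Komp_C(F') & F'^>\\ F'^< & C\end{matrix}\right),\quad
L''=\left(\begin{matrix}\Komp_C(F\otimes_B F') & (F\otimes_B F')^>\\ (F\otimes_B F')^< & C\end{matrix}\right).
\]
The key algebraic input is that $F\otimes_B F'$, viewed as a $C$-pair with left $\Komp_B(F)$-action, gives a Morita cycle whose "compacts" relate to $\Komp_B(F)$: more precisely one has a natural homomorphism $\Komp_B(F)\to\Lin_C(F\otimes_B F')$ landing, after raising to a suitable power, in $\Komp_C(F\otimes_B F')$ (this is the Banach analogue of the C$^*$-fact, and is where powers $A^k$ genuinely matter). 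Using Corollary~\ref{Corollary:MoritaEquivalenceAandCompacts} one can identify $\mF$ of the relevant compact-operator algebras; this will let me bridge between $L$, $L'$ and $L''$.

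Next I would assemble an auxiliary "big" linking algebra $M$, a $3\times 3$ matrix algebra built from $\Komp_B(F)$, $B$, $C$, $F^{\gtrless}$, $F'^{\gtrless}$ and $(F\otimes_B F')^{\gtrless}$, designed so that $M$ contains copies of $L$ and $L'$ (as the upper-left and lower-right $2\times2$ corners, say) and also a copy of $L''$, with the inclusions of $B$, $C$ and $A^{kk'}$ sitting coherently inside. The inclusion of $C$ into $M$ factors through both $L'$ and $L''$; the inclusion of $B$ into $M$ factors through $L$; and the composite $A^{kk'}\hookrightarrow A^k\xrightarrow{\varphi}\Komp_B(F)\hookrightarrow L\hookrightarrow M$ should, up to a standard rotation homotopy (as in the proof of Lemma~\ref{Lemma:IncusionsIntoNonLinkingAlgebras}), agree with the path through $L''$ given by $\iota_{\Komp(F\otimes_B F')}\circ(\varphi\otimes\varphi')\restr$. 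Once this diagram is set up, every leftward/inclusion arrow becomes an isomorphism under $\mF$ by Morita invariance, Lemma~\ref{Lemma:IncusionsIntoNonLinkingAlgebras} and the corollaries, and chasing $\mF(A)$ through it yields exactly $\overline{\mF}(F')\circ\overline{\mF}(F)=\overline{\mF}(F\otimes_B F')$ by unwinding Definition~\ref{Definition:LiftOfFtoMoritaban}. One should also invoke Lemma~\ref{Lemma:HomotopyMoritabanAndF} to see that the construction does not depend on the choice of $k$, $k'$, so the particular exponents are harmless.

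The main obstacle I anticipate is purely bookkeeping-heavy but genuinely delicate: constructing the $3\times3$ algebra $M$ so that all the compatibilities hold on the nose (the off-diagonal products must be the correct balanced tensor-product pairings, the action of $\Komp_B(F)$ on $F\otimes_B F'$ must match, and the "corner" inclusions of $L$, $L'$, $L''$ must be genuine algebra homomorphisms), and then verifying that the two routes from $A^{kk'}$ through $M$ really are homotopic via the rotation trick rather than merely equal up to some extra stabilisation. A secondary subtlety is that $\Komp_C(F\otimes_B F')$ need not equal the image of $\Komp_B(F)$ or $\Komp_C(F')$, so the identification of $\mF$ on these algebras must be routed carefully through Corollary~\ref{Corollary:MoritaEquivalenceAandCompacts} applied to $F\otimes_B F'$ as a Morita equivalence onto the closed ideal it generates; keeping track of which closed subalgebra one is working in at each stage, and checking the relevant powers land where they should, is the part that needs care rather than cleverness.
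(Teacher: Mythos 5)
Your proposal follows essentially the same route as the paper: the auxiliary $3\times 3$ algebra $M$ you describe is exactly the paper's central algebra $D=\Komp_C(F\otimes_B F'\oplus F'\oplus C)$, into which the three $2\times 2$ linking algebras are embedded as corner blocks, and the multiplicativity is read off by chasing the resulting (hexagonal) diagram after applying $\mF$. You also correctly identify the genuine difficulty in the possibly degenerate case, namely replacing the algebras at the vertices by sufficiently high powers of themselves, which is precisely the point the paper flags and leaves as "tiresome" bookkeeping.
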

\begin{proof}
We first give a proof under the additional assumptions that $A$, $B$ and $C$ are non-degenerate and that $F$ is a non-degenerate $B$-pair, i.e., $F^>B = F^>$ and $BF^< = F^<$. In this situation, $\varphi(A) \subseteq \Komp_B(F)$ and $\varphi'(B) \subseteq \Komp_C(F')$ as well as $\Komp_B(F) \otimes \id_{F'} \subseteq \Komp_C(F \otimes_B F')$. Consider the following diagram:
$$
\xymatrix{
&{\left(\begin{matrix}\Komp_B(F) & F^>\\ F^< & B \end{matrix}\right)}\ar[dd]_{\alpha}&\\
A \ar[dd]\ar[ru]  & & B \ar[ul] \ar[dd] \\
&D&\\
    {\left(\begin{matrix}\Komp_C(F\otimes_B F') & F^> \otimes_B F'^>\\ F'^< \otimes_B F^< & C \end{matrix}\right)}\ar[ru]^{\gamma} &  &  \qquad {\left(\begin{matrix}\Komp_C(F') & F'^>\\ F'^< & C \end{matrix}\right)}\qquad \ar[lu]_{\beta}\\
    & C\ar[ur] \ar[ul] &
}
$$
The morphisms appearing in the exterior hexagon are those that are used in the definition of $\overline{\mF}$; it hence suffices to show that this hexagon is commutative after applying $\mF$ to all morphisms (a process under which some of the morphisms become isomorphisms so that it makes actually sense to speak of commutativity of the diagram). The Banach algebra $D$ in the center is defined as
$$
D:= \Komp_C(F \otimes_C F' \oplus F' \oplus C)
$$
so it can be thought of as an algebra of 3-by-3 matrices. The map $\alpha$ takes ${\left(\begin{matrix}\Komp_B(F) & F^>\\ F^< & B \end{matrix}\right)}$ first to $\Komp_C(F \otimes_B F' \oplus F') = {\left(\begin{matrix}\Komp_C(F\otimes_B F') & \Komp_C(C,F')\\ \Komp_C(F',C)& \Komp_C(C) \end{matrix}\right)}$ and then to $D$ as upper-left 2-by-2 block matrices. The map $\beta$ is defined similarly, with the difference that we end up in the lower-right block of 2-by-2 matrices in $D$.Note that $\beta$ maps to an isomorphism under $\mF$. Finally, the map $\gamma$ is again defined in a similary fashion, but this time, the inclusion is contained in the set of matrices that have vanishing second row and vanishing second column. The internal squares of the diagram commute, hence we are done.

The basic idea of the proof in the general case is very similar, but the details are somewhat more tiresome, so we do not give them here; the crucial point is that you have to replace the algebras in the vertices of the hexagon by sufficiently high powers of themselves to give a meaning to all arrows.
\end{proof}

The following lemma is elementary to check so we omit the proof; the key ingredient is a homotopy between the inclusions of $BB$ into $\Mat_2(BB)$ into the upper left and lower right corners.

\begin{lemma}\label{Lemma:OverlineFLiftsF}
Let $\varphi \colon A \to B$ be a homomorphism of Banach algebras. Then $\overline{\mF}(\Mban(\varphi)) = \mF(\varphi)$.
\end{lemma}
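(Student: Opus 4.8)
The plan is to unwind the definition of $\overline{\mF}(\Mban(\varphi))$ for the specific Morita cycle $\Mban(\varphi) = (B,B)$ with left $A$-action induced by $\varphi$, and match it against $\mF(\varphi)$. First I would observe that for $F = (B,B)$ we have $\Komp_B(F) = BB$ (the closed span of twofold products), and the linking algebra appearing in Definition~\ref{Definition:LiftOfFtoMoritaban} becomes
$$
L=\left(\begin{matrix}BB & B\\ B & B\end{matrix}\right) = \Mat_2(B)\cap \left(\begin{matrix}BB & B\\ B & B\end{matrix}\right),
$$
with $\iota_B$ the inclusion into the lower right corner and $\iota_{\Komp(F)} = \iota_{BB}$ the inclusion of $BB$ into the upper left corner. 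Since $\varphi$ maps $A$ into $\Lin_B(B) = \unital{B}$ and we need $\varphi(A^k)\subseteq BB$, we may take any $k\geq 2$ (indeed $\varphi(A^2)\subseteq BB$), and $\varphi\restr_{A^k}$ is just the corestriction of $\varphi$ to $BB$ followed by the inclusion; so
$$
\overline{\mF}(\Mban(\varphi)) = \mF(\iota_B)^{-1} \circ \mF\bigl(\iota_{BB}\circ \varphi\restr_{A^k}\bigr) \circ \mF(\iota_{A^k})^{-1}.
$$

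Next I would bring in the two known reductions. By the Corollary following Lemma~\ref{Lemma:ConcurrentHomomorphismOfMoritaEquivalences}, $\mF(\iota_{A^k})\colon \mF(A^k)\cong\mF(A)$ and $\mF(\iota_{B^k})\colon\mF(B^k)\cong\mF(B)$ are the canonical isomorphisms induced by inclusions; naturality of these (via Lemma~\ref{Lemma:ConcurrentHomomorphismOfMoritaEquivalences} applied to the concurrent homomorphism $\varphi$ carries the Morita equivalence $(A,A^{k-1})$ to $(B,B^{k-1})$ with coefficient maps $\varphi\restr_{A^k}$ and $\varphi$) gives that $\mF(\varphi\restr_{BB})\circ\mF(\iota_{A^k})^{-1}$ up to the inclusion $BB\hookrightarrow B$ equals $\mF(\iota_{BB})^{-1}\circ\mF(\varphi)$, modulo identifying $BB$ with $B^2$. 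More precisely, the square
$$
\xymatrix{
\mF(A^k) \ar[r]^{\mF(\varphi\restr)} \ar[d]_{\mF(\iota_{A^k})}^{\cong} & \mF(B^k) \ar[d]^{\mF(\iota_{B^k})}_{\cong}\\
\mF(A) \ar[r]^{\mF(\varphi)} & \mF(B)
}
$$
commutes by functoriality of $\mF$ alone. So it remains to identify $\mF(\iota_B)^{-1}\circ \mF(\iota_{BB}\hookrightarrow L) \circ \mF(\iota_{B^k}\hookrightarrow BB)$ — i.e. the contribution of the linking algebra — with the identity of $\mF(B)$.

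The heart of the argument, and the only non-formal step, is the homotopy hinted at in the statement: inside $L = \left(\begin{smallmatrix}BB & B\\ B & B\end{smallmatrix}\right)$ the inclusion $\iota_B$ of $B$ as the lower-right corner is homotopic to the composite $B \xrightarrow{\text{corestrict}} BB \hookrightarrow \text{upper-left corner of }L$ — wait, that is false as stated since those land in different corners and need not be homotopic within $L$ unless we pass to $\Mat_2$; the correct statement, which the paper flags, is the homotopy between the two inclusions of $BB$ into $\Mat_2(BB)$ as upper-left and lower-right corners, effected by the standard rotation $t\mapsto R_t \cdot R_t^{-1}$ with $R_t = \left(\begin{smallmatrix}\cos & -\sin\\ \sin & \cos\end{smallmatrix}\right)$. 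Applying $\mF$ and homotopy invariance, I would conclude that $\mF$ of the inclusion $B^k\hookrightarrow BB\hookrightarrow L$ (into upper-left) agrees with $\mF$ of $B^k\hookrightarrow B\hookrightarrow L$ (into lower-right), which is $\mF(\iota_B)\circ\mF(\iota_{B^k}\hookrightarrow B)$; combining with the commuting square above and cancelling the isomorphisms $\mF(\iota_{B^k})$, $\mF(\iota_B)$ yields $\overline{\mF}(\Mban(\varphi)) = \mF(\varphi)$. The main obstacle is purely bookkeeping: making the rotation homotopy take place inside the genuinely smaller algebra $\left(\begin{smallmatrix}BB & B\\ B & B\end{smallmatrix}\right)$ rather than $\Mat_2(B)$, and checking that all the inclusion-induced maps being inverted are indeed isomorphisms under $\mF$ — but the latter is exactly Lemma~\ref{Lemma:IncusionsIntoNonLinkingAlgebras} together with the Corollary on powers, so no new idea is needed, which is why the paper omits the details.
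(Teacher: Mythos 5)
Your argument is correct and is exactly the one the paper has in mind: the paper omits the proof but names the corner-rotation homotopy for $BB$ as the key ingredient, and that is precisely the non-formal step you isolate (and you rightly check that the rotation stays inside $\left(\begin{smallmatrix}BB & B\\ B & B\end{smallmatrix}\right)$, since every entry of the rotated matrix is a multiple of an element of $BB\subseteq B$). The only cosmetic imprecision is writing $\Komp_B(B,B)=BB$ where one should really speak of the canonical homomorphism $BB\to\Komp_B(B,B)$, $x\mapsto$ (multiplication by $x$), through which both the left action $\varphi\restr_{A^k}$ and the rotation homotopy factor; nothing in the argument changes.
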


To sum up, we have shown that the map $\overline{\mF}\colon \Moritabancat \to \mC$ defined in \ref{Definition:LiftOfFtoMoritaban} is well-defined (Lemma \ref{Lemma:HomotopyMoritabanAndF}) and a functor (Lemma \ref{Lemma:FonMoritabanMultiplicative} and Lemma \ref{Lemma:OverlineFLiftsF}). It lifts $\mF$ by Lemma  \ref{Lemma:OverlineFLiftsF}, so we have shown Theorem \ref{Theorem:LiftToMoritaban}.

We are now able to prove Morita invariance of $\KTh$-theory also for possibly degenerate Banach algebras; note that it only uses the less complicated part of Theorem \ref{Theorem:LiftToMoritaban}.

\begin{theorem}\label{Theorem:KTheoryMoritaInvariant}
The $\KTh$-functor for Banach algebras is Morita invariant 
\end{theorem}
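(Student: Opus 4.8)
The plan is to verify the condition in the definition of ``Morita invariant'' by hand. So let $E=(E^<,E^>)$ be a Morita equivalence between Banach algebras $A$ and $B$ in the sense of Definition~\ref{Definition:MoritaEquivalence:degenerate}, with linking algebra $L$; I must show that the corner inclusion $\iota_B\colon B\to L$ induces an isomorphism $\KTh_*(B)\cong\KTh_*(L)$, after which the statement for $\iota_A$ follows by the symmetric roles of the two corners. Only three standard facts about topological $\KTh$-theory on $\BanAlg$ will be used: it is homotopy invariant; it is invariant under the matrix inclusions $D\hookrightarrow\Mat_n(D)$; and it is half-exact, so that every extension of Banach algebras yields a six-term exact sequence. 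This is also the sense in which the proof uses \emph{only the less complicated part} of Theorem~\ref{Theorem:LiftToMoritaban}: it never has to construct an image under $\KTh$ of a general Morita cycle, it only ever needs to recognise specific inclusions as isomorphisms.

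First, the non-degenerate case. If $A$ and $B$ are non-degenerate then so is $L$, $E$ is a Morita equivalence in the sense of Definition~\ref{Definition:MoritaEquivalence:nondegenerate}, and $B$ is a full corner of $L$; that the inclusion of a full corner induces a $\KTh$-isomorphism for non-degenerate Banach algebras is exactly what the Morita-invariance results of \cite{Paravicini:07:Morita:richtigerschienen}, resting on \cite{Lafforgue:02}, provide --- $B$ and $L$ are Morita equivalent, so $\KKban(\C,B)\cong\KKban(\C,L)$ via the inclusion, and $\KKban(\C,-)\cong\KTh_0(-)$, with a suspension argument handling $\KTh_1$. For the general case I also use that $\KTh$-theory kills nilpotent Banach algebras, which is elementary and Morita-free: if $N^m=0$ (the closed span of all $m$-fold products being zero), then $N$ is a nilpotent ideal of $\unital N$, so idempotents and invertibles over $\Mat_n(\unital N)$ lift from, and are classified up to conjugation over, $\Mat_n(\unital N/N)=\Mat_n(\C)$; hence $\KTh_*(\unital N)\cong\KTh_*(\C)$, i.e.\ $\KTh_*(N)=0$. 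Together with half-exactness this gives, for every Banach algebra $D$ and every $k$, that $D^k\hookrightarrow D$ is a $\KTh$-isomorphism (the quotient $D/D^k$ being nilpotent), and likewise $L^m\hookrightarrow L$ for the high powers of $L$.

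For the general, possibly degenerate, $E$ the strategy is to cut $L$ down --- without altering its $\KTh$-theory --- to a linking algebra of a full-corner pair of non-degenerate algebras, and then apply the non-degenerate case. Concretely, using the compatibility relations and condition~4' of Definition~\ref{Definition:MoritaEquivalence:degenerate} one replaces $L$, successively, by the closed ideal generated by a corner and by sufficiently high powers, each time passing to a subalgebra that is $\KTh$-equivalent to $L$ under the inclusion (because the quotients encountered are nilpotent) and in which the degeneracy has been further removed; a diagram chase, identifying the corner inclusions up to homotopy and matrix stabilisation exactly as in the lemmas leading up to Theorem~\ref{Theorem:LiftToMoritaban}, then reduces the claim to the case already settled. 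The genuine obstacle is this last reduction: a power of a degenerate Banach algebra need not be non-degenerate, so ``pass to a high enough power'' does not suffice on its own --- one has to combine finitely many of these $\KTh$-preserving moves in the correct order to actually reach the non-degenerate world, and to check that the inclusions that come out are the ones one wants. The remaining ingredients --- vanishing on nilpotent algebras, the non-degenerate full-corner case, matrix stability, and the homotopies between corner inclusions --- are all either classical or already in place.
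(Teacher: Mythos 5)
Your overall goal is the right one (show that the corner inclusion $\iota_B\colon B\to L$ is a $\KTh$-isomorphism), and several of your ingredients are sound: $\KTh$-theory does vanish on nilpotent Banach algebras, hence $D^k\hookrightarrow D$ is a $\KTh$-isomorphism for every $D$ and $k$, and the non-degenerate case is indeed available from \cite{Paravicini:07:Morita:richtigerschienen} once one identifies the abstract isomorphism with the corner inclusion via a concurrent homomorphism as in Lemma~\ref{Lemma:ConcurrentEquivalences}. But the central step of your argument --- cutting $L$ down by ``$\KTh$-preserving moves'' (powers and corner ideals with nilpotent quotients) until one lands in the non-degenerate world --- is exactly the step you flag as ``the genuine obstacle'', and it is not merely unfinished: it cannot work as stated. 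Take $A=B=T_1X$ (the paper's own motivating example of a degenerate algebra, Definition~\ref{Definition:BanachTensorAlgebra}) with $E=(A,A)$ and $L=\Mat_2(T_1X)$. Every power $(T_1X)^k$ is again degenerate, the chain of powers never stabilises, and no finite combination of ``pass to a power'' and ``pass to the ideal generated by a corner'' produces a non-degenerate algebra $\KTh$-equivalent to $L$ through nilpotent quotients. So the degenerate case does not reduce to the non-degenerate one along the route you propose, and the proof has a genuine gap at its load-bearing step.

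The paper avoids this reduction entirely. It extends the definition of $\KKban$ to possibly degenerate coefficient algebras (requiring $a(T^2-1),[a,T]\in\Komp_B(E)$ only for $a\in A^k$, and using fibred homotopies), reproves $\KKban(\C,B)\cong\KTh_0(B)$ in that generality (here the observation that $B/BB$ is contractible, so $B$ and $BB$ have the same $\KTh$-theory, plays the role your nilpotence lemma was meant to play), and then shows that $\KKban(\C,\cdot)$ factors through the Morita category $\Moritabancat$; Morita invariance of $\KTh$ is then immediate from the (easy direction of) Theorem~\ref{Theorem:LiftToMoritaban}. In other words, the degeneracy is absorbed into the cycle definitions rather than removed from the algebras. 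If you want to keep your more hands-on strategy, you would have to replace the ``reach the non-degenerate world'' step by a direct construction of the inverse of $\KTh(\iota_B)$ from the Morita equivalence $E\oplus B$ of Proposition~\ref{Proposition:MoritaEquivarianceOfLinkingAlgebra} --- which is precisely the construction of an image of a general Morita cycle under $\KTh$ that you were trying to avoid.
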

\begin{proof}
The basic idea of how to prove this result is to go through the article \cite{Paravicini:07:Morita:richtigerschienen} and to generalise everything to possibly degenerate Banach algebras. Instead of actually carrying out this plan, I'll just give the list of the necessary changes in  \cite{Paravicini:07:Morita:richtigerschienen} and also in \cite{Lafforgue:02}:
\begin{itemize}
\item First, we have to adjust the definition of $\KKban$ to allow for possibly degenerate Banach algebras. This is done analogously to the case of Morita cycles discussed above and just as sketched in the Remarque following Th\'{e}or\`{e}me 1.2.8 of \cite{Lafforgue:02}: cycles in $\Eban(A,B)$ consist of pairs $(E,T)$ where $E$ is a (possibly degenerate) graded Banach $B$-pair and $T$ is an odd element of $\Lin_B(E)$ such that there is a $k\in \N$ such that $a (T^2-1)$ and $[a,T]$ is in $\Komp_B(E)$ for all $a\in A^k$. The main change concerns the homotopies: A homotopy between elements of $\Eban(A,B)$ is an element $(E,T)$ in $\Eban(A, B[0,1])$ such that $E$ is not only a Banach $B[0,1]$-pair but a $\Cont[0,1]$-Banach $B$-pair in the sense of \cite{Paravicini:10:GreenJulg:submitted}, see also Definition~\ref{Definition:HomotopyOfMoritaCycles} above. The operator $T$ has to be $\unital{B}[0,1]$-linear. This means that homotopies are fibred over $[0,1]$, and you can then view a homotopy as connecting the fibres over $0$ and $1$.
\item Now you have to show that $\KKban(\C, B) \cong \KTh_0(B)$, also for degenerate Banach algebras $B$. To this end, you go through the relevant parts of the first chapter of \cite{Lafforgue:02} to reprove Th\'{e}or\`{e}me 1.2.8 in this more general setting. Some changes have to be made: For example, the canonical map from $B$ to $\Lin_B(B)$ is no longer $\Komp_B(B)$-valued. This can be repaired by considering $BB$ instead of $B$. Note that $B$ and $BB$ have the same $\KTh$-theory because of the six-term exact sequence in $\KTh$-theory (the quotient of $B$ by $BB$ is contractible). You have to adjust Lemme 1.1.6 and Lemme 1.1.7 accordingly. Also, in Lemme 1.3.5, you have to consider elements $y \in \Mat_n(B)\Mat_n(B)$ instead of $y\in \Mat_n(B)$.
\item The action of Morita morphisms on $\KKban$ is defined as in Section 5.7 of \cite{Paravicini:07:Morita:richtigerschienen}. In particular, $\KKban(\C, \cdot)$ factors through $\Moritaban$. Now apply Theorem \ref{Theorem:LiftToMoritaban}.
\end{itemize}
\end{proof}

\section{Exactness and Bott-periodicity for functors on $\BanAlg$}

\subsection{Half-exactness and split-exactness}

\begin{definition}
Let $\mC$ be a category and $\mF \colon \BanAlg \to \mC$ a functor.
\begin{itemize}
\item If $\mC$ is an abelian category, then $\mF$ is called \demph{split-exact} if, for every extension $\xymatrix{B \ \ar@{>->}[r]& D \ar@{->>}[r]^-{\pi}& A}$ of Banach algebras such that the quotient map has a split $\sigma$ that is a morphism of Banach algebras, the sequence $\xymatrix{\mF(B) \ \ar@{>->}[r]& \mF(D) \ar@{->>}[r]^-{\mF(\pi)}& \mF(A)}$ is a short exact in $\mC$ with split $\mF(\sigma)$. 
\item If $\mC$ is an abelian category and if $\mE$ denotes a class of extensions of Banach algebras, then $\mF$ is called \demph{half-exact for $\mE$} if, for every extension $\xymatrix{B \ \ar@{>->}[r]& D \ar@{->>}[r]& A}$ in $\mE$, the sequence $\xymatrix{\mF(B) \ \ar@{->}[r]& \mF(D) \ar@{->}[r]& \mF(A)}$ in $\mC$ is exact in the middle.
\item If $\mC$ is (just) an additive category, then $\mF$ is called \demph{split-exact} if, for every object $X$ of $\mC$, the functors $A \mapsto \Hom(X, \mF(A))$ from $\BanAlg$ to $\Ab$ and $A\mapsto \Hom(\mF(A), X)$ from $\BanAlg$ to $\Ab^{\op}$ are split-exact. Similarly, one defines \demph{half-exactness} for functors with values in an additive category. 
\end{itemize}
\end{definition}

\begin{proposition}[{\it cf.}~\cite{Schochet:84}, Prop.~2.4, \cite{CuntzThom:06}, Lemma~4.1.5, and  \cite{CMR:07}, Prop.~6.71]\label{Proposition:HalfexactnessLongExactSequence}
Let $\mF$ be a functor on $\BanAlg$ with values in an abelian category that is half-exact for all (semi-split) extensions and homotopy invariant. Define
$$
\mF_k(A):= \mF(\Sigma^k A)
$$
for all Banach algebras $A$ and all $k\geq 0$. Then the functor $\mF$ has long exact sequences of the form
$$
\xymatrix{
\cdots \ar[r] & \mF_1(B) \ar[r] & \mF_1(D) \ar[r]& \mF_1(A) \ar[r]& \mF_0(B) \ar[r]& \mF_0(D) \ar[r]& \mF_0(A)
}
$$
for any (semi-split) extension $\xymatrix{B \ \ar@{>->}[r]& D \ar@{->>}[r]^{\pi}& A}$; the injection $\kappa_{\pi}\colon B \to \cone_{\pi}, b\mapsto (b,0)$ induces an isomorphism $\mF_k(B) \cong \mF_k(\cone_{\pi})$ and the connecting map of the above exact sequence is given by $\xymatrix{\mF_k(\Sigma A) \ar[r]^{\mF_k(\iota(\pi))} & \mF_k(C_{\pi}) & \mF_k(B) \ar[l]_-{\mF_k(\kappa_{\pi})}^{\cong}}$, for all $k\geq 0$. The functor $\mF$ is split-exact.
\end{proposition}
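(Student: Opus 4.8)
The plan is to import the Puppe-sequence formalism into $\BanAlg$, working with semi-split extensions, so that the statement reduces to diagram chasing once a single structural lemma about mapping cones is in hand. I would first clear away three routine points. First, $\mF$ annihilates contractible Banach algebras; in particular $\mF(\cone X)=0$ for every $X$, since $\cone X=X[0,1[\,\cong X\otimes\Cont_0([0,\infty[)$ and the ``slide to infinity'' endomorphisms deform its identity map to $0$ (this uses homotopy invariance only; a consequence is also $\mF(0)=0$). Second, half-exactness applied to the split extension $C_1\rightarrowtail C_1\oplus C_2\twoheadrightarrow C_2$, together with the coordinate retractions, shows $\mF$ carries finite products to biproducts in the abelian category $\mC$ (a split short exact sequence in $\mC$ is a biproduct diagram). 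Third, the suspension $\Sigma(-)=(-)\otimes\Cont_0(]0,1[)$ carries semi-split extensions to semi-split extensions (tensor a bounded linear section) and commutes with the pullbacks defining mapping cones, so $\Sigma^{k}\cone_\varphi=\cone_{\Sigma^{k}\varphi}$; hence it is enough to prove everything for $\mF$ and then re-run the arguments with $\mF$ replaced by $\Sigma^{k}\mF:=\mF(\Sigma^{k}-)$.

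The engine of the proof, and the step I expect to be the real obstacle, is a pair of facts established simultaneously by bootstrapping off half-exactness together with explicit deformations of $\Cont_0$-function algebras --- essentially the Spanier--Whitehead/Puppe machinery used in \cite{DellAmbrogio:08}, which the paper follows. The first is: for every homomorphism $\varphi\colon A\to B$ the mapping cone of $\iota(\varphi)\colon\Sigma B\to\cone_\varphi$ is homotopy equivalent to $\Sigma A$ compatibly with the structure maps, so that $\mF$ sends the cofibre sequence of $\varphi$ to a long exact sequence $\cdots\to\mF(\Sigma A)\xrightarrow{\mF\Sigma\varphi}\mF(\Sigma B)\xrightarrow{\mF\iota(\varphi)}\mF(\cone_\varphi)\xrightarrow{\mF\epsilon(\varphi)}\mF(A)\xrightarrow{\mF\varphi}\mF(B)$. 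The second is the \emph{mapping cone lemma}: for a semi-split extension $B\xrightarrow{i}D\xrightarrow{\pi}A$ the inclusion-induced map $\kappa_\pi\colon B\to\cone_\pi$ becomes an isomorphism under $\mF$. Surjectivity of $\mF(\kappa_\pi)$ is cheap --- $\cone_\pi$ is the pullback $D\times_A\cone A$, so $B\xrightarrow{\kappa_\pi}\cone_\pi\twoheadrightarrow\cone A$ (projection to the cone coordinate) is semi-split with kernel inclusion $\kappa_\pi$, and half-exactness plus $\mF(\cone A)=0$ forces $\mF(\kappa_\pi)$ onto --- whereas injectivity is where the genuine homotopy input is spent: applying the cone manipulations of the first fact to that same semi-split extension identifies $\cone_{\kappa_\pi}$ with the contractible algebra $\Sigma(\cone A)=\cone(\Sigma A)$, whence $\mF(\cone_{\kappa_\pi})=0$, and then the cofibre exact sequence of $\kappa_\pi$ forces $\ker\mF(\kappa_\pi)=0$. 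Carrying the bookkeeping of these iterated cones through correctly in the Banach rather than the $C^*$ setting --- keeping all sections bounded-linear and all homotopies norm-continuous --- is the bulk of the work.

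Granting the two facts, the assertions of the proposition are formal. Exactness of the displayed sequence at $\mF_k(D)$ is just half-exactness of $\Sigma^{k}$ of the original extension. For the connecting spot: the cone extension of $\pi$ is $\Sigma A\xrightarrow{\iota(\pi)}\cone_\pi\xrightarrow{\epsilon(\pi)}D$, with $\epsilon(\pi)\circ\kappa_\pi=i$, so half-exactness gives exactness of $\mF_k(\Sigma A)\xrightarrow{\mF_k\iota(\pi)}\mF_k(\cone_\pi)\xrightarrow{\mF_k\epsilon(\pi)}\mF_k(D)$, and transporting along the isomorphism $\mF_k(\kappa_\pi)$ turns this into exactness of $\mF_{k+1}(A)\xrightarrow{\partial}\mF_k(B)\xrightarrow{\mF_k i}\mF_k(D)$ with $\partial=\mF_k(\kappa_\pi)^{-1}\circ\mF_k(\iota(\pi))$, which is exactly the claimed description of the connecting map; the statement $\mF_k(\kappa_\pi)\colon\mF_k(B)\cong\mF_k(\cone_\pi)$ is the mapping cone lemma at level $k$. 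Repeating the argument with $\pi$ replaced by the semi-split surjection $\epsilon(\pi)\colon\cone_\pi\to D$ (kernel $\Sigma A$, bounded linear section available) yields exactness at $\mF_{k+1}(A)$ and extends the sequence indefinitely to the left; naturality is inherited from naturality of the mapping cone. Finally, split-exactness: if $\pi$ admits a \emph{homomorphism} section $\sigma$, then $\iota(\pi)$ factors as $\Sigma A\hookrightarrow\cone A\xrightarrow{\,f\mapsto(\sigma(f(0)),f)\,}\cone_\pi$, so $\mF(\iota(\pi))$ factors through $\mF(\cone A)=0$ and the connecting map vanishes; the exact sequence then forces $\mF(i)$ to be monic while $\mF(\sigma)$ splits $\mF(\pi)$, yielding a split short exact sequence $\mF(B)\rightarrowtail\mF(D)\twoheadrightarrow\mF(A)$ (for a functor to a merely additive category one post-composes with $\Hom(X,-)$, which preserves split short exact sequences).
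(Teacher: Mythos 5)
The paper gives no proof of this proposition --- it is stated as a transcription of known results, with the references to Schochet, Cuntz--Thom and Cuntz--Meyer--Rosenberg standing in for the argument --- so I am measuring your proposal against those standard proofs. Most of your architecture is sound: the contractibility of $\cone X$, the fact that $\mF$ turns finite products into biproducts, the surjectivity of $\mF(\kappa_\pi)$ via the semi-split extension $B\rightarrowtail\cone_\pi\twoheadrightarrow\cone A$ with $\mF(\cone A)=0$, and the purely formal derivation of the long exact sequence, of the description of the connecting map, and of split-exactness once the mapping cone lemma is in hand, are all correct. (Two small points: $\Sigma A$ is $\Cont_0(]0,1[,A)$, not $A\otimes\Cont_0(]0,1[)$ --- the paper is careful to distinguish these and only identifies them under $\mF$ via Lemma \ref{Lemma:HalfExactTwoStabilisation}, though your arguments do go through for the correct definition; and $\mF(0)=0$ needs half-exactness, not homotopy invariance alone.)

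The genuine gap is the injectivity of $\mF(\kappa_\pi)$, which you rightly identify as the crux but then dispose of with a circular argument. You claim that ``the cone manipulations of the first fact'' identify $\cone_{\kappa_\pi}$ with the contractible algebra $\Sigma(\cone A)$. This is false in general. The heuristic you are using --- the mapping cone of the kernel inclusion of a semi-split extension is the suspension of the quotient --- holds up to genuine homotopy equivalence only for \emph{cone} extensions (that is the content of the lemma that $\theta\colon\Sigma B\to\cone_{\epsilon(\varphi)}$ is a homotopy equivalence); for a general semi-split extension it holds only after applying $\mF$, which is exactly the statement you are trying to prove. Indeed, if $\cone_{\kappa_\pi}$ were contractible for every semi-split extension, then $\kappa_\pi$ would already be invertible in $\SWban$ (a morphism in a triangulated category with vanishing cone is an isomorphism), and the localisation $\EHoban=\SWban[\mM_{\min}]$ performed in Section \ref{Subsection:DefinitionOfEHhoban} would be vacuous --- which it is not. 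A second, independent problem: even granting $\mF(\cone_{\kappa_\pi})=0$, you invoke exactness of $\mF(\cone_{\kappa_\pi})\to\mF(B)\to\mF(\cone_\pi)$ at $\mF(B)$, i.e.\ Puppe exactness at the \emph{domain} of a non-surjective homomorphism; this does not follow from your ``first fact'' (which only delivers exactness at the spots $\mF(\cone_\varphi),\mF(\Sigma B),\dots$ by iterating half-exactness of cone extensions) but requires the mapping cylinder $Z_\varphi$ and the semi-split extension $\cone_\varphi\rightarrowtail Z_\varphi\twoheadrightarrow B$ together with $Z_\varphi\simeq A$. The injectivity of $\mF(\kappa_\pi)$ is the one piece of non-formal content in the proposition and needs a different argument; see Schochet's Propositions 2.3--2.4, Blackadar's treatment of excision for half-exact functors, or Proposition 6.71 of \cite{CMR:07}.
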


\begin{corollary}\label{Corollary:KappaIsomorphism:Additive}
Let $\mF$ be a functor on $\BanAlg$ with values in an \emph{additive} category that is half-exact for all (semi-split) extensions and homotopy invariant. Then, for any (semi-split) extension $\xymatrix{B \ \ar@{>->}[r]& D \ar@{->>}[r]^{\pi}& A}$, the injection $\kappa_{\pi}\colon B \to \cone_{\pi}, b\mapsto (b,0)$ induces an isomorphism $\mF(B) \cong \mF(\cone_{\pi})$.
\end{corollary}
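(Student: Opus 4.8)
The plan is to reduce the statement to the abelian-valued case that is already settled in Proposition~\ref{Proposition:HalfexactnessLongExactSequence}, by testing $\mF$ against all objects of $\mC$. Fix an object $X$ of $\mC$ and consider the functor $h^X := \Hom(X, \mF(-)) \colon \BanAlg \to \Ab$. By the definition of half-exactness for functors with values in an additive category, $h^X$ is half-exact for all (semi-split) extensions; and since $\mF$ is homotopy invariant, so is $h^X$ (homotopic homomorphisms $\varphi$, $\psi$ satisfy $\mF(\varphi) = \mF(\psi)$, hence also $h^X(\varphi) = h^X(\psi)$). As $\Ab$ is an abelian category, Proposition~\ref{Proposition:HalfexactnessLongExactSequence} applies to $h^X$.

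First I would invoke the relevant clause of that proposition for $h^X$ with $k = 0$: it asserts that $\kappa_\pi \colon B \to \cone_\pi$ induces an isomorphism $h^X(B) \cong h^X(\cone_\pi)$. By functoriality of $h^X$ this isomorphism is exactly the postcomposition map $\mF(\kappa_\pi)_\ast \colon \Hom(X, \mF(B)) \to \Hom(X, \mF(\cone_\pi))$, $f \mapsto \mF(\kappa_\pi) \circ f$. Hence $\mF(\kappa_\pi)_\ast$ is a bijection for every object $X$ of $\mC$. A standard Yoneda argument then upgrades this to $\mF(\kappa_\pi)$ being an isomorphism: writing $g := \mF(\kappa_\pi) \colon \mF(B) \to \mF(\cone_\pi)$, the case $X = \mF(\cone_\pi)$ produces $s$ with $g \circ s = \id_{\mF(\cone_\pi)}$; then $g_\ast(s \circ g) = g \circ s \circ g = g = g_\ast(\id_{\mF(B)})$, and injectivity of $g_\ast$ for $X = \mF(B)$ forces $s \circ g = \id_{\mF(B)}$. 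So $g$ is invertible, which is the claim. (One could run the same argument with the contravariant family $h_X := \Hom(\mF(-), X) \colon \BanAlg \to \Ab^{\op}$ and the dual Yoneda statement instead; only one of the two families is needed.)

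I expect there to be essentially no obstacle here beyond bookkeeping: all the real content — the $\kappa_\pi$-isomorphism, homotopy invariance, and the six-term machinery — is carried by Proposition~\ref{Proposition:HalfexactnessLongExactSequence}, and the passage from an abelian target to a merely additive target via representable functors is precisely the device built into the definition of half-exactness for additive categories. The only point requiring a moment's attention is the verification that the isomorphism supplied by the proposition for $h^X$ is the one literally induced by $\mF(\kappa_\pi)$ rather than some abstract isomorphism, but this is immediate from the functoriality of $h^X$.
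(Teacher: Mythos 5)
Your argument is correct and takes essentially the same route as the paper's own proof: both reduce the claim to Proposition~\ref{Proposition:HalfexactnessLongExactSequence} applied to the test functors $A\mapsto\Hom(X,\mF(A))$. The only (harmless) difference is that the paper invokes both the covariant family $\Hom(X,\mF(-))$ and the contravariant family $\Hom(\mF(-),X)$ to produce a right and a left inverse of $\mF(\kappa_\pi)$ directly, whereas you use only the covariant one together with the standard Yoneda-type argument; either variant is fine.
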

\begin{proof}
Use Proposition \ref{Proposition:HalfexactnessLongExactSequence} on the functors $A \mapsto \Hom(X, \mF(A))$ and $A \mapsto \Hom(\mF(A), X)$; this gives that composition with $\mF(\kappa_\pi)$ induces isomorphisms $\Hom(X, \mF(B)) \cong \Hom(X, \mF(\cone_{\pi}))$ and $\Hom(\mF(\cone_{\pi}), X) \cong \Hom(\mF(B), X))$. Evaluate this on $X=\mF(\cone_{\pi})$ and $X=\mF(B)$, respectively, to see that $\mF(\kappa_{\pi})$ is invertible from the left and from the right.
\end{proof}

\begin{corollary}\label{Corollary:HalfexactnessLongExactSequence:Additive}
Let $\mF$ be a functor on $\BanAlg$ with values in an additive category that is half-exact for semi-split extensions and homotopy invariant. Then $\mF$ is split-exact.
\end{corollary}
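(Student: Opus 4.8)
The plan is to reduce the statement entirely to the abelian-category version already proved in Proposition~\ref{Proposition:HalfexactnessLongExactSequence}. By definition, a functor $\mF$ with values in an additive category $\mC$ is split-exact exactly when, for every object $X$ of $\mC$, the two functors $A \mapsto \Hom(X,\mF(A))$ from $\BanAlg$ to $\Ab$ and $A \mapsto \Hom(\mF(A),X)$ from $\BanAlg$ to $\Ab^{\op}$ are split-exact. So it suffices to check, for each such $X$, that these two $\Ab$-valued functors satisfy the hypotheses of Proposition~\ref{Proposition:HalfexactnessLongExactSequence} in its semi-split version.

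First I would observe that homotopy invariance of $\mF$ is inherited by both composed functors: if $\varphi$ and $\psi$ are homotopic, then $\mF(\varphi)=\mF(\psi)$, hence $\Hom(X,\mF(\varphi))=\Hom(X,\mF(\psi))$ and $\Hom(\mF(\varphi),X)=\Hom(\mF(\psi),X)$. Second, half-exactness of $\mF$ for semi-split extensions is, by the very definition of half-exactness for functors into an additive category, the assertion that for each $X$ the sequences $\Hom(X,\mF(B)) \to \Hom(X,\mF(D)) \to \Hom(X,\mF(A))$ and $\Hom(\mF(A),X) \to \Hom(\mF(D),X) \to \Hom(\mF(B),X)$ are exact in the middle for every semi-split extension $B \rightarrowtail D \twoheadrightarrow A$; that is, both $\Ab$-valued functors are half-exact for semi-split extensions.

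It then remains only to apply Proposition~\ref{Proposition:HalfexactnessLongExactSequence} to each of these functors: each is half-exact for semi-split extensions and homotopy invariant, hence split-exact as an $\Ab$-valued (resp.\ $\Ab^{\op}$-valued) functor. Unwinding the definition of split-exactness in the additive setting one last time, this is precisely the statement that $\mF\colon \BanAlg \to \mC$ is split-exact.

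I do not expect a genuine obstacle: the substance lies entirely in Proposition~\ref{Proposition:HalfexactnessLongExactSequence}, and this corollary is a matter of matching up definitions. The only point that requires a moment's care is verifying that the definition of half-exactness for functors into an additive category is read exactly so that it produces the input needed by the proposition — which it does, since exactness "in the middle" for a diagram in an additive category is by definition tested on the represented $\Ab$-valued functors.
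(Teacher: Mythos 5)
Your proposal is correct and is exactly the argument the paper intends (the paper leaves this corollary without an explicit proof, but the immediately preceding Corollary~\ref{Corollary:KappaIsomorphism:Additive} is proved by the same reduction): pass to the represented functors $A \mapsto \Hom(X,\mF(A))$ and $A \mapsto \Hom(\mF(A),X)$, note that the additive-category definitions of half-exactness and split-exactness are phrased precisely in terms of these, and apply Proposition~\ref{Proposition:HalfexactnessLongExactSequence}. Nothing is missing.
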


\begin{lemma}\label{Lemma:HalfExactTwoStabilisation} Let $\mF$ be a functor on $\BanAlg$ with values in an additive category $\mC$ that is half-exact for semi-split extensions and homotopy invariant. Then, for every Banach algebra $A$, the canonical map from $\Sigma \otimes A$ to $\Sigma A$ maps to an isomorphism $\mF(\Sigma \otimes A) \cong \mF(\Sigma A)$. 
\end{lemma}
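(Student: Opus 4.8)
Write $c_A\colon\Sigma\otimes A\to\Sigma A$ for the canonical map. The plan is to exhibit $c_A$ as the restriction to the ideals of a morphism of semi-split extensions and then to read off invertibility of $\mF(c_A)$ from Proposition~\ref{Proposition:HalfexactnessLongExactSequence}. Since $\mC$ is only additive, the first step is to reduce to an abelian target: $\mF(c_A)$ is an isomorphism in $\mC$ as soon as $\Hom_{\mC}(X,\mF(c_A))$ is bijective for every object $X$ of $\mC$ (a Yoneda argument, as in the proof of Corollary~\ref{Corollary:KappaIsomorphism:Additive}), and for each such $X$ the functor $A'\mapsto\Hom_{\mC}(X,\mF(A'))$ is a homotopy-invariant, half-exact functor from $\BanAlg$ to $\Ab$. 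So I may assume $\mC$ abelian and use the long exact sequences of Proposition~\ref{Proposition:HalfexactnessLongExactSequence}.

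Next I would set up the diagram. Tensoring the semi-split cone extension $\Sigma\rightarrowtail\cone\xrightarrow{\ev_0}\C$ with $A$ again produces a semi-split extension $\Sigma\otimes A\rightarrowtail\cone\otimes A\to A$: the functor $-\otimes A$ turns a linearly split short exact sequence of Banach spaces into one of the same kind (a bounded linear splitting $s$ gives the bounded linear splitting $s\otimes\id_A$, equivalently a topological direct sum decomposition that $-\otimes A$ preserves). The canonical comparison maps $\Cont_0(X)\otimes A\to\Cont_0(X,A)$, for $X=]0,1[$ and $X=[0,1[$, then assemble into a morphism of semi-split extensions from this one to the cone extension $\Sigma A\rightarrowtail\cone A\xrightarrow{\ev_0^A}A$ of $A$, which is the identity on the base $A$ and equals $c_A$ on the kernels; commutativity is checked on elementary tensors.

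Then I would observe that both $\cone\otimes A$ and $\cone A$ are contractible, hence so are all their suspensions. For $\cone A$ this is the usual contraction of the cone; for $\cone\otimes A$ one transports the standard contracting homotopy $h_s(f)(t)=f((1-s)t+s)$ of $\cone$ (each $h_s$ an algebra endomorphism, with $h_0=\id$ and $h_1=0$) through $-\otimes A$, using the canonical comparison map $\Cont([0,1],\cone)\otimes A\to\Cont([0,1],\cone\otimes A)$. Since $\mF$ is split-exact and homotopy invariant, it vanishes on contractible Banach algebras (the identity factors up to homotopy through the zero algebra, and $\mF$ of the zero algebra is a zero object by split-exactness); thus $\mF_k(\cone\otimes A)=\mF_k(\cone A)=0$ for all $k\ge 0$. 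Feeding this into the long exact sequences of the two rows, the connecting maps $\partial\colon\mF_1(A)\to\mF(\Sigma\otimes A)$ and $\partial'\colon\mF_1(A)\to\mF(\Sigma A)$ are squeezed between zero objects and hence are isomorphisms. Finally, naturality of the connecting map for the morphism of extensions — which, via the explicit formula $\mF(\kappa)^{-1}\circ\mF(\iota(\cdot))$ for the connecting map, rests on functoriality of the mapping cone and of the maps $\iota(\cdot)$ and $\kappa$, together with the fact that the morphism is the identity on the base — gives $\mF(c_A)\circ\partial=\partial'$, whence $\mF(c_A)=\partial'\circ\partial^{-1}$ is an isomorphism. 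Applying this to the functors $\Hom_{\mC}(X,\mF(-))$ and invoking the Yoneda reduction finishes the proof.

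The only point requiring genuine care — everything else being standard homological bookkeeping — is the behaviour of the projective tensor product with $A$: that $-\otimes A$ preserves semi-splitness of extensions, and, relatedly, that $\cone\otimes A$ really is contractible even though it is not literally $\cone A$ (the comparison map $\Cont_0(X)\otimes A\to\Cont_0(X,A)$ is in general not an isomorphism). Once that is settled, the argument is just the long-exact-sequence comparison above.
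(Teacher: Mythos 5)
Your argument is correct and follows essentially the same route as the paper's proof: compare the two cone extensions $\Sigma\otimes A\rightarrowtail\cone\otimes A\twoheadrightarrow A$ and $\Sigma A\rightarrowtail\cone A\twoheadrightarrow A$ via the canonical comparison map (identity on the base), use contractibility of both middle terms to make the connecting maps of the long exact sequences isomorphisms, and conclude by naturality, reducing the additive case to the abelian one through the $\Hom$-functors. Your extra care about $-\otimes A$ preserving semi-splitness and the contractibility of $\cone\otimes A$ fills in details the paper leaves implicit, but the strategy is the same.
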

\begin{proof}
We first consider the case that $\mC$ is abelian. 

The semi-split extension 
$$
\xymatrix{ 
\Sigma \otimes A\ \ar@{>->}[r]& \cone \otimes A \ar@{->>}[r]& A
}
$$
has a contractible algebra in the middle; so, by Proposition \ref{Proposition:HalfexactnessLongExactSequence}, it gives a long exact sequence:
$$
\xymatrix{
\cdots \ar[r] & 0 \ar[r]& \mF_1(A) \ar[r]& \mF_0(\Sigma \otimes A) \ar[r]& 0 \ar[r]& \mF_0(A)
}
$$
which yields an isomorphism $\mF(\Sigma A)\cong \mF(\Sigma \otimes A)$. Now here is a simple way to check the more precise result that the canonical map from $\Sigma \otimes A$ to $\Sigma A$ is mapped to an isomorphism under $\mF$: There is a commutative diagram
$$
\xymatrix{ 
\Sigma \otimes A\ \ar@{>->}[r] \ar[d]& \cone \otimes A \ar@{->>}[r]\ar[d]& A\ar@{=}[d]\\
\Sigma A\ \ar@{>->}[r]& \cone  A \ar@{->>}[r]& A
}
$$
which, after taking long exact sequences, gives the diagram
$$
\xymatrix{
\cdots \ar[r] & 0\ar@{=}[d] \ar[r]& \mF_1(A) \ar@{=}[d] \ar[r]^-{\cong}& \mF_0(\Sigma \otimes A) \ar[r] \ar[d]& 0 \ar[r] \ar@{=}[d]& \mF_0(A)\ar@{=}[d]\\
\cdots \ar[r] & 0 \ar[r]& \mF_1(A) \ar[r]^-{\cong}& \mF_0(\Sigma A) \ar[r]& 0 \ar[r]& \mF_0(A)
}
$$
This implies the claim for $\mC$ abelian. To prove it for general additive categories consider the functors $A \mapsto \Hom(X, \mF(A))$ and $A\mapsto \Hom(\mF(A), X)$ for $X=\mF(\Sigma A)$ and $X=\mF(\Sigma \otimes A)$.
\end{proof}

\begin{definition}[Homology theory for Banach algebras, {\it cf.}~p.~400 in \cite{Schochet:84} and Def.~6.68 in \cite{CMR:07}]\label{Definition:HomologyTheory} A \demph{homology theory for Banach algebras} is a sequence of covariant functors $(\mF_k)_{k\in \Z}$ from $\BanAlg$ to an abelian category together with natural isomorphisms $\mF_k(\Sigma A) \cong \mF_{k+1}(A)$ for all $k\in \Z$, such that
\begin{enumerate}
\item the functors $\mF_k$ are homotopy invariant;
\item the functors $\mF_k$ are half-exact for semi-split extensions.
\end{enumerate}
\end{definition}

\noindent We do not exactly need the following result on homology theories, but it is somewhat reassuring to know that homology theories generously ignore at least some of the technical subtleties that one encounters in the world of Banach algebras.

Let $\CW_2$ denote the category of pairs of finite CW-complexes. There are two seemingly different ways of defining an ``action'' of $\CW_2$ on $\BanAlg$: If $A$ is a Banach algebra and $(X,X_0)$ is a pair of finite CW-complexes, then one can either form the Banach algebra $\Cont((X,X_0), A) \cong \Cont_0(X-X_0, A)$ or the Banach algebra $\Cont_0(X-X_0) \otimes A$, where the tensor product is the completed projective tensor product. One can compare the two algebras using the canonical homomorphism from $\Cont_0(X-X_0) \otimes A$ to $\Cont_0(X-X_0, A)$, but note that it is rarely an isomorphism of Banach algebras. However, we have the following result:

\begin{proposition}\label{Proposition:ActionOfSpacesOnFunctors}
Let $(\mF_k)_{k\in \Z}$ be a homology theory for Banach algebras. Let $A$ be a Banach algebra and $(X,X_0)$ an object in $\CW_2$. The natural homomorphism from $\Cont_0(X-X_0) \otimes A$ to $\Cont_0(X-X_0, A)$ induces an isomorphism $\mF_k(\Cont_0(X-X_0) \otimes A) \cong \mF_k(\Cont_0(X-X_0, A))$ for all $k\in \Z$.
\end{proposition}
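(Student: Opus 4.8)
The plan is to reduce the statement to the one-dimensional case already handled in Lemma~\ref{Lemma:HalfExactTwoStabilisation} by an induction on a CW-structure, using the Mayer--Vietoris-type long exact sequences supplied by Proposition~\ref{Proposition:HalfexactnessLongExactSequence}. Write $U := X - X_0$. First I would observe that the assignment $(X,X_0) \mapsto \Cont_0(X-X_0, A)$ turns a cofibration-like pushout of finite CW-pairs into a semi-split extension of Banach algebras: if $(X,X_0)$ is obtained from $(Y,Y_0)$ by attaching a single cell, we get a semi-split extension relating $\Cont_0$ of the open cell, $\Cont_0(X-X_0,A)$ and $\Cont_0(Y-Y_0,A)$. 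The same is true for $\Cont_0(X-X_0) \otimes A$, and the canonical natural transformation $\eta_{(X,X_0)} \colon \Cont_0(X-X_0)\otimes A \to \Cont_0(X-X_0,A)$ is a morphism of such extensions. Applying the homology theory $(\mF_k)_k$ and invoking the naturality and long-exactness from Proposition~\ref{Proposition:HalfexactnessLongExactSequence}, one gets a ladder of long exact sequences; by the five lemma, if $\mF_*(\eta)$ is an isomorphism on two of the three corners it is an isomorphism on the third.

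The induction is then on the number of cells of $X$ relative to $X_0$. In the base case $U$ is empty and both algebras are zero, so there is nothing to prove. For the attachment of an $n$-cell, the open cell contributes $\Cont_0(\mathring{D}^n) \cong \Cont_0(\R^n) \cong \suspension^{\otimes n}$ (up to the relevant identifications), and one needs to know that $\mF_*\big(\suspension^{\otimes n} \otimes A\big) \to \mF_*\big(\suspension^{n} A\big)$ is an isomorphism — this is precisely $n$-fold iteration of Lemma~\ref{Lemma:HalfExactTwoStabilisation}, together with the natural suspension isomorphisms $\mF_k(\suspension A) \cong \mF_{k+1}(A)$ built into the definition of a homology theory. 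Feeding this into the ladder and applying the five lemma completes the inductive step.

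The main obstacle I anticipate is bookkeeping rather than conceptual: making the ``attach a cell'' step into an honest semi-split extension of Banach algebras compatible under $\eta$ requires some care with the point-set topology of $\Cont_0$ on non-compact locally closed sets, and one must check the splits can be chosen compatibly for the $\otimes A$ and the $\Cont_0(-,A)$ versions so that $\eta$ really is a morphism of extensions (a linear split over $\C$ tensored with $\id_A$ maps to a linear split, so this should go through). A secondary point is that $\Cont_0$ of a half-open cell is contractible, so the middle terms in the relevant extensions vanish under $\mF_*$, which is what forces the connecting maps to be isomorphisms exactly as in the proof of Lemma~\ref{Lemma:HalfExactTwoStabilisation}; one should phrase the induction so that this collapse is invoked uniformly. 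Once the ladder and the five lemma are in place the result follows for all $k\in\Z$ simultaneously, since the homology theory is $\Z$-graded and the suspension isomorphisms are natural.
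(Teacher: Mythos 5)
Your proposal is correct and follows essentially the same route as the paper: reduce to $\mF(\Sigma^n\otimes A)\cong\mF(\Sigma^n A)$ via Lemma~\ref{Lemma:HalfExactTwoStabilisation}, then propagate through the CW-structure using the semi-split extensions coming from cell attachments (which exist because subcomplexes are strong neighbourhood retracts) and a five-lemma argument on the ladder of long exact sequences, noting as you do that a bounded linear split over $\C$ tensored with $\id_A$ gives a compatible split for both versions of the extension. The only difference is organizational -- you induct cell by cell on the pair directly, whereas the paper inducts skeleton by skeleton on the absolute complex and then handles the pair $(X,X_0)$ in a final step -- and both correctly require the long exact sequences in negative degrees, which the $\Z$-grading of the homology theory provides.
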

\begin{proof}
Note that, if we fix $A$, we are comparing two (generalised) homology theories on $\CW_2$, namely $(\mF_k(\Cont_0(X-X_0) \otimes A))_{k\in \Z}$ and $(\mF_k(\Cont_0(X-X_0, A)))_{k\in \Z}$ which are connected by a natural transformation. The arguments that this natural transformation is indeed a natural equivalence is somewhat classical, but we repeat it for the reader's convenience. We concentrate on the equivalence for $k=0$ and define $\mF:=\mF_0$.
\begin{itemize}
\item It follows from Lemma \ref{Lemma:HalfExactTwoStabilisation} that $\mF(\Sigma \otimes A) \cong \mF(\Sigma A)$, canonically.
\item If $n\in \N$, then $\mF(\Sigma^{n}A) = \mF(\Sigma(\Sigma^{n-1} A)) \cong \mF(\Sigma \otimes (\Sigma^{n-1} A))$. So, by induction, $\mF(\Sigma \otimes (\Sigma^{n-1} A)) \cong \mF(\Sigma \otimes (\Sigma^{n-1} \otimes A)) \cong \mF((\Sigma \otimes \Sigma^{n-1}) \otimes A) \cong \mF(\Sigma^{n} \otimes A)$. All in all, we have
$$
\mF(\Sigma^{n} \otimes A)\cong \mF(\Sigma^{n}A),
$$
canonically and naturally. In other words, if $\mathring{D}^n$ denotes the open $n$-dimensional unit disk, then $\mF(\Cont_0(\mathring{D}^n) \otimes A) \cong \mF(\Cont_0(\mathring{D}^n, A))$. 

\item The $n$-dimensional unit sphere is the one-point compactification of $\mathring{D}^n$. We hence get a diagram of split extensions
$$
\xymatrix{ 
\Cont_0(\mathring{D}^n) \otimes A\ \ar@{>->}[r] \ar[d] & \Cont(S^n) \otimes A \ar@{->>}[r]\ar[d]& A \ar@{=}[d]\\
\Cont_0(\mathring{D}^n, A) \ \ar@{>->}[r]& \Cont(S^n, A) \ar@{->>}[r]& A
}
$$
Two of the vertical arrows are isomorphisms after applying $\mF$, so the same is true for the third. In other words, we have a natural isomorphism
$\mF(\Cont(S^n) \otimes A) \cong \mF(\Cont(S^n, A))$. 

\item Similarly, one shows that $\mF(\Cont(D^n) \otimes A) \cong \mF(\Cont(D^n, A))$; you can also use homotopy invariance to arrive at this fact.

\item Let $X$ be a finite CW-complex; for all $n\in \N_0$, let $X^n$ denote its $n$-skeleton. We have a push-out square
$$
\xymatrix{
X^n & X^{n-1} \ar[l]\\
\bigcup_i D^n \ar[u]&  \bigcup_i S^{n-1}\ar[u] \ar[l]
}
$$

where the hoizontal arrows are embeddings. It follows that we have a push-out square 
$$
\xymatrix{
\Cont(X^n) \ar[r]\ar[d] & \Cont(X^{n-1}) \ar[d]\\
\Cont(\bigcup_i D^n) \ar[r]&  \Cont(\bigcup_i S^{n-1}) 
}
$$
where the horizontal maps are surjective. Because the subcomplex $X^{n-1}$ is a strong neighbourhood retract in $X^n$ one can extend this square to the following diagram where the rows are semi-split extensions:
$$
\xymatrix{
\Cont_0(\bigcup_i \mathring{D}^n) \ar@{>->}[r] \ar@{=}[d] & \Cont(X^n) \ar@{->>}[r]\ar[d] & \Cont(X^{n-1}) \ar[d]\\
\Cont_0(\bigcup_i \mathring{D}^n) \ar@{>->}[r]& \Cont(\bigcup_i D^n) \ar@{->>}[r]&  \Cont(\bigcup_i S^{n-1}) 
}
$$
From the first line of this diagram and the corresponding diagram with coefficients in $A$ we obtain the following commutative diagram
$$
\xymatrix{
\Cont_0(\bigcup_i \mathring{D}^n) \otimes A \ar@{>->}[r] \ar[d] & \Cont(X^n)\otimes A \ar@{->>}[r]\ar[d] & \Cont(X^{n-1}) \otimes A\ar[d]\\
\Cont_0(\bigcup_i \mathring{D}^n,A ) \ar@{>->}[r]& \Cont(X^n,A) \ar@{->>}[r]& \Cont(X^{n-1},A) 
}
$$
The left vertical arrow is an isomorphism after applying $\mF$ by what we have said above (recall that $X$ was supposed to be a \emph{finite} CW-complex). If we proceed by induction on $n$, starting with $n=0$, we can deduce that the right vertical arrow is an isomorphism after applying $\mF$. Hence also the vertical arrow in the centre is an isomorphism after applying $\mF$ (here we use a five-lemma argument which needs the long-exact sequences for $\mF$ also in negative degrees). 

We have hence shown: If $X$ is a finite CW-complex, then there is a natural isomorphism $\mF(\Cont(X) \otimes A) \cong \mF(\Cont(X,A))$.

\item Let $(X,X_0)$ be a pair of finite CW-complexes. Then we obtain a diagram
$$
\xymatrix{
\Cont_0(X-X_0) \otimes A \ar@{>->}[r] \ar[d] & \Cont(X)\otimes A \ar@{->>}[r]\ar[d] & \Cont(X_0) \otimes A\ar[d]\\
\Cont_0(X-X_0,A ) \ar@{>->}[r]& \Cont(X,A) \ar@{->>}[r]& \Cont(X_0,A) 
}
$$
Again, the subcomplex $X_0$ is a strong neighbourhood retract of $X$, so we can find continuous linear splitings of both extensions. We can hence deduce that there is a natural isomorphism: 
$$
\mF(\Cont_0(X-X_0) \otimes A) \cong \mF(\Cont_0(X-X_0,A)).
$$

\end{itemize}

\end{proof}

\subsection{Double split extensions and quasi-homomorphisms}

\begin{definition}[Double split extension,  {\it cf.}~\cite{CMR:07, Cuntz:87}]
A \demph{double split extension} is an extension $\xymatrix{B \ \ar@{>->}[r]& D \ar@{->>}[r]^{\pi}& A}$ of Banach algebras equipped with two continuous homomorphisms $\varphi_+$ and $\varphi_{-}$ from $A$ to $D$ which are both splits of $\pi$. 
\end{definition}

Let $\mF$ be a split-exact functor on the Banach algebras with values in some additive category $\mC$.

Let $\xymatrix{B \ \ar@{>->}[r]& D \ar@{->>}[r]^{\pi}& A}$ be a double split extension with splits $\varphi_{\pm}$. We obtain a split exact sequence
$$
\xymatrix{\mF(B) \ \ar@{>->}[r]& \mF(D) \ar@{->>}[r]& \mF(A)}
$$
with two sections $\mF(\varphi_{+})$ and $\mF(\varphi_{-})$. Thus we get a map
$$
\mF(\varphi_{\pm})\colon \mF(\varphi_{+}) - \mF(\varphi_{-}) \colon \mF(A) \to \mF(B) \subseteq \mF(D). 
$$
So every double split extension of Banach algebras induces a morphism on the level of $\mF$.

\begin{definition}[Generalised ideal, {\it cf.}~\cite{CMR:07}, Def.~3.1]
Let $B$ and $D$ be Banach algebras and let $\iota \colon B \to D$ be an injective bounded homomorphism. We call $B$ a (generalised) ideal in $D$ if the multiplication on $D$ restricts to bounded linear maps $B \times D \to B$ and $D \times B \to B$; here we identify $B$ and $\iota(B)$. The ideal is called closed if $\iota(B)$ is closed in $D$, i.e., if $\iota$ is an isomorphism onto its image. 
\end{definition}

\begin{definition}[Quasi-homomorphism, {\it cf.}~\cite{CMR:07}, Def.~3.2 and \cite{Cuntz:87}]
Let $A$, $B$ and $D$ be Banach algebras and suppose that $B$ is a generalised ideal in $D$. A quasi-homomorphism $A \rightrightarrows D \vartriangleright B$ is a pair of bounded homomorphisms $\varphi_{\pm}\colon A\to D$ such that $\varphi_{+}(a) - \varphi_{-}(a) \in B$ for all $a\in A$ \fxnote{reicht hier $\forall a \in A^k$ fuer ein $k\in \N$?} and the resulting linear map $\varphi_{+}-\varphi_{-}\colon A \to B$ is bounded. It is called special if the map $A \oplus B \to D, (a,b) \mapsto \varphi_{+}(a) + b$ is bijective (and hence an isomorphism).
\end{definition}

Let $A \rightrightarrows D \vartriangleright B$ be a special quasi-homomorphism. Then $B$ is a closed ideal in $D$ and $D/B \cong A$ via $\varphi_{\pm}^{-1}$, so that we obtain an extension of Banach algebras $\xymatrix{B \ \ar@{>->}[r]& D \ar@{->>}[r]& A}$. The bounded homomorphisms $\varphi_{+}$ and $\varphi_{-}$ are sections for this extension. If $\mF$ is as above, we hence get an element
$$
\mF(\varphi_{\pm})\colon \mF(\varphi_{\pm}) - \mF(\varphi_{-}) \colon \mF(A) \to \mF(B) \subseteq \mF(D). 
$$
induced by the given special quasi-homomorphism. 

If the quasi-homomorphism is not special, proceed as follows ({\it cf.}~p.~47 of \cite{CMR:07}). Define $D'$ to be the Banach space $B \oplus A$, equipped with the multiplication
$$
(b_1, a_1) \cdot (b_2, a_2) := (b_1 b_2 + \varphi_+(a_1) b_2 +b_1 \varphi_+(a_1), \cdot a_1 a_2);
$$
here, we regard $B$ as a subset of $D$. It is easy to check that this is bounded and associative. We obtain an extension of Banach algebras
$$
\xymatrix{B \ \ar@{>->}[r]& D' \ar@{->>}[r]& A}
$$
where the homomorphisms are given by $b \mapsto (b, 0)$ and $(b,a) \mapsto a$. The extension has two sections, namely
$$
\varphi_{\pm}' \colon A \to D', \quad \varphi_+'(a) := (0,a), \quad \varphi_-'(a) := (\varphi_-(a) - \varphi_+(a), a).
$$
The maps $\varphi_+'$ and $\varphi_-'$ are bounded homomorphisms and form a special quasi-homomorphism $\varphi'_{\pm}\colon A \rightrightarrows D' \vartriangleright B$.

The following proposition is a transcription of Proposition 3.3 of \cite{CMR:07} which is formulated in the setting of bornological algebras. We include it here for the convenience of the reader but omit the proof which could also be copied verbatim.

\begin{proposition}\label{Proposition:EigenschaftenDoppelspalt}
The construction of $\mF(\varphi_{\pm})$ has the following properties:
\begin{enumerate}
\item Consider a commuting diagram
$$
\xymatrix{
A \ar@<3pt>[r]^{\varphi_+} \ar@<-3pt>[r]_{\varphi_-} & D_1 \ar@{}[r]|{\vartriangleright} \ar[d]^{\psi_D}& B_1 \ar[d]^{\psi_B}\\
 & D_2 \ar@{}[r]|{\vartriangleright}& B_2
}
$$
whose first row is a quasi-homomorphism. Then $(\psi_D \circ \varphi_{\pm}) \colon A \rightrightarrows D_2 \vartriangleright B_2$ is a quasi-homomorphism, and $\mF(\psi_D \circ \varphi_{\pm}) = \mF(\psi_B) \circ \mF(\varphi_{\pm})$. 
\item We have 
$$
\mF(\varphi, \varphi) = 0.
$$
\item If $(\varphi_+, \varphi_-)$ is a quasi-homomorphism, then so is $(\varphi_, \varphi_+)$, and 
$$
\mF(\varphi_+, \varphi_-) = -\mF(\varphi_-, \varphi_+).
$$
\item If $(\varphi_+, \varphi_-)$ and $(\varphi_+, \varphi_0)$ are quasi-homomorphism, then so is $(\varphi_, \varphi_0)$, and 
$$
\mF(\varphi_+, \varphi_0) + \mF(\varphi_0, \varphi_-) = \mF(\varphi_+, \varphi_-).
$$
\item If $\varphi_{\pm}$ is a pair of bounded homomorphisms from $A$ to $B$, then \label{Proposition:EigenschaftenDoppelspalt:EchteHoms}
$$
\mF(\varphi_{\pm}) = \mF(\varphi_+) - \mF(\varphi_-).
$$
\item Two quasi-homomorphisms $(\varphi^1_+, \varphi^1_-), (\varphi^2_+, \varphi^2_-)\colon A \rightrightarrows D \vartriangleright B$ are called \demph{orthogonal} if $\varphi^1_+(x) \varphi^2_+(y) = 0=\varphi^2_+(x) \varphi^1_+(y)$ and $\varphi^1_-(x) \varphi^2_-(y) = 0=\varphi^2_-(x) \varphi^1_-(y) $ for all $x,y\in A$. In this case, $\varphi_+^1+\varphi_+^2$ and $\varphi_-^1+\varphi_-^2$ are homomorphisms and we get a quasi-homomorphism
$$
(\varphi_+^1, \varphi_-^1) + (\varphi_+^2, \varphi_-^2):= (\varphi_+^1+\varphi_+^2, \varphi_-^1+\varphi_-^2)\colon A \rightrightarrows D \vartriangleright B.
$$
We have
$$
\mF\left((\varphi_+^1, \varphi_-^1) + (\varphi_+^2, \varphi_-^2)\right) = \mF(\varphi_+^1, \varphi_-^1) + \mF(\varphi_+^2, \varphi_-^2).
$$
\end{enumerate}
\end{proposition}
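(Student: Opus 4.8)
The plan is to reduce the whole statement to the case of \emph{special} quasi-homomorphisms and then to read off each assertion from the single basic fact that, for a special quasi-homomorphism $\varphi_{\pm}\colon A\rightrightarrows D\vartriangleright B$, the morphism $\mF(\varphi_{\pm})$ is by construction the difference $\mF(\varphi_{+})-\mF(\varphi_{-})\colon\mF(A)\to\mF(D)$, which, since $\mF$ is split-exact and hence $\mF(B)\rightarrowtail\mF(D)\twoheadrightarrow\mF(A)$ is split with the two sections $\mF(\varphi_{\pm})$, factors through the direct summand $\mF(B)$ of $\mF(D)$. To legitimise the reduction I would first check that the passage from an arbitrary quasi-homomorphism to the special one supported on $D'=B\oplus A$ is functorial: a commuting square as in (1) induces a bounded homomorphism $D_1'\to D_2'$ intertwining the two new sections and restricting to $\psi_B$ on the ideals, and similarly for the other operations occurring in the proposition. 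After this, ``without loss of generality special'' is available throughout.

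Granting that, (2), (3) and (4) become formal identities in the additive category $\mC$: (2) holds because $\varphi_{+}=\varphi_{-}$ gives difference zero; (3) is the antisymmetry of a difference; and (4) is the telescoping $(\mF(\varphi_{+})-\mF(\varphi_{0}))+(\mF(\varphi_{0})-\mF(\varphi_{-}))=\mF(\varphi_{+})-\mF(\varphi_{-})$, legitimate because all three differences already factor through the additive subobject $\mF(B)$. Item (1) is naturality: $\psi_D$ is a morphism of extensions, so $\mF(\psi_D)$ is a morphism of split-exact sequences carrying sections to sections and restricting to $\mF(\psi_B)$ on the kernels, which is the asserted square. For (5), when $\varphi_{\pm}\colon A\to B$ are honest homomorphisms the associated special quasi-homomorphism lives on $D'=B\oplus A$, and there the map $\rho\colon D'\to B$, $(b,a)\mapsto b+\varphi_{+}(a)$, is a bounded homomorphism with $\rho\vert_B=\id_B$ and $\rho\circ\varphi_{+}'=\varphi_{+}$, $\rho\circ\varphi_{-}'=\varphi_{-}$; applying $\mF(\rho)$ to the difference of sections and using (1) gives $\mF(\varphi_{\pm})=\mF(\rho)\bigl(\mF(\varphi_{+}')-\mF(\varphi_{-}')\bigr)=\mF(\varphi_{+})-\mF(\varphi_{-})$.

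The genuinely non-formal point is (6), additivity under orthogonal sums, and I expect it to be the main obstacle. The difficulty is that the orthogonality hypothesis only pairs the two $+$-legs with each other and the two $-$-legs with each other, so one cannot simply insert the intermediate map $\alpha_{-}+\beta_{+}$ and telescope by (4) — it need not be a homomorphism. My plan is to pass to a larger ambient algebra in which the given ``diagonal'' orthogonality is upgraded to full orthogonality: e.g. a $2\times 2$ amplification $A\rightrightarrows\Mat_2(D)\vartriangleright\Mat_2(B)$ placing the two quasi-homomorphisms in complementary corners, or, to stay within algebras built from $B$, $D$, $A$, a suitable fibre product of the Banach algebras underlying the three special quasi-homomorphisms; in either case the telescoping of (4) then applies, and one identifies the pieces using only split-exactness (in particular its additivity on direct products). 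The delicate sub-step is the purely algebraic ``rotation trick'' of Cuntz, which shows for a split-exact functor that enlarging both legs of a quasi-homomorphism by one common homomorphism orthogonal to both leaves $\mF$ unchanged; this is the one place where split-exactness is used in an essential, non-formal way, and — as the paper itself notes — here one may simply transcribe the proof of Proposition~3.3 of \cite{CMR:07} verbatim.
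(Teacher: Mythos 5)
First, note that the paper itself does not prove this proposition: it is declared to be a transcription of Proposition~3.3 of \cite{CMR:07} and the proof is explicitly omitted (``could also be copied verbatim''). Measured against that reference, your treatment of (1)--(5) is essentially the standard one and is fine modulo the bookkeeping you acknowledge: the reduction to special quasi-homomorphisms, the check that the three pairs in (4) can be realised as sections of one and the same split extension $B\rightarrowtail B\oplus A\twoheadrightarrow A$ (twisted by $\varphi_+$), and the retraction $\rho(b,a)=b+\varphi_+(a)$ for (5) are all exactly the right moves.

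The genuine problem is your plan for (6). The hypothesis on $\mF$ is split-exactness \emph{only}; neither matrix stability nor homotopy invariance is available (indeed the paper later \emph{derives} homotopy invariance from split-exactness plus Morita invariance, and uses the present proposition beforehand). So a $2\times 2$ amplification $A\rightrightarrows\Mat_2(D)\vartriangleright\Mat_2(B)$ is useless -- you have no way to compare $\mF(\Mat_2(B))$ with $\mF(B)$ -- and Cuntz's rotation trick, which you single out as ``the one place where split-exactness is used in an essential, non-formal way'', cannot be run at all, since rotation homotopies require homotopy invariance. You have also misidentified where the content lies: the correct argument for (6) is \emph{more} formal than the rest, not less. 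Split-exactness applied to $A\rightarrowtail A\oplus A\twoheadrightarrow A$ shows that $\mF$ turns finite products into biproducts, whence $\mF(\delta)=\mF(i_1)+\mF(i_2)$ for the diagonal $\delta\colon A\to A\oplus A$ and the two coordinate inclusions $i_1,i_2$. Orthogonality of the two $+$-legs (resp.\ the two $-$-legs) makes $\mu(a_1,a_2):=\varphi^1_+(a_1)+\varphi^2_+(a_2)$ and $\nu(a_1,a_2):=\varphi^1_-(a_1)+\varphi^2_-(a_2)$ into bounded homomorphisms $A\oplus A\to D$ forming a quasi-homomorphism $(\mu,\nu)\colon A\oplus A\rightrightarrows D\vartriangleright B$, with $\mu\circ\delta=\varphi^1_++\varphi^2_+$, $\nu\circ\delta=\varphi^1_-+\varphi^2_-$ and $\mu\circ i_k=\varphi^k_+$, $\nu\circ i_k=\varphi^k_-$. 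Naturality of the construction in the first variable (the companion of your item (1), immediate from the special model) then gives $\mF(\varphi^1_++\varphi^2_+,\varphi^1_-+\varphi^2_-)=\mF(\mu,\nu)\circ\mF(\delta)=\mF(\mu,\nu)\circ(\mF(i_1)+\mF(i_2))=\mF(\varphi^1_+,\varphi^1_-)+\mF(\varphi^2_+,\varphi^2_-)$. Your own diagnosis that $\varphi^1_-+\varphi^2_+$ need not be a homomorphism is correct, but the remedy is to factor through $A\oplus A$ on the source side, not to enlarge the target.
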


\subsection{Bott periodicity}

\begin{theorem}[{\it cf.}~\cite{Cuntz:84}] \label{Theorem:BottPeriodicity:Functor}
Let $\mF$ be a functor from $\BanAlg$ into an additive category which is half-exact for semi-split extensions, homotopy invariant and Morita invariant. Then there is a natural isomorphism $\mF(\Sigma^2 A) \cong \mF(A)$ for every Banach algebra $A$.
\end{theorem}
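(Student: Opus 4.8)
The plan is to follow Cuntz's classical argument for Bott periodicity via the Toeplitz algebra, adapted to the Banach setting. Let $\mathcal{T}$ denote the (Banach) Toeplitz algebra, i.e.\ the unitization of $\Komp(\ell^2(\N))$ extended by $\Cont(S^1)$; more precisely I would work with the extension
$$
\xymatrix{
\Komp \ \ar@{>->}[r]& \mathcal{T}_0 \ar@{->>}[r]& \Cont_0(S^1 \setminus \{1\}) = \Sigma
}
$$
obtained from the classical Toeplitz extension by restricting the quotient to functions vanishing at $1$, where $\Komp = \Komp(\ell^2(\N))$ is the algebra of compact operators on the separable Hilbert space (which for us just means a fixed infinite-dimensional Banach pair, so that $\Komp$ is Morita equivalent to $\C$). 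Tensoring with an arbitrary Banach algebra $A$ (projective tensor product) gives a semi-split extension
$$
\xymatrix{
\Komp \otimes A \ \ar@{>->}[r]& \mathcal{T}_0 \otimes A \ar@{->>}[r]& \Sigma \otimes A,
}
$$
and the point is that $\mathcal{T}_0 \otimes A$ is, up to the stabilisation, homotopy equivalent to the trivial algebra so that the connecting map becomes an isomorphism.

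The key steps, in order, would be: (1) Establish that $\mF$ kills $\Komp$-stabilisation, i.e.\ that the corner embedding $A \to \Komp \otimes A$ induces an isomorphism $\mF(A) \cong \mF(\Komp \otimes A)$; this is exactly Morita invariance of $\mF$ combined with Lemma~\ref{Lemma:HalfExactTwoStabilisation}-style arguments (the Morita equivalence between $\C$ and $\Komp$ gives a linking algebra, and $\mF$ of the corner inclusion is an isomorphism by the definition of Morita invariance, after passing through the linking algebra). (2) Show, using half-exactness for the semi-split Toeplitz extension above, that there is a long exact sequence relating $\mF_k(\Sigma \otimes A)$, $\mF_k(\mathcal{T}_0 \otimes A)$ and $\mF_k(\Komp \otimes A) \cong \mF_k(A)$; this is Proposition~\ref{Proposition:HalfexactnessLongExactSequence} applied to the additive-valued functor (via the representable functors $\Hom(X,\mF(-))$ and $\Hom(\mF(-),X)$). (3) Prove that $\mF(\mathcal{T}_0 \otimes A) = 0$, or more precisely that the two obvious homomorphisms $\Sigma \otimes A \to \mathcal{T}_0 \otimes A$ relevant to the extension are such that the middle term drops out; the standard trick is that $\mathcal{T}_0$ admits an endomorphism making it "absorb a shift", producing a homotopy that trivialises $\mF(\mathcal{T}_0 \otimes A)$ — concretely, one exhibits $\mathcal{T}_0$ as contractible after one further stabilisation, or uses the Eilenberg swindle on $\mathcal{T}_0$ which is possible because $\mathcal{T}_0$ is already stable (it contains $\Komp$ as an ideal with $\mathcal{T}_0/\Komp$ itself admitting infinite repetition). (4) Feeding $\mF(\mathcal{T}_0 \otimes A) = 0$ into the long exact sequence of step (2) collapses it to an isomorphism $\mF_k(\Sigma \otimes A) \cong \mF_{k}(\Komp \otimes A)$ shifted by one degree, i.e.\ $\mF(\Sigma(\Sigma \otimes A)) \cong \mF(\Komp \otimes A) \cong \mF(A)$; combined with Lemma~\ref{Lemma:HalfExactTwoStabilisation} ($\mF(\Sigma \otimes A) \cong \mF(\Sigma A)$) this yields $\mF(\Sigma^2 A) \cong \mF(A)$. (5) Naturality is then checked by running the whole construction for a morphism $A \to A'$, which commutes with every step because the Toeplitz extension, the $\Komp$-stabilisation and all the homotopies are natural in $A$.

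The main obstacle I expect is step (3): in the Banach world one cannot simply invoke $C^*$-algebraic facts like "$\mathcal{T}$ is $KK$-contractible", and one must produce, by hand, a concrete homotopy or Eilenberg-swindle witnessing $\mF(\mathcal{T}_0 \otimes A) = 0$ while staying inside the category of Banach algebras and continuous homomorphisms. The subtlety is that the relevant Banach Toeplitz algebra may be degenerate or may fail some nondegeneracy one is used to, so the swindle must be set up using the generalised Morita equivalences of Section~1 (e.g.\ $\Komp$ should be understood via a Banach pair, and "$\Komp \otimes$ infinitely many copies" must be given a precise meaning as a Banach algebra) — this is where I would need to be most careful, and where Morita invariance of $\mF$, rather than mere stability, does the real work. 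A secondary technical point is ensuring all the extensions in play are genuinely semi-split (bounded linear splittings), which for the Toeplitz-type extensions follows from the explicit description of the quotient as functions on $S^1$, but should be spelled out.
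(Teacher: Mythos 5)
Your outline reproduces, step for step, the paper's explicit proof: the reduced Toeplitz extension $\xymatrix{\Komp(\ell^2(\N)) \ \ar@{>->}[r]& \mT_0 \ar@{->>}[r]& \Sigma}$ tensored (projectively) with $A$, Morita invariance to identify $\mF(\Komp(\ell^2(\N))\otimes A)\cong\mF(A)$, the long exact sequence from half-exactness, the vanishing $\mF(\mT_0\otimes A)=0$, and Lemma \ref{Lemma:HalfExactTwoStabilisation} to pass from $\Sigma\otimes A$ to $\Sigma A$. The only place you diverge is the one you yourself flag as the main obstacle, namely step (3), and there your sketch is both incomplete and slightly off: $\mT_0$ is not stable and is not contractible after a further stabilisation, so neither of the two strategies you name works as stated (Cuntz's actual argument in Proposition 4.3 of \cite{Cuntz:84} is a quasi-homomorphism/stability argument built from the generating isometry, applied to the \emph{functor}, not a contractibility statement about the algebra). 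The paper closes this gap without any Banach-specific work: every algebra occurring in Cuntz's Proposition 4.3 is a C$^*$-algebra, and the auxiliary functor $B\mapsto\mF(B\otimes A)$, restricted to C$^*$-algebras, is half-exact, homotopy invariant and stable in the C$^*$-sense (Morita invariance in the Banach sense restricts to C$^*$-Morita invariance, since a C$^*$-linking algebra is a Banach linking algebra), so Cuntz's proof applies verbatim and yields $\mF(\mT_0\otimes A)=0$. Your worry about having to redo the swindle with degenerate Banach pairs is therefore unnecessary. The paper in fact also records an even shorter variant of the same observation: the restricted functor $\mF(\cdot\otimes A)$ satisfies C$^*$-algebraic Bott periodicity wholesale, giving $\mF(\Sigma^2\otimes A)\cong\mF(A)$ directly, after which Lemma \ref{Lemma:HalfExactTwoStabilisation} finishes the argument.
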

\begin{proof}
We give the proof on two levels of detail: a short argument to which Martin Grensing has pointed me, and my original proof, a longer and more explicit version of the argument that gives more information on why the isomorphism is natural.

Let $A$ be a Banach algebra and consider the functor $\mF(\cdot \otimes A)$. Note that this functor is half-exact (for semi-split extensions), homotopy invariant and Morita invariant (and thus split-exact by Corollary \ref{Corollary:HalfexactnessLongExactSequence:Additive}). We can restrict it to the category of C$^*$-algebras, and the restricted functor has the same properties in the C$^*$-algebraic sense. It hence satisfies Bott periodicity (see \cite{Cuntz:84}) which means in particular that $\mF(\Sigma^2 \otimes A) \cong \mF(A)$. Now $\mF(\Sigma^2 \otimes A) \cong \mF(\Sigma^2 A)$ by Lemma \ref{Lemma:HalfExactTwoStabilisation}, so we have obtained the desired isomorphism. 

More explicitly, let $\mT$ denote the Toeplitz algebra as defined in \cite{Cuntz:84}; likewise, let $\mT_0\subseteq \mT$ denote the reduced Toeplitz algebra. Note that $\mF(\mT_0\otimes A)=0$, which follows as in Proposition 4.3 of \cite{Cuntz:84}, applied to the functor $\mF(\cdot \otimes A)$ (with the slight change that half-exactness is only valid for semi-split extensions); compare also the smooth version in \cite{CMR:07}. Note that $\mF$ and therefore also $\mF(\cdot \otimes A)$  are split-exact by Corollary \ref{Corollary:HalfexactnessLongExactSequence:Additive}.

Let $A$ be a Banach algebra. Consider the reduced Toeplitz extension 
$$
\xymatrix{ \Komp(\ell^2(\N)) \ \ar@{>->}[r]& \mT_0 \ar@{->>}[r]& \Sigma}.
$$
Taking the completed projective tensor product of this (semi-split) extension with the Banach algebra $A$ gives an extension 
\begin{equation}\label{Equation:TopelitzExtension}
\xymatrix{ 
\Komp(\ell^2(\N)) \otimes A\ \ar@{>->}[r]& \mT_0 \otimes A \ar@{->>}[r]& \Sigma \otimes A.
}
\end{equation}
Note that taking the injective tensor product would be somewhat more natural but it is not clear that this would give Banach algebras. Now the canonical map from $\Sigma \otimes A$ to $\Sigma A$ induces an isomorphism $\mF(\Sigma \otimes A) \cong \mF(\Sigma A)$ by Lemma \ref{Lemma:HalfExactTwoStabilisation}.


Note also that $\Komp(\ell^2(\N)) \otimes A$ is (naturally) Morita equivalent to $A$, and hence $\mF(\Komp(\ell^2(\N)) \otimes A)\cong \mF(A)$, naturally.

Consider now the case that $\mF$ has values in an abelian category. By Proposition \ref{Proposition:HalfexactnessLongExactSequence}, the extension (\ref{Equation:TopelitzExtension}) gives a long exact sequence 
$$
\xymatrix{
\cdots \ar[r] & \mF_1( \mT_0 \otimes A) \ar[r]& \mF_1(\Sigma  A) \ar[r]& \mF (\Komp(\ell^2(\N)) \otimes A) \ar[r]& \mF (\mT_0 \otimes A) \ar[r]& \mF(\Sigma \otimes A)
}
$$
where $\mF_1(\cdot) = \mF(\Sigma\cdot)$. Using our above calculations (also for $\mF_1$ instead of $\mF$), we arrive at the following exact sequence
$$
\xymatrix{
\cdots \ar[r] & 0 \ar[r]& \mF_1(\Sigma  A) \ar[r]& \mF (A) \ar[r]& 0 \ar[r]& \mF(\Sigma A).
}
$$
So we have an isomorphism $\mF(\Sigma^2 A) = \mF_1(\Sigma A) \cong \mF(A)$, as claimed. It is clearly natural in $A$. 

To treat the case that $\mF$ takes its values only in an additive category consider the functors $A \mapsto \Hom(X, \mF(A))$ and $A\mapsto \Hom(\mF(A), X)$ for arbitrary $X$.
\end{proof}

\begin{remark}
In a first attempt to show Bott periodicity I used the following variant of the Toeplitz algebra (with coefficients in a Banach algebra $A$):
$$
\cT^{\ban}_A:= \ell^1(\N_0 \times \N_0, A) \oplus \ell^1(\Z, A) \cong \left(\ell^1(\N_0 \times \N_0) \oplus \ell^1(\Z)\right) \otimes A
$$
with a  product similar to that defined on page 64 of \cite{CMR:07}. With this algebra, one can only show that $\mF(A)$ is isomorphic to $\mF(\Sigma \Sigma^{\ban} A)$, where $\Sigma^{\ban} A= \ell^1_0(\Z, A)= \{f \in  \ell^1(\Z, A) | \sum_{k\in \Z} f(k) =0\}$. This algebra sits as a dense subalgebra in $\Sigma A$, but it is not clear that $\mF(\Sigma \Sigma^{\ban} A)\cong \mF(\Sigma^2 A)$  unless you know that $\mF$ is invariant under dense and spectral homomorphisms.
\end{remark}

The condition that $\mF$ is homotopy invariant is automatic and hence redundant because we have the following variant of Lemma~3.26 of \cite{CMR:07}, {\it cf.} \cite{Kasparov:81:Extensions, Higson:88, CuntzThom:06}. Note that the hypotheses of the result are somewhat oversimplified: much less than Morita invariance will probably do, but we will not venture into this.

\begin{proposition}
Let $\mF$ be a split-exact Morita invariant functor on $\BanAlg$. Then $\mF$ is homotopy invariant.
\end{proposition}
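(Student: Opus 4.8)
The plan is to run the classical argument of Higson and Cuntz (Lemma~3.26 of \cite{CMR:07}, {\it cf.}~\cite{Kasparov:81:Extensions, Higson:88, CuntzThom:06}) showing that a split-exact, stable functor is automatically homotopy invariant, and to check that it survives the passage to possibly degenerate Banach algebras. The first task is to extract from Morita invariance, {\it using only split-exactness}, the basic stability facts one needs — the corresponding statements in Section~1 are off limits here since they already presuppose homotopy invariance. Concretely I would show: $\mF$ is invariant under matrix amplification $B\mapsto\Mat_n(B)$ and under $\Komp(\ell^2(\N))$-stabilisation (each $\Mat_n(B)$, and $\Komp(\ell^2(\N))\otimes B$, is Morita equivalent to $B$, so the relevant inclusion into a linking algebra is an $\mF$-isomorphism by hypothesis, and one transfers this to the corner embeddings); all corner embeddings $B\to\Mat_n(B)$ induce {\it one and the same} isomorphism; and the inner automorphisms of $\Mat_n(B)$ implemented by invertible scalar matrices are sent to the identity by $\mF$. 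This last, Whitehead-type, fact is the one to establish with care from split-exactness and the quasi-homomorphism calculus of Proposition~\ref{Proposition:EigenschaftenDoppelspalt} alone. In the degenerate case one replaces throughout an algebra by a high enough power of itself and carries the exponents along — exactly the slack built into the ``$\forall a\in A^k$'' clauses of Morita cycles and quasi-homomorphisms.

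So let $\varphi_0,\varphi_1\colon A\to B$ be homotopic, realised by $\Phi\colon A\to B[0,1]$ with $\ev_i^B\circ\Phi=\varphi_i$; it suffices to prove $\mF(\varphi_0)=\mF(\varphi_1)$. Following Higson's rotation trick, I would choose a norm-continuous path $R$ of rotation matrices in $\Mat_2(\C)$ with $R(0)=1$ and $R(1)=\Spalte{0 & -1 \\ 1 & 0}$; conjugation by $R$, an invertible element of the multiplier algebra of $\Mat_2(B[0,1])$, is a bounded automorphism, so
\[
\psi\colon A\to\Mat_2(B[0,1]),\qquad \psi(a)(t):=R(t)\,\Spalte{\Phi(a)(t) & 0 \\ 0 & 0}\,R(t)^{-1}
\]
is a bounded homomorphism with $\ev_0^{\Mat_2(B)}\circ\psi=\iota_1\circ\varphi_0$ landing in the upper-left corner and $\ev_1^{\Mat_2(B)}\circ\psi=\iota_2\circ\varphi_1$ landing in the lower-right corner, where $\iota_1,\iota_2\colon B\to\Mat_2(B)$ are the two corner embeddings. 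Comparing $\psi$ with the constant homomorphisms $a\mapsto\iota_1(\varphi_0(a))$ and $a\mapsto\iota_2(\varphi_1(a))$, which agree with $\psi$ at $t=0$ and at $t=1$ respectively, produces two quasi-homomorphisms from $A$ to $\Mat_2(B[0,1])$ relative to the ideals $\kernel(\ev_0^{\Mat_2(B)})$ and $\kernel(\ev_1^{\Mat_2(B)})$.

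It then remains to push each of these quasi-homomorphisms through the opposite evaluation and to combine the outputs via Proposition~\ref{Proposition:EigenschaftenDoppelspalt}: naturality (part~1) turns each pushforward into a quasi-homomorphism between genuine homomorphisms into $\Mat_2(B)$; part~5 rewrites it as a difference $\mF(\iota_j)\circ\mF(\varphi_i)$; the antisymmetry and cocycle identities (parts~3 and~4) splice the two sides together; and the stability facts above — $\mF(\iota_1)=\mF(\iota_2)$ together with the triviality of the conjugations by $R(t)$ — cause every term not of the form $\mF(\iota_1)\circ\mF(\varphi_i)$ to cancel, leaving $\mF(\iota_1)\circ\mF(\varphi_0)=\mF(\iota_1)\circ\mF(\varphi_1)$; invertibility of $\mF(\iota_1)$ gives the claim. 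General homotopy invariance then follows by composing with $\Phi$, and for an additive (not necessarily abelian) target one reduces to this case via the representable functors $\Hom(X,\mF(-))$ and $\Hom(\mF(-),X)$.

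The main obstacle is the Whitehead-type input of the first paragraph: establishing, with no circular recourse to homotopy invariance, that $\mF$ kills the relevant inner automorphisms of matrix algebras — this is where ``stable'' genuinely enters and is the crux of the whole argument. I would also be careful about the exact bookkeeping in the third paragraph — which two evaluations to use for the pushforwards, and in which order to apply parts~3--5 of Proposition~\ref{Proposition:EigenschaftenDoppelspalt} — so that all ``junk'' really does cancel and nothing is left living in the $\mF$ of a cone. A last, purely technical point is the degenerate-Banach accounting: passing to sufficiently high powers so the compositions with $\varphi$ land in the intended ideals, and handling the homotopies fibrewise over $[0,1]$ as in Definition~\ref{Definition:HomotopyOfMoritaCycles}, which lengthens the proof relative to its $C^*$ model without adding conceptual content.
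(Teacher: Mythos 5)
Your overall strategy (rotation trick plus the quasi-homomorphism calculus) is the right family of ideas, but the argument sketched in your third paragraph cannot close, and the real difficulty is not where you locate it. Pushing the quasi-homomorphism $(\psi,\ \iota_1\circ\varphi_0)\colon A \rightrightarrows \Mat_2(B[0,1]) \vartriangleright \kernel(\ev_0)$ through $\ev_1$ and applying parts 1 and 5 of Proposition \ref{Proposition:EigenschaftenDoppelspalt} yields
$$
\mF(\iota_2)\circ\mF(\varphi_1) - \mF(\iota_1)\circ\mF(\varphi_0) \;=\; \mF\bigl(\ev_1\restr_{\kernel(\ev_0)}\bigr)\circ\mF\bigl(\psi,\ \iota_1\circ\varphi_0\bigr),
$$
and the right-hand side factors through $\mF(\kernel(\ev_0^{\Mat_2(B)}))$, i.e.\ through $\mF$ of a cone. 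No combination of parts 3--6 makes this term vanish: the antisymmetry and cocycle identities applied to your two pushforwards only reproduce the same expression with opposite signs, and killing $\mF$ of a cone is precisely homotopy invariance, which is what you are trying to prove. The failure is not an accident of bookkeeping. Split-exactness together with invariance under finite matrix amplification --- which is all your main argument actually extracts from Morita invariance --- does \emph{not} imply homotopy invariance: algebraic $\KTh_1$ restricted to $\BanAlg$ is split-exact and matrix-stable, yet it distinguishes $\C$ from $\C[0,1]$. So any correct proof must use a strictly infinite piece of Morita invariance; the Whitehead-type lemma on inner automorphisms, which you single out as the crux, is in fact the easy and genuinely finite-matrix part of the story.

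The paper's proof supplies exactly this missing infinite ingredient. It invokes Lemma 3.26 of \cite{CMR:07}, which for a split-exact, $\Mat_2$-stable functor asserts only that $\mF(\iota\circ\varphi_0)=\mF(\iota\circ\varphi_1)$, where $\iota$ is the corner embedding of $B$ into the infinite stabilisation $\mK^*B= c_0(\N,\ell^1(\N,B))\cap\ell^1(\N,c_0(\N,B))$; the proof of that lemma is a subdivision argument requiring infinitely many matrix slots and is not a $2\times 2$ rotation. Morita invariance then enters a second time, in its full strength: $\mK^*B$ is, up to isomorphism, a linking algebra of an algebra Morita equivalent to $B$, so $\mF(\iota)$ is invertible and can be cancelled. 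Your first paragraph does mention $\Komp(\ell^2(\N))\otimes B$, which is the right instinct, but that stabilisation never re-enters your argument. To repair the proposal, either reproduce the subdivision argument of \cite{CMR:07} inside $\mK^*B$ (checking, as the paper does, that it never leaves $\BanAlg$) or cite it; the reduction to additive targets via $\Hom(X,\mF(-))$ and the degenerate-algebra bookkeeping you describe are then unproblematic.
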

\begin{proof}
Let $A$ be a Banach algebra. We consider the Banach algebra $\mK^*A$ from Chapter 3 of \cite{CMR:07}:
$$
\mK^*A:= c_0(\N, \ell^1(\N, A)) \cap \ell^1(\N, c_0(\N, A)) 
$$
as a completion of $c_c(\N \times \N, A)$. You can think of this as a groupoid Banach algebra of the groupoid $\N \times \N$. In Lemma~3.26 of \cite{CMR:07} it is shown that, if $\mF$ is a functor on the category of bornological algebras that is split-exact and $\Mat_2$-stable and if $\iota$ denotes the inclusion $a\mapsto \delta_{(1,1)} a$ of $A$ into $\mK^*A$ and if $\varphi_0, \varphi_1 \colon A \to B$ are homotopic morphisms of bornological algebras, then 
$$
\mF(\iota \circ \varphi_0) = \mF(\iota \circ \varphi_1) \colon A \to \mK^*B.
$$
The same is true if we restrict ourselves to the category of Banach algebras and functors $\mF$ thereon because, in Lemma~3.26 of \cite{CMR:07}, if we start with Banach algebras we do not leave the realm of Banach algebras in the course of the proof. In particular, we get the same conclusion if $\mF$ is Morita invariant and split-exact on $\BanAlg$.

Because we can think of $\mK^*A$ as a linking algebra (up to possibly non-isometric isomorphism) of a Banach algebra Morita equivalent to $A$ and $\iota$ as the inclusion of $A$ into this linking algebra, it follows from Morita invariance of $\mF$ that $\mF(\iota)$ is an isomorphism and hence $\mF(\varphi_0) = \mF(\varphi_1)\colon \mF(A) \to \mF(B)$. So $\mF$ is homotopy invariant.
\end{proof}

\section{Relation to $\KK$-theory}\label{SectionRelationOfFToKKTheory}

In this section, let $\mF\colon \BanAlg \to \mC$ be a functor on the category of Banach algebras to some additive category that is homotopy invariant, Morita invariant and split-exact. Note that, by the universal property of Kasparov's $\KK$-theory, its restriction to the category of separable C$^*$-algebras and $*$-homomorphism factors through $\KK$-theory. We will now show that this functor factors through a natural map $\overline{\mF}$ that we define on $\KKban$.

\subsection{$\KKban$-elements give morphisms in $\mC$}


Let $A$ and $B$ be Banach algebras and let $(E,T)$ be an element of $\Eban(A,B)$, i.e., an even $\KKban$-cycle (see the proof of Theorem \ref{Theorem:KTheoryMoritaInvariant}). Fix a $k\in \N$ such that $a(T^2 -1)$ and $[a,T]$ are contained in $\Komp_B(E)$ for all $a\in A^k$. 

If $T^2 = 1$, then write $E$ as $E_0 \oplus E_1$, by degree. Let $\alpha\colon A \to \Lin_B(E_0) \subseteq \Lin_B(E)$ denote the action of $A$ on $E_0$ and $\bar{\alpha}\colon A \to \Lin_B(E_1) \subseteq \Lin_B(E)$ denote the action of $A$ on $E_1$. Since $T^2 =1$ we have another continuous homomorphism:
$$
\Ad_T \circ \bar{\alpha}\colon A \to \Lin_B(E_0) \subseteq \Lin_B(E), \quad a\mapsto T \bar{\alpha}(a) T.
$$
The condition $[a, T] \in \Komp_B(E)$ for all $a\in A^k$ 
 yields $\Ad_T \circ \bar{\alpha}(a) - \alpha(a) \in \Komp_B(E_0) \subseteq \Komp_B(E)$ for all $a\in A^k$. Hence we get a quasi-homomorphism
$$
(\alpha, \Ad_T \circ \bar{\alpha})\colon A^k \rightrightarrows \Lin_B(E) \vartriangleright \Komp_B(E).
$$
This defines a morphism $\mF(\alpha, \Ad_T \circ \bar{\alpha})$ in $\mC$ from $\mF(A^k)$ to $\mF(\Komp_B(E))$ by split-exactness of $\mF$. Now $\mF(A^k)$ is canonically isomorphic to $\mF(A)$ and $E$ is a Morita cycle from $\Komp_B(E)$ to $B$, we obtain a morphism
$$
\xymatrix{
\overline{\mF}(E,T)\colon &\mF(A) & \ar[l]_-{\cong} \mF(A^k) \ar[rr]^-{\mF(\alpha, \Ad_T \circ \bar{\alpha})} && \mF(\Komp_B(E)) \ar[r]^-{\overline{\mF}(E)} & \mF(B)
}
$$
from $\mF(A)$ to $\mF(B)$.

If $T^2 \neq 1$, then we can use the following trick which I found in the proof of Lemme 1.2.10 of \cite{Lafforgue:02}: If $E = E_0 \oplus E_1$ and if 
$$
T = \left(\begin{matrix} 0 & f\\ g & 0\end{matrix}\right),
$$
 then you can replace $E_0$ with $E_0':= E_0 \oplus E_1$ and $E_1$ with $E_1':= E_1 \oplus E_0$, where the summand $E_1$ of $E'_0$ is equal to $E_1$ as a Banach $B$-pair, but equipped with the zero-action of $A$, and the summand $E_0$ of $E_1'$ is defined analogously. Define 
$$
f' = \left(\begin{matrix} f & -(1-fg)\\ 1-gf & 2g-gfg \end{matrix}\right), \qquad g' = \left(\begin{matrix} 2g-gfg & 1-gf\\ -(1-fg) & f \end{matrix}\right).
$$
Then
$$
T' = \left(\begin{matrix} 0 & f'\\ g' & 0\end{matrix}\right)
$$
is an operator on $E':=E_0' \oplus E_1'$ such that $(T')^2 = 1$. Moreover, $(E', T') \in \Eban(A,B)$.

Define 
$$
f'' = \left(\begin{matrix} f & 0\\ 0&0 \end{matrix}\right), \qquad g'' = \left(\begin{matrix} g & 0\\ 0&0 \end{matrix}\right), 
$$
and
$$
T'' = \left(\begin{matrix} 0 & f''\\ g'' & 0\end{matrix}\right)
$$
on $E'$. Then $T'$ and $T''$ differ only by a ``compact'' operator, so they are homotopic. In particular, $(E', T')$ and $(E', T'')$ give the same element of $\KKban(A,B)$. Now $(E', T'') = (E,T) \oplus \text{trivial cycle}$, so $(E,T)$ and $(E',T')$ also give the same element of $\KKban(A,B)$. This construction can easily be checked to be very natural:

\begin{lemma}
The construction of $(E',T')$  from $(E,T)$ is compatible with the push-forward and with the sum of cycles. In particular, it respects homotopies.
\end{lemma}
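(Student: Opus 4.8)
The plan is to notice that the passage $(E,T)\mapsto(E',T')$ is assembled out of three operations on Banach $B$-pairs and their operators — forming finite direct sums, equipping a given summand with the zero $A$-action, and forming fixed non-commutative polynomial expressions in $f$ and $g$ (these are the entries $f$, $1-fg$, $1-gf$, $2g-gfg$ of the matrices defining $f'$ and $g'$) — and that each of these is \emph{strictly} compatible with push-forward along a homomorphism and with the direct sum of cycles. Once this is spelled out the lemma follows formally, and the claim about homotopies is then just the push-forward statement applied to the evaluation maps $\ev^B_t$.

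First I would treat push-forward. For a continuous homomorphism $\psi\colon B\to C$ let $\psi_*$ denote the induced operation on $\KKban$-cycles, $\psi_*(E,T)=\bigl(E\hotimes_{\unital\psi}\unital C,\;T\hotimes\id\bigr)$, with $A$ acting via $\Lin_B(E)\to\Lin_C(\psi_*E)$. This functor commutes with finite direct sums of pairs, it carries the auxiliary summand ``$E_1$ with zero $A$-action'' of $E_0'$ to ``$\psi_*(E_1)$ with zero $A$-action'' (and likewise for $E_0\subseteq E_1'$), and on adjoinable operators $S\mapsto S\hotimes\id$ is a \emph{unital} algebra homomorphism $\Lin_B(E)\to\Lin_C(\psi_*E)$: it sends $\id_{E_0},\id_{E_1}$ to the corresponding identities and $SS'$ to $(S\hotimes\id)(S'\hotimes\id)$. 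Hence, after the canonical identification $\psi_*(E')\cong(\psi_*E)'$, one obtains $\psi_*(f')=(\psi_*f)'$ and $\psi_*(g')=(\psi_*g)'$, so $\psi_*(E',T')\cong(\psi_*(E,T))'$ canonically; in particular $(\psi_*T')^2=\psi_*((T')^2)=1$, so the right-hand side is again a genuine cycle.

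Next, for the sum: if $(E,T)=(E^{(1)},T^{(1)})\oplus(E^{(2)},T^{(2)})$ then $E_0=E_0^{(1)}\oplus E_0^{(2)}$, $E_1=E_1^{(1)}\oplus E_1^{(2)}$, and $f,g,1-fg,1-gf,2g-gfg$ are all block-diagonal for this decomposition; therefore the $2\times2$ matrices defining $f'$ and $g'$ become block-diagonal after the evident permutation of the four summands $E_0^{(1)},E_1^{(1)},E_0^{(2)},E_1^{(2)}$, and this permutation is an isomorphism of $\KKban$-cycles $(E',T')\cong(E^{(1)})'\oplus(E^{(2)})'$. For homotopies, recall that a homotopy between $(E_0,T_0)$ and $(E_1,T_1)$ in $\Eban(A,B)$ is a cycle $(E,T)\in\Eban(A,B[0,1])$ carrying a $\Cont[0,1]$-Banach $B[0,1]$-pair structure, with $T$ being $\unital B[0,1]$-linear and with fibres $(E_0,T_0),(E_1,T_1)$ at $t=0,1$. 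Applying the construction to this $(E,T)$: finite direct sums and the zero-action twist leave the $\Cont[0,1]$-module structures and the $B[0,1]$-valued inner product untouched, and $f',g'$, being built from the $\unital B[0,1]$-linear operators $f,g$ by composition, addition and scalars, are again $\unital B[0,1]$-linear, so $(E',T')$ is once more a $\Cont[0,1]$-cycle, i.e.\ a homotopy in $\Eban(A,B[0,1])$. Its fibre at $t$ is the push-forward of $(E',T')$ along $\ev^B_t$, which by the first step equals $\bigl((\ev^B_t)_*(E,T)\bigr)'=(E_t,T_t)'$; taking $t=0,1$ shows $(E',T')$ is a homotopy from $(E_0,T_0)'$ to $(E_1,T_1)'$.

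The only point needing care — and the reason the whole verification is nonetheless routine — is the bookkeeping with the unitization hidden in $\psi_*$: one must confirm that $S\mapsto S\hotimes\id$ really is unital and multiplicative so that the identity entries $1-fg$, $1-gf$ transform as asserted, and, in the homotopy step, that the auxiliary zero-action summands remain honest $\Cont[0,1]$-Banach $B[0,1]$-pairs. Neither is a genuine difficulty, which is exactly why the statement ``can easily be checked''.
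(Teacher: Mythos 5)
Your verification is correct and is precisely the routine check the paper leaves to the reader (the lemma is stated without proof, only with the remark that it ``can easily be checked''): the passage $(E,T)\mapsto(E',T')$ is built from direct sums, zero-action summands and noncommutative polynomials in $f$, $g$ and the identity, all of which are preserved by the unital homomorphism $S\mapsto S\hotimes\id$ and by block-diagonal decomposition, and the homotopy statement then follows by applying push-forward compatibility to the evaluation maps $\ev^B_t$. No gaps; the points you flag (unitality of $S\mapsto S\hotimes\id$, and the identification of fibres of a $\Cont[0,1]$-pair with push-forwards along $\ev^B_t$) are exactly the right ones to check and are standard in Lafforgue's framework.
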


\noindent So without loss of generality we can assume that $T^2 =1$.

\begin{remark}\label{Remark:kkbanTrafoOfMoritaCycles}
Consider the special case that $(E,T)$ is given by a Morita cycle $(E,\varphi)$, i.e., $T=0$, $E_0=E$, $E_1=0$, and $\alpha=\varphi$. Then it is not hard to check that $\overline{\mF}(E,T) = \overline{\mF}(E, \varphi)$ using the fact that, in this case, we can apply  Proposition \ref{Proposition:EigenschaftenDoppelspalt}, \ref{Proposition:EigenschaftenDoppelspalt:EchteHoms}., because $\bar{\alpha}=0$. 

In particular, if we consider the even more special case that $(E,T)$ is induced from a continuous homomorphism $\varphi \colon A \to B$, then it follows from Lemma~\ref{Lemma:OverlineFLiftsF} that $\overline{\mF}(E,T) = \mF(\varphi)$.
\end{remark}

\begin{lemma}\label{Lemma:NaturalTrafoNatural}
Let $(E,T) \in \Eban(A,B)$ and let $\psi\colon B\to B'$ be a homomorphism. Then
$$
\overline{\mF}(\psi_*(E,T)) = \mF(\psi) \circ \overline{\mF}(E,T) \ \colon\  \mF(A) \to \mF(B').
$$
\end{lemma}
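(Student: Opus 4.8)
\emph{Plan.} The idea is to unwind $\overline{\mF}(\psi_*(E,T))$ through Definition~\ref{Definition:LiftOfFtoMoritaban} and the construction of $\overline{\mF}(E,T)$, and to split it into a ``quasi-homomorphism part'' and a ``Morita-cycle part'', each handled by a result already proved.

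First I would reduce to the case $T^2=1$. The passage from $(E,T)$ to the cycle $(E',T')$ with $(T')^2=1$ is compatible with the push-forward by the Lemma stated just above (and respects homotopies), and $\overline{\mF}(E,T)$ depends on $(E,T)$ only through that construction; so replacing $(E,T)$ by $(E',T')$ changes neither side of the asserted equation. Assuming $T^2=1$, write $E=E_0\oplus E_1$ with left actions $\alpha,\bar\alpha$ of $A$, fix $k\in\N$ with $[a,T]\in\Komp_B(E)$ for all $a\in A^k$, and recall $\overline{\mF}(E,T)=\overline{\mF}(E)\circ\mF(\alpha,\Ad_T\circ\bar\alpha)\circ\mF(\iota_{A^k})^{-1}$, where $\iota_{A^k}\colon A^k\hookrightarrow A$ and $E$ is read as a Morita cycle from $\Komp_B(E)$ to $B$.

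Next I would describe $\psi_*(E,T)$ explicitly: it is $E':=E\otimes_{\unital{\psi}}\unital{B'}$ with operator $T\otimes 1$. Since $(T\otimes1)^2=1$ and $[a,T\otimes1]=[a,T]\otimes1\in\Komp_{B'}(E')$, the normalisation and the same $k$ persist. The canonical homomorphism $\mu\colon\Lin_B(E)\to\Lin_{B'}(E')$, $S\mapsto S\otimes1$, restricts to $\psi_B\colon\Komp_B(E)\to\Komp_{B'}(E')$; the left action of $A$ on $E'_0$ is $\mu\circ\alpha$, and $\Ad_{T\otimes1}\circ(\mu\circ\bar\alpha)=\mu\circ(\Ad_T\circ\bar\alpha)$. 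Thus the quasi-homomorphism attached to $\psi_*(E,T)$ is the image under $\mu$ of the one attached to $(E,T)$, so the functoriality statement in the first item of Proposition~\ref{Proposition:EigenschaftenDoppelspalt} gives $\mF(\mu\circ\alpha,\mu\circ\Ad_T\circ\bar\alpha)=\mF(\psi_B)\circ\mF(\alpha,\Ad_T\circ\bar\alpha)$.

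It then remains to match the Morita-cycle parts, i.e.\ to show $\overline{\mF}(E')\circ\mF(\psi_B)=\mF(\psi)\circ\overline{\mF}(E)$ as morphisms $\mF(\Komp_B(E))\to\mF(B')$. The key observation is that $E'$, viewed as a Morita cycle from $\Komp_{B'}(E')$ to $B'$ and pulled back along $\psi_B$, is literally the push-forward along $\psi$ of the Morita cycle $E\colon\Komp_B(E)\to B$: both are the $B'$-pair $E\otimes_{\unital{\psi}}\unital{B'}$ with left $\Komp_B(E)$-action $S\mapsto S\otimes1$. Applying $\overline{\mF}$, using Lemma~\ref{Lemma:MoritabanHomotopies}(1), Lemma~\ref{Lemma:FonMoritabanMultiplicative}, Lemma~\ref{Lemma:OverlineFLiftsF} and homotopy invariance (Lemma~\ref{Lemma:HomotopyMoritabanAndF}) on the pull-back side, and Lemma~\ref{Lemma:MoritabanHomotopies}(2) with the same lemmas on the push-forward side, both $\overline{\mF}(E')\circ\mF(\psi_B)$ and $\mF(\psi)\circ\overline{\mF}(E)$ equal $\overline{\mF}(\psi_*(E))$, hence agree. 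Composing the three identities (the iso $\mF(\iota_{A^k})$ being the same on both sides) yields $\overline{\mF}(\psi_*(E,T))=\mF(\psi)\circ\overline{\mF}(E,T)$.

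The routine parts are the manipulations of $\otimes_{\unital{\psi}}\unital{B'}$ and the check that $\mu$ sends ``compact'' operators to ``compact'' operators. The one point that needs genuine care is the \emph{exact} (not merely up-to-homotopy) identification $\psi_B^*(E')=\psi_*(E)$ of Morita cycles and the verification that the $T^2=1$ reduction commutes with $\psi_*$; once those are in place, the rest is bookkeeping of which algebra sits in which vertex.
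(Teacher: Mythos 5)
Your proposal is correct and follows essentially the same route as the paper: the paper's proof also assumes $T^2=1$, writes down exactly your commuting diagram with the map $S\mapsto S\otimes 1$ from $\Lin_B(E)$ to $\Lin_{B'}(\psi_*(E))$, and then invokes Proposition~\ref{Proposition:EigenschaftenDoppelspalt}(1) together with Lemma~\ref{Lemma:OverlineFLiftsF} and Lemma~\ref{Lemma:MoritabanHomotopies}(2). Your write-up merely makes explicit the two points the paper leaves implicit (the compatibility of the $T^2=1$ normalisation with $\psi_*$, and the identification $\psi_B^*(E')=\psi_*(E)$ of Morita cycles), both of which you handle correctly.
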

\begin{proof}
Assume that $T^2=1$. Now consider the diagram
$$
\xymatrix{
A^k \ar@<3pt>[rr]^-{\alpha} \ar@<-3pt>[rr]_-{\Ad_T \circ \bar{\alpha}} && \Lin_B(E) \ar@{}[r]|{\vartriangleright} \ar[d]^{S \mapsto S\otimes 1}& \Komp_B(E) \ar[d]^{S \mapsto S\otimes 1}\\
 && \Lin_{B'}(\psi_*(E)) \ar@{}[r]|{\vartriangleright}& \Komp_{B'}(\psi_*(E))
}
$$
where $k\in \N$ is such that $[a, T] \in \Komp_B(E)$ for all $a\in A^k$. The upper row is the quasi-homomorphism which essentially induces $\overline{\mF}(E,T)$. Now Proposition~\ref{Proposition:EigenschaftenDoppelspalt}, 1., in conjunction with Lemma \ref{Lemma:OverlineFLiftsF} and Lemma~\ref{Lemma:MoritabanHomotopies}, 2., implies the claim.
 \end{proof}

The following lemma can be proved similarly ({\it cf.}~Lemma~\ref{Lemma:HomotopyMoritabanAndF}).

\begin{lemma}
If $(E_0,T_0)$ and $(E_1, T_1)$ are homotopic elements of $\Eban(A,B)$, then $\overline{\mF}(E_0,T_0) = \overline{\mF}(E_1, T_1)$ as morphisms from $\mF(A)$ to $\mF(B)$. In particular, we get a well-defined map $\overline{\mF}$ from $\KKban(A,B)$ to $\mC(\mF(A),\mF(B))$.
\end{lemma}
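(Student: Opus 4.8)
The plan is to follow the proof of Lemma~\ref{Lemma:HomotopyMoritabanAndF}, the new point being that $\overline{\mF}(E,T)$ is assembled from a quasi-homomorphism followed by a Morita cycle rather than from honest homomorphisms; so I would treat the quasi-homomorphism part by the naturality statement Proposition~\ref{Proposition:EigenschaftenDoppelspalt}, 1., and the Morita-cycle part by Lemma~\ref{Lemma:HomotopyMoritabanAndF} itself. Recall that a homotopy is a cycle $(E,T)\in\Eban(A,B[0,1])$ that is a $\Cont[0,1]$-Banach $B$-pair with $T$ linear over $\unital{B}[0,1]$ and has fibres $(E_0,T_0)$, $(E_1,T_1)$ at $0$, $1$. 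First I would reduce to the case $T^2=1$: the construction of a cycle $(E',T')$ with $(T')^2=1$ from $(E,T)$ is functorial and respects homotopies (the lemma just before Remark~\ref{Remark:kkbanTrafoOfMoritaCycles}), and $\overline{\mF}(E,T)$ is by definition $\overline{\mF}(E',T')$, so it suffices to treat cycles with $T^2=1$; then $T_t^2=1$ for every fibre $(E_t,T_t)$, $t\in[0,1]$, and I would fix one $k\in\N$ with $a(T^2-1),[a,T]\in\Komp_{B[0,1]}(E)$ for $a\in A^k$, which works for all fibres at once.

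Write $E=E_0\oplus E_1$ by degree, with left actions $\alpha,\bar\alpha$ of $A$ on $E_0,E_1$. For $t\in[0,1]$ let $\rho_t\colon\Lin_{B[0,1]}(E)\to\Lin_B(E_t)$ be the fibrewise evaluation over $t$ of a $\Cont[0,1]$-linear operator; I would check that $\rho_t$ is an algebra homomorphism with $\rho_t(\Komp_{B[0,1]}(E))\subseteq\Komp_B(E_t)$, $\rho_t(T)=T_t$, $\rho_t\circ\alpha=\alpha_t$ and $\rho_t\circ\bar\alpha=\bar\alpha_t$. The square with top row the quasi-homomorphism $(\alpha,\Ad_T\circ\bar\alpha)\colon A^k\rightrightarrows\Lin_{B[0,1]}(E)\vartriangleright\Komp_{B[0,1]}(E)$, bottom row $(\alpha_t,\Ad_{T_t}\circ\bar\alpha_t)\colon A^k\rightrightarrows\Lin_B(E_t)\vartriangleright\Komp_B(E_t)$ and vertical maps $\rho_t$ then commutes, so Proposition~\ref{Proposition:EigenschaftenDoppelspalt}, 1., gives $\mF(\rho_t)\circ\mF(\alpha,\Ad_T\circ\bar\alpha)=\mF(\alpha_t,\Ad_{T_t}\circ\bar\alpha_t)$.

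For the Morita-cycle part I would view $E$ as a Morita cycle from $\Komp_{B[0,1]}(E)$ to $B[0,1]$; the $\Komp_{B[0,1]}(E)$-action is $\Cont[0,1]$-linear, so this is a homotopy of Morita cycles in the sense of Definition~\ref{Definition:HomotopyOfMoritaCycles}, and its fibre at $t$ is $E_t$ with the $\Komp_{B[0,1]}(E)$-action restricted along $\rho_t$, that is, $\rho_t^*(E_t)$ where $E_t$ on the right is the Morita cycle from $\Komp_B(E_t)$ to $B$. Then Lemma~\ref{Lemma:HomotopyMoritabanAndF} gives $\overline{\mF}(\rho_t^*(E_t))=\mF(\ev_t^B)\circ\overline{\mF}(E)$, while Lemma~\ref{Lemma:MoritabanHomotopies}, 1., together with Lemma~\ref{Lemma:FonMoritabanMultiplicative} and Lemma~\ref{Lemma:OverlineFLiftsF}, gives $\overline{\mF}(\rho_t^*(E_t))=\overline{\mF}(E_t)\circ\mF(\rho_t)$; hence $\overline{\mF}(E_t)\circ\mF(\rho_t)=\mF(\ev_t^B)\circ\overline{\mF}(E)$.

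Combining this with the identity of the previous paragraph, and using that the canonical isomorphism $\mF(A)\cong\mF(A^k)$ is independent of $t$, I would obtain $\overline{\mF}(E_t,T_t)=\mF(\ev_t^B)\circ\overline{\mF}(E,T)$ for all $t\in[0,1]$. Putting $t=0$ and $t=1$ and observing that $\ev_0^B$ and $\ev_1^B$ are homotopic homomorphisms $B[0,1]\to B$, so $\mF(\ev_0^B)=\mF(\ev_1^B)$ by homotopy invariance of $\mF$, gives $\overline{\mF}(E_0,T_0)=\overline{\mF}(E_1,T_1)$; the ``in particular'' clause is then immediate, homotopy of cycles being the relation that defines $\KKban(A,B)$. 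I expect the main obstacle to be the technical verification around $\rho_t$: that it is a well-defined, multiplicative, compact-preserving homomorphism intertwining $T,\alpha,\bar\alpha$, and that the fibre of the Morita cycle $E$ over $\Komp_{B[0,1]}(E)$ really is $\rho_t^*(E_t)$ (or at worst differs from it by a degenerate correction that $\overline{\mF}$ ignores). Once these identifications are in hand, the rest is a diagram chase built from results already proved.
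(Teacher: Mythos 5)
Your proposal is correct and is essentially the argument the paper intends: the paper omits the proof entirely, saying only that the lemma ``can be proved similarly ({\it cf.}~Lemma~\ref{Lemma:HomotopyMoritabanAndF})'', and your write-up is exactly that strategy spelled out --- reduce to $T^2=1$ via the $(E',T')$ construction, evaluate fibrewise on $\Lin_{B[0,1]}(E)$ and apply Proposition~\ref{Proposition:EigenschaftenDoppelspalt}, 1.\ to the quasi-homomorphism part and Lemma~\ref{Lemma:HomotopyMoritabanAndF} to the Morita-cycle part, and conclude from $\overline{\mF}(E_t,T_t)=\mF(\ev_t^B)\circ\overline{\mF}(E,T)$ together with the homotopy between $\ev_0^B$ and $\ev_1^B$. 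The only detail worth adding is that the fibres at $0$ and $1$ are a priori only \emph{isomorphic} to $(E_0,T_0)$ and $(E_1,T_1)$, but an isomorphism of cycles clearly induces equality under $\overline{\mF}$.
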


\begin{theorem}
The so-defined map $\overline{\mF}$ from $\KKban(A,B)$ to $\mC(\mF(A),\mF(B))$ is additive and natural in $A$ and $B$.
\end{theorem}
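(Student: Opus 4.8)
The plan is to read off all three assertions from the description, for $(E,T)\in\Eban(A,B)$ with $T^{2}=1$ (which, as recalled in the text, we may always assume), of $\overline{\mF}(E,T)$ as the composite
\[
\overline{\mF}(E,T)=\overline{\mF}(E)\circ\mF(\alpha,\Ad_T\circ\bar\alpha)\circ c_A^{-1},
\]
where $c_A\colon\mF(A^{k})\to\mF(A)$ is the canonical isomorphism induced by $A^{k}\hookrightarrow A$ (the corollary following Lemma~\ref{Lemma:ConcurrentHomomorphismOfMoritaEquivalences}), $\mF(\alpha,\Ad_T\circ\bar\alpha)\colon\mF(A^{k})\to\mF(\Komp_B(E))$ is the morphism attached by split-exactness to the quasi-homomorphism $(\alpha,\Ad_T\circ\bar\alpha)\colon A^{k}\rightrightarrows\Lin_B(E)\vartriangleright\Komp_B(E)$, and $\overline{\mF}(E)\colon\mF(\Komp_B(E))\to\mF(B)$ is the value on the Morita cycle underlying $E$ from Definition~\ref{Definition:LiftOfFtoMoritaban}. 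Since $\overline{\mF}$ is already known to be well-defined on $\KKban(A,B)$, it suffices to check the three identities on representing cycles, and we are free to enlarge the auxiliary exponent $k$, independence of its choice being built into $c_A$ through $\mF(A^{k'})\to\mF(A^{k})\to\mF(A)$.

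Naturality in $B$ is exactly Lemma~\ref{Lemma:NaturalTrafoNatural}: $\overline{\mF}(\psi_{*}(E,T))=\mF(\psi)\circ\overline{\mF}(E,T)$ for every homomorphism $\psi\colon B\to B'$; together with additivity of $\psi_{*}$ on $\KKban$ this gives naturality in the second variable on all of $\KKban(A,B)$. For naturality in $A$, fix $\varphi\colon A'\to A$ and a $k$ valid for $(E,T)$. Because $\varphi$ is multiplicative, $\varphi\bigl((A')^{k}\bigr)\subseteq A^{k}$, so $k$ is valid for $\varphi^{*}(E,T)$ as well and its quasi-homomorphism is $(\alpha\circ\varphi,\Ad_T\circ\bar\alpha\circ\varphi)$, i.e.\ the quasi-homomorphism of $(E,T)$ precomposed with $\varphi\restr_{(A')^{k}}\colon(A')^{k}\to A^{k}$. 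The precomposition counterpart of Proposition~\ref{Proposition:EigenschaftenDoppelspalt}(1) --- proved in the same way, through the algebra $D'$ of an associated special quasi-homomorphism --- gives $\mF(\alpha\varphi,\Ad_T\bar\alpha\varphi)=\mF(\alpha,\Ad_T\bar\alpha)\circ\mF(\varphi\restr_{(A')^{k}})$, while the square of honest homomorphisms formed by $(A')^{k}\hookrightarrow A'$, $A^{k}\hookrightarrow A$, $\varphi$ and $\varphi\restr_{(A')^{k}}$ commutes in $\BanAlg$ and hence yields $\mF(\varphi)\circ c_{A'}=c_A\circ\mF(\varphi\restr_{(A')^{k}})$. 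Substituting into the displayed composite (with $\overline{\mF}(E)$ left untouched) gives $\overline{\mF}(\varphi^{*}(E,T))=\overline{\mF}(E,T)\circ\mF(\varphi)$, hence naturality in the first variable; combining the two yields naturality in $(A,B)$.

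For additivity, the class $x+y$ is represented by the direct sum cycle $(E,T)=(E_{0}\oplus E_{1},\,T_{0}\oplus T_{1})$ with $T_{i}^{2}=1$; pick a single $k$ valid for both summands, hence for $(E,T)$. The even part of $E$ is the sum of the even parts of $E_{0}$ and $E_{1}$, the $A$-action and $\Ad_T$ are diagonal, and inside $\Lin_B(E)\vartriangleright\Komp_B(E)$ the two quasi-homomorphisms obtained from $(E_{0},T_{0})$ and $(E_{1},T_{1})$ via the block inclusions $\Lin_B(E_{i})\hookrightarrow\Lin_B(E)$ and $\kappa_{i}\colon\Komp_B(E_{i})\hookrightarrow\Komp_B(E)$ are orthogonal, since the operators coming from $(E_{0},T_{0})$ act through the summand $E_{0}$ and those from $(E_{1},T_{1})$ through the complementary summand $E_{1}$. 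Hence, by Proposition~\ref{Proposition:EigenschaftenDoppelspalt}(1) and~(6), $\mF$ of the quasi-homomorphism of $(E,T)$ equals $\mF(\kappa_{0})\circ\mF(\alpha_{0},\Ad_{T_{0}}\circ\bar\alpha_{0})+\mF(\kappa_{1})\circ\mF(\alpha_{1},\Ad_{T_{1}}\circ\bar\alpha_{1})$. It then remains to see that $\overline{\mF}(E_{0}\oplus E_{1})\circ\mF(\kappa_{i})=\overline{\mF}(E_{i})$, where on the left $E_{0}\oplus E_{1}$ is the Morita cycle from $\Komp_B(E_{0}\oplus E_{1})$ to $B$ and on the right $E_{i}$ the Morita cycle from $\Komp_B(E_{i})$ to $B$. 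By Lemma~\ref{Lemma:OverlineFLiftsF}, Lemma~\ref{Lemma:FonMoritabanMultiplicative}, Lemma~\ref{Lemma:MoritabanHomotopies}(1) and homotopy invariance of $\overline{\mF}$ on Morita cycles (Lemma~\ref{Lemma:HomotopyMoritabanAndF}), the left-hand side equals $\overline{\mF}$ of the $B$-pair $E_{0}\oplus E_{1}$ with $\Komp_B(E_{i})$ acting as itself on $E_{i}$ and by $0$ on $E_{1-i}$; comparing the linking algebra of this cycle with that of $E_{i}$ along the evident block inclusion, which is compatible with the inclusions of $B$ and of $\Komp_B(E_{i})$ and under which both inclusions of $B$ become isomorphisms by Lemma~\ref{Lemma:IncusionsIntoNonLinkingAlgebras}, and then unwinding Definition~\ref{Definition:LiftOfFtoMoritaban}, one gets $\overline{\mF}(E_{i})$. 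Distributing composition over $+$ and precomposing with $c_A^{-1}$ then gives $\overline{\mF}\bigl((E_{0}\oplus E_{1},T_{0}\oplus T_{1})\bigr)=\overline{\mF}(E_{0},T_{0})+\overline{\mF}(E_{1},T_{1})$, and $\overline{\mF}(0)=0$ follows formally.

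The genuinely non-formal work is concentrated in the additivity step and is forced by non-fullness in the Banach setting: restricting the Morita cycle $E_{0}\oplus E_{1}$ along $\kappa_{i}$ does not return $E_{i}$ on the nose but $E_{i}$ carrying a spurious degenerate summand, so one cannot simply invoke ``degenerate cycles vanish'' (which would presuppose additivity); this is what makes the linking-algebra comparison and Lemma~\ref{Lemma:IncusionsIntoNonLinkingAlgebras} necessary. A lesser nuisance is the bookkeeping of the subalgebras $A^{k}$ together with the precomposition version of Proposition~\ref{Proposition:EigenschaftenDoppelspalt}(1), neither of which is spelled out in the text but both of which are routine.
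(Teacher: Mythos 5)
Your argument is correct and follows the paper's own route: naturality in $B$ is quoted from Lemma \ref{Lemma:NaturalTrafoNatural}, naturality in $A$ is the straightforward precomposition check, and additivity is obtained exactly as in the paper by viewing the two block quasi-homomorphisms into $\Lin_B(E_1\oplus E_2)\vartriangleright\Komp_B(E_1\oplus E_2)$ as orthogonal and applying Proposition \ref{Proposition:EigenschaftenDoppelspalt}. The only point where you diverge is the identity $\overline{\mF}(E_1\oplus E_2)\circ\mF(\iota_i)=\overline{\mF}(E_i)$, which the paper disposes of by asserting $E\circ\Moritaban(\iota_i)=E_i$ in the Morita category; your linking-algebra comparison via Lemma \ref{Lemma:IncusionsIntoNonLinkingAlgebras} is a legitimate (and more explicit) justification of the same step, correctly accounting for the degenerate summand that the pullback of the cycle along $\iota_i$ acquires.
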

\begin{proof}
We already know from Lemma \ref{Lemma:NaturalTrafoNatural} that the map is natural in the second variable. That it is natural in the first variable is straighforward to prove. Additivity is checked as follows. 

Let $(E_1,T_1)$ and $(E_2,T_2)$ be elements of $\Eban(A,B)$ and write $(E,T)$ for $(E_1,T_1) + (E_2,T_2)$. Consider the diagram, for $i=1,2$,
$$
\xymatrix{
A^k \ar@<3pt>[rr]^-{\alpha_i} \ar@<-3pt>[rr]_-{\Ad_{T_i} \circ \bar{\alpha_i}} && \Lin_B(E_i) \ar@{}[r]|{\vartriangleright} 
\ar@{^{(}->}[d]_{\iota_i}& \Komp_B(E_i) \ar@{^{(}->}[d]_{\iota_i}\\
 && \Lin_{B}(E_1\oplus E_2) \ar@{}[r]|{\vartriangleright}& \Komp_{B}(E_1\oplus E_2)
}
$$
where the top-row defines $\overline{\mF}(E_i,T_i)$ (and $k\in \N$ is large enough). Appealing to Proposition \ref{Proposition:EigenschaftenDoppelspalt} gives that $\mF(\iota_i) \circ \overline{\mF}(\alpha_i, \Ad_{T_i} \circ \bar{\alpha}_i)$ equals the morphism induced from the the quasi-homomorphism
$$
\xymatrix{
A^k \ar@<3pt>[rr]^-{\iota_i\circ \alpha_i} \ar@<-3pt>[rr]_-{\iota_i\circ \Ad_{T_i} \circ \bar{\alpha}_i} &&
\Lin_{B}(E_1\oplus E_2) \ar@{}[r]|{\vartriangleright}& \Komp_{B}(E_1\oplus E_2).
}
$$
These two quasi-homomorphisms, for $i=1$ and $i=2$, are orthogonal and their sum is
$$
\xymatrix{
A^k \ar@<3pt>[rr]^-{\alpha} \ar@<-3pt>[rr]_-{\Ad_{T} \circ \bar{\alpha}} &&
\Lin_{B}(E_1\oplus E_2) \ar@{}[r]|{\vartriangleright}& \Komp_{B}(E_1\oplus E_2).
}
$$
where $T=T_1 \oplus T_2$ and $\alpha = \alpha_1 \oplus \alpha_2$; this is the quasi-homomorphism induced from $(E,T)$. So from Proposition \ref{Proposition:EigenschaftenDoppelspalt} we can deduce:
$$
\mF(\iota_1) \circ \overline{\mF}(\alpha_1, \Ad_{T_1} \circ \bar{\alpha}_1) + \mF(\iota_2) \circ \overline{\mF}(\alpha_2, \Ad_{T_2} \circ \bar{\alpha}_2) = \overline{\mF}(\alpha, \Ad_{T} \circ \bar{\alpha}).
$$
Now, for $i=1,2$,
$$
E \circ \Moritaban(\iota_i) = E_i, 
$$
in the Morita category, so
$$
\overline{\mF}(E) \circ \mF(\iota_i) = \overline{\mF}(E_i). 
$$
If we sum up (and neglect the isomorphism between $A^k$ and $A$), this gives:
\begin{eqnarray*}
&&\overline{\mF}(E_1, T_1) + \overline{\mF}(E_2, T_2)\\ 
&=& \ \overline{\mF}(E_1) \circ \overline{\mF}(\alpha_1, \Ad_{T_1} \circ \bar{\alpha}_1)  + \overline{\mF}(E_2) \circ \overline{\mF}(\alpha_2, \Ad_{T_2} \circ \bar{\alpha}_2)\\ 
&=& \ \overline{\mF}(E) \circ  \mF(\iota_1) \circ \overline{\mF}(\alpha_1, \Ad_{T_1} \circ \bar{\alpha}_1) + \overline{\mF}(E) \circ \mF(\iota_2) \circ \overline{\mF}(\alpha_2, \Ad_{T_2} \circ \bar{\alpha}_2)\\ 
&=& \ \overline{\mF}(E) \circ \overline{\mF}(\alpha, \Ad_{T} \circ \bar{\alpha}) = \overline{\mF}(E,T).
\end{eqnarray*}

\end{proof}

\begin{proposition}
The map  $\overline{\mF}\colon \KKban(A,B)\ \to \ \mC(\mF(A),\mF(B))$ respects the action of Morita morphisms in the second component, {\it cf.}~Section~5.7 of \cite{Paravicini:07:Morita:richtigerschienen}.
\end{proposition}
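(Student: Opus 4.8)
The plan is to mimic the proofs of Lemma~\ref{Lemma:NaturalTrafoNatural} and Lemma~\ref{Lemma:FonMoritabanMultiplicative}. Fix $x \in \KKban(A,B)$ and a Morita cycle $G \in \Mban(B,B')$; write $[G] \in \Moritaban(B,B')$ for its homotopy class and $x\cdot[G] \in \KKban(A,B')$ for the product defined in Section~5.7 of \cite{Paravicini:07:Morita:richtigerschienen}. One has to show
$$
\overline{\mF}(x\cdot[G]) \;=\; \overline{\mF}([G]) \circ \overline{\mF}(x) \;\colon\; \mF(A) \longrightarrow \mF(B'),
$$
where $\overline{\mF}([G])\colon \mF(B)\to\mF(B')$ is the functor of Definition~\ref{Definition:LiftOfFtoMoritaban} (which depends only on $[G]$ by Lemma~\ref{Lemma:HomotopyMoritabanAndF}). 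Both sides depend only on the homotopy classes of $G$ and $x$ (by Lemma~\ref{Lemma:HomotopyMoritabanAndF}, by the homotopy invariance of $\overline{\mF}$ on $\KKban$, and by that of the product), so I may pick representatives freely; in particular, since every class in $\KKban(A,B)$ is represented by some $(E,T)$ with $T^2=1$, I assume this.

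First I would unwind the action: because the ``operator'' of a Morita cycle is $0$, the class $x\cdot[G]$ is represented by $(E\otimes_B G,\ T\otimes\id_G)$, with $A$ acting through $\varphi_E\otimes\id_G$, and no connection correction term appears; since $(E,T)\otimes_B G\in\Eban(A,B')$ by construction, all exponent conditions needed below hold for some $k'$, and $(T\otimes\id_G)^2=1$. Write $E=E_0\oplus E_1$ by degree and recall that $\overline{\mF}(E,T)=\overline{\mF}(E)\circ\mF(\alpha,\Ad_T\circ\bar\alpha)\circ\mF(\iota_{A^k})^{-1}$, where $(\alpha,\Ad_T\circ\bar\alpha)\colon A^k\rightrightarrows\Lin_B(E)\vartriangleright\Komp_B(E)$ is the defining quasi-homomorphism and $\overline{\mF}(E)$ is the morphism of $E$ viewed as a Morita cycle from $\Komp_B(E)$ to $B$. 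The pair of homomorphisms $S\mapsto S\otimes\id_G$ from $\Lin_B(E)$ to $\Lin_{B'}(E\otimes_B G)$ and from $\Komp_B(E)$ to $\Komp_{B'}(E\otimes_B G)$ fits, for $k'\geq k$, into exactly the commuting square used in the proof of Lemma~\ref{Lemma:NaturalTrafoNatural}, but with the push-forward along a homomorphism replaced by $-\otimes_B G$; its lower quasi-homomorphism is precisely the one defining $\overline{\mF}(E\otimes_B G,\ T\otimes\id_G)$. By Proposition~\ref{Proposition:EigenschaftenDoppelspalt}, 1., and the canonical identifications $\mF(A^{n})\cong\mF(A)$, this yields
$$
\mF(\alpha\otimes 1,\ \Ad_{T\otimes 1}\circ(\bar\alpha\otimes 1)) \;=\; \mF(j)\circ\mF(\alpha,\Ad_T\circ\bar\alpha),
$$
where $j\colon\Komp_B(E)\to\Komp_{B'}(E\otimes_B G)$ is $S\mapsto S\otimes\id_G$.

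Next I would handle the Morita cycles. Viewed with $\Komp_B(E)$ acting via $j$ followed by inclusion, $E\otimes_B G$ is the pullback $j^{*}$ of $E\otimes_B G\in\Mban(\Komp_{B'}(E\otimes_B G),B')$, hence by Lemma~\ref{Lemma:MoritabanHomotopies}, 1., it is homotopic to $\Mban(j)\otimes_{\Komp_{B'}(E\otimes_B G)}(E\otimes_B G)$; on the other hand it is the composite of $E\in\Mban(\Komp_B(E),B)$ with $G\in\Mban(B,B')$. Applying Lemma~\ref{Lemma:FonMoritabanMultiplicative} together with Lemma~\ref{Lemma:OverlineFLiftsF} to both descriptions gives
$$
\overline{\mF}(E\otimes_B G)\circ\mF(j) \;=\; \overline{\mF}([G])\circ\overline{\mF}(E)\;\colon\;\mF(\Komp_B(E))\to\mF(B'),
$$
the left-hand $\overline{\mF}(E\otimes_B G)$ denoting the morphism of $E\otimes_B G$ as a Morita cycle from $\Komp_{B'}(E\otimes_B G)$ to $B'$. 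Composing the two displayed identities with $\mF(\iota_{A^k})^{-1}$ and using the description of $\overline{\mF}(E\otimes_B G,\ T\otimes\id_G)$ recalled above, one obtains $\overline{\mF}(x\cdot[G])=\overline{\mF}([G])\circ\overline{\mF}(E,T)=\overline{\mF}([G])\circ\overline{\mF}(x)$, which is the assertion.

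I expect the main obstacle to be the very first reduction: making precise, in the possibly degenerate Banach setting, the action of Section~5.7 of \cite{Paravicini:07:Morita:richtigerschienen}, and verifying that the product cycle really is $(E\otimes_B G,\ T\otimes\id_G)$ with the asserted quasi-homomorphism --- that is, that no connection term intervenes and that the operator and compactness conditions survive after passing to the appropriate powers $A^{k'}$ (and, when $G$ is degenerate, to powers of $\Komp_B(E)$), exactly as in the ``general case'' at the end of the proof of Lemma~\ref{Lemma:FonMoritabanMultiplicative}. Once that is in place, the remainder is the diagram chase above, combining Proposition~\ref{Proposition:EigenschaftenDoppelspalt} with the multiplicativity Lemma~\ref{Lemma:FonMoritabanMultiplicative}.
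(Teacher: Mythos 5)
Your argument is correct in outline, but it takes a genuinely different route from the paper. The paper does not unwind the quasi-homomorphism at all: it restricts to the case of a Morita \emph{equivalence} $F$ between $B$ and $B'$ with linking algebra $L$, quotes the identity $\iota_{B,*}[(E,T)]=\iota_{B',*}[(E,T)\otimes_B F]$ in $\KKban(A,L)$ (imported from the earlier Morita paper), applies the already-established naturality of $\overline{\mF}$ in the second variable under the homomorphisms $\iota_B,\iota_{B'}$ (Lemma \ref{Lemma:NaturalTrafoNatural}), and then simply inverts $\mF(\iota_{B'})$ and recognises $\mF(\iota_{B'})^{-1}\circ\mF(\iota_B)$ as the very definition of $\overline{\mF}(F)$; general Morita morphisms are dismissed as ``similar''. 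You instead compute directly with the product cycle $(E\otimes_B G,\,T\otimes 1)$, factor its defining quasi-homomorphism through $j\colon S\mapsto S\otimes 1$ via Proposition \ref{Proposition:EigenschaftenDoppelspalt}, and use the multiplicativity of $\overline{\mF}$ on Morita cycles (Lemma \ref{Lemma:FonMoritabanMultiplicative}) to identify $\overline{\mF}(E\otimes_B G)\circ\mF(j)$ with $\overline{\mF}([G])\circ\overline{\mF}(E)$ --- essentially the same diagram chase as in Lemma \ref{Lemma:NaturalTrafoNatural}, with the push-forward replaced by $-\otimes_B G$. The paper's route buys brevity and sidesteps all degenerate-case bookkeeping, at the price of treating only equivalences explicitly and of relying on the $\KKban$-level linking-algebra identity as external input; your route buys a uniform treatment of arbitrary Morita cycles $G$, at the price of the power-of-algebras technicalities you correctly flag (for instance, $\Komp_B(E)\otimes 1$ need only land in $\Komp_{B'}(E\otimes_B G)$ after passing to a suitable power of $\Komp_B(E)$ when $G$ is degenerate), which is precisely the level of care the paper itself defers in the ``general case'' of Lemma \ref{Lemma:FonMoritabanMultiplicative}.
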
 
\begin{proof}
We just consider the case of Morita equivalences, the general case being similar: Let $A$, $B$ and $B'$ be Banach algebras, let $(E,T) \in \Eban(A,B)$ and $F$ be a Morita equivalence from $B$ to $B'$. Let $L$ be the corresponding linking algebra. Let $\iota_B$ and $\iota_{B'}$ be the inclusions of $B$ and $B'$ in $L$, respectively. Then
$$
\iota_{B,*}[(E,T)] = \iota_{B',*}[(E,T) \otimes_B F] \quad \in \ \KKban(A,L).
$$
It follows that
$$
\mF(\iota_B) \circ \overline{\mF}(E,T) = \mF(\iota_{B'}) \circ \overline{\mF}((E,T) \otimes_B F) \ \colon \ \mF(A) \to \mF(L).
$$
Since $\mF(\iota_{B'})$ is invertible, we obtain
$$
\mF(\iota_{B'})^{-1}\circ \mF(\iota_B) \circ \overline{\mF}(E,T) =  \overline{\mF}((E,T) \otimes_B F)\ \colon \ \mF(A) \to \mF(B').
$$
Since $\overline{\mF}(F) = \mF(\iota_{B'})^{-1}\circ \mF(\iota_B)$, by definition, we obtain the desired equality.
\end{proof}

\begin{remark}
There is a similar natural transformation of bifunctors from $(A,B) \to \KK^{\ban}_1(A,B)$ to $(A,B) \to \mC(\mF(\Sigma A), \mF(B))$ if $\mF$ is a homotopy invariant, Morita invariant functor on $\BanAlg$ that is not only split-exact but half-exact for semi-split extensions of Banach algebras.
\end{remark}

\subsection{Relation to Kasparov's $\KK$-theory}

Recall the $\KK$-category with separable C$^*$-algebras as objects is universal for split-exact, homotopy invariant and Morita invariant functors (where these concepts have to be interpreted in the C$^*$-sense). Hence also $\mF$, restricted to the separable C$^*$-algebras, factors through $\KK$. In the proof of the universal property, a construction very similar to our construction fo $\KKban$-cycles is employed, so it is evident that the functor on $\KK$ given by the universal property factors through $\KKban$. 

However, we will give a proof of this fact here that does not compare the constructions directly. To this end, we analyse the composition of the canonical map from $\KK$ to $\KKban$ and the transformation $\overline{\mF}$ from $\KKban$ to $\mC$ constructed above. Note that this is not, at least a priori, a functor, so we cannot use the uniqueness part of the universal property right away. But it is a natural transformation of bifunctors and we will show that, for abstract reasons, it is functorial.

If $A$ and $B$ are C$^*$-algebras and if $(E,T)$ is a Kasparov cycle in $\E(A,B)$, then we can regard it as a cycle in $\Eban(A,B)$, see Section 1.6 and Proposition 1.1.4 of \cite{Lafforgue:02}. Now this cycle induces an element $\overline{\mF}(E,T)\colon \mF(A) \to \mF(B)$. As a composition of natural transformations, this transformation of bifunctors from $\KK(A,B)$ to $\mC(\mF(A), \mF(B))$ is natural. We call it $\overline{\mF}$, too.

We use the following lemma which is a somewhat abstract variant of Proposition 1.6.10 of \cite{Lafforgue:02}. 

\begin{lemma}\label{Lemma:LafforgueSkandalisFactorisationTrick}
Let $\mG$ be a functor from the category of separable C$^*$-algebras into some additive category $\mA$, and let $\tilde{\mG}$ be a natural transformation from the bifunctor $(A,B) \mapsto \KK(A,B)$ to the bifunctor $(A,B) \mapsto \Hom_{\mA}(\mG(A), \mG(B))$ (both bifunctors are considered to have values in the category of abelian groups) that extends $\mG$ in the sense that $\tilde{\mG} ([\varphi]) = \mG(\varphi)$ for all $*$-homomorphisms $\varphi$. Then $\tilde{\mG}$ is itself a functor, i.e., we have $\tilde{\mG}(y) \circ \tilde{\mG}(x) = \tilde{\mG}(y \circ x)$ for all $x\in \KK(A,B)$ and $y \in \KK(B,C)$. 
\end{lemma}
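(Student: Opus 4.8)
The plan is a bootstrap argument that is entirely formal once one knows how $\KK$ is generated by $*$-homomorphisms. Call a class $x\in\KK(A,B)$ (for arbitrary separable C$^*$-algebras $A,B$) \emph{good} if $\tilde{\mG}(z\circ x)=\tilde{\mG}(z)\circ\tilde{\mG}(x)$ for every $z\in\KK(B,C)$ and $\tilde{\mG}(x\circ w)=\tilde{\mG}(x)\circ\tilde{\mG}(w)$ for every $w\in\KK(A_0,A)$. Once every Kasparov class is shown to be good, the lemma follows by taking $z=y$ in the first identity. First I would note that the class $[\varphi]$ of any $*$-homomorphism is good: if $\varphi\colon A'\to A$ then $x\circ[\varphi]=\varphi^{*}(x)$ in $\KK$, so naturality of $\tilde{\mG}$ in the first variable gives $\tilde{\mG}(x\circ[\varphi])=\tilde{\mG}(x)\circ\mG(\varphi)=\tilde{\mG}(x)\circ\tilde{\mG}([\varphi])$; symmetrically, if $\psi\colon B\to B'$ then $[\psi]\circ x=\psi_{*}(x)$ and naturality in the second variable gives $\tilde{\mG}([\psi]\circ x)=\mG(\psi)\circ\tilde{\mG}(x)=\tilde{\mG}([\psi])\circ\tilde{\mG}(x)$, using that $\tilde{\mG}$ extends $\mG$.

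Next I would check that the good classes are closed under composition and under formation of $\KK$-inverses. Closure under composition is a one-line chase using associativity of the Kasparov product and the two defining identities. For $\KK$-inverses: if $x\in\KK(A,B)$ is good and invertible, then applying the two identities of goodness of $x$ to $x^{-1}\circ x=1_A$ and $x\circ x^{-1}=1_B$, together with $\tilde{\mG}(1_A)=\tilde{\mG}([\id_A])=\mG(\id_A)=\id_{\mG(A)}$, shows that $\tilde{\mG}(x)$ is invertible in $\mA$ with inverse $\tilde{\mG}(x^{-1})$. To see that $x^{-1}$ is good, write an arbitrary $z$ to be composed on the right with $x^{-1}$ as $z=(z\circ x^{-1})\circ x$ and apply goodness of $x$: this gives $\tilde{\mG}(z)=\tilde{\mG}(z\circ x^{-1})\circ\tilde{\mG}(x)$, hence $\tilde{\mG}(z\circ x^{-1})=\tilde{\mG}(z)\circ\tilde{\mG}(x)^{-1}=\tilde{\mG}(z)\circ\tilde{\mG}(x^{-1})$; the identity on the other side is obtained symmetrically from $w=x\circ(x^{-1}\circ w)$.

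Finally I would invoke the structure theory of $\KK$: every $x\in\KK(A,B)$ is a composite of classes of $*$-homomorphisms and of inverses of those among them that are $\KK$-equivalences; concretely, in the Cuntz picture $x=[e]^{-1}\circ[\psi]\circ[\pi_A]^{-1}$, where $\pi_A\colon qA\to A$ and $e\colon B\to\Komp(\ell^2(\N))\otimes B$ are the standard $*$-homomorphisms (both $\KK$-equivalences) and $\psi\colon qA\to\Komp(\ell^2(\N))\otimes B$ is a $*$-homomorphism. This is the fact underlying Proposition~1.6.10 of \cite{Lafforgue:02}. Since $[\psi]$, $[\pi_A]$ and $[e]$ are good by the first step, and the good classes are closed under $\KK$-inverses and composition by the second step, $x$ is good; hence all Kasparov classes are good and $\tilde{\mG}$ is a functor. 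I expect the only non-formal ingredient to be this factorisation of an arbitrary Kasparov class, and the main point to watch is that the factorisation one cites really has the shape of a composite of $[\varphi]^{\pm 1}$'s with each $\varphi$ a $*$-homomorphism (and a $\KK$-equivalence wherever its inverse appears), since that is exactly what the closure properties of the good classes are tailored to.
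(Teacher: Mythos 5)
Your proposal is correct and follows essentially the same route as the paper, which simply refers to Lafforgue's Proposition 1.6.10 and Lemme 1.6.11: the key input in both cases is that every Kasparov class factors as a composite of classes of $*$-homomorphisms and inverses of such classes, and naturality of $\tilde{\mG}$ handles composition with $*$-homomorphism classes. Your explicit bookkeeping via ``good'' classes (closure under composition and $\KK$-inverses) is just a careful write-up of the formal argument the paper outsources to \cite{Lafforgue:02}.
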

\begin{proof}
Proceed as in the proof of Proposition 1.6.10 of \cite{Lafforgue:02}. The main idea is that every element of $\KK(A,B)$ can be written as a composition of the class of a $*$-homomorphism and the inverse of a class of a $*$-homomorphism, see Lemme 1.6.11 of \cite{Lafforgue:02}. 

Note that naturality of $\tilde{\mG}$ means that, for all $x\in \KK(A,B)$ and $*$-homomorphisms $\varphi \colon A' \to A$ and $\psi \colon B\to B'$, we have
$$
\tilde{\mG} (\varphi^*\psi_* x) = \mG(\psi) \circ \tilde{\mG}(x) \circ \mG(\varphi). 
$$
 \end{proof}

\begin{lemma}\label{Lemma:mFRespctsStarHomomorphisms}
Let $A$ and $B$ be C$^*$-algebras. If $\varphi \colon A \to B$ is a $*$-homomorphism, then $\overline{\mF}$ maps the $\KK$-element $[\varphi] \in \KK(A,B)$ to $\mF(\varphi)$. 
\end{lemma}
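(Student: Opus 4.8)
The plan is to make the image of $[\varphi]$ under the canonical map $\KK(A,B)\to\KKban(A,B)$ fully explicit and then to recognise it as a cycle already handled above. The class of a $*$-homomorphism $\varphi\colon A\to B$ is represented by the Kasparov cycle $(B,0)$, where $B$ is regarded as a Hilbert $B$-module, trivially $\Z/2$-graded, on which $A$ acts through $\varphi$ followed by left multiplication. Under the comparison of Section~1.6 and Proposition~1.1.4 of \cite{Lafforgue:02}, this becomes the element $(E,T)\in\Eban(A,B)$ with $E$ the Banach $B$-pair $(B,B)$ associated to $B$, with $T=0$, with $E_0=E$ and $E_1=0$, and with left action $\alpha\colon A\to\Lin_B(E)$ given by $\varphi$ composed with the canonical homomorphism $B\to\Lin_B(B)=\Lin_B(E)$; since $B=BB$ for a C$^*$-algebra, this action is $\Komp_B(E)$-valued, so $k=1$ is admissible in the construction of $\overline{\mF}(E,T)$.

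First I would observe that $(E,T)$ is exactly the degenerate cycle treated in Remark~\ref{Remark:kkbanTrafoOfMoritaCycles}: here $T=0$, $E_1=0$, $\bar\alpha=0$ and $\alpha=\varphi$, and the underlying Morita cycle $(E,\varphi)$ is precisely $\Mban(\varphi)$. Hence $\overline{\mF}(E,T)=\overline{\mF}(\Mban(\varphi))$, and Lemma~\ref{Lemma:OverlineFLiftsF} gives $\overline{\mF}(\Mban(\varphi))=\mF(\varphi)$. Since $\overline{\mF}([\varphi])$ is by definition $\overline{\mF}(E,T)$, this proves the lemma. If one wishes to avoid quoting Remark~\ref{Remark:kkbanTrafoOfMoritaCycles}, one re-derives it in this case: as $\bar\alpha=0$, the quasi-homomorphism $(\alpha,\Ad_T\circ\bar\alpha)=(\alpha,0)$ that defines $\overline{\mF}(E,T)$ is a pair of genuine homomorphisms, so by Proposition~\ref{Proposition:EigenschaftenDoppelspalt}, \ref{Proposition:EigenschaftenDoppelspalt:EchteHoms}., it induces $\mF$ of the left action $A\to\Komp_B(E)$; composing with $\overline{\mF}(E)$ for the Morita cycle $E$ over $\Komp_B(E)$ and applying Lemma~\ref{Lemma:OverlineFLiftsF} once more collapses the whole composite to $\mF(\varphi)$. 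One should also recall, as in the first lines of Remark~\ref{Remark:kkbanTrafoOfMoritaCycles}, that the normalisation trick replacing $T$ by an operator with square $1$ (needed because $T=0$ has $T^2=0\neq1$) leaves the value of $\overline{\mF}$ unchanged.

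I expect no analytic difficulty here; the only real work is bookkeeping. The point to be careful about is that the comparison functor $\KK\to\KKban$ really does send $[\varphi]$ to the cycle $\Mban(\varphi)$ --- which requires unwinding Lafforgue's identification of Hilbert modules with Banach pairs (in particular the isomorphism identifying the conjugate module $\overline{B}$ with $B$) --- and that the recipe for $\overline{\mF}$ on $\Eban$-cycles agrees, on this cycle, with the recipe for $\overline{\mF}$ on Morita cycles. Once these identifications are made the statement is immediate.
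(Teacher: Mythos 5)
Your argument is exactly the paper's: the paper's proof consists of the single sentence that the claim follows from the last paragraph of Remark~\ref{Remark:kkbanTrafoOfMoritaCycles}, which in turn rests on Proposition~\ref{Proposition:EigenschaftenDoppelspalt}, \ref{Proposition:EigenschaftenDoppelspalt:EchteHoms}.\ and Lemma~\ref{Lemma:OverlineFLiftsF}, precisely the chain you spell out. Your version merely makes explicit the identification of $[\varphi]$ with the Banach pair $(B,B)$ and the normalisation issue for $T=0$, which the paper leaves implicit.
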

\begin{proof}
This follows from the last paragraph of Remark \ref{Remark:kkbanTrafoOfMoritaCycles}.
\end{proof}

\begin{theorem}\label{Theorem:NaturalTrafoMultiplicative}
On the full subcategory of separable C$^*$-algebras, the natural transformation $\overline{\mF}$ of bifunctors is multiplicative, i.e., if $A$, $B$ and $C$ are separable C$^*$-algebras and if $x\in \KK(A,B)$ and $y \in \KK(B,C)$, then 
$$
\overline{\mF}(y \circ x) = \overline{\mF}(y) \circ \overline{\mF}(x).
$$
So $\overline{\mF} \colon \KK \to \mC$ is a functor that extends $\mF$ and hence identical to the functor from $\KK$ to $\mC$ given by the universal property of $\KK$. 
\end{theorem}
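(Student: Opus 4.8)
The plan is to deduce multiplicativity from the abstract factorisation principle of Lemma~\ref{Lemma:LafforgueSkandalisFactorisationTrick}, applied with $\mG := \mF$ restricted to the category of separable C$^*$-algebras and $*$-homomorphisms, with $\mA := \mC$, and with $\tilde{\mG} := \overline{\mF}$ the natural transformation constructed in the previous paragraph. Two hypotheses must be verified. First, $\overline{\mF}$ has to be a natural transformation of bifunctors from $(A,B)\mapsto \KK(A,B)$ to $(A,B)\mapsto \Hom_{\mC}(\mF(A),\mF(B))$, both regarded as valued in abelian groups; but $\overline{\mF}$ on $\KK$ was defined precisely as the composite of the canonical (bi-)natural map $\KK\to\KKban$ (obtained by viewing a Kasparov cycle as a Banach $\KKban$-cycle) with the transformation $\overline{\mF}$ on $\KKban$, and the latter was shown above to be additive and natural in both variables; this additivity and naturality pass to the composite. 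Second, $\overline{\mF}$ must extend $\mG$ on morphisms, i.e.\ $\overline{\mF}([\varphi]) = \mF(\varphi)$ for every $*$-homomorphism $\varphi$, which is exactly Lemma~\ref{Lemma:mFRespctsStarHomomorphisms}. Granting these, Lemma~\ref{Lemma:LafforgueSkandalisFactorisationTrick} gives $\overline{\mF}(y\circ x) = \overline{\mF}(y)\circ\overline{\mF}(x)$ for all $x\in\KK(A,B)$ and $y\in\KK(B,C)$, which is the asserted multiplicativity.

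Once multiplicativity is in hand, $\overline{\mF}$ is a genuine covariant functor from the $\KK$-category (with separable C$^*$-algebras as objects) to $\mC$: it is compatible with composition by the displayed identity, and it sends the identity element $1_A = [\id_A]\in\KK(A,A)$ to $\mF(\id_A) = \id_{\mF(A)}$ by Lemma~\ref{Lemma:mFRespctsStarHomomorphisms}. By the same lemma, the precomposition of $\overline{\mF}$ with the canonical functor from separable C$^*$-algebras to $\KK$ equals the restriction of $\mF$. Since the universal property of $\KK$ for split-exact, homotopy invariant and Morita invariant functors (interpreted in the C$^*$-sense) asserts that a factorisation of $\mF$ through $\KK$ exists and is unique, $\overline{\mF}$ must coincide with the functor produced by that universal property.

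The only thing requiring care is the bookkeeping behind the first hypothesis of Lemma~\ref{Lemma:LafforgueSkandalisFactorisationTrick}, namely checking that the transformation on $\KK$ really is the honest composite $\KK\to\KKban\xrightarrow{\overline{\mF}}\mC$ and that it thereby inherits additivity and naturality in each variable (the latter in the form $\tilde{\mG}(\varphi^*\psi_* x) = \mF(\psi)\circ\tilde{\mG}(x)\circ\mF(\varphi)$). This is immediate from the definitions, so there is no substantive obstacle; the theorem is a formal consequence of the material already assembled, with Lemma~\ref{Lemma:LafforgueSkandalisFactorisationTrick} doing all the real work via the factorisation of $\KK$-elements through classes of $*$-homomorphisms and their formal inverses.
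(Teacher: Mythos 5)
Your proposal is correct and follows exactly the paper's own argument: verify via Lemma~\ref{Lemma:mFRespctsStarHomomorphisms} that $\overline{\mF}$ extends $\mF$ on $*$-homomorphisms, note that it is a natural transformation of bifunctors as a composite of such, and then invoke Lemma~\ref{Lemma:LafforgueSkandalisFactorisationTrick} to get multiplicativity and hence agreement with the functor from the universal property of $\KK$. Nothing is missing.
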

\begin{proof}
Lemma \ref{Lemma:mFRespctsStarHomomorphisms} implies that $\varphi \mapsto \overline{\mF}([\varphi])$ agrees with $\varphi \mapsto \overline{\mF}(\varphi)$ on all $*$-homomorphism $\varphi$. So the natural transformation of bifunctors $\overline{\mF}$ from $(A,B) \mapsto \KK(A,B)$ to $(A,B) \mapsto \mC(\mF(A), \mF(B))$ satisfies the hypotheses of Lemma \ref{Lemma:LafforgueSkandalisFactorisationTrick}, so we are done.
\end{proof}

\section{The Spanier-Whitehead construction for Banach algebras}

Let $\Cpt$ and $\Cpt_2$ denote categories of compact Hausdorff spaces and pairs of such spaces, respectively, as described in \cite{DellAmbrogio:08}. There is a natural notion of homotopy for these categories. 

\subsection{Short exact sequences}

%

Let $\mE$ be one of the classes $\mE_{\max}$ and $\mE_{\min}$. We call the morphisms $i$ appearing in the elements of $\mE$ \text{admissible monomorphisms} and the morphisms $p$ \text{admissible epimorphisms}. The class $\mE$ has the following properties: 
\begin{enumerate}
\item For every Banach algebra $A$, the sequence
\[
\xymatrix{
0\ar@{>->}[r]^-{0} & A \ar@{->>}[r]^-{\id}  & A
}
\]
is in $\mE$.

\item  For every Banach algebra $A$, the sequence
\[
\xymatrix{
A\ar@{>->}[r]^-{\id} & A \ar@{->>}[r]^-{0}  & 0
}
\]
is in $\mE$.

\item For all short exact sequences 
\[
\xymatrix{
A\ar@{>->}[r] & B \ar@{->>}[r]  & C
}
\]
in $\mE$ and every homomorphism of Banach algebras $\varphi \colon C' \to C$ the (canonical) sequence
\[
\xymatrix{
A\ar@{>->}[r] & B \times_{C} C' \ar@{->>}[r]  & C'
}
\]
is in $\mE$; in other words: $\mE$ is closed under pullbacks.
\end{enumerate}

\begin{lemma}[Three lemma]
Given a commuting diagram
\[
\xymatrix{
A\ar@{>->}[r] \ar[d]_{\varphi} & B \ar@{->>}[r]\ar[d]_{\chi}  & C\ar[d]_{\psi}\\
A'\ar@{>->}[r] & B' \ar@{->>}[r]  & C'
}
\]
with rows in $\mE$, if $\varphi$ and $\psi$ are isomorphisms, so is $\chi$.
\end{lemma}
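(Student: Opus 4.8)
The plan is to recognise this as the \emph{short five lemma} for the category $\BanAlg$: the underlying diagram chase is purely formal, and the only analytic ingredient is the open mapping theorem, needed because in $\BanAlg$ a bijective morphism is not automatically invertible. First I would fix notation. Write the top row as $i\colon A\to B$, $p\colon B\to C$ and the bottom row as $i'\colon A'\to B'$, $p'\colon B'\to C'$, so that commutativity of the diagram means $i'\circ\varphi=\chi\circ i$ and $p'\circ\chi=\psi\circ p$; by the definition of a short exact sequence in $\BanAlg$, $i,i'$ are injective with closed image, $p,p'$ are surjective, $i(A)=\kernel(p)$ and $i'(A')=\kernel(p')$.

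Next I would check that $\chi$ is injective. If $\chi(b)=0$ then $\psi(p(b))=p'(\chi(b))=0$, so $p(b)=0$ since $\psi$ is injective; hence $b\in\kernel(p)=i(A)$, say $b=i(a)$, and then $i'(\varphi(a))=\chi(i(a))=0$, which forces $a=0$ because $i'$ and $\varphi$ are injective, so $b=0$. Then I would check surjectivity. Given $b'\in B'$, choose $c\in C$ with $\psi(c)=p'(b')$ (possible as $\psi$ is surjective) and then $b_0\in B$ with $p(b_0)=c$ (as $p$ is surjective); since $p'(\chi(b_0))=\psi(p(b_0))=\psi(c)=p'(b')$, we get $b'-\chi(b_0)\in\kernel(p')=i'(A')$, say $b'-\chi(b_0)=i'(a')$; picking $a\in A$ with $\varphi(a)=a'$ gives $\chi(b_0+i(a))=\chi(b_0)+i'(\varphi(a))=b'$. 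Thus $\chi$ is bijective.

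Finally, $\chi$ is a bijective continuous homomorphism of Banach spaces, so by the open mapping theorem $\chi^{-1}$ is bounded, and $\chi^{-1}$ is multiplicative since $\chi$ is; hence $\chi$ is an isomorphism in $\BanAlg$. I expect no real obstacle: the content is the elementary chase together with this standard functional-analytic upgrade from ``bijective'' to ``invertible''. I would only remark that the argument never uses a bounded linear split, so it applies verbatim whether $\mE=\mE_{\max}$ or $\mE=\mE_{\min}$.
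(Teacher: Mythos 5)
Your proposal is correct and follows essentially the same route as the paper: the paper also obtains bijectivity of $\chi$ from the (short) five lemma and then upgrades to an isomorphism of Banach algebras via the open mapping theorem, noting that the case $\mE=\mE_{\min}$ is subsumed. You merely spell out the diagram chase that the paper cites as "the five-lemma".
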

\begin{proof}
Let the extensions be in $\mE_{\max}$. Then $\chi$ is bijective by the five-lemma and continuous and linear by assumption. So it is a homeomorphism, i.e., an isomorphism of Banach algebras.
\end{proof}

Notice that this lemma would be false if we had considered the category of Banach algebras with norm-contractive homomorphisms instead of continuous homomorphisms.

The following lemma can be proved in much the same way as Lemma A.2.3 in \cite{DellAmbrogio:08}.

\begin{lemma}\label{Lemma:IvosLemma}
Given a commuting diagram
\[
\xymatrix{
A\ar@{>->}[r] \ar[d]_{\varphi} & B \ar@{->>}[r]\ar[d]_{\chi}  & C\ar[d]_{\psi}\\
A'\ar@{>->}[r] & B' \ar@{->>}[r]  & C'
}
\]
with rows in $\mE$, then the right-hand square is a pullback if and only if $\varphi$ is an isomorphism. 
\end{lemma}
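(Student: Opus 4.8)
The plan is to reduce the assertion to the Three Lemma. Denote the monomorphisms and epimorphisms of the two rows by $i\colon A\to B$, $p\colon B\to C$ and $i'\colon A'\to B'$, $p'\colon B'\to C'$, and set $P := B'\times_{C'}C = \{(b',c)\in B'\times C : p'(b')=\psi(c)\}$. Since $p'$ and $\psi$ are continuous, $P$ is a closed subalgebra of the Banach algebra $B'\times C$, hence a Banach algebra; let $q\colon P\to C$ be the second projection. Recalling that $\mE$ is closed under pullbacks, applied to the bottom row together with $\psi\colon C\to C'$, the sequence
$$
\xymatrix{
A'\ar@{>->}[r]^-{a'\mapsto(i'(a'),0)} & P \ar@{->>}[r]^-{q}  & C
}
$$
belongs to $\mE$. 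Because $p'\circ\chi=\psi\circ p$, the pair $(\chi,p)$ defines a homomorphism $\kappa\colon B\to P$, $b\mapsto(\chi(b),p(b))$, and from $\chi\circ i=i'\circ\varphi$ and $p\circ i=0$ one sees that
$$
\xymatrix{
A\ar@{>->}[r]^i \ar[d]_{\varphi} & B \ar@{->>}[r]^p\ar[d]_{\kappa}  & C\ar@{=}[d]\\
A'\ar@{>->}[r] & P \ar@{->>}[r]^-{q}  & C
}
$$
commutes, with both rows in $\mE$. By the universal property of the pullback, the right-hand square of the given diagram is a pullback if and only if $\kappa$ is an isomorphism, so it suffices to show that $\kappa$ is an isomorphism exactly when $\varphi$ is.

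If $\varphi$ is an isomorphism, then the two outer vertical arrows $\varphi$ and $\id_C$ of the last diagram are isomorphisms, so $\kappa$ is an isomorphism by the Three Lemma. Conversely, assume $\kappa$ is an isomorphism. From $q\circ\kappa=p$ we get $\kappa^{-1}(\ker q)=\ker p$, so $\kappa$ restricts to a bounded bijection with bounded inverse from $\ker p$ onto $\ker q$. Now $\ker p=i(A)$ and, $i$ being injective with closed image, $i$ is an isomorphism of $A$ onto $\ker p$; similarly $\ker q=\{(i'(a'),0):a'\in A'\}$ and $a'\mapsto(i'(a'),0)$ is an isomorphism of $A'$ onto $\ker q$. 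Since $\kappa(i(a))=(i'(\varphi(a)),0)$, the restriction $\kappa|_{\ker p}$ becomes $\varphi$ under these two identifications, so $\varphi$ is an isomorphism, being a composite of three isomorphisms.

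I do not expect a serious obstacle; this is the Banach-algebra version of Lemma~A.2.3 of \cite{DellAmbrogio:08}. The only delicate points are topological: that $P$ really is a Banach algebra and that the continuous algebra bijections occurring above have continuous inverses; both follow from the open mapping theorem, exactly as in the Three Lemma. The argument is uniform in the choice $\mE=\mE_{\max}$ or $\mE=\mE_{\min}$, since both classes are closed under pullbacks.
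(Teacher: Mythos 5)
Your proof is correct and is exactly the standard argument (comparison map $\kappa\colon B\to B'\times_{C'}C$, the Three Lemma for one direction, restriction to kernels plus the open mapping theorem for the other), which is the argument of Dell'Ambrogio's Lemma A.2.3 that the paper explicitly defers to. No gaps.
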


\begin{definition}
For all $(X,Y) \in\in \Cpt_2$ and $E\in \in \BanSp$ define 
$$
\Cont(X,Y;E):= \left\{ f\colon X \to E \ | \ f\restr_Y\equiv 0,\ f \text{ continuous}\right\}.
$$
This is a Banach space when equipped with the $\sup$-norm, and it is a Banach algebra in a canonical way if $E$ is a Banach algebra.
\end{definition}

Note that $\Cont(\cdot; \cdot)$ is a bifunctor from $\Cpt_2 \times \BanAlg$ to $\BanAlg$, contravariant in the first variable and covariant in the second.

\begin{proposition}
The bifunctor $\Cont(\cdot; \cdot)$ satisfies:
\begin{enumerate}
\item The category $\BanAlg$ is a module over the symmetric monoidal category $\Cpt_2$, i.e., we have canonical (isometric) isomorphisms
$$
\Cont(X,Y; \Cont (X',Y'; A)) \cong \Cont((X,Y) \wedge (X',Y');A) \text{ and } \Cont(\pt; A) \cong A
$$
for all Banach algebras $A$, natural in $(X,Y), (X',Y') \in\in \Cpt_2$.

\item For every pair $(X,Y) \in\in \Cpt_2$ the functor $\Cont(X,Y;\cdot)$ is exact in the sense that it maps elements of $\mE$ to elements of $\mE$.

\item For every pair $(X,Y) \in\in \Cpt_2$ the functor $\Cont(X,Y;\cdot)$ preserves finite products (in particular, it sends $0$ to $0$). 

\item For every Banach algebra $A$ and every triple $Z \subseteq Y \subseteq X$ of compact Hausdorff spaces, the sequence of inclusions
$$
(Y,Z) \hookrightarrow (X,Z) \hookrightarrow (X,Y)
$$
induces an extension (in $\mE$)
\[
\xymatrix{
\Cont(X,Y;A) \ar@{>->}[r] & \Cont(X,Z;A) \ar@{->>}[r]  & \Cont(Y,Z;A).
}
\]

\item For any pair $(X,Y) \in \in \Cpt_2$ and every Banach algebra $A$, the canonical projection from $(X,Y)$ to $(X/Y, \{Y\})$ induces an isomorphism
$$
\Cont(X,Y; A) \ \cong \ \Cont(X/Y, \{Y\}; A).
$$
\end{enumerate}
\end{proposition}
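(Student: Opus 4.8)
The plan is to treat the five assertions separately; only part (1) carries genuine structure, while (2)--(5) reduce to standard point-set topology once the appropriate extension theorems are invoked. For part (1), the engine I would use is the exponential law: for compact Hausdorff spaces $X,X'$ and a Banach space $E$, the assignment $g\mapsto\bigl((x,x')\mapsto g(x)(x')\bigr)$ is an isometric isomorphism $\Cont(X,\Cont(X',E))\cong\Cont(X\times X',E)$ for the sup-norms, the passage from a two-variable function back to its curried form being continuous because $X'$ is compact. Writing the smash product of pairs in $\Cpt_2$ as $(X,Y)\wedge(X',Y')=\bigl(X\times X',\;X\times Y'\cup Y\times X'\bigr)$, one then checks that under this isomorphism the subspace $\Cont(X,Y;\Cont(X',Y';E))$ (those $g$ with $g|_Y\equiv 0$ and $g(x)\in\Cont(X',Y';E)$ for all $x$) corresponds precisely to the $f\in\Cont(X\times X',E)$ vanishing on $Y\times X'$ and on $X\times Y'$, i.e.\ to $\Cont((X,Y)\wedge(X',Y');E)$; for $E$ a Banach algebra this is an isomorphism of Banach algebras, natural in both pair-variables, and $\Cont(\pt;A)\cong A$ is evaluation at the point. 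The associativity and unit coherences making $\BanAlg$ a module over $(\Cpt_2,\wedge)$ I would deduce from the naturality of the exponential law together with the coherences already available in $\Cpt_2$, rather than running the diagram chase.

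Parts (2) and (4) are the exactness statements, and the only nontrivial ingredient in each is surjectivity. For (2), given an extension $A\rightarrowtail B\overset{p}{\twoheadrightarrow}C$, the map $\Cont(X,Y;-)$ is injective with image $\{f: f(X)\subseteq A\}$, which is closed (a uniform limit of $A$-valued maps is $A$-valued) and equal to $\ker\Cont(X,Y;p)$. If the extension is semi-split with bounded linear splitting $\sigma$, then $h\mapsto\sigma\circ h$ is a bounded linear splitting of $\Cont(X,Y;p)$ preserving vanishing on $Y$ (since $\sigma(0)=0$); if $\mE=\mE_{\max}$, I would use a Bartle--Graves continuous section $s\colon C\to B$ of $p$, normalised so that $s(0)=0$ by passing to $c\mapsto s(c)-s(0)$, so that $h\mapsto s\circ h$ lifts any $h\in\Cont(X,Y;C)$. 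For (4), restriction along $Y\hookrightarrow X$ maps $\Cont(X,Z;A)$ to $\Cont(Y,Z;A)$, is a homomorphism, and has kernel the continuous $A$-valued maps on $X$ vanishing on $Z$ and on $Y$, hence (as $Z\subseteq Y$) on $Y$, namely $\Cont(X,Y;A)$, which is closed; surjectivity is the vector-valued Tietze extension theorem for the closed inclusion $Y\subseteq X$ of compact Hausdorff spaces (an extension of a map vanishing on $Z\subseteq Y$ again vanishes on $Z$), and in the $\mE_{\min}$ case one additionally needs a \emph{bounded linear} extension operator respecting the vanishing on $Z$, supplied by the Dugundji-type extension available for the pairs in $\Cpt_2$ (equivalently, after applying (5), for pointed compact Hausdorff spaces).

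Part (3) is immediate: a continuous map into a finite product is a tuple of continuous maps, the vanishing is checked coordinatewise, and $\Cont(X,Y;0)=0$. For (5), the quotient map $q\colon X\to X/Y$ is closed, hence a quotient map, and $X/Y$ is compact Hausdorff; pullback $h\mapsto h\circ q$ carries $\Cont(X/Y,\{Y\};A)$ isometrically and multiplicatively onto $\Cont(X,Y;A)$, because a continuous $A$-valued function on $X$ vanishing on $Y$ is constant on $Y$ and so factors uniquely through $X/Y$ by the universal property of the quotient topology, the factor vanishing at the point $\{Y\}$; the inverse is this factorisation, and surjectivity of $q$ makes the correspondence norm-preserving.

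The principal obstacle is the surjectivity built into (2) and (4): it is not formal but rests on genuine theorems --- Bartle--Graves (or Michael selection) on the Banach-space side and Tietze/Dugundji extension on the topological side --- and, in the semi-split case, on arranging that the chosen sections preserve the vanishing conditions that cut out the subspaces $\Cont(X,Y;-)$.
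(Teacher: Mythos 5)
Your proposal is correct and, on the two points the paper actually argues (surjectivity in part 2 for $\mE_{\max}$ via a Michael/Bartle--Graves continuous section normalised to send $0$ to $0$, and the bounded linear split in part 4 for $\mE_{\min}$), it follows essentially the same route as the paper; the remaining parts, which the paper leaves to the reader, you fill in by the standard exponential-law, kernel-identification and quotient arguments, all of which are sound. The one place where your mechanism differs is part 4 in the $\mE_{\min}$ case: you invoke a vector-valued Dugundji-type extension operator directly, whereas the paper first produces a continuous linear (completely positive) split of the scalar surjection $\Cont(X,Z;\C)\twoheadrightarrow\Cont(Y,Z;\C)$ and then transports it to coefficients in $E$ via the identification $\Cont(X;E)\cong\Cont(X)\hotimes_{\varepsilon}E$ with the injective tensor product. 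The paper's two-step variant is worth keeping in mind because the direct Dugundji construction presupposes metrizability of the spaces in $\Cpt_2$ (which holds in Dell'Ambrogio's setting but is a hypothesis you are silently using), while the tensor-product argument isolates that hypothesis entirely in the scalar case; your observation that any extension operator automatically preserves vanishing on $Z\subseteq Y$ is correct and needed in either version.
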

\begin{proof}
We sketch some parts of the proof:

As far as 2.\ is concerned in the case $\mE = \mE_{\max}$, there are certainly many different proofs, but one proceeds as follows: If $p\colon A \to B$ is a continuous surjective homomorphism, then by Michael's selection principle you can find a continuous split $s$ of $p$ such that $s(0) =0$. If $g\in \Cont(X,Y; B)$, then $s\circ g \in \Cont(X,Y;A)$ such that $p\circ (s \circ g) = (p \circ s) \circ g = g$. So the map from $\Cont(X,Y;A)$ to $\Cont(X,Y;B)$ induced by $p$ is surjective.

As far as 4.\ is concerned in the case $\mE = \mE_{\min}$: There is a continuous linear (even a completely positive) split of the canonical  $*$-homomorphism from $\Cont(X,Z;\C)$ to $\Cont(Y,Z;\C)$. This can be lifted to a continuous linear split for the canonical linear map from $\Cont(X,Z;E)$ to $\Cont(Y,Z;E)$ for any Banach space $E$, for example using the injective tensor product.

\end{proof} 

\begin{lemma}[Mayer--Vietoris, {\it cf.}~Prop.~A.2.7 in  \cite{DellAmbrogio:08}]
Consider a square in $\Cpt_2$:
\[
\xymatrix{
(X', Z') & (Y',Z') \ar@{_{(}->}[l]\\
(X,Z) \ar[u]^f & (Y,Z) \ar@{_{(}->}[l] \ar[u]_g
}
\]
where the horizontal maps are inclusions of pairs. Assume that on the bigger spaces, it gives a pushout:
\[
\xymatrix{
X' & Y'\ar@{_{(}->}[l]\\
X \ar[u]^f & Y \ar@{_{(}->}[l] \ar[u]_{g}
}
\]
Then, for every Banach algebra $A$, the following square is a pullback:
\[
\xymatrix{
\Cont(X', Z'; A) \ar@{->>}[r] \ar[d]_{\Cont(f;A)} & \Cont(Y',Z';A) \ar[d]^{\Cont(g;A)}\\
\Cont(X,Z;A)\ar@{->>}[r]   & \Cont(Y,Z;A) 
}
\]
\end{lemma}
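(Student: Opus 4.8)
The plan is to prove directly that the canonical comparison map from $\Cont(X',Z';A)$ to the (explicit) pullback is an isomorphism of Banach algebras. Write
$$
P \;=\; \bigl\{(u,v) : u\in\Cont(X,Z;A),\ v\in\Cont(Y',Z';A),\ u|_Y = v\circ g\bigr\}
$$
for the pullback in $\BanAlg$ of $\Cont(X,Z;A)\to\Cont(Y,Z;A)\leftarrow\Cont(Y',Z';A)$; this is the closed subalgebra of the product (with the maximum of the two sup-norms) cut out by the compatibility condition. Let $\iota\colon Y'\hookrightarrow X'$ denote the inclusion. Then $\Phi\colon\Cont(X',Z';A)\to P$, $w\mapsto (w\circ f,\,w\circ\iota)$, is a well-defined contractive homomorphism: it is the pair of the two restriction homomorphisms, the compatibility $u|_Y=v\circ g$ holds because $f|_Y=\iota\circ g$, the function $w\circ f$ kills $Z$ since $f(Z)\subseteq Z'$ and $w$ kills $Z'$, and $w\circ\iota$ kills $Z'$ since $\iota(Z')=Z'\subseteq X'$. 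It then suffices to prove $\Phi$ bijective and bicontinuous; I will in fact show it is bijective and isometric.

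The geometric heart is surjectivity, and it comes from the pushout hypothesis $X'=X\cup_Y Y'$. Given $(u,v)\in P$, the continuous maps $u\colon X\to A$ and $v\colon Y'\to A$ agree over $Y$, so by the universal property of the pushout there is a unique continuous $w\colon X'\to A$ with $w\circ f=u$ and $w\circ\iota=v$; equivalently, $\Cont(X';A)\cong\Cont(X;A)\times_{\Cont(Y;A)}\Cont(Y';A)$. Here one must know that the square of spaces, a pushout in $\Cpt$, also has this universal property against the possibly non-compact target $A$: this holds because $A$ is Hausdorff and the pushout in $\Cpt$ is the Hausdorffification of the pushout in $\Top$, so no maps into $A$ are lost. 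It remains to see that $w$ is admissible: since $Z'\subseteq Y'$ and, as a subspace of $X'$, $Z'=\iota(Z')$, we get $w|_{Z'}=v|_{Z'}=0$, hence $w\in\Cont(X',Z';A)$ with $\Phi(w)=(u,v)$. Note also that no constraint was lost in passing to $P$: the condition $u|_Z=0$ built into $P$ is automatic from the other two, because $u|_Z=(v\circ g)|_Z$ and $g(Z)\subseteq Z'$ where $v$ vanishes.

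For injectivity and the norm equality, use that the cocone of a pushout is jointly surjective, so $X'=f(X)\cup\iota(Y')$. Thus $w\circ f=0$ and $w\circ\iota=0$ force $w\equiv 0$ on $X'$, so $\Phi$ is injective; and in general
$$
\|w\|_\infty \;=\; \max\Bigl(\sup_{x\in X}\|w(f(x))\|,\ \sup_{y\in Y'}\|w(\iota(y))\|\Bigr) \;=\; \max\bigl(\|w\circ f\|_\infty,\ \|w\circ\iota\|_\infty\bigr) \;=\; \|\Phi(w)\|_P .
$$
Being a bijective isometric homomorphism, $\Phi$ is an isomorphism in $\BanAlg$, which is exactly the assertion that the square of $\Cont$-algebras is a pullback.

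I expect the only genuinely non-formal step, hence the main obstacle, to be the absolute Mayer--Vietoris statement $\Cont(X';A)\cong\Cont(X;A)\times_{\Cont(Y;A)}\Cont(Y';A)$ used above --- more precisely, making precise that the hypothesis ``pushout in $\Cpt$'' furnishes the universal property against the Banach algebra $A$. I would dispose of this via the Hausdorffification remark, or, equivalently, by noting that in every situation where the lemma is applied the pushout is the gluing of two closed subspaces of $X'$ along their intersection, for which the pasting lemma for closed sets gives the universal property against an arbitrary space; this is precisely the argument of Proposition~A.2.7 of \cite{DellAmbrogio:08}, transcribed into the Banach setting. Everything after the absolute case is bookkeeping with the vanishing conditions and with sup-norms. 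As an alternative one could avoid discussing pushouts of spaces altogether by reducing to the absolute case through the extensions attached to the triples $Z'\subseteq Y'\subseteq X'$ and $Z\subseteq Y\subseteq X$ (item~4 of the preceding proposition), using $\Cont(X',Y';A)\cong\Cont(X,Y;A)$ together with a three-lemma argument in the spirit of Lemma~\ref{Lemma:IvosLemma}.
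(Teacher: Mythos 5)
Your proof is correct and is exactly the direct verification intended here: the paper states this lemma without proof, deferring to Prop.~A.2.7 of \cite{DellAmbrogio:08}, and your argument is a faithful transcription of that C$^*$-algebraic proof into the Banach setting (explicit comparison map to the concrete pullback, surjectivity from the pushout's universal property, injectivity and the isometry from joint surjectivity of the cocone). You also correctly isolate and dispose of the one genuinely non-formal point, namely that the pushout in $\Cpt$ retains its universal property against the non-compact Hausdorff target $A$.
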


\begin{lemma}[{\it cf.}~Lemma~A.2.9 in \cite{DellAmbrogio:08}]\label{Lemma:FunctionsAndPullback}
For every pair $(X,Y) \in \in \Cpt_2$, the functor $\Cont(X,Y;\cdot)$ preserves those pullbacks where one of the maps $A \rightarrow D \leftarrow B$ is an admissible epimorphism.
\end{lemma}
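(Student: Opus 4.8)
The plan is to realise the image of the given pullback under $\Cont(X,Y;\cdot)$ as the right-hand square of a morphism of short exact sequences in $\mE$ and then to quote Lemma~\ref{Lemma:IvosLemma}.

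So suppose we are handed a pullback square
$$
\xymatrix{
P \ar[r]^-{\pi_A} \ar[d]_-{\pi_B} & A \ar[d]^-{\alpha} \\
B \ar[r]^-{\beta} & D
}
$$
in $\BanAlg$; by the evident symmetry of a pullback square we may assume that it is $\alpha$ which is an admissible epimorphism. First I would write down the extension $\xymatrix{\kernel(\alpha) \ar@{>->}[r] & A \ar@{->>}[r]^-{\alpha} & D}$ in $\mE$ and pull it back along $\beta$. By property~3 of the class $\mE$ (closure under pullbacks), the resulting sequence $\xymatrix{\kernel(\alpha) \ar@{>->}[r] & P \ar@{->>}[r]^-{\pi_B} & B}$ — whose left map is the canonical inclusion $x\mapsto(x,0)$ — again lies in $\mE$. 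Thus $\pi_B$ is an admissible epimorphism with the same kernel as $\alpha$, and the two extensions are related by the evident commuting ladder whose middle and right vertical arrows are $\pi_A$ and $\beta$ and whose left vertical arrow is the identity of $\kernel(\alpha)$.

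Next I would apply $\Cont(X,Y;\cdot)$, which is exact by part~2 of the preceding proposition, so that the ladder above becomes a morphism of extensions in $\mE$:
$$
\xymatrix{
\Cont(X,Y;\kernel(\alpha)) \ar@{>->}[r] \ar@{=}[d] & \Cont(X,Y;P) \ar@{->>}[r] \ar[d]_-{\Cont(X,Y;\pi_A)} & \Cont(X,Y;B) \ar[d]^-{\Cont(X,Y;\beta)} \\
\Cont(X,Y;\kernel(\alpha)) \ar@{>->}[r] & \Cont(X,Y;A) \ar@{->>}[r] & \Cont(X,Y;D)
}
$$
Commutativity of the squares is functoriality of $\Cont(X,Y;\cdot)$ applied to $\alpha\circ\pi_A = \beta\circ\pi_B$ and to the fact that the inclusion $\kernel(\alpha)\hookrightarrow A$ factors through $P$ via $\pi_A$. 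Both rows lie in $\mE$ and the left-hand vertical arrow is an isomorphism, so Lemma~\ref{Lemma:IvosLemma} yields that the right-hand square is a pullback in $\BanAlg$. Since that square, equipped with the maps $\Cont(X,Y;\pi_A)$ and $\Cont(X,Y;\pi_B)$, is exactly the image of the original pullback, this is precisely the assertion.

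I do not anticipate any real obstacle: the argument rests entirely on the closure of $\mE$ under pullbacks, the exactness of $\Cont(X,Y;\cdot)$, and Lemma~\ref{Lemma:IvosLemma}, all already available. The only routine verifications are the canonical identification $\kernel(\pi_B)\cong\kernel(\alpha)$ via $x\mapsto(x,0)$ — so that property~3 applies verbatim to the pullback model $P = A\times_D B$ — and the observation that $\Cont(X,Y;\pi_A)$ restricts to the identity on $\Cont(X,Y;\kernel(\alpha))$; both are immediate from that explicit model.
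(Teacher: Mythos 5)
Your argument is correct and is exactly the intended one (the paper defers to Lemma~A.2.9 of \cite{DellAmbrogio:08}, which proceeds the same way): realise the pullback as a morphism of extensions via closure of $\mE$ under pullbacks, apply the exactness of $\Cont(X,Y;\cdot)$ from part~2 of the preceding proposition, and conclude with Lemma~\ref{Lemma:IvosLemma}. The routine identifications you flag at the end are indeed the only things to check, and they hold for the explicit model $P=A\times_D B$.
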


\subsection{Homotopy}

%

\begin{definition}
We define the homotopy category $\BanAlg/\!\!\sim$ to be the quotient of $\BanAlg$ by the homotopy relation, i.e., objects of $\BanAlg/\!\!\sim$ are Banach algebras and morphisms of $\BanAlg/\!\!\sim$ are homotopy classes of homomorphisms of Banach algebras. The set of morphism in this category between Banach algebras $A$ and $B$ is denoted by $[A,B]$.
\end{definition}

\begin{lemma}[ {\it cf.}~Prop.~A.3.3 of \cite{DellAmbrogio:08}]
\begin{enumerate}
\item The canonical functor $\can \colon \BanAlg \to \BanAlg/\!\!\sim$ sends finite products to finite products and $0$ to a zero object.
\item The bifunctor $\Cont(\cdot; \cdot)$ from $\Cpt_2^{\op} \times \BanAlg$ to $\BanAlg$ descends to the homotopy categories
$$
\xymatrix{
\Cpt_2^{\op} \times \BanAlg \ar[rr]^-{\Cont} \ar[d]_{\can \times \can}&& \BanAlg \ar[d]^{\can}\\
\Cpt_2/\!\!\sim^{\op} \times \BanAlg/\!\!\sim \ar[rr]^-{\Cont}&& \BanAlg/\!\!\sim
}
$$
Moreover, $\Cont(X,Y; \cdot)\colon \BanAlg/\!\!\sim \to \BanAlg/\!\!\sim$ still preserves the zero objects and finite products, for any fixed pair $(X,Y)$. In particular, this holds for the functor $\Sigma = \Cont(S^1, 1; \cdot)$:
$$
\xymatrix{
 \BanAlg \ar[rr]^-{\Sigma} \ar[d]_{\can}&& \BanAlg \ar[d]^{\can}\\
\BanAlg/\!\!\sim \ar[rr]^-{\Sigma}&& \BanAlg/\!\!\sim
}
$$
\end{enumerate}
\end{lemma}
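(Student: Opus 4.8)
The plan is to exploit that $\can\colon\BanAlg\to\BanAlg/\!\!\sim$ is the identity on objects and full (it is the quotient of $\BanAlg$ by the homotopy congruence) and that the module structure of $\BanAlg$ over $\Cpt_2$ recorded in the proposition above on $\Cont(\cdot;\cdot)$ is natural; everything then reduces to bookkeeping with the cylinder functor $\zylinder A=A[0,1]$, which is canonically $\Cont(I,\emptyset;\cdot)$ for $I=[0,1]$. For part~1, I would first note that $\Hom_{\BanAlg}(0,A)$ and $\Hom_{\BanAlg}(A,0)$ are singletons, hence so are their quotients $[0,A]$ and $[A,0]$; thus $\can(0)=0$ is a zero object of $\BanAlg/\!\!\sim$ (this also handles the empty product). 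For binary products, take the product $A\times B$ in $\BanAlg$ with the max norm, so that the projections $\pi_A,\pi_B$ are contractive and the universal property holds; it suffices to show that for every Banach algebra $C$ the map $[C,A\times B]\to[C,A]\times[C,B]$, $[h]\mapsto([\pi_A\circ h],[\pi_B\circ h])$, is bijective. Surjectivity is immediate from fullness of $\can$ (lift and pair representatives). For injectivity one uses the canonical isometric isomorphism $(A\times B)[0,1]\cong A[0,1]\times B[0,1]$: given a homotopy $H$ from $\pi_A\circ h_0$ to $\pi_A\circ h_1$ and a homotopy $K$ from $\pi_B\circ h_0$ to $\pi_B\circ h_1$, the pair $(H,K)\colon C\to(A\times B)[0,1]$ is a homotopy from $h_0$ to $h_1$ (since $h_i=(\pi_A\circ h_i,\pi_B\circ h_i)$). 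Hence $(\can(A\times B),\can(\pi_A),\can(\pi_B))$ is a product in $\BanAlg/\!\!\sim$, and a general finite product is assembled from these and the zero object.

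For the descent in part~2, I would check homotopy invariance of $\Cont(\cdot;\cdot)$ in each of the two variables separately. In the Banach algebra variable: a homotopy $\Phi\colon A\to B[0,1]$ gives $\Cont(X,Y;\Phi)\colon\Cont(X,Y;A)\to\Cont(X,Y;B[0,1])$, and under the natural isomorphism $\Cont(X,Y;B[0,1])\cong\Cont(X,Y;B)[0,1]$ --- an instance of part~1 of the proposition above applied to $B[0,1]=\Cont(I,\emptyset;B)$ --- this becomes a homotopy whose endpoint evaluations are $\Cont(X,Y;\varphi_0)$ and $\Cont(X,Y;\varphi_1)$. In the space variable: a homotopy in $\Cpt_2$ between $f_0,f_1\colon(X,Y)\to(X',Y')$ is a map of pairs $h\colon(X,Y)\wedge(I,\emptyset)=(X\times I,Y\times I)\to(X',Y')$ restricting to $f_0,f_1$ at the two ends, and then $\Cont(h;A)\colon\Cont(X',Y';A)\to\Cont(X\times I,Y\times I;A)\cong\Cont(X,Y;A)[0,1]$ is a homotopy from $\Cont(f_0;A)$ to $\Cont(f_1;A)$. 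Therefore $\Cont(\cdot;\cdot)$ factors through $\can\times\can$, and the displayed square commutes because $\can$ is the identity on objects and the algebra $\Cont(X,Y;A)$ is literally unchanged by passing to the quotients.

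That $\Cont(X,Y;\cdot)$ on $\BanAlg/\!\!\sim$ still sends the zero object to a zero object and preserves finite products then follows by combining part~3 of the proposition above (which gives this on $\BanAlg$) with part~1 of the present lemma: by part~1 the zero object and the finite products of $\BanAlg/\!\!\sim$ are precisely the $\can$-images of those of $\BanAlg$, so commutativity of the square gives $\Cont(X,Y;\can(A\times B))=\can(\Cont(X,Y;A\times B))=\can(\Cont(X,Y;A)\times\Cont(X,Y;B))$, which is the product of $\can(\Cont(X,Y;A))$ and $\can(\Cont(X,Y;B))$, and likewise $\Cont(X,Y;\can(0))=\can(0)$; since every object of $\BanAlg/\!\!\sim$ is of the form $\can(A)$ this suffices. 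The assertion for $\Sigma$ is the special case $(X,Y)=(S^1,\{1\})$, via $\Cont(S^1,\{1\};A)\cong\Cont_0(S^1\setminus\{1\},A)\cong\Sigma A$. The only point requiring genuine care --- the main (minor) obstacle --- is the repeated use of the exponential-law identifications $\Cont(X,Y;B[0,1])\cong\Cont(X,Y;B)[0,1]$ and $\Cont(X\times I,Y\times I;A)\cong\Cont(X,Y;A)[0,1]$: one must confirm that the cylinder functor really is $\Cont(I,\emptyset;\cdot)$ up to canonical natural isomorphism, so that part~1 of the proposition above applies, and that these isomorphisms intertwine the evaluation maps $\ev_0,\ev_1$ on the two sides; everything else is purely formal.
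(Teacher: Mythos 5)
Your argument is correct and is exactly the standard one that the paper delegates to Dell'Ambrogio's Prop.~A.3.3: reduce everything to the cylinder $\zylinder B=\Cont(I,\emptyset;B)$, use the module/exponential-law isomorphisms $\Cont(X,Y;B[0,1])\cong\Cont(X,Y;B)[0,1]$ and $\Cont(X\times I,Y\times I;A)\cong\Cont(X,Y;A)[0,1]$ (checking compatibility with $\ev_0,\ev_1$), and deduce preservation of products and zero objects from fullness of $\can$ together with the pairing of homotopies via $(A\times B)[0,1]\cong A[0,1]\times B[0,1]$. No gaps.
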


\subsection{Inverting the suspension}

\begin{definition}
Let $A, B$ be Banach algebras and $m,n \in \Z$. Define
$$
\SW((A,m), (B,n)):= \SWban((A,m), (B,n)):=\colim_{k\to \infty} [\Sigma^{m+k} A, \ \Sigma^{n+k} B]
$$
where the connecting map in the colimit is given by suspension. With this set as morphisms, the class of all pairs $(A,m)$ with $A\in \in \BanAlg$ and $m\in \Z$ becomes a category $\SW=\SWban$. There is a canonical embedding $\can$ of $\BanAlg/\!\!\sim$ into $\SW$ given by $A \mapsto (A,0)$. Define 
$$
\Sigma\colon \SW\to \SW,\ (A,m) \mapsto (A, m+1).
$$
This is an automorphism of the category $\SW$. The notation is justified, because there is a natural isomorphism from the functor $A \mapsto (\Sigma A, 0)$ to the functor $A \mapsto (A, 1)$ implemented by $\id_A$, considered as an isomorphism from $(\Sigma A, 0)$ to $(A, 1)$; so $(A,m) \mapsto (A, m+1)$ extends $\Sigma$ from $\BanAlg/\!\!\sim$ to $\SW$.
\end{definition}

\begin{lemma}
The category $\SW$ has a zero object and all finite products (and $\can \colon \BanAlg \to \SW$ as well as $\Sigma$ preserve zero objects and finite products). Moreover, it is an additive category and the functor $\Sigma$ is additive on it. 
\end{lemma}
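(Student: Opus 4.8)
The plan is to check the defining properties of an additive category in turn, reducing almost everything to facts already available for $\BanAlg/\!\!\sim$ and to one genuinely new ingredient, the natural group structure the ordinary suspension puts on morphism sets. First I would take $(0,0)$ as zero object: since $\Sigma^k 0 = 0$ and $0$ is a zero object of $\BanAlg/\!\!\sim$, every homotopy set $[\Sigma^{m+k}A,\Sigma^{m+k}0]$ and $[\Sigma^{m+k}0,\Sigma^{n+k}B]$ is a one-point set, hence so are the colimits $\SW((A,m),(0,0))$ and $\SW((0,0),(B,n))$. For finite products, the natural isomorphism $\id_A\colon(\Sigma A,\ell)\cong(A,\ell+1)$ lets one replace any finite family of objects of $\SW$ by an isomorphic family all of whose members sit at one common level $m$, so it suffices to exhibit the product of $(A,m)$ and $(B,m)$; I claim it is $(A\times B,m)$ equipped with the images in $\SW$ of the projections $A\times B\to A$ and $A\times B\to B$. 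This follows from the identity $\SW((C,\ell),(A\times B,m))=\colim_k[\Sigma^{\ell+k}C,\Sigma^{m+k}(A\times B)]$ once one uses that $\Sigma$ preserves finite products on $\BanAlg/\!\!\sim$, so that $\Sigma^{m+k}(A\times B)\cong\Sigma^{m+k}A\times\Sigma^{m+k}B$, and that filtered colimits of sets commute with finite products; a routine check identifies the resulting bijection with the one induced by the two projections. Since $\can$ applied to a product diagram in $\BanAlg$ becomes a product diagram in $\SW$, and since $\Sigma(A\times B,m)=(A\times B,m+1)$ is the product of $\Sigma(A,m)$ and $\Sigma(B,m)$, both $\can$ and $\Sigma$ preserve the zero object and finite products.

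The substantial part is the abelian group structure on morphisms. For arbitrary Banach algebras $Y$ and $B'$ I would build a natural group structure on $[Y,\Sigma B']$ out of the pinch map $c\colon S^1\to S^1\vee S^1$ collapsing a non-basepoint: it is a genuine pointed continuous map, and since the complement of the wedge point in $S^1\vee S^1$ is a disjoint union of two open intervals one has a canonical isometric isomorphism $\Cont(S^1\vee S^1,\{\mathrm{pt}\};B')\cong\Sigma B'\times\Sigma B'$, so $c$ induces a bounded homomorphism $\mu_{B'}\colon\Sigma B'\times\Sigma B'\to\Sigma B'$. Together with the zero morphism as unit and the homomorphism induced by the reflection $S^1\to S^1$ as inversion, $\mu_{B'}$ turns $\Sigma B'$ into a group object of $\BanAlg/\!\!\sim$ — the group identities holding up to homotopy by the usual reparametrisation homotopies — and this is natural in $B'$ because $\Cont(-;-)$ is a bifunctor. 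Hence $[Y,\Sigma B']$ is a group, naturally in $Y$ and $B'$. Applying this to $\Sigma B'=\Sigma^{n+k}B$, every term $[\Sigma^{m+k}A,\Sigma^{n+k}B]$ with $n+k\geq1$ carries a group structure; for $n+k\geq2$ the target is a double suspension, hence carries a second, a priori different, multiplication, and the Eckmann--Hilton argument forces the two to agree and to be commutative. The connecting maps of the colimit are suspensions, so by the same Eckmann--Hilton identification they are group homomorphisms for $k$ large; therefore $\SW((A,m),(B,n))=\colim_k[\Sigma^{m+k}A,\Sigma^{n+k}B]$ is an abelian group, naturally in $A$ and $B$.

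It then remains to see that composition is biadditive and to conclude. Precomposition with a fixed morphism is additive because the addition is built entirely on the target side, and postcomposition with a fixed morphism is additive because the group structures on the morphism sets into a given object are natural; so composition in $\SW$ is biadditive. A preadditive category with a zero object and finite products automatically has finite biproducts — the product, with its canonical injections, is also a coproduct — so $\SW$ is an additive category. Finally $\Sigma$ preserves the zero object and finite (bi)products, as already shown, and is additive on morphism sets, again by naturality of $\mu$ under $\Sigma$; hence $\Sigma$ is an additive functor.

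The only part that is not pure bookkeeping is the construction of the abelian group structure: one must produce the group-object structure on suspensions and prove its naturality, verify that the Eckmann--Hilton argument applies after one further suspension, and check that these structures are compatible with the suspension connecting maps so that the colimit inherits a well-defined, natural, abelian group structure. This is the same argument as in the appendix of \cite{DellAmbrogio:08}, on which the construction of $\SW$ is modelled, so in practice one transfers it with the obvious changes.
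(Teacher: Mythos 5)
Your proof is correct and follows essentially the same route as the paper: the zero object and finite products are inherited from $\BanAlg/\!\!\sim$ through the colimit (the paper simply cites Lemma A.4.5 of \cite{DellAmbrogio:08}, where you verify it directly), the group structure on $[Y,\Sigma B']$ comes from the pinch/cogroup structure on $(S^1,1)$, and commutativity plus compatibility with the connecting maps follow from Eckmann--Hilton after a further suspension, exactly as in Prop.~A.4.7 of \cite{DellAmbrogio:08} and Lemma~6.4 of \cite{CMR:07}, which the paper cites for this ``classical argument''. The one point to phrase more carefully is additivity of postcomposition with a general representative $\psi\colon\Sigma^{n+k}B\to\Sigma^{p+k}C$, which is not a priori a map of group objects; one replaces $\psi$ by $\Sigma\psi$ (the same class in the colimit) and invokes the Eckmann--Hilton identification of the two multiplications on the double suspension -- the same device you already use for the connecting maps -- rather than bare ``naturality''.
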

\begin{proof}
We've seen above that $\BanAlg/\!\!\sim$ has $0$ as a zero-object and the product of Banach algebras as a product. The suspension functor $\Sigma$ on $\BanAlg/\!\!\sim$ sends $0$ to $0$ and is compatible with products (up to isomorphism), so it follows from Lemma A.4.5 in \cite{DellAmbrogio:08} that $\SW$ also has a zero object and finite products and that both $\can\colon \BanAlg/\!\!\sim \to \SW$ and $\Sigma \colon \SW \to \SW$ preserve them. 

The sum on $[A', \Sigma B']$, for any Banach algebras $A'$ and $B'$, is given by the concatenation $\bullet$ of paths, the inverse is given by the reversal of paths, i.e., the group structure on $[A',\Sigma B']$ is induced from the canonical co-group structure on the pointed space $(S^1, 1)$.

A classical argument shows that this structure induces the structure of an abelian group on the set $[\Sigma^{m+k} A, \ \Sigma^{n+k} B]$ for all Banach algebras $A$ and $B$, for all $m,n\in \Z$ and for $k\in \N$ large enough, and that $\Sigma \colon \SW \to \SW$ preserves this group structure, {\it cf.}~Proposition A.4.7 in \cite{DellAmbrogio:08} or Lemma 6.4 in \cite{CMR:07}.

We have now shown that $\SW$ is preadditive and has a zero object, so finite products automatically provide finite sums and $\Sigma \colon \SW \to \SW$ is an additive functor.
\end{proof}

We can identify the sum of homomorphisms of Banach algebras in the special case that they are orthogonal. Recall that two homomorphisms $\varphi_1, \varphi_2\colon A \to B$ are said to be \demph{orthogonal} if, for all $a_1,a_2\in A$, we have $\varphi_1(a_1)\varphi_2(a_2) = 0 = \varphi_2(a_2) \varphi_1(a_1)$. In this case, the sum $\varphi_1 + \varphi_2$ is again a homomorphism from $A$ to $B$.

\begin{lemma}\label{Lemma:OrthogonalHomomorphisms:SigmaHo}
Let $\varphi_1, \varphi_2\colon A \to B$ be continuous orthogonal homomorphisms of Banach algebras. Then $\Sigma \varphi_1$ and $\Sigma\varphi_2$ are orthogonal homomorphisms from $\Sigma A$ to $\Sigma B$ and their sum $\Sigma \varphi_1 + \Sigma\varphi_2=\Sigma(\varphi_1 + \varphi_2)$ is homomotopic to their concatenation $\Sigma \varphi_1 \bullet \Sigma\varphi_2$. In particular,
$$
[\varphi_1 + \varphi_2] = [\varphi_1] + [\varphi_2] \quad \in \quad \SW(A,B).
$$ 
\end{lemma}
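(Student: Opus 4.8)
The plan is to carry everything out in the concrete interval model $\Sigma A = \Cont_0(\,]0,1[\,,A)$, in which $\Sigma\varphi_i$ is nothing but ``apply $\varphi_i$ to function values pointwise''. In this model the orthogonality of $\Sigma\varphi_1$ and $\Sigma\varphi_2$ is immediate: for $f,g\in\Sigma A$ and $t\in\,]0,1[\,$ we have $\bigl(\Sigma\varphi_1(f)\cdot\Sigma\varphi_2(g)\bigr)(t)=\varphi_1(f(t))\varphi_2(g(t))=0$, and likewise with the two factors interchanged, using orthogonality of $\varphi_1$ and $\varphi_2$. Consequently $\Sigma\varphi_1+\Sigma\varphi_2$ is a homomorphism, and evaluating at each $t$ identifies it with $\Sigma(\varphi_1+\varphi_2)$ (here one uses that $\varphi_1+\varphi_2$ is itself a homomorphism).

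For the homotopy I would proceed as follows. Recall that, in the model $\Sigma B=\Cont_0(\,]0,1[\,,B)$, the concatenation $\Sigma\varphi_1\bullet\Sigma\varphi_2$ sends $f$ to the function equal to $t\mapsto\varphi_1(f(2t))$ on $[0,\tfrac12]$ and to $t\mapsto\varphi_2(f(2t-1))$ on $[\tfrac12,1]$; this is continuous because $f(0)=f(1)=0$. Introduce two jointly continuous families of reparametrisations $u_s,v_s\colon[0,1]\to[0,1]$, $s\in[0,1]$, fixing $0$ and $1$, with $u_0=v_0=\id$, $u_1(t)=\min(2t,1)$ and $v_1(t)=\max(2t-1,0)$ --- for instance $u_s(t)=\min\bigl((1+s)t,1\bigr)$ and $v_s(t)=\max\bigl((1+s)t-s,0\bigr)$ --- and set
$$
H_s(f)(t):=\varphi_1\bigl(f(u_s(t))\bigr)+\varphi_2\bigl(f(v_s(t))\bigr).
$$
Since $u_s,v_s$ fix the endpoints of $[0,1]$ and $f$ vanishes there, each $H_s(f)$ lies in $\Sigma B$; since the formula is jointly continuous in $(s,t)$ and bounded by $(\norm{\varphi_1}+\norm{\varphi_2})\norm{f}$, the assignment $f\mapsto\bigl(s\mapsto H_s(f)\bigr)$ is a continuous homomorphism $\Sigma A\to(\Sigma B)[0,1]$, i.e.\ a homotopy. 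The one nontrivial point is multiplicativity $H_s(fg)=H_s(f)H_s(g)$: expanding $H_s(f)(t)\,H_s(g)(t)$, the two mixed terms of the form $\varphi_1(\cdot)\varphi_2(\cdot)$ and $\varphi_2(\cdot)\varphi_1(\cdot)$ vanish by orthogonality of $\varphi_1,\varphi_2$, and the surviving terms recombine to $H_s(fg)(t)$ via multiplicativity of $\varphi_1$ and $\varphi_2$. Finally $H_0=\Sigma\varphi_1+\Sigma\varphi_2$ is clear, and $H_1=\Sigma\varphi_1\bullet\Sigma\varphi_2$ follows by distinguishing $t\le\tfrac12$ from $t\ge\tfrac12$ and using $\varphi_2(f(0))=0=\varphi_1(f(1))$.

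To finish, note that in $[\Sigma A,\Sigma B]$ we then have $[\Sigma\varphi_1+\Sigma\varphi_2]=[\Sigma\varphi_1\bullet\Sigma\varphi_2]=[\Sigma\varphi_1]+[\Sigma\varphi_2]$, where the last equality is the definition of the group law on $[\Sigma A,\Sigma B]$ by concatenation. Combining this with $\Sigma\varphi_1+\Sigma\varphi_2=\Sigma(\varphi_1+\varphi_2)$ from the first paragraph, and recalling that in $\SW(A,B)$ the class $[\varphi]$ is represented by $[\Sigma\varphi]$ and that the group law on $\SW(A,B)$ is compatible with suspension, one obtains $[\varphi_1+\varphi_2]=[\varphi_1]+[\varphi_2]$ in $\SW(A,B)$.

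The only real obstacle is engineering the homotopy $H_s$ so that it simultaneously (i) takes values in $\Sigma B$, which is why $u_s,v_s$ must fix the endpoints of $[0,1]$; (ii) is multiplicative, which is precisely where orthogonality of $\varphi_1,\varphi_2$ is needed and which forces the asymmetric shape ``$\varphi_1$ on the $u_s$-slot plus $\varphi_2$ on the $v_s$-slot'' rather than a symmetric mixture; and (iii) has the stated endpoints, where one must verify that the first half of $H_1(f)$ is exactly $\varphi_1(f(2t))$ --- which works only because the other summand, $\varphi_2(f(v_1(t)))=\varphi_2(f(0))$, vanishes on $[0,\tfrac12]$. Everything else reduces to routine continuity and boundedness checks.
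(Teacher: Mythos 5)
Your proof is correct and is essentially the paper's argument made explicit: your $H_s(f)(t)=\varphi_1(f(u_s(t)))+\varphi_2(f(v_s(t)))$ is exactly the sum $H_1+H_2$ of the two standard (orthogonal) reparametrisation homotopies $\Sigma\varphi_1\sim\Sigma\varphi_1\bullet 0$ and $\Sigma\varphi_2\sim 0\bullet\Sigma\varphi_2$ that the paper uses, with orthogonality guaranteeing that the sum is again a homomorphism. The endpoint identifications and the passage to $\SW(A,B)$ via the concatenation group law are handled as in the paper.
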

\begin{proof}
Let $H_1$ be the standard homotopy connecting $\Sigma \varphi_1$ and the concatenation $\Sigma \varphi_1 \bullet 0$ of $\Sigma \varphi_1$ and $0\colon \Sigma A \to \Sigma B$. Let $H_2$ denote the standard homotopy from $\Sigma \varphi_2$ to the concatenation $0 \bullet \Sigma \varphi_2$. Then $H_1$ and $H_2$ are orthogonal and $H_1 + H_2$ is a homotopy from $\Sigma \varphi_1 \bullet \Sigma \varphi_2 = \Sigma \varphi_1 \bullet 0 + 0 \bullet \Sigma \varphi_2$ to $\Sigma \varphi_1 + \Sigma\varphi_2$.
\end{proof}

Later on, when we define $\kkban$, we are going to identify $B$ and $\Mat_2(B)$, so the following lemma gives an alternative characterisation of the sum of homomorphisms in $\kkban$. It does not give an alternative characterisation on the level of $\SW$, but we nevertheless state the lemma here in the extent that it holds true on this level.

\begin{lemma}\label{Lemma:SummeInSWban}
Let $\varphi, \psi \colon A \to B$ be bounded homomorphisms of Banach algebras. Define 
$$
\varphi \oplus \psi \colon A \to \Mat_2(B), a \mapsto \left(\begin{matrix}\varphi(a) & 0 \\ 0 & \psi(a) \end{matrix}\right). 
$$
Let $\iota$ be the inclusion of $B$ into $\Mat_2(B)$ as the upper left-hand corner. Then
$$
[\iota] \circ ([\varphi] + [\psi]) = [\varphi \oplus \psi]
$$
in $\SW(A, \Mat_2(B))$.  
\end{lemma}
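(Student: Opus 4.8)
The plan is to peel off the statement from the already-established additivity of orthogonal homomorphisms in $\SW$ (Lemma~\ref{Lemma:OrthogonalHomomorphisms:SigmaHo}) together with a standard rotation homotopy. Write $\iota\colon B \to \Mat_2(B)$ for the inclusion into the upper left-hand corner, as in the statement, and $\iota'\colon B \to \Mat_2(B)$ for the inclusion into the lower right-hand corner, $\iota'(b) = \Matrix{0 & 0 \\ 0 & b}$. First I would observe that $\iota\circ\varphi$ and $\iota'\circ\psi$, viewed as homomorphisms from $A$ to $\Mat_2(B)$, are orthogonal (their images sit in complementary matrix corners and multiply to $0$ in both orders) and that their sum is exactly $\varphi\oplus\psi$. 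Hence Lemma~\ref{Lemma:OrthogonalHomomorphisms:SigmaHo}, applied with $\Mat_2(B)$ in place of $B$, yields
$$
[\varphi\oplus\psi] \;=\; [\iota\circ\varphi] + [\iota'\circ\psi] \qquad \text{in } \ \SW(A,\Mat_2(B)).
$$
On the other hand, since $\SW$ is an additive (in particular preadditive) category, composition is bi-additive, so post-composition with the fixed morphism $[\iota]$ gives $[\iota]\circ([\varphi]+[\psi]) = [\iota\circ\varphi] + [\iota\circ\psi]$. Comparing the two, the lemma reduces to the single equality $[\iota\circ\psi] = [\iota'\circ\psi]$ in $\SW(A,\Mat_2(B))$.

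For that equality I would use the rotation connecting the two corners. For $t\in[0,1]$ set
$$
u_t := \Matrix{\cos(\pi t/2) & -\sin(\pi t/2)\\ \sin(\pi t/2) & \cos(\pi t/2)} \in \Mat_2(\C),
$$
so that $u_0 = 1$, each $u_t$ is invertible, $t\mapsto u_t$ is norm-continuous, and $u_1\,\iota(b)\,u_1^{-1} = \iota'(b)$ for all $b\in B$. Conjugation by $u_t$ is a bounded algebra automorphism of $\Mat_2(B)$, with norm bounded uniformly in $t$, so $a \mapsto \big(t\mapsto u_t\,\iota(\psi(a))\,u_t^{-1}\big)$ defines a continuous homomorphism $A \to \Mat_2(B)[0,1]$ whose evaluations at $0$ and $1$ are $\iota\circ\psi$ and $\iota'\circ\psi$. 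Thus $\iota\circ\psi$ and $\iota'\circ\psi$ are homotopic, hence equal in $\BanAlg/\!\!\sim$ and a fortiori in $\SW$. Combining, $[\iota]\circ([\varphi]+[\psi]) = [\iota\circ\varphi] + [\iota\circ\psi] = [\iota\circ\varphi] + [\iota'\circ\psi] = [\varphi\oplus\psi]$, as claimed.

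I do not expect a genuine obstacle; the only points needing a word of care are bookkeeping ones. One is that the sum $[\varphi]+[\psi]$ is formed in the abelian group $\SW(A,B)$ — which is visibly a group only after applying $\Sigma$, where the operation is concatenation of loops — and that composition in $\SW$ is additive in each variable with respect to this structure; both facts are part of the additive-category structure of $\SW$ established before this lemma. The other is that $u_t\in\Mat_2(\C)$ acts on $\Mat_2(B)$ by an honest (bounded) automorphism regardless of whether $B$ is non-degenerate, so no non-degeneracy hypothesis enters. If one wished to avoid Lemma~\ref{Lemma:OrthogonalHomomorphisms:SigmaHo} altogether one could instead rotate the concatenation $\Sigma(\iota\circ\varphi)\bullet\Sigma(\iota\circ\psi)$ directly into $\Sigma(\varphi\oplus\psi)$ by the familiar $\Mat_2$-rotation of a concatenation of loops, but invoking the orthogonality lemma is cleaner.
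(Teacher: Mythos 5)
Your proof is correct and is essentially the paper's own argument: the paper likewise notes that $\iota\circ\psi=\psi\oplus 0$ is homotopic to $0\oplus\psi$ by a standard rotation, that $\varphi\oplus 0$ and $0\oplus\psi$ are orthogonal with sum $\varphi\oplus\psi$, and then invokes Lemma~\ref{Lemma:OrthogonalHomomorphisms:SigmaHo}. You have merely written out the rotation homotopy explicitly, which the paper leaves as "standard."
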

\begin{proof}
Note that, by a standard rotation argument, the homomorphism $\iota \circ \psi= \psi  \oplus 0$ is homotopic to $0\oplus \psi \colon A \to \Mat_2(B)$ (this is $\psi$ followed by the inclusion of $B$ at the lower right-hand corner of $\Mat_2(B)$). Note that $\iota \circ \varphi = \varphi \oplus 0$ and $0 \oplus \psi$ are orthogonal and $\varphi \oplus \psi = \varphi \oplus 0 + 0 \oplus \psi$, so the lemma follows. 

\end{proof}

\begin{definition}
Let $\varphi \colon A \to B$ be a continuous homomorphism of Banach algebras. The \demph{cone triangle of $\varphi$} is the following diagram in $\BanAlg$ (or its image in $\SW$):
$$
\xymatrix{
\Sigma B \ar[r]^-{\iota(\varphi)}  &\cone_{\varphi} \ar[r]^-{\epsilon(\varphi)}  & A \ar[r]^-{\varphi} &B. 
}
$$
\end{definition}

\begin{definition}\label{Definition:DistinguishedTriangle}
A \demph{distinguished triangle} in $\SW$ is a diagram
$$
\xymatrix{
\Sigma X \ar[r] &X'' \ar[r] & X' \ar[r] &X. 
}
$$
which is isomorphic in $\SW$ to the image under $(-\Sigma)^n$, for some $n\in \Z$, of some cone triangle of some continuous homomorphism of Banach algebras; here, the map $-\Sigma$ assigns to a triangle 
$$
\xymatrix{
\Sigma X \ar[r]^-{u''} &X'' \ar[r]^-{u'} & X' \ar[r]^-{u} &X 
}
$$
the triangle
$$
\xymatrix{
\Sigma \Sigma X \ar[r]^-{-\Sigma u''} &\Sigma X'' \ar[r]^-{-\Sigma u'} & \Sigma X' \ar[r]^-{-\Sigma u} &\Sigma X 
}
$$

\end{definition}

\begin{theorem}
The Spanier-Whitehead category $\SWban=\SW$ together with the inverse suspension $\Sigma^{-1}$ as translation functor and with the class of distinguished triangles defined above is a triangulated category.

\end{theorem}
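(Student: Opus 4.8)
The plan is to verify Verdier's axioms for the triple consisting of $\SW$, the translation functor $\Sigma^{-1}$ (an automorphism of $\SW$ by the preceding lemma), and the class of distinguished triangles of Definition~\ref{Definition:DistinguishedTriangle}, closely following the appendix of \cite{DellAmbrogio:08}. The point is that every structural ingredient that argument needs is already in place: $\SW$ is additive and $\Sigma$ is an additive automorphism of it; the functors $\Cont(X,Y;\cdot)$ are exact for the chosen class $\mE$, preserve finite products and zero objects, satisfy the Mayer--Vietoris property, and, by Lemma~\ref{Lemma:FunctionsAndPullback}, preserve pullbacks along admissible epimorphisms; moreover the mapping cone of a homomorphism is by construction exactly such a pullback, and its formation iterates well thanks to Lemma~\ref{Lemma:IvosLemma}. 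Closure of the class of distinguished triangles under isomorphism, and the fact that $(-\Sigma)^{n}$ of a distinguished triangle is distinguished, are built into Definition~\ref{Definition:DistinguishedTriangle}.

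For the first axiom it remains to produce, for each object, a distinguished triangle on its identity and, for each morphism, a distinguished triangle containing it. Here $\cone_{\id_A}$ is isomorphic to the contractible Banach algebra $\cone A$, so in $\SW$ the cone triangle of $\id_A$ is isomorphic to $\Sigma A\to 0\to A\xrightarrow{\id}A$, which is the required triangle. An arbitrary morphism of $\SW$ is, after passing to a sufficiently high suspension, represented by a genuine homomorphism of Banach algebras; its cone triangle, read in $\SW$ and shifted back by the appropriate power of $\Sigma$, exhibits the morphism as the last map of a distinguished triangle. The rotation axiom is the first place where real work is needed: the key lemma is that, for every homomorphism $\varphi\colon A\to B$, the iterated mapping cone $\cone_{\epsilon(\varphi)}$ is naturally homotopy equivalent to $\Sigma B$, and that under this equivalence the cone triangle of $\epsilon(\varphi)$ becomes isomorphic in $\SW$ to the rotation of the cone triangle of $\varphi$. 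This is a diagram chase with iterated pullbacks whose verification rests on the Mayer--Vietoris lemma together with Lemmas~\ref{Lemma:IvosLemma} and~\ref{Lemma:FunctionsAndPullback}. Since $-\Sigma$ is an automorphism of $\SW$, rotating once in one direction suffices to obtain the full ``if and only if'' form of the axiom.

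For the completion axiom I would begin with distinguished triangles arising from homomorphisms $\varphi\colon A\to B$ and $\varphi'\colon A'\to B'$ and with morphisms of $\SW$ making the left-hand square commute; after raising everything to a sufficiently high suspension these morphisms are represented by honest homomorphisms $a$ and $b$ with $b\varphi$ \emph{homotopic} to $\varphi'a$, and a choice of such a homotopy yields, by a mapping-cylinder argument, a homomorphism $\cone_\varphi\to\cone_{\varphi'}$ completing the diagram; naturality of $\iota(\cdot)$ and $\epsilon(\cdot)$ then makes the remaining squares commute in $\SW$, and homotopy invariance shows the outcome is independent of the choices. Reducing a general morphism of triangles to this situation is done as in \cite{DellAmbrogio:08}, by first replacing the given triangles with isomorphic ones coming from homomorphisms.

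The octahedral axiom is the main obstacle. Given composable homomorphisms $\varphi\colon A\to B$ and $\psi\colon B\to C$ I would write down the cone triangles of $\varphi$, $\psi$ and $\psi\varphi$, the natural homomorphism $\cone_\varphi\to\cone_{\psi\varphi}$ induced by the map $\cone B\to\cone C$ coming from $\psi$, and the natural homomorphism $\cone_{\psi\varphi}\to\cone_\psi$ induced by $\varphi$, and then prove that the mapping cone of $\cone_\varphi\to\cone_{\psi\varphi}$ is naturally homotopy equivalent to $\cone_\psi$, so that these maps assemble into a distinguished triangle $\Sigma\cone_\psi\to\cone_\varphi\to\cone_{\psi\varphi}\to\cone_\psi$ forming the base of the octahedron; all the commutativities, and the identification of the new triangle as isomorphic in $\SW$ to a cone triangle, are again obtained by iterating the Mayer--Vietoris lemma and Lemmas~\ref{Lemma:IvosLemma} and~\ref{Lemma:FunctionsAndPullback}, exactly as in \cite{DellAmbrogio:08}. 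I expect the only genuinely delicate points to be the bookkeeping of the $(-\Sigma)^{n}$ sign twists of Definition~\ref{Definition:DistinguishedTriangle} and the careful matching of the several iterated-pullback models of the cones; nothing specific to Banach algebras enters beyond the exactness and pullback-preservation properties of $\Cont(\cdot;\cdot)$ recorded above, which is exactly why Dell'Ambrogio's argument transfers essentially verbatim.
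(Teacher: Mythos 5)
Your proposal follows essentially the same route as the paper: the identity triangle via contractibility of $\cone A$, existence of triangles by representing morphisms through genuine homomorphisms after suspension, the rotation axiom via the natural homotopy equivalence $\Sigma B\simeq\cone_{\epsilon(\varphi)}$, and the morphism and octahedron axioms transferred from Dell'Ambrogio's C$^*$-algebraic argument (with the compatibility $\Sigma\cone_\varphi\cong\cone_{\Sigma\varphi}$ supplied by Lemma~\ref{Lemma:FunctionsAndPullback}). The only cosmetic difference is that you attribute the verification of the key homotopies to Mayer--Vietoris-style pullback manipulations, whereas the paper (like Dell'Ambrogio) ultimately writes them down as explicit formulas; this does not affect the correctness of the argument.
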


The class of distinguished triangles is closed under isomorphisms of triangles by definition. And clearly, every morphism in $\SW$ fits into a distinguished triangle. Moreover, for every object $(A,m)$ of $\SW$ the following triangle is distinguished:
$$
\xymatrix{
\Sigma (A,m) \ar[r] & 0 \ar[r] & (A,n) \ar[r]^-{\id_{(A,n)}} & (A,n);
}
$$
note that $CA \sim 0$. 

\begin{lemma}
Let $\varphi \colon A \to B$ be a continuous homomorphism of Banach algebras. Then $\Sigma$ sends the pullback square defining $\cone_{\varphi}$ to the pullback square which defines $\cone_{\Sigma \varphi}$. In particular, $\Sigma \cone_{\varphi}$ and $\cone_{\Sigma \varphi}$ are canonically isomorphic.
\end{lemma}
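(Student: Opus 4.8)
The plan is to realise both squares as pullbacks of diagrams that become isomorphic once the cone construction is commuted past the suspension, and then invoke uniqueness of pullbacks. First I would recall that, by definition, $\cone_\varphi$ is the pullback of the diagram $A \xrightarrow{\varphi} B \xleftarrow{\ev_0^B} \cone B$, and that $\ev_0^B$ is an admissible epimorphism, since it is the surjection in the semi-split cone extension $\Sigma B \rightarrowtail \cone B \twoheadrightarrow B$. Writing the suspension as $\Sigma = \Cont(S^1,1;\cdot)$, Lemma~\ref{Lemma:FunctionsAndPullback} applies to this diagram and shows that $\Sigma$ carries the defining pullback square of $\cone_\varphi$ to the pullback square
$$
\xymatrix{
\Sigma\cone_{\varphi} \ar[r]^-{\Sigma\epsilon(\varphi)} \ar[d] & \Sigma A \ar[d]^{\Sigma\varphi} \\
\Sigma\cone B \ar[r]^-{\Sigma\ev_0^B} & \Sigma B.
}
$$

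Next I would identify this with the square defining $\cone_{\Sigma\varphi}$. Since $\cone B = \Cont([0,1],\{1\};B)$ and $\Sigma B = \Cont(S^1,1;B)$, the fact that $\BanAlg$ is a module over the symmetric monoidal category $\Cpt_2$ gives canonical isometric isomorphisms $\Sigma\cone B \cong \Cont\bigl((S^1,1)\wedge([0,1],\{1\});B\bigr)$ and $\cone\Sigma B \cong \Cont\bigl(([0,1],\{1\})\wedge(S^1,1);B\bigr)$; the symmetry isomorphism of the smash product then induces a canonical isomorphism $\Sigma\cone B \cong \cone\Sigma B$. Both $\Sigma\ev_0^B$ and $\ev_0^{\Sigma B}$ arise by functoriality from the basepoint inclusion $\iota_0\colon \pt \hookrightarrow ([0,1],\{1\})$ (at the value $0$), smashed with the identity of $(S^1,1)$ in the two possible orders, so the naturality of the symmetry isomorphism makes the relevant square commute and hence shows that under the identification $\Sigma\cone B\cong\cone\Sigma B$ the map $\Sigma\ev_0^B$ corresponds to $\ev_0^{\Sigma B}$. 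The displayed square therefore becomes precisely the pullback square defining $\cone_{\Sigma\varphi}$, and since pullbacks are unique up to canonical isomorphism we obtain the desired canonical isomorphism $\Sigma\cone_\varphi \cong \cone_{\Sigma\varphi}$ (compatible with the maps down to $\Sigma A$ and $\Sigma B$, hence with the cone triangles).

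The only genuinely technical point is this last compatibility check, namely that the identification $\Sigma\cone B\cong\cone\Sigma B$ intertwines $\Sigma\ev_0^B$ with $\ev_0^{\Sigma B}$: one must use the naturality of the monoidal structure of $\Cont(\cdot;\cdot)$ over $\Cpt_2$ rather than an \emph{ad hoc} coordinate interchange. Everything else is a direct application of Lemma~\ref{Lemma:FunctionsAndPullback} and the universal property of pullbacks. If one prefers a concrete route, the same identification can be obtained by hand from the descriptions $\cone_\varphi = \{(a,f)\in A\times\cone B : f(0)=\varphi(a)\}$ and the analogous description of $\cone_{\Sigma\varphi}$, the isomorphism being the swap of the two interval coordinates.
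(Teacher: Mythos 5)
Your proposal is correct and follows essentially the same route as the paper: apply Lemma~\ref{Lemma:FunctionsAndPullback} to the defining pullback square (legitimate since $\ev_0^B$ is an admissible epimorphism of the semi-split cone extension), then identify $\Sigma\cone B$ with $\cone\Sigma B$ so that the resulting pullback square is recognised as the one defining $\cone_{\Sigma\varphi}$. The paper states the identification $\Sigma\cone B\cong\cone\Sigma B$ without elaboration; your justification via the $\Cpt_2$-module structure and the naturality of the symmetry of the smash product is a clean way to supply that detail.
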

\begin{proof}
By Lemma\ \ref{Lemma:FunctionsAndPullback}, we can conclude that $\Sigma$ sends the pullback square defining $\cone_{\varphi}$ to another pullback square, which happens to be
$$
\xymatrix{
\Sigma \cone_{\varphi} \ar[r]^-{\Sigma \epsilon(\varphi)} \ar[d] & \Sigma A \ar[d]^{\Sigma \varphi} \\
\Sigma \cone  B \ar[r]^-{\Sigma \ev^B_0} & \Sigma B.
}
$$
The Banach algebra $\Sigma \cone  B$  is isomorphic to $\cone \Sigma B$. If we identify these algebras, the above pullback square becomes the pullback square defining $\cone_{\Sigma \varphi}$, namely
$$
\xymatrix{
\cone_{\Sigma \varphi} \ar[r]^-{\epsilon(\Sigma \varphi)} \ar[d] & \Sigma A \ar[d]^{\Sigma \varphi} \\
\cone \Sigma  B \ar[r]^-{\ev^{\Sigma B}_0} & \Sigma B.
}
$$
\end{proof}

\begin{lemma} Up to isomorphism, the suspension $\Sigma \colon \BanAlg \to \BanAlg$ sends cone triangles to cone triangles. The same is true for the ``negative suspension'' $-\Sigma$ introduced in Definition \ref{Definition:DistinguishedTriangle}.
\end{lemma}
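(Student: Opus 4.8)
The plan is to deduce both statements from the preceding lemma, with a ``reflection'' trick supplying the sign needed for $-\Sigma$. For $\Sigma$, I would apply $\Sigma$ to the cone triangle of $\varphi$, obtaining the diagram with objects $\Sigma\Sigma B,\Sigma\cone_{\varphi},\Sigma A,\Sigma B$ and maps $\Sigma\iota(\varphi),\Sigma\epsilon(\varphi),\Sigma\varphi$. By the preceding lemma $\Sigma$ carries the pullback square defining $\cone_{\varphi}$ to the one defining $\cone_{\Sigma\varphi}$, so there is a canonical isomorphism $c\colon\Sigma\cone_{\varphi}\to\cone_{\Sigma\varphi}$ with $\epsilon(\Sigma\varphi)\circ c=\Sigma\epsilon(\varphi)$; and since $\iota(\varphi)$ is by construction the kernel inclusion of $\epsilon(\varphi)$ and $\Sigma$ is exact (it maps $\mE$ into $\mE$), the same $c$ also satisfies $c\circ\Sigma\iota(\varphi)=\iota(\Sigma\varphi)$. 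Then $(\id_{\Sigma\Sigma B},c,\id_{\Sigma A},\id_{\Sigma B})$ is an isomorphism from $\Sigma$ of the cone triangle of $\varphi$ to the cone triangle of $\Sigma\varphi$, which settles the first assertion.

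For $-\Sigma$, the extra ingredient I would use is the reflection isomorphism. For each Banach algebra $D$ the orientation-reversing self-map $t\mapsto 1-t$ of $(S^1,1)$ induces an automorphism $\rho_D\colon\Sigma D\to\Sigma D$; these assemble into a natural automorphism of the functor $\Sigma$, and from the description of the group structure on $[A',\Sigma B']$ via the co-group structure of $(S^1,1)$ one gets $[\rho_D]=-[\id_{\Sigma D}]$ in $\SW$. Hence in $\SW$ the three maps $-\Sigma\iota(\varphi),-\Sigma\epsilon(\varphi),-\Sigma\varphi$ occurring in $-\Sigma$ of the cone triangle of $\varphi$ each coincide with an honest composite $\rho\circ\Sigma(-)$; in particular $-\Sigma\varphi$ is represented by the honest homomorphism $\psi:=\Sigma\varphi\circ\rho_A\colon\Sigma A\to\Sigma B$, and, by naturality of $\rho$, also $\psi=\rho_B\circ\Sigma\varphi$. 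Since $\rho_A$ is an isomorphism, it identifies the pullback defining $\cone_{\psi}$ with the one defining $\cone_{\Sigma\varphi}=\Sigma\cone_{\varphi}$; combining this identification, the isomorphism $c$ from the first part, and the relation $[\rho_D]=-[\id]$, I would build an isomorphism of triangles in $\SW$ between $-\Sigma$ of the cone triangle of $\varphi$ and the cone triangle of $\psi$.

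The hard part will be the sign bookkeeping in the $-\Sigma$-case: the identity-based isomorphism of the first part fails, because passing from $\Sigma$ to $-\Sigma$ puts a sign on all three maps, so one has to place the reflections $\rho$ carefully over source versus target objects in order to make the three squares of the triangle isomorphism commute simultaneously; this is exactly where naturality of $\rho$, the involutivity $\rho_D^2=\id$, and the identity $\rho_B\circ\Sigma\varphi=\Sigma\varphi\circ\rho_A$ are needed. Once $\Sigma$ and $-\Sigma$ are known to send cone triangles to cone triangles up to isomorphism, closure of the class of distinguished triangles under the translation $\Sigma^{-1}$ and its inverse follows formally, which is presumably the point of the lemma.
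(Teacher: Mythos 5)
Your overall strategy is the same as the paper's: for $\Sigma$, use the preceding lemma's identification $\Sigma\cone_{\varphi}\cong\cone_{\Sigma\varphi}$ to build an isomorphism of triangles, and for $-\Sigma$, insert the reflection $t\mapsto 1-t$. But there is one concrete error in the $\Sigma$-case: the left-hand square of your triangle isomorphism does \emph{not} commute with $\id_{\Sigma\Sigma B}$. Your justification --- that $\iota(\varphi)$ is the kernel inclusion of $\epsilon(\varphi)$ and $\Sigma$ is exact, hence $c\circ\Sigma\iota(\varphi)=\iota(\Sigma\varphi)$ --- only shows that the two injections $c\circ\Sigma\iota(\varphi)$ and $\iota(\Sigma\varphi)$ have the same image, namely $\kernel(\epsilon(\Sigma\varphi))$; it does not show they are equal, and in fact they differ by the automorphism of $\Sigma\Sigma B$ interchanging the two suspension coordinates. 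Concretely, $\iota(\varphi)$ lands in $\cone_{\varphi}\subseteq\cone B\times A$ through the inner (cone) variable, so after applying $\Sigma$ the new suspension variable sits on the outside and the identification $\Sigma\cone B\cong\cone\Sigma B$ built into $c$ swaps it to the cone slot, whereas $\iota(\Sigma\varphi)$ uses the outer variable of $\Sigma(\Sigma B)$ as the cone variable of $\cone_{\Sigma\varphi}$. The paper accordingly takes the left vertical map to be the coordinate flip $\gamma\colon\Sigma\Sigma B\to\Sigma\Sigma B$ rather than the identity. Since $\gamma$ is an isomorphism of Banach algebras the lemma itself is unaffected --- it only asserts an isomorphism of triangles, not one over the identity --- but the point is not cosmetic: $\gamma$ is not homotopic to the identity (stably it acts as $-\id$), so the specific quadruple $(\id,c,\id,\id)$ you wrote down is not a morphism of triangles, and any later computation using this identification must carry the flip along. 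Your $-\Sigma$ discussion matches the paper's equally brief one, and the sign bookkeeping you defer is indeed the only remaining work there.
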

\begin{proof}
Let $\varphi \colon A \to B$ be a continuous homomorphism of Banach algebras. Then the cone triangle for $\varphi$ is 
$$
\xymatrix{
\Sigma B \ar[r]^-{\iota(\varphi)}  &\cone_{\varphi} \ar[r]^-{\epsilon(\varphi)}  & A \ar[r]^-{\varphi} &B. 
}
$$
We just discuss the case of $\Sigma$ in detail. For the case $-\Sigma$ you have to use the map $\tau\colon t\mapsto 1-t$ on $]0,1[$ which has the property that $\Sigma \varphi \circ \tau = -\Sigma \varphi$ in the $\SigmaHo(A,B)$. 

Consider the diagram
$$
\xymatrix{
\Sigma \Sigma B \ar[r]^-{\Sigma \iota(\varphi)}\ar[d]_{\gamma}^{\cong}  &\Sigma \cone_{\varphi} \ar[r]^-{\Sigma \epsilon(\varphi)} \ar[d]^{\cong} & \Sigma A \ar[r]^-{\Sigma \varphi}\ar@{=}[d] &\Sigma B\ar@{=}[d]\\
\Sigma \Sigma B \ar[r]^-{\iota(\Sigma \varphi)}  &\cone_{\Sigma \varphi} \ar[r]^-{\epsilon(\Sigma \varphi)}  & \Sigma A \ar[r]^-{\Sigma \varphi} &\Sigma B 
}
$$
The upper row is the suspended cone triangle of $\varphi$, the lower row is the cone triangle of $\Sigma \varphi$. The isomorphism between $\Sigma \cone_{\varphi}$ and $\cone_{\Sigma \varphi} $ is the one constructed above, the isomorphism $\gamma$ twists  the two copies of $\Sigma$. 

It is easy to see that the diagram commutes; the lemma follows.
\end{proof}

\begin{lemma}[Rotation Axiom] A triangle $(u'', u', u) \colon \Sigma X \to X'' \to X' \to X$ in $\SW$ is distinguished if and only if the ``rotated triangle'' $(-\Sigma u, u'', u') \colon \Sigma X' \to \Sigma X \to X'' \to X'$ is. 
\end{lemma}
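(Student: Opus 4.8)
The plan is to reduce the ``if and only if'' to a single statement about cone triangles, and then to prove that statement by constructing an explicit isomorphism of triangles in $\SW$.

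Write $R$ for the rotation operation in the statement. A direct inspection of the definitions shows that $R^3$ coincides with the operation $-\Sigma$ of Definition~\ref{Definition:DistinguishedTriangle}: applied to $(u'',u',u)\colon \Sigma X \to X'' \to X' \to X$ both yield $(-\Sigma u'', -\Sigma u', -\Sigma u)\colon \Sigma^2 X \to \Sigma X'' \to \Sigma X' \to \Sigma X$. Moreover $R$ commutes with $-\Sigma$ and sends isomorphisms of triangles to isomorphisms of triangles, while the class of distinguished triangles is, by Definition~\ref{Definition:DistinguishedTriangle}, closed under isomorphism and under $(-\Sigma)^{\pm 1}$. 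Hence, as soon as we know that $R$ carries \emph{every cone triangle} to a distinguished triangle, it follows that $R$ carries every distinguished triangle (being some $(-\Sigma)^n$ of a cone triangle) to a distinguished triangle; this is the forward implication of the lemma. For the converse, if $R(T)$ is distinguished then so are $R^2(T)$ and $R^3(T)=(-\Sigma)(T)$, and applying $(-\Sigma)^{-1}$ shows that $T$ is distinguished. So it remains to prove: the rotation of a cone triangle is distinguished.

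For $\varphi\colon A \to B$ the rotated cone triangle of $\varphi$ is
$$
\xymatrix{ \Sigma A \ar[r]^-{-\Sigma\varphi} & \Sigma B \ar[r]^-{\iota(\varphi)} & \cone_{\varphi} \ar[r]^-{\epsilon(\varphi)} & A }
$$
and the claim will be that it is isomorphic in $\SW$ to the cone triangle of $\epsilon(\varphi)\colon \cone_{\varphi} \to A$,
$$
\xymatrix{ \Sigma A \ar[r]^-{\iota(\epsilon(\varphi))} & \cone_{\epsilon(\varphi)} \ar[r]^-{\epsilon(\epsilon(\varphi))} & \cone_{\varphi} \ar[r]^-{\epsilon(\varphi)} & A, }
$$
which is distinguished by definition. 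I would take the identity on the first, third and fourth objects and connect the second objects by an honest Banach algebra homomorphism $\theta\colon \cone_{\epsilon(\varphi)} \to \Sigma B$. Identifying $\cone_{\epsilon(\varphi)}$ with the algebra of triples $(a,f,g)$, $f\in\cone B$, $g\in\cone A$, $\varphi(a)=f(0)$, $a=g(0)$, one lets $\theta(a,f,g)$ be the path in $B$ equal to $s\mapsto \varphi(g(1-2s))$ on $[0,\tfrac12]$ and to $s\mapsto f(2s-1)$ on $[\tfrac12,1]$; this is well defined, lies in $\Sigma B$, and is a bounded homomorphism because $\varphi$ is. Then one checks three things: $\theta$ is a homotopy equivalence, hence invertible in $\SW$; the left-hand square commutes in $\SW$, where $\theta\circ\iota(\epsilon(\varphi))$ is, after a reparametrisation homotopy, the path $s\mapsto \varphi(h(1-s))$, i.e.\ exactly $-\Sigma\varphi$ in the sense of the reversal map $\tau$ of the preceding lemmas — this is where the sign demanded by the statement appears; and the middle square commutes up to homotopy, via the standard ``sliding'' homotopy that uses the extra cone coordinate $g$ to pull $a$ to $0$ while unfolding $f$. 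The right-hand square commutes on the nose. This is precisely the Banach-algebraic transcription of the treatment of the Puppe/rotation sequence in the appendix of \cite{DellAmbrogio:08}, and the arguments carry over verbatim since only explicit algebras, homomorphisms and homotopies of functions occur.

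The only genuine work lies in the last step: exhibiting a homotopy inverse of $\theta$ (using contractibility of $\cone A$), writing down the sliding homotopy for the middle square, and organising the reparametrisations so that the sign comes out as $-\Sigma\varphi$ and not $+\Sigma\varphi$. These verifications are routine but slightly delicate, and we refer to \cite{DellAmbrogio:08} for the details.
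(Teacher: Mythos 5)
Your proof is correct and follows essentially the same route as the paper: reduce the biconditional to the forward implication via the identity $R^3=-\Sigma$ (the paper phrases this as ``apply the only-if part twice and de-suspend once''), then show the rotated cone triangle of $\varphi$ is isomorphic in $\SW$ to the cone triangle of $\epsilon(\varphi)$ by means of a homotopy equivalence between $\Sigma B$ and $\cone_{\epsilon(\varphi)}$, with the sign arising from the reversal $\tau$, deferring the explicit homotopies to Dell'Ambrogio. The only cosmetic difference is that the paper takes the comparison map in the other direction, $\theta=(0,\iota)\colon \Sigma B \to \cone_{\epsilon(\varphi)}$, so that the middle square commutes on the nose by the pullback property and only the left square needs a homotopy, whereas your choice makes the middle square commute only up to the extra ``sliding'' homotopy.
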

\begin{proof}
Let $\varphi\colon A\to B$ be a continuous homomorphism of Banach algebras and consider the following diagram
$$
\xymatrix{
\Sigma A \ar[rr]^{(\Sigma \varphi)\circ \tau}\ar@{=}[d]  \ar@{}[rrd]|{\fA}&&\Sigma B \ar[rr]^{\iota=\iota(\varphi)}\ar@{}[rrd]|{\fB} \ar[d]^{\theta} && \cone_{\varphi} \ar[rr]^{\epsilon=\epsilon(\varphi)}\ar@{=}[d] &&A \ar@{=}[d]\ar[rr]^{\varphi}&& B\\
\Sigma A \ar[rr]_{\iota(\epsilon)}  &&\cone_{\epsilon} \ar[rr]_{\epsilon(\epsilon)}  && \cone_{\varphi} \ar[rr]_{\epsilon} && A}
$$
The upper line is the cone triangle of $\varphi$, prolongated to the left. The homomorphism $\tau \colon \Sigma A \to \Sigma A$ denotes the ``inverting morphism'' induced by $t \mapsto 1-t$, so that the canonical image of $(\Sigma \varphi) \circ \tau$ is $-\Sigma \can(\varphi)$ in $\SW$. The lower line line is the cone triangle of $\epsilon:=\epsilon(\varphi)\colon \cone_{\varphi} \to A$.

The morphism $\theta=(0,\iota)$ is given by the pullback defining $\cone_{\epsilon}$, so the square $\fB$ commutes by definition. If we can show that the square $\fA$ commutes up to homotopy and that $\theta$ is a homotopy equivalence, then we have shown the ``only if'' part of the lemma. The ``if'' implication follows by applying the ``only if'' part twice and by (de-)suspending once. 

To show that $\fA$ commutes up to homotopy we have to find a homotopy between the morphisms $\theta \circ \Sigma \varphi \circ \tau$ and $\iota(\epsilon)$ which are morphisms from $\Sigma A$ to $\cone_{\varphi}$. The desired homotopy is given in the C$^*$-algebra setting in \cite{DellAmbrogio:08}, Remark A.5.12, and the formula works just as well for Banach algebras. Or you may use the more elaborate smooth version in \cite{CMR:07}, page 111.

\end{proof}

\begin{lemma}
Let $\varphi\colon A \to B$ be a homomorphism of Banach algebras and let $\epsilon = \epsilon (\varphi) \colon \cone_{\varphi} \to A$ be the canonical (admissible) epimorphism. Then the canonical morphism $\theta = (0, \iota) \colon \Sigma B \to \cone_{\epsilon}$ is a homotopy equivalence.  
\end{lemma}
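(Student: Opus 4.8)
The plan is to construct an explicit bounded homomorphism $\rho\colon\cone_\epsilon\to\Sigma B$ and to verify that it is a two-sided homotopy inverse of $\theta$. The homotopies involved are the usual reparametrisation homotopies which, as with the preceding lemma, make sense verbatim for Banach algebras; they may also be imported from Remark~A.5.12 of \cite{DellAmbrogio:08} (or the smooth version on p.~111 of \cite{CMR:07}).

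First I would unravel the two pullbacks. Writing $\cone B=B[0,1[$ and $\cone A=A[0,1[$, we have $\cone_\varphi=\{(a,f):a\in A,\ f\in\cone B,\ f(0)=\varphi(a)\}$ with $\epsilon=\epsilon(\varphi)$ the projection $(a,f)\mapsto a$, and hence
$$
\cone_\epsilon=\bigl\{(a,f,h):a\in A,\ f\in\cone B,\ h\in\cone A,\ f(0)=\varphi(a),\ h(0)=a\bigr\},
$$
under which identifications $\theta(g)=(0,g,0)$ for $g\in\Sigma B\subseteq\cone B$. I would then define $\rho\colon\cone_\epsilon\to\Sigma B$ by gluing the path $\varphi\circ h$, run backwards, to the path $f$:
$$
\rho(a,f,h)(t):=
\begin{cases}
\varphi\bigl(h(1-2t)\bigr),& 0<t\le\tfrac12,\\
f(2t-1),& \tfrac12\le t<1.
\end{cases}
$$
The two branches agree at $t=\tfrac12$ because $\varphi(h(0))=\varphi(a)=f(0)$, and the result lies in $C_0(]0,1[,B)=\Sigma B$ since $f$ and $h$ vanish towards $1$; as $\rho$ is assembled from $\varphi$, point evaluations and reparametrisations, it is a bounded algebra homomorphism.

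It then remains to produce homotopies $\rho\circ\theta\simeq\id_{\Sigma B}$ and $\theta\circ\rho\simeq\id_{\cone_\epsilon}$. For the first, $\rho(\theta(g))$ is simply $g$ squeezed into the subinterval $[\tfrac12,1[$ and extended by zero, and a one-parameter family of such squeezings (with breakpoint moving to $(1-s)/2$) connects it to $g$ through elements of $\Sigma B$. For the second -- the substantive one -- I would use the family
$$
G_s(a,f,h):=\bigl(h(1-s),\ f_s,\ h_s\bigr),
$$
where $h_s$ is $h$ reparametrised onto $[1-s,1[$ (so $h_0=0$ and $h_1=h$), and $f_s$ carries, on its left part $[0,(1-s)/2]$, the portion of $\varphi\circ h$ not yet absorbed into $h_s$ (run backwards), and on its right part $[(1-s)/2,1[$ a copy of $f$ rescaled from $[0,1[$. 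One checks directly that $G_0=\theta\circ\rho$, that $G_1=\id$, that each $G_s$ lands in $\cone_\epsilon$ because $f_s(0)=\varphi(h(1-s))$ and $h_s(0)=h(1-s)$ by construction, and that $(a,f,h)\mapsto(G_s(a,f,h))_s$ is a bounded homomorphism $\cone_\epsilon\to\cone_\epsilon[0,1]$. I expect the only real obstacle to be the bookkeeping around the moving breakpoint of $f_s$, together with the routine but slightly tedious check that every map in sight is jointly continuous and multiplicative rather than merely linear; once this is in place, $\theta$ is a homotopy equivalence with homotopy inverse $\rho$.
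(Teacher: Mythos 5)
Your proposal is correct and is precisely the explicit construction that the paper delegates to Remark~A.5.14 of \cite{DellAmbrogio:08} (respectively p.~111 of \cite{CMR:07}): the homotopy inverse concatenates the reversed path $\varphi\circ h$ with $f$, and the two reparametrisation homotopies with moving breakpoint are the standard ones, which indeed carry over verbatim to Banach algebras. All the compatibility checks you flag ($f_s(0)=\varphi(h(1-s))$, $h_s(0)=h(1-s)$, agreement at the breakpoint, componentwise multiplicativity) go through, so this is a complete write-out of the argument the paper only cites.
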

\begin{proof}
The desired homotopy inverse and the needed homotopies are given in Remark A.5.14 in \cite{DellAmbrogio:08} for C$^*$-algebras, but again, they work for Banach algebras in general. Or you could again use the smooth version from page 111 of \cite{CMR:07}.
\end{proof}

\begin{lemma}[Morphism axiom] Given in $\SW$ the solid arrow diagram 
$$
\xymatrix{
\Sigma X \ar[r] \ar[d]_{\Sigma u} & X'' \ar[r] \ar@{..>}[d]& X'\ar[r]  \ar[d] \ar@{}[dr]|{\mathfrak{C}}& X \ar[d]^u\\
\Sigma Y \ar[r] & Y'' \ar[r] & Y' \ar[r]& Y
}
$$
with distinguished lines and commutative square $\mathfrak{C}$, there always exists a dotted morphism making the two squares on its sides commute.
\end{lemma}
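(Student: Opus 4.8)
The plan is to reduce to the case where both rows are cone triangles of genuine homomorphisms of Banach algebras, and then to exhibit the dotted arrow explicitly on mapping cones. For the reduction I would use that $-\Sigma$ and its inverse are automorphisms of $\SW$ carrying distinguished triangles to distinguished triangles and morphisms of triangles to morphisms of triangles, and that it suffices to solve the fill-in problem after applying such an automorphism or after passing to isomorphic triangles. By Definition~\ref{Definition:DistinguishedTriangle} each of the two rows is isomorphic to the image under some power $(-\Sigma)^{n}$ of a cone triangle, so applying $(-\Sigma)^{N}$ for $N$ large enough and iterating the lemma above that $-\Sigma$ sends cone triangles to cone triangles up to isomorphism, I may assume both rows are honest cone triangles. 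Since every morphism of $\SW$ is, by definition of $\SW$, represented by a homotopy class of genuine homomorphisms after sufficiently many suspensions, I then apply $\Sigma^{M}$ for $M$ large -- again an automorphism preserving distinguished triangles and carrying the cone triangle of $\varphi$ to that of $\Sigma^{M}\varphi$ up to isomorphism -- and relabel, so that: the top row is the cone triangle $\Sigma B \xrightarrow{\iota(\varphi)} \cone_{\varphi} \xrightarrow{\epsilon(\varphi)} A \xrightarrow{\varphi} B$ of some $\varphi\colon A\to B$; the bottom row is the cone triangle of some $\psi\colon A'\to B'$; the vertical maps $X\to Y$ and $X'\to Y'$ are genuine homomorphisms $\beta\colon B\to B'$ and $\alpha\colon A\to A'$; and commutativity of $\mathfrak{C}$ in $\SW$ means $\beta\circ\varphi$ is homotopic to $\psi\circ\alpha$. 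I fix a witnessing homotopy, i.e.\ a homomorphism $H\colon A\to B'[0,1]$ with $\ev_0^{B'}\circ H=\beta\circ\varphi$ and $\ev_1^{B'}\circ H=\psi\circ\alpha$.

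For the fill-in itself I would use that $\cone_{\varphi}$ is the pullback $A\times_B\cone B$, consisting of pairs $(a,f)$ with $f\in\cone B=B[0,1[$ and $f(0)=\varphi(a)$, and define $\Phi\colon\cone_\varphi\to\cone_\psi$ by $\Phi(a,f):=(\alpha(a),g_{a,f})$ with
\[ g_{a,f}(t) := \begin{cases} H(a)(1-2t), & 0\le t\le\tfrac12,\\ \beta\bigl(f(2t-1)\bigr), & \tfrac12\le t<1. \end{cases} \]
The two branches agree at $t=\tfrac12$ since $H(a)(0)=\beta(\varphi(a))=\beta(f(0))$; one has $g_{a,f}(0)=H(a)(1)=\psi(\alpha(a))$, so the pair $(\alpha(a),g_{a,f})$ indeed lies in $\cone_\psi$; and $g_{a,f}(t)\to 0$ as $t\to1$ because $f\in\cone B$. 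It is straightforward that $\Phi$ is a bounded homomorphism. The middle square commutes on the nose, as $\epsilon(\psi)\circ\Phi$ and $\alpha\circ\epsilon(\varphi)$ both send $(a,f)$ to $\alpha(a)$. For the left-hand square I would compute, for $b\in\Sigma B$ with $\iota(\varphi)(b)=(0,\tilde b)$ (where $\tilde b$ is the extension of $b$ by $0$ at $0$) and using that $H(0)=0$, that $\Phi(\iota(\varphi)(b))=(0,g)$ where $g$ vanishes on $[0,\tfrac12]$ and equals $\beta(\tilde b(2t-1))$ on $[\tfrac12,1[$; this is the compression into $[\tfrac12,1[$ of $\iota(\psi)(\Sigma\beta(b))=(0,\beta\circ\tilde b)$, and the evident homotopy of reparametrisations of $[0,1[$ connects the two inside $\cone_\psi$. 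Hence both squares commute in $\SW$, and transporting $\Phi$ back along the isomorphisms of triangles and the inverse automorphisms used in the reduction yields the required dotted morphism for the original diagram. The explicit reparametrisation homotopies here are the Banach-algebra analogues of those in Remarks~A.5.12--A.5.14 of \cite{DellAmbrogio:08} (or the smooth ones on page~111 of \cite{CMR:07}).

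The only genuine difficulty is the reduction: organising the powers of $\Sigma$ and $-\Sigma$ and the colimit defining $\SW$ so that all maps in sight become honest homomorphisms and the square $\mathfrak{C}$ commutes up to an honest homotopy, while keeping track of -- and absorbing into isomorphisms of triangles -- the signs introduced by $-\Sigma$. Once that bookkeeping is in place, the mapping-cone construction and the verification of the two squares are routine.
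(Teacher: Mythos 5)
Your proposal is correct and is essentially the construction the paper appeals to: the paper's proof simply defers to Lemma A.5.15 of Dell'Ambrogio (and p.~112f of Cuntz--Meyer--Rosenberg), and what you write out -- reduction to cone triangles of honest homomorphisms via suspension and the colimit structure of $\SW$, then the explicit fill-in $\cone_\varphi\to\cone_\psi$ built from a witnessing homotopy $H$, with the middle square commuting strictly and the left square commuting up to a reparametrisation homotopy -- is exactly that construction transplanted to Banach algebras. No gaps; the details of the map $\Phi$ and the two square-checks are all verified correctly.
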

\begin{proof}
Proceed as in Lemma A.5.15 and Remark A.5.23 of \cite{DellAmbrogio:08}; again, the C$^*$-construction works for general Banach algebras. The exposition on page 112f of \cite{CMR:07} looks more complicated on first sight, but this is only due to the fact that the functor $J$ used there is not as compatible with the mapping cone construction as the functor $\Sigma$; the (smooth) homotopies used there work just as well.
\end{proof}

\begin{lemma}[Octahedron axiom] 
Given $\varphi\colon A \to B$, $\psi \colon B \to C$ and $\chi:= \psi \circ \varphi\colon A \to C$ in $\SW$, and given distinguished triangles $[\varphi'', \varphi', \varphi]$, $[\psi'', \psi', \psi]$ and $[\chi'', \chi', \chi]$, then there exist morphisms $\alpha\colon \cone_{\chi} \to \cone_{\psi}$ and $\beta \colon \cone_{\varphi} \to \cone_{\chi}$ such that
\begin{enumerate}
\item $[\varphi'' \psi', \beta, \alpha]$ is distinguished,
\item $\alpha \chi'' = \Sigma \psi'$ and $\chi' \beta = \varphi'$,
\item $\varphi \chi' = \psi' \alpha$ and $\chi'' \Sigma \psi = \beta \varphi''$.
\end{enumerate}
\end{lemma}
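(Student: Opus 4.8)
The plan is to reduce to cone triangles, write $\alpha$ and $\beta$ down explicitly, and then obtain the distinguished triangle of item~(1) by comparing a rotation of it with the cone triangle of $\beta$. Since any distinguished triangle with a prescribed base morphism is isomorphic, as a triangle, to the cone triangle of that morphism — compare the two along the identities of the two common vertices by the Morphism axiom, and conclude with the usual five-lemma for triangulated categories, which only invokes the rotation and morphism axioms proved above — I may assume that $[\varphi'',\varphi',\varphi]$, $[\psi'',\psi',\psi]$ and $[\chi'',\chi',\chi]$ are literally the cone triangles of $\varphi$, $\psi$ and $\chi$; the $\alpha,\beta$ constructed in this case are then transported along the chosen isomorphisms, and conditions (1)--(3) are stable under isomorphisms of triangles. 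With $\cone_\varphi=\{(a,f)\in A\times\cone B:\varphi(a)=f(0)\}$ (and similarly for $\cone_\psi,\cone_\chi$), $\epsilon(\varphi)(a,f)=a$, $\iota(\varphi)(g)=(0,g)$, I would set
\[
\alpha\colon\cone_\chi\to\cone_\psi,\ (a,h)\mapsto(\varphi(a),h),\qquad \beta\colon\cone_\varphi\to\cone_\chi,\ (a,f)\mapsto(a,\psi\circ f),
\]
which are bounded homomorphisms because $\chi=\psi\varphi$. The identities in (2) and (3) then follow by inspecting these formulas; the ones carrying a suspension twist are handled as in the Rotation axiom, by inserting the flip $\tau\colon t\mapsto 1-t$ on $]0,1[$ that realises $-\Sigma$ in $\SW$.

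The substance is (1): that $\Sigma\cone_\psi\to\cone_\varphi\xrightarrow{\beta}\cone_\chi\xrightarrow{\alpha}\cone_\psi$, with first map $\iota(\varphi)\circ\Sigma\epsilon(\psi)$, is distinguished. By the Rotation axiom this is equivalent to distinguishedness of its rotation $\Sigma\cone_\chi\xrightarrow{-\Sigma\alpha}\Sigma\cone_\psi\to\cone_\varphi\xrightarrow{\beta}\cone_\chi$, and for that I only need an isomorphism of triangles from it to the cone triangle $\Sigma\cone_\chi\xrightarrow{\iota(\beta)}\cone_\beta\xrightarrow{\epsilon(\beta)}\cone_\varphi\xrightarrow{\beta}\cone_\chi$ of $\beta$ — i.e.\ a homotopy equivalence $\theta$ between $\Sigma\cone_\psi$ and $\cone_\beta$ (the identity being taken on $\Sigma\cone_\chi$, $\cone_\varphi$, $\cone_\chi$) with $\epsilon(\beta)\circ\theta=\iota(\varphi)\circ\Sigma\epsilon(\psi)$ and $\theta\circ(-\Sigma\alpha)\simeq\iota(\beta)$.

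To produce $\theta$ I would unravel $\cone_\beta$: a point of $\cone_\beta$ is a pair $(a,f)\in\cone_\varphi$ together with a path $\xi=(\xi_A,\xi_C)$ in $\cone_\chi$ with $\xi(0)=\beta(a,f)$; the data $(a,\xi_A)$ then runs over $\{(\xi_A(0),\xi_A):\xi_A\in\cone A\}\cong\cone A$, which is contractible. Since $\beta$ is the identity on the quotient $A$ of the (semi-split) cone extensions, this $\cone A$-factor can be contracted away: an explicit shearing homotopy, moving $a$, $f$, $\xi_A$, $\xi_C$ simultaneously, deformation-retracts $\cone_\beta$ onto a Banach subalgebra canonically isomorphic to $\cone_{\Sigma\psi}$, and $\cone_{\Sigma\psi}\cong\Sigma\cone_\psi$ by the lemma that $\Sigma$ carries the pullback defining $\cone_\psi$ to the one defining $\cone_{\Sigma\psi}$. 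The deformation retraction and the reparametrisation are the standard ``shearing'' homotopies of paths; for Banach algebras they are given verbatim by the formulas in Remarks~A.5.12 and~A.5.14 of \cite{DellAmbrogio:08} (or the smooth versions on p.~111f of \cite{CMR:07}), which are continuous and algebraic, hence valid here. Tracing $\epsilon(\beta)$ and $\iota(\beta)$ through this chain of identifications should then give $\iota(\varphi)\circ\Sigma\epsilon(\psi)$ and $-\Sigma\alpha$ respectively.

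The main obstacle I expect is bookkeeping rather than analysis: one must keep the two cone parameters, the path parameter and the various $t\mapsto1-t$ flips in lockstep so that the homotopy equivalence $\cone_\beta\simeq\Sigma\cone_\psi$ genuinely intertwines the structure maps with the exact signs demanded by (1)--(3) — in particular so that the minus sign in $-\Sigma\alpha$ and the suspension twists in (2)--(3) come out right. No analytic ingredient beyond those already used for the rotation and morphism axioms is required, since every map and homotopy occurring is a composition of evaluations, post-compositions and affine reparametrisations of cones, for which continuity and compatibility with the Banach-algebra structure are automatic.
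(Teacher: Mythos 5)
Your proposal is correct and follows essentially the same route as the paper, whose proof consists of deferring to the arguments of Lemma~A.5.24 of \cite{DellAmbrogio:08} and Section~13.2 of \cite{CMR:07}: explicit formulas for $\alpha$ and $\beta$ on the mapping cones, direct verification of the commutation identities, and establishing~(1) by exhibiting a homotopy equivalence between $\cone_{\beta}$ and $\Sigma\cone_{\psi}$ obtained by shearing away a contractible $\cone A$-factor. You in fact supply more detail than the paper does; the only slight imprecision is that the shearing homotopies for the octahedron are those of Lemma~A.5.24 of \cite{DellAmbrogio:08} rather than of Remarks~A.5.12 and~A.5.14, which belong to the rotation axiom.
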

\begin{proof}
Again, one can use the arguments of \cite{DellAmbrogio:08}, Lemma A.5.24, or of Section 13.2 of \cite{CMR:07}. 
\end{proof}

\begin{theorem}[Universal Property of $\SWban$] \label{Theorem:UniversalPropertyOfSpanierWhitehead}Let $F\colon \BanAlg \to \mT$ be a functor, with values in a triangulated category, equipped with a natural isomorphism $F(\Sigma A) \cong \Sigma F(A)$ where the $\Sigma$ on the right-hand side denotes the suspension functor of $\mT$ such that
\begin{enumerate}
\item the functor $F$ is homotopy invariant;
\item the functor $F$ maps mapping cone triangles to exact triangles in $\mT$.
\end{enumerate}
Then there is a unique exact functor $\overline{F}\colon \SWban \to \mT$ such that $F= \overline{F} \circ \can$.
\end{theorem}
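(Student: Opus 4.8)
The plan is to run the standard ``invert an endofunctor'' argument, following the appendix of \cite{DellAmbrogio:08}. Since $F$ is homotopy invariant it factors (uniquely) through $\can\colon \BanAlg \to \BanAlg/\!\!\sim$; I denote the induced functor on $\BanAlg/\!\!\sim$ again by $F$. As $\mT$ is triangulated, its suspension $\Sigma$ is an automorphism of $\mT$, so I define $\overline{F}$ on objects of $\SWban$ by $\overline{F}(A,m) := \Sigma^{m} F(A)$. For a morphism $x \in \SWban((A,m),(B,n)) = \colim_k [\Sigma^{m+k}A,\Sigma^{n+k}B]$ represented by a homotopy class $f\colon \Sigma^{m+k}A \to \Sigma^{n+k}B$, I set $\overline{F}(x)$ to be $\Sigma^{-k}$ applied to the composite
\[
\Sigma^{m+k}F(A)\ \cong\ F(\Sigma^{m+k}A)\ \xrightarrow{\ F(f)\ }\ F(\Sigma^{n+k}B)\ \cong\ \Sigma^{n+k}F(B),
\]
where the outer isomorphisms are the iterates of the structure isomorphism $F(\Sigma\,\cdot)\cong\Sigma F(\cdot)$.

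First I would check that $\overline{F}(x)$ is independent of the chosen representative: passing from $f$ to $\Sigma f$ (and $k$ to $k+1$) gives the same morphism because the structure isomorphism is natural --- so that $F(\Sigma f)$ is intertwined with $\Sigma F(f)$ --- and because the iterated structure isomorphisms are coherent. Functoriality of $\overline{F}$ then reduces, after stabilizing two composable morphisms to a common level $k$, to functoriality of $F$ on $\BanAlg/\!\!\sim$; that $\overline{F}$ preserves identities is clear; and $\overline{F}\circ\can = F$ holds by inspecting the case $m=n=k=0$ (with $\can(\Sigma A)\cong(A,1)$ sent to the structure isomorphism). Next, $\overline{F}$ commutes with the translation functors: $\overline{F}(\Sigma(A,m)) = \overline{F}(A,m+1) = \Sigma^{m+1}F(A) = \Sigma\,\overline{F}(A,m)$, compatibly with morphisms, so it remains to see that $\overline{F}$ sends distinguished triangles to exact triangles. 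By Definition~\ref{Definition:DistinguishedTriangle} a distinguished triangle of $\SWban$ is isomorphic to $(-\Sigma)^{n}$ of (the image in $\SWban$ of) a cone triangle $\Sigma B \to \cone_{\varphi} \to A \to B$ of a continuous homomorphism $\varphi$; hypothesis~(2) makes $F$ send this cone triangle to an exact triangle of $\mT$, and exact triangles are stable under $\Sigma^{\pm 1}$, under negating all three maps (rotate three times and desuspend), hence under $(-\Sigma)^{n}$, and under isomorphism. So $\overline{F}$ is an exact functor with $F = \overline{F}\circ\can$.

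For uniqueness, suppose $G\colon \SWban \to \mT$ is exact with $G\circ\can = F$. Then $G$ commutes with the translation, hence with $\Sigma$, so on objects $G(A,m) = \Sigma^{m} G(A,0) = \Sigma^{m} F(A)$; and every morphism of $\SWban$ is, modulo the canonical identifications $(\Sigma^{j}A,0)\cong(A,j)$, of the form $\Sigma^{-k-m}$ applied to $\can(f)$ for a homomorphism $f$ of Banach algebras, so $G$ is forced to coincide with $\overline{F}$ on morphisms as well. I expect the only genuinely fussy part to be the coherence bookkeeping for the natural isomorphism $F(\Sigma\,\cdot)\cong\Sigma F(\cdot)$ --- making sure its iterated instances match up so that well-definedness on the colimit and strict functoriality hold --- rather than anything conceptually deep; the whole argument is modeled on the corresponding statement in \cite{DellAmbrogio:08}.
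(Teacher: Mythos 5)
Your proposal is correct and follows the same route as the paper's (very brief) proof sketch: define $\overline{F}(A,m)=\Sigma^m F(A)$, send a representative $\varphi\colon\Sigma^{m+k}A\to\Sigma^{n+k}B$ to the conjugate of $F(\varphi)$ by the iterated structure isomorphisms (desuspended by $\Sigma^{-k}$), and use naturality of $F(\Sigma\,\cdot)\cong\Sigma F(\cdot)$ for compatibility with the colimit. You supply more detail than the paper does on exactness (via stability of exact triangles under $(-\Sigma)^{\pm1}$ and isomorphism) and on uniqueness, but the argument is the same.
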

\begin{proof}
We sketch the proof ({\it cf.}~Theorem~A.4.4 of \cite{DellAmbrogio:08}, Proposition~6.72 of \cite{CMR:07} or \cite{KelVos:87}, \S 2). 

An object $(A,m)$ of $\SWban$ is mapped under $\overline{F}$ to the object $\Sigma^mF(A)$ of $\mT$ (note that this also makes sense for $m<0$). If a continuous homomorphism $\varphi \colon \Sigma ^{m+k} A \to \Sigma^{n+k}B$ represents a morphism in $\SWban$, then $\overline{F}[\varphi]$ is defined as $F(\varphi)$. The naturality of the isomorphism $F(\Sigma \cdot) \cong \Sigma F(\cdot)$ ensures that this definition is compatible with the inductive limit definition of the morphism sets of $\SWban$.
\end{proof}

\section{Localising $\SWban$ to obtain excision properties}\label{Section:EHoban}

We now want to define an intermediate theory $\EHoban=\EHo$ between $\SWban$ and $\kkban$ which has long exact sequences, in both variables, for short exact sequences which allow for a bounded linear split (a class that we have assembled in $\mE_{\min}$). The difference between $\EHoban$ and $\kkban$ will be that we will arrange $\kkban$ to be, in addition, invariant under Morita equivalences of Banach algebras. 

\subsection{The definition of $\EHoban$}\label{Subsection:DefinitionOfEHhoban}

We define $\EHoban$ by localising $\SWban$ at a suitable class of morphisms:

Let $\xymatrix{B\ar@{>->}[r] & E \ar@{->>}[r]^{\pi}  & A}$ be an extension of Banach algebras; let $\kappa_{\pi}\colon B \to \cone_{\pi}$ be the canonical comparison morphism given by $\kappa_{\pi}\colon B \to \cone_{\pi}, \ b \mapsto (b, 0)$. Note that if you consider the cone extension for $\pi$
$$
\xymatrix{\Sigma A \ \ar@{>->}[r]& \cone_{\pi} \ar@{->>}[r] & E\\ &B \ar[u]_{\kappa_{\pi}}&}
$$
then inverting $\kappa_{\pi}$ allows you to construct a canonical morphism from $\Sigma A$ to $B$ induced from the given extension. 

Define 
$$
\mM_{\min}:=\{\kappa_{\pi}:\ \xymatrix{B \ \ar@{>->}[r]& E \ar@{->>}[r]^{\pi}& A} \text{ extension in } \mE_{\min}\}.
$$
Similarly, define $\mM_{\max}$.

\begin{definition}
Let $\EHoban:=\EHo$ denote the triangulated category $\SWban[\mM_{\min}]$.
\end{definition}

Similarly, one can define a theory $\EHo^{\ban}_{\max}:=\SWban[\mM_{\max}]$; this theory will be related to a variant of $E$-theory in much the same way as $\EHoban$ is related to $\kkban$. In what follows, we will concentrate on $\EHoban$ rather than its ``quotient'' $\EHo^{\ban}_{\max}$.

Note that the morphisms in $\EHoban$ are not just compositions of morphisms of $\SWban$ and formal inverses of morphisms in $\mM_{\min}$; in the definition of the Verdier quotient you have to formally invert much more morphisms, namely those morphisms $\varphi$ such that the cone $\cone_{\varphi}$ is an object of the thick triangulated subcategory of $\SWban$ generated by all cones of morphisms in $\mM_{\min}$. Effectively, one does not have much control over this class.

\begin{definition}\label{Definition:EHobanepsilon}
Let $\epsilon\colon \xymatrix{B \ \ar@{>->}[r]& D \ar@{->>}[r]^{\pi}& A}$ be an extension of Banach algebras in $\mE_{\min}$. Then $\EHo(\epsilon) \in \EHo(\Sigma A, B)$ is defined as the product of the canonical morphism $\Sigma A \to \cone_{\pi}$ in $\EHo(\Sigma A, \cone_{\pi})$ and the inverse of the morphism $\kappa_{\pi}\colon B \to \cone_{\pi}$ in $\EHo(B,\cone_{\pi})$.
\end{definition}

\begin{lemma}\label{Lemma:ExtensionsAreDistinguishedInEHoban}
Let $\epsilon \colon \xymatrix{B \ \ar@{>->}[r]& D \ar@{->>}[r]^{\pi}& A}$ be in $\mE_{\min}$. Then the sequence, called \demph{extension triangle},
$$
\xymatrix{\Sigma A  \ar[r]^{\EHo(\epsilon)} & B \ar[r]& D \ar[r] & A}
$$
is an exact triangle in $\EHobancat$. In particular, every element of $\mE_{\min}$ gives long exact sequences in $\EHo$ in both variables.
\end{lemma}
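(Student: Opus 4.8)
The plan is to realise the extension triangle as an isomorph, inside $\EHobancat$, of the cone triangle of the quotient map $\pi$, which is distinguished already in $\SWban$.

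First I would invoke Definition~\ref{Definition:DistinguishedTriangle}: for the continuous homomorphism $\pi\colon D \to A$, the cone triangle
$$
\xymatrix{\Sigma A \ar[r]^-{\iota(\pi)} & \cone_{\pi} \ar[r]^-{\epsilon(\pi)} & D \ar[r]^-{\pi} & A}
$$
is distinguished in $\SWban$ (take $n=0$ there). Since $\EHobancat = \SWban[\mM_{\min}]$ is a Verdier localisation, the canonical functor $\SWban \to \EHobancat$ is exact, so the image of this triangle is distinguished in $\EHobancat$ too.

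Next I would use that $\epsilon$ lies in $\mE_{\min}$: then the comparison map $\kappa_{\pi}\colon B \to \cone_{\pi}$ lies in $\mM_{\min}$ and hence becomes invertible in $\EHobancat$. This allows me to form the diagram in $\EHobancat$
$$
\xymatrix{
\Sigma A \ar[r]^-{\EHo(\epsilon)} \ar@{=}[d] & B \ar[r]^-{i} \ar[d]^-{\kappa_{\pi}} & D \ar[r]^-{\pi} \ar@{=}[d] & A \ar@{=}[d] \\
\Sigma A \ar[r]^-{\iota(\pi)} & \cone_{\pi} \ar[r]^-{\epsilon(\pi)} & D \ar[r]^-{\pi} & A
}
$$
where $i\colon B \to D$ is the inclusion. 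The right-hand and outer squares commute trivially. The middle square commutes already in $\BanAlg$, since $\kappa_{\pi}(b) = (b,0)\in \cone_{\pi}$ while $\epsilon(\pi)$ is the projection onto $D$, so $\epsilon(\pi)\circ\kappa_{\pi} = i$; hence it commutes in $\EHobancat$. The left-hand square commutes by the very definition of $\EHo(\epsilon)$ (Definition~\ref{Definition:EHobanepsilon}) as $\kappa_{\pi}^{-1}$ composed with the canonical morphism $\Sigma A \to \cone_{\pi}$, which is $\iota(\pi)$: this forces $\kappa_{\pi}\circ\EHo(\epsilon) = \iota(\pi)$. All three vertical arrows are isomorphisms in $\EHobancat$, so this is an isomorphism of triangles; as the lower row is distinguished, so is the upper row, which is the asserted extension triangle.

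Finally, the ``in particular'' clause is the standard fact that a cohomological functor applied to a distinguished triangle in a triangulated category yields, after rotation, a long exact sequence; applying this to $\EHobancat(X,-)$ and $\EHobancat(-,X)$, together with the invertibility of $\Sigma$ in $\EHobancat$, gives the long exact sequences in both variables. I do not anticipate a genuine obstacle: the only delicate point is to confirm that the ``canonical morphism $\Sigma A \to \cone_{\pi}$'' occurring in Definition~\ref{Definition:EHobanepsilon} is literally the structure map $\iota(\pi)$ of the cone extension and carries no sign, so that the left-hand square above commutes on the nose rather than only up to a known automorphism of $\Sigma A$.
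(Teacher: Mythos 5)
Your proposal is correct and is exactly the paper's argument: the paper's proof simply states that the extension triangle is isomorphic in $\EHo$ to the mapping cone triangle of $\pi$, and your diagram (with $\kappa_{\pi}$ as the middle vertical isomorphism and the commutativity checks, including $\kappa_{\pi}\circ\EHo(\epsilon)=\iota(\pi)$ from Definition~\ref{Definition:EHobanepsilon}) is just the spelled-out version of that one-line claim.
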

\begin{proof}
The sequence is clearly a triangle. It is exact because it is isomorphic (in $\EHo$) to the mapping cone triangle $\xymatrix{\Sigma A  \ar[r] & \cone_{\pi} \ar[r]& D \ar[r] & A}$.
\end{proof}

\begin{remark}\label{Remark:AllExactTrianglesExtensionTriangles}
It is easy to see that, conversely, every exact triangle in $\EHobancat$ is isomorphic to an (iterated suspension) of an extension triangle, {\it cf.}~p.~118 in \cite{CMR:07}.
\end{remark}

\noindent The following easy lemmas will be used at several instances in what follows.

\begin{lemma}\label{Lemma:ExactSequencesWithKContractiveMiddleTerm}
Let $\epsilon\colon \xymatrix{B \ \ar@{>->}[r]& D \ar@{->>}[r]& A}$ be an extension of Banach algebras in $\mE_{\min}$ such that $D \cong 0$ in $\EHo$. Then $\EHo(\epsilon)$ is an isomorphism from $\Sigma A$ to $B$. 
\end{lemma}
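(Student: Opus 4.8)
The plan is to deduce the statement immediately from the extension triangle of Lemma~\ref{Lemma:ExtensionsAreDistinguishedInEHoban} together with the elementary fact that, in any triangulated category, an exact triangle one of whose vertices is a zero object has its remaining edge an isomorphism. So first I would invoke Lemma~\ref{Lemma:ExtensionsAreDistinguishedInEHoban}: for the given extension $\epsilon$ in $\mE_{\min}$ the triangle
$$
\xymatrix{\Sigma A \ar[r]^-{\EHo(\epsilon)} & B \ar[r]& D \ar[r] & A}
$$
is exact in $\EHobancat$. In the triangulated structure on $\EHoban$ the translation functor is $\Sigma^{-1}$, so $D$ plays the role of the cone of $\EHo(\epsilon)$. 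By hypothesis $D \cong 0$ in $\EHo$; since $\Sigma$ (hence $\Sigma^{-1}$) is an autoequivalence of $\EHo$, every (de-)suspension of $D$ is a zero object as well.

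Next, for an arbitrary object $W$ of $\EHoban$ I would apply the homological functor $\EHo(W,-)$ to this exact triangle, obtaining a long exact sequence of abelian groups in which all terms involving $D$ and its suspensions vanish; hence post-composition with $\EHo(\epsilon)$ induces an isomorphism $\EHo(W,\Sigma A) \to \EHo(W,B)$ for every $W$, and by the Yoneda lemma $\EHo(\epsilon)$ is an isomorphism from $\Sigma A$ to $B$ in $\EHo$. Equivalently, one may rotate the triangle so that $D$ appears literally as the cone of $\EHo(\epsilon)$ and use directly that a morphism with vanishing cone is invertible; either way the argument is a one-step consequence of the triangulated axioms.

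There is no real obstacle here. The only point requiring a word of care is the bookkeeping of the translation functor, namely that $D\cong 0$ in $\EHo$ forces $\Sigma^{\pm 1}D\cong 0$ as well; this is immediate because $\Sigma$ is an equivalence. Everything else is formal, given the triangulated structure of $\EHoban=\SWban[\mM_{\min}]$ and Lemma~\ref{Lemma:ExtensionsAreDistinguishedInEHoban}.
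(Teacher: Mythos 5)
Your proof is correct and follows essentially the same route as the paper, whose entire proof reads ``Use the long exact sequences in both variables''; your application of the homological functor $\EHo(W,-)$ to the extension triangle (plus Yoneda) is exactly this argument spelled out. The extra care about the translation functor and the alternative rotation argument are fine but add nothing beyond the paper's one-line proof.
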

\begin{proof}
Use the long exact sequences in both variables.
\end{proof}

\begin{lemma}\label{Lemma:kkbanVonErweiterungenLeiterdiagramm}
Let $\epsilon\colon \xymatrix{B \ \ar@{>->}[r]& D \ar@{->>}[r]^{\pi}& A}$ and $\epsilon'\colon \xymatrix{B' \ \ar@{>->}[r]& D' \ar@{->>}[r]^{\pi'}& A'}$ be extensions in $\mE_{\min}$ which can be put in a diagram of the form
$$
\xymatrix{B \ \ar@{>->}[r] \ar[d]_{\psi}& D \ar@{->>}[r]^-{\pi}\ar[d]& A\ar[d]^{\varphi} \\ B' \ \ar@{>->}[r]& D' \ar@{->>}[r]^-{\pi'}& A'}
$$
Then 
$$
\EHo(\Sigma \varphi) \cdot \EHo(\epsilon') =  \EHo(\epsilon) \cdot \EHo(\psi) \ \in\ \EHo(\Sigma A, B'),
$$
i.e., we can extend the above commutative diagram to the left to a commutative diagram in $\EHobancat$:
$$
\xymatrix{\Sigma A \ar[r]^{\EHo(\epsilon)} \ar[d]_{\Sigma \varphi}& B \ \ar@{>->}[r] \ar[d]_{\psi}& D \ar@{->>}[r]\ar[d]& A\ar[d]^{\varphi} \\ \Sigma A' \ar[r]^{\EHo(\epsilon')} &B' \ \ar@{>->}[r]& D' \ar@{->>}[r]& A'}
$$
\end{lemma}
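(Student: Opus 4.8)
The plan is to deduce the identity from the naturality of the mapping cone construction. A morphism of extensions as in the statement does not only consist of $\varphi$ and $\psi$: the (unlabelled) middle vertical arrow is a continuous homomorphism $\chi\colon D\to D'$ with $\pi'\circ\chi=\varphi\circ\pi$ and $\chi\circ i=i'\circ\psi$, where $i\colon B\to D$ and $i'\colon B'\to D'$ are the inclusions; in other words $\chi$ restricts on $B$ to $\psi$. First I would record this and fix the pullback descriptions of the two mapping cones: $\cone_\pi=D\times_A\cone A$ and $\cone_{\pi'}=D'\times_{A'}\cone A'$, with $\iota(\pi)(f)=(0,f)$ and $\kappa_\pi(b)=(i(b),0)$, and likewise for $\pi'$.

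Next I would produce the induced morphism of mapping cones. Since $\chi$ and $\cone\varphi\colon\cone A\to\cone A'$ are compatible over $\varphi$, that is, $\ev^{A'}_0\circ\cone\varphi=\varphi\circ\ev^A_0$ and $\pi'\circ\chi=\varphi\circ\pi$, the assignment $(d,f)\mapsto(\chi(d),\cone\varphi(f))$ defines a continuous homomorphism $c\colon\cone_\pi\to\cone_{\pi'}$. A one-line check on elements then gives the two squares
$$
c\circ\iota(\pi)=\iota(\pi')\circ\Sigma\varphi \LazyAnd c\circ\kappa_\pi=\kappa_{\pi'}\circ\psi,
$$
the first because $c(0,f)=(0,\cone\varphi(f))$ and $\cone\varphi$ restricts on $\Sigma A$ to $\Sigma\varphi$, the second because $c(i(b),0)=(i'(\psi(b)),0)$. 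Both equalities hold already in $\BanAlg$, hence in $\EHobancat$.

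The last step assembles these identities in $\EHobancat$. By construction of the Verdier localisation $\SWban[\mM_{\min}]=\EHobancat$, the morphisms $\kappa_\pi,\kappa_{\pi'}\in\mM_{\min}$ become invertible, and by Definition~\ref{Definition:EHobanepsilon} we have $\EHo(\epsilon)=\kappa_\pi^{-1}\circ\iota(\pi)$ and $\EHo(\epsilon')=\kappa_{\pi'}^{-1}\circ\iota(\pi')$. From $c\circ\kappa_\pi=\kappa_{\pi'}\circ\psi$ we obtain $\psi\circ\kappa_\pi^{-1}=\kappa_{\pi'}^{-1}\circ c$, whence
$$
\psi\circ\EHo(\epsilon)=\psi\circ\kappa_\pi^{-1}\circ\iota(\pi)=\kappa_{\pi'}^{-1}\circ c\circ\iota(\pi)=\kappa_{\pi'}^{-1}\circ\iota(\pi')\circ\Sigma\varphi=\EHo(\epsilon')\circ\Sigma\varphi,
$$
which, recalling that $\EHo(\Sigma\varphi)$ is just the image of $\Sigma\varphi$ in $\EHobancat$, is precisely the asserted equality $\EHo(\Sigma\varphi)\cdot\EHo(\epsilon')=\EHo(\epsilon)\cdot\EHo(\psi)$; it is the commutativity of the leftmost square of the extended ladder, the other two squares commuting already in $\BanAlg$. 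I do not expect a genuine obstacle here: everything reduces to the naturality of $\cone_{(-)}$, $\iota(-)$ and $\kappa_{(-)}$, and the only care needed is with the pullback descriptions of the mapping cones and with keeping the composition order consistent with the paper's convention for the dot product.
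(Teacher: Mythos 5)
Your proposal is correct and follows essentially the same route as the paper: both construct the canonical homomorphism $\cone_{\pi}\to\cone_{\pi'}$ induced by the morphism of extensions, verify that it intertwines $\iota(\pi)$ with $\Sigma\varphi$ and $\kappa_{\pi}$ with $\psi$ already in $\BanAlg$, and then invert the $\kappa$'s in the localisation. You merely make explicit (via the pullback description of the mapping cones) what the paper leaves to ``the universal properties of the pullback''.
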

\begin{proof}
There is a canonical map $\chi\colon \cone_{\pi} \to \cone_{\pi'}$ which can be obtained using the universal properties of the pullback. It fits into a diagram
$$
\xymatrix{\Sigma A \ \ar[r] \ar[d]_{\Sigma \varphi}& \cone_{\pi} \ar[d]^{\chi}& B\ar[d]^{\psi} \ar_-{\cong}[l] \\ \Sigma A' \ \ar[r]& \cone_{\pi'} & B' \ar_-{\cong}[l]}
$$
The fact that this diagram commutes implies the claim. 
\end{proof}

\begin{proposition}\label{Proposition:ActionOfSpacesOnEHoban}
Let $A$ be a Banach algebra and $(X,X_0)$ an object in $\CW_2$. The natural homomorphism from $\Cont_0(X-X_0) \otimes A$ to $\Cont_0(X-X_0, A)$ is an isomorphism in $\EHo$.
\end{proposition}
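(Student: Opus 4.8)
The plan is to reduce the statement to Proposition~\ref{Proposition:ActionOfSpacesOnFunctors} by testing against all representable functors on $\EHobancat$ and then invoking the Yoneda lemma. Write $\iota$ for the natural homomorphism $\Cont_0(X-X_0)\otimes A \to \Cont_0(X-X_0,A)$; the claim is that its image in $\EHobancat$ is an isomorphism.

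First I would fix an arbitrary object $Z$ of the triangulated category $\EHobancat$ and consider the sequence of covariant functors $\mF_k:=\EHo(\Sigma^{-k}Z,\,\cdot\,)\colon\BanAlg\to\Ab$, $k\in\Z$. The key step is to verify that $(\mF_k)_{k\in\Z}$ is a homology theory for Banach algebras in the sense of Definition~\ref{Definition:HomologyTheory}. The natural isomorphisms $\mF_k(\Sigma C)\cong\mF_{k+1}(C)$ come from $\Sigma$ being an automorphism of $\EHobancat$. Homotopy invariance of each $\mF_k$ is immediate, since the canonical functor $\BanAlg\to\EHobancat$ factors through $\BanAlg/\!\!\sim$. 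For half-exactness on semi-split extensions, note that, by Lemma~\ref{Lemma:ExtensionsAreDistinguishedInEHoban}, an extension $\epsilon\colon B\hookrightarrow D\twoheadrightarrow A$ in $\mE_{\min}$ yields an exact triangle $\Sigma A\to B\to D\to A$ in $\EHobancat$; applying $\EHo(\Sigma^{-k}Z,\,\cdot\,)$ to it produces a long exact sequence of abelian groups, in particular exactness of $\mF_k(B)\to\mF_k(D)\to\mF_k(A)$.

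With that established, Proposition~\ref{Proposition:ActionOfSpacesOnFunctors} applies to $(\mF_k)_{k\in\Z}$ and shows that $\iota$ induces isomorphisms $\mF_k(\Cont_0(X-X_0)\otimes A)\cong\mF_k(\Cont_0(X-X_0,A))$ for all $k$; taking $k=0$ tells us that $\EHo(Z,\iota)$ is a bijection. Since $Z$ was arbitrary, the Yoneda lemma finishes the argument: with $Z=\Cont_0(X-X_0,A)$ one produces a morphism $g$ with $\iota\circ g=\id$, and with $Z=\Cont_0(X-X_0)\otimes A$ one sees that $g\circ\iota=\id$ as well, so $\iota$ is an isomorphism in $\EHobancat$. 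Essentially all of the substantive work --- the induction over the skeleta of $X$ and the five-lemma arguments in negative degrees --- is already carried out inside Proposition~\ref{Proposition:ActionOfSpacesOnFunctors}; the only point in the present argument deserving a moment's care is confirming the half-exactness of the representable functors $\mF_k$, i.e., reading off the long exact sequences correctly from the extension triangles of Lemma~\ref{Lemma:ExtensionsAreDistinguishedInEHoban}, and I do not expect any genuine obstacle.
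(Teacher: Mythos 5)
Your argument is correct and is essentially the paper's own proof: the paper likewise reduces the claim to Proposition~\ref{Proposition:ActionOfSpacesOnFunctors} by applying it to the representable functors $C \mapsto \EHo(D,(C,k))$ (and their contravariant counterparts) for arbitrary objects $D$ of $\EHobancat$, and concludes by Yoneda. Your verification that the covariant representable functors form a homology theory --- suspension compatibility from $\Sigma$ being an automorphism, homotopy invariance from factoring through $\BanAlg/\!\!\sim$, and half-exactness from the extension triangles of Lemma~\ref{Lemma:ExtensionsAreDistinguishedInEHoban} --- is exactly the intended justification.
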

\begin{proof}
You can use Proposition \ref{Proposition:ActionOfSpacesOnFunctors} on the functors $A \mapsto \EHo(D, (A, k))$ and  $A \mapsto \EHo((A, k),D)$ for $k\in \Z$ and any fixed object $D$ of $\EHobancat$.
\end{proof}

\begin{definition}
A \demph{triangulated homology functor for Banach algebras} is a functor $F$ from $\BanAlg$ to a triangulated category $\mT$ together with a natural isomorphism $F(\Sigma(A)) \cong \Sigma(F(A))$, where the $\Sigma$ on the right-hand side denotes the suspension functor of $\mT$, such that
\begin{enumerate}
\item the functor $F$ is homotopy invariant;
\item the functor $F$ maps mapping cone triangles to exact triangles in $\mT$;
\item the functor $F$ is half-exact.
\end{enumerate}
\end{definition}

\begin{remark}
Note that every triangulated homology functor is split-exact by Corollary \ref{Corollary:HalfexactnessLongExactSequence:Additive}.
\end{remark}

\begin{theorem}[Universal property of $\EHoban$, {\it cf.}~Prop.~7.72 of \cite{CMR:07}]\label{Theorem:UniversalPropertyEHoban} The category $\EHobancat$ is triangulated and the canonical functor 
$$
\EHobanfunc\colon \SWbancat \to \EHobancat
$$
is exact. Every extension in $\mE_{\min}$ gives a distinguished triangle in $\EHobancat$, and all distinguished triangles in $\EHobancat$ are isomorphic to iterated suspensions of extension triangles induced by extensions in $\mE_{\min}$. 

The functor $\EHobanfunc\colon \BanAlg \to \EHobancat$ is a triangulated homology functor for Banach algebras; it is the universal triangulated homology theory: If $F\colon \BanAlg \to \mT $ is a triangulated homology functor for Banach algebras, then there is a unique exact functor $\overline{F} \colon \EHobancat \to \mT$ such that $F = \overline{F} \circ \EHobanfunc$. And every exact functor on $\EHobancat$ gives a triangulated homology functor in this way.


Let $(F_k)_{k\in \Z}$ be a homology theory for Banach algebras with values in some abelian category $\mA$, then $\overline{F}(A,k):=F_k(A)$ defines a homological functor $\overline{F}\colon \EHobancat \to \mA$. Conversely, any such homological functor arises from a unique homology theory for Banach algebras in this fashion.

%
%
\end{theorem}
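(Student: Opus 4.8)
The plan is to prove the claimed bijection between homology theories and homological functors on $\EHobancat$ in two steps, with essentially all of the work in the passage from a homology theory $(F_k)$ to a homological functor. For the easy converse, given a homological functor $H\colon\EHobancat\to\mA$ I would set $F_k(A):=H(A,k)$. Each $F_k$ is homotopy invariant because $A\mapsto(A,k)=\Sigma^k\can(A)$ is; each $F_k$ is half-exact for $\mE_{\min}$ because every extension in $\mE_{\min}$ yields a distinguished triangle in $\EHobancat$ and $H$ is homological; and the natural isomorphism $F_k(\Sigma A)=H(\Sigma A,k)\cong H(A,k+1)=F_{k+1}(A)$ is $H$ applied to the canonical isomorphism $(\Sigma A,k)\cong(A,k+1)$ in $\SWban$ (implemented by $\id_A$). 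So $(F_k)$ is a homology theory, and that this assignment is inverse to the one below will follow from the uniqueness proved there.

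For the main direction I would first build a functor $\overline{F}\colon\SWbancat\to\mA$ by hand, the universal property of Theorem~\ref{Theorem:UniversalPropertyOfSpanierWhitehead} being unavailable here since $\mA$ is only abelian. Put $\overline{F}(A,m):=F_m(A)$; on a morphism in $\SWban((A,m),(B,n))=\colim_k[\Sigma^{m+k}A,\Sigma^{n+k}B]$ represented by $\varphi\colon\Sigma^{m+k}A\to\Sigma^{n+k}B$, put $\overline{F}[\varphi]:=\sigma_B^{-1}\circ F_{-k}(\varphi)\circ\sigma_A$, where $\sigma_A\colon F_m(A)\xrightarrow{\cong}F_{-k}(\Sigma^{m+k}A)$ is the iterated structure isomorphism of the homology theory and similarly for $\sigma_B$. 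Well-definedness on the colimit reduces, upon replacing $\varphi$ by $\Sigma\varphi$, to naturality of the structure isomorphisms $F_j(\Sigma\,\cdot\,)\cong F_{j+1}(\,\cdot\,)$ together with the bookkeeping identity that the level-$k$ comparison isomorphism equals the level-$(k+1)$ one followed by one structure isomorphism; independence of the representative at a fixed level is homotopy invariance of $F_{-k}$; functoriality is functoriality of $F_{-k}$ after raising two morphisms to a common level; additivity holds because half-exact homotopy-invariant functors preserve finite products. I would then check that $\overline{F}$ is a homological functor on $\SWbancat$, i.e.\ sends mapping cone triangles and their iterated $(-\Sigma)$-suspensions to exact sequences: applying Proposition~\ref{Proposition:HalfexactnessLongExactSequence} separately to each $F_j$ (every $F_j$ is homotopy invariant and half-exact for $\mE_{\min}$, and the cone extension of any homomorphism lies in $\mE_{\min}$) produces exactly the needed long exact sequences, and one verifies that the connecting maps there agree with $\overline{F}$ applied to $\iota(\varphi)$, $\epsilon(\varphi)$, $\varphi$ by comparing Definition~\ref{Definition:EHobanepsilon} with the description of the connecting map in Proposition~\ref{Proposition:HalfexactnessLongExactSequence}.

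Next I would descend $\overline{F}$ along $\EHobanfunc\colon\SWbancat\to\EHobancat=\SWbancat[\mM_{\min}]$. By the universal property of the Verdier quotient it suffices that $\overline{F}$ annihilate the thick subcategory generated by the cones of the morphisms $\kappa_\pi\in\mM_{\min}$; since $\overline{F}$ is homological and additive, the full subcategory of objects $X$ with $\overline{F}(\Sigma^n X)=0$ for all $n\in\Z$ is thick, so it is enough that $\overline{F}(\Sigma^n\kappa_\pi)$ be an isomorphism for all $n$. Up to the canonical isomorphisms above this morphism is $F_j(\Sigma^l\kappa_\pi)$ for suitable $j,l$, which is an isomorphism by the part of Proposition~\ref{Proposition:HalfexactnessLongExactSequence} stating that $\kappa_\pi$ induces $\mF_l(B)\cong\mF_l(\cone_\pi)$, applied with $\mF=F_j$. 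The factored functor $\overline{F}\colon\EHobancat\to\mA$ is again homological, since by the already established part of the theorem every distinguished triangle in $\EHobancat$ is isomorphic to an iterated suspension of an extension triangle and hence (Lemma~\ref{Lemma:ExtensionsAreDistinguishedInEHoban}) to a mapping cone triangle already handled in $\SWbancat$. By construction $\overline{F}(A,k)=F_k(A)$ and the isomorphisms $F_k(\Sigma A)\cong F_{k+1}(A)$ are natural; uniqueness follows because every morphism of $\SWbancat$ is, up to the canonical suspension isomorphisms, the image of a Banach algebra homomorphism and the morphisms of the localization are generated by these together with formal inverses, so the two assignments are mutually inverse.

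The main obstacle I anticipate is the descent to the Verdier quotient: showing that $\overline{F}$ kills the entire thick subcategory generated by the cones of the $\kappa_\pi$ — not merely those cones — for which the lemma that objects annihilated in every degree form a thick subcategory is indispensable. A close second is keeping the colimit bookkeeping honest in the construction of $\overline{F}$ on $\SWbancat$, in particular the compatibility of the iterated structure isomorphisms $\sigma_A$ with suspension.
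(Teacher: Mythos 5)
Your treatment of the final paragraph --- the bijection between homology theories and homological functors on $\EHobancat$ --- is correct and in fact considerably more careful than the paper, which disposes of this part with the single sentence that the arguments ``for homology theories are similar'' to the triangulated case. Your route (construct $\overline{F}$ on $\SWbancat$ by hand, since Theorem~\ref{Theorem:UniversalPropertyOfSpanierWhitehead} only covers triangulated targets; verify it is homological via Proposition~\ref{Proposition:HalfexactnessLongExactSequence}; descend through the Verdier quotient by noting that the objects annihilated in every degree form a thick subcategory containing the cones of all $\kappa_\pi$, the latter by the $\kappa_\pi$-isomorphism part of Proposition~\ref{Proposition:HalfexactnessLongExactSequence}) is exactly the intended argument, and the two anticipated obstacles you name are the right ones.

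The gap is that you prove only the last paragraph of the theorem. The central claim --- that $\EHobanfunc\colon\BanAlg\to\EHobancat$ is the \emph{universal triangulated homology functor}, i.e.\ that every triangulated homology functor $F\colon\BanAlg\to\mT$ with $\mT$ triangulated factors uniquely through an \emph{exact} functor on $\EHobancat$ --- is never addressed, and your construction for abelian targets does not subsume it: an exact functor between triangulated categories is a different datum from a homological functor, and the existence/uniqueness mechanism is different. The paper's argument for this part is short but indispensable: lift $F$ to a unique exact functor on $\SWbancat$ by Theorem~\ref{Theorem:UniversalPropertyOfSpanierWhitehead}; observe that $F(\kappa_\pi)$ is invertible for every semi-split extension by Corollary~\ref{Corollary:KappaIsomorphism:Additive}; then invoke the universal property of the Verdier localisation $\SWbancat[\mM_{\min}]$. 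You should supply this, together with a sentence recording that the first-paragraph claims (the triangulation of $\EHobancat$, exactness of the canonical functor, and the identification of all distinguished triangles with iterated suspensions of extension triangles) follow from the localisation construction combined with Lemma~\ref{Lemma:ExtensionsAreDistinguishedInEHoban} and Remark~\ref{Remark:AllExactTrianglesExtensionTriangles}, which your argument in fact already cites.
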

\begin{proof}
The claims of the first paragraph are either true by construction or follow from Lemma \ref{Lemma:ExtensionsAreDistinguishedInEHoban} and Remark \ref{Remark:AllExactTrianglesExtensionTriangles}. That $\EHobanfunc\colon \BanAlg \to \EHobancat$ is homotopy invariant and maps mapping cone triangle to exact triangles is also clear from the construction; by Lemma \ref{Lemma:ExtensionsAreDistinguishedInEHoban} it is a triangulated homology theory.

Composing $\EHobanfunc$ with an exact functor from $\EHobancat$ into some triangulated category hence gives a triangulated homology theory on $\BanAlg$. Conversely, let $F\colon \BanAlg \to \mT $ be a triangulated homology functor of Banach algebras. 
From Theorem \ref{Theorem:UniversalPropertyOfSpanierWhitehead} it follows that $F$ lifts uniquely to an exact functor on $\SWban$. Corollary \ref{Corollary:KappaIsomorphism:Additive} implies that $F(\kappa_{\pi}) \colon F(B) \to F(\cone_{\pi})$ is an isomorphism whenever $\xymatrix{B \ \ar@{>->}[r]& D \ar@{->>}[r]^{\pi}& A}$ is a split-exact extension of Banach algebras. The universal property of the localisation construction of triangulated categories then implies that $F$ factors uniquely through an exact functor on $\EHobancat$.

The arguments for homology theories are similar, {\it cf.}~Prop.~6.72 in \cite{CMR:07}.
\end{proof}

%
%
%
%
%

\begin{lemma}[ {\it cf.}~p.~122 of \cite{CMR:07}]\label{Lemma:SigmaProduktInEHoban}
Let $D$ be a Banach algebra. Let $\sigma_D$ be the functor that assigns to every Banach algebra $A$ the Banach algebra $A \otimes D$, where $\otimes$ denotes the completed projective tensor product, and to a homomorphism $\varphi$ the homomorphism $\varphi \otimes \id_D$. Then $\sigma_D$, as a functor from the Banach algebras to $\EHo$, is a triangulated homology functor. It hence factors through $\EHo$. The resulting functor from $\EHo$ to itself will be called $\sigma_D$, too.
\end{lemma}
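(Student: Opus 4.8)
The plan is to verify that $\sigma_D$, regarded as a functor $\BanAlg\to\EHo$ — that is, as the composite $\EHobanfunc\circ(-\otimes D)$ — is a \emph{triangulated homology functor for Banach algebras} in the sense introduced just before Theorem~\ref{Theorem:UniversalPropertyEHoban}, and then to quote the universal property of $\EHoban$ (Theorem~\ref{Theorem:UniversalPropertyEHoban}) to get the unique exact factorisation $\sigma_D=\overline{\sigma_D}\circ\EHobanfunc$ with $\overline{\sigma_D}\colon\EHo\to\EHo$ exact.

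The decisive preliminary point concerns $-\otimes D\colon\BanAlg\to\BanAlg$ itself ($\otimes$ the completed projective tensor product). It preserves homotopies: a homotopy $H\colon A\to B[0,1]$ yields a homotopy $A\otimes D\xrightarrow{H\otimes\id_D}B[0,1]\otimes D\xrightarrow{\nu}(B\otimes D)[0,1]$, where $\nu$ is the canonical homomorphism, the evaluations matching since $\ev_t^{B\otimes D}\circ\nu=\ev_t^B\otimes\id_D$; in particular $-\otimes D$ takes contractible Banach algebras to contractible ones. It also sends extensions in $\mE_{\min}$ to extensions in $\mE_{\min}$: a bounded linear splitting $s$ of $\pi$ gives the bounded linear splitting $s\otimes\id_D$ of $\pi\otimes\id_D$, and since a semi-split extension $B\rightarrowtail E\twoheadrightarrow A$ is a topological Banach-space direct sum $E\cong B\oplus A$, one has $E\otimes D\cong(B\otimes D)\oplus(A\otimes D)$, which identifies the tensored sequence with the evident split extension (this is exactly where semi-splitness enters — the projective tensor product need not preserve general closed subspaces). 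The same mechanism provides the structural natural isomorphism $\sigma_D(\Sigma A)\cong\Sigma\sigma_D(A)$: apply $-\otimes D$ to the cone extension $\Sigma A\rightarrowtail\cone A\twoheadrightarrow A$; this is a semi-split extension whose middle term $(\cone A)\otimes D$ is contractible, so by Lemma~\ref{Lemma:ExactSequencesWithKContractiveMiddleTerm} its $\EHo$-class is an isomorphism, and equivalently the canonical homomorphism $\mu_0\colon\Sigma A\otimes D\to\Sigma(A\otimes D)$ becomes invertible in $\EHo$ (compare the two cone extensions via Lemma~\ref{Lemma:kkbanVonErweiterungenLeiterdiagramm}). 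I take $\mu_0$ as the structural isomorphism; it is manifestly natural in $A$.

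With this in hand, the three axioms are checked as follows. Homotopy invariance of $\sigma_D$ is immediate. Half-exactness: for $\epsilon\colon B\rightarrowtail E\twoheadrightarrow A$ in $\mE_{\min}$ the tensored sequence lies in $\mE_{\min}$, and Lemma~\ref{Lemma:ExtensionsAreDistinguishedInEHoban} turns it into an exact extension triangle in $\EHo$, so $\sigma_D$ sends $\mE_{\min}$-extensions to exact triangles. The mapping-cone axiom: given $\varphi\colon A\to B$, apply $\sigma_D$ to its mapping cone triangle and compare with the mapping cone triangle of $\varphi\otimes\id_D$ through the canonical homomorphisms, obtaining a morphism of triangles
$$
\xymatrix{
\Sigma B\otimes D \ar[r] \ar[d]_{\mu_0} & \cone_{\varphi}\otimes D \ar[r]\ar[d]_{\nu} & A\otimes D \ar[r]^-{\varphi\otimes\id_D}\ar@{=}[d] & B\otimes D\ar@{=}[d]\\
\Sigma(B\otimes D) \ar[r] & \cone_{\varphi\otimes\id_D} \ar[r]& A\otimes D \ar[r]^-{\varphi\otimes\id_D} & B\otimes D
}
$$
in which the lower row is a mapping cone triangle, hence exact in $\EHo$ because $\EHobanfunc\colon\SWbancat\to\EHobancat$ is exact. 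Since $\mu_0$ is invertible in $\EHo$ and the two right-hand verticals are identities, the third vertical $\nu$ is invertible in $\EHo$ as well, so the upper row is isomorphic to the lower one and therefore exact. Thus $\sigma_D\colon\BanAlg\to\EHo$ is a triangulated homology functor, and Theorem~\ref{Theorem:UniversalPropertyEHoban} gives the desired factorisation.

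The part requiring care — the main obstacle — is entirely inside the mapping-cone axiom: that the comparison diagram above commutes (naturality of the pullback defining the mapping cone) and, above all, that $\mu_0$ is genuinely an isomorphism \emph{in $\EHo$}. This last point cannot be extracted from Proposition~\ref{Proposition:ActionOfSpacesOnEHoban} by tensoring an $\EHo$-isomorphism with $\id_D$ — that would presuppose the very lemma we are proving — but must be obtained intrinsically, by pitting the $D$-tensored cone extension of $B$ against the cone extension of $B\otimes D$ (both in $\mE_{\min}$, both with contractible middle term) via Lemma~\ref{Lemma:ExactSequencesWithKContractiveMiddleTerm} and Lemma~\ref{Lemma:kkbanVonErweiterungenLeiterdiagramm}; invertibility of $\nu$ is then a formal consequence of the triangulated two-out-of-three property. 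All of this is routine once organised correctly (compare p.~122 of \cite{CMR:07}).
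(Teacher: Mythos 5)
Your proposal is correct and follows essentially the same route as the paper's proof: homotopy invariance via the canonical map $B[0,1]\otimes D\to (B\otimes D)[0,1]$, preservation of semi-split extensions giving half-exactness, the suspension isomorphism from the $D$-tensored cone extension with contractible middle term, and the comparison of the tensored mapping cone triangle with that of $\varphi\otimes\id_D$. You are in fact more explicit than the paper on the two delicate points (why $-\otimes D$ preserves $\mE_{\min}$, and why the middle vertical $\nu$ is invertible in $\EHo$), which the paper leaves as "clear" resp.\ implicit.
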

\begin{proof}
Let $\varphi \colon A \to B[0,1]$ be a homotopy between homomorphism of Banach algebras $\varphi_0$ and $\varphi_1$. Then $\sigma_D(\varphi)$ is a homomorphism from $A\otimes D$ to $B[0,1] \otimes D$. There is a canonical homomorphism $\iota$ from $B[0,1]\otimes D$ to $(B \otimes D)[0,1]$ that is compatible with the evaluation maps. Hence $\iota\circ \sigma_D(\varphi)$ is a homotopy from $\sigma_D(\varphi_0)$ to $\sigma_D(\varphi_1)$. In particular, $\sigma_D$ is homotopy invariant.

It is clear that $\sigma_D$ respects semi-split extensions, so in particular, it is half-exact for semi-split extensions. 

Now consider the extension $\xymatrix{(\Sigma B) \otimes D\ \ar@{>->}[r] & (\cone B) \otimes D \ar@{->>}[r] & B \otimes D}$ for some Banach algebra $B$. Because the algebra in the middle is contractible, we obtain an isomorphism $\Sigma (B \otimes D) \cong (\Sigma B) \otimes D$ which is given by the canonical homomorphism from $(\Sigma B) \otimes D$ to $\Sigma (B \otimes D)$.
Let $\varphi \colon A \to B$ be a homomorphism of Banach algebras. Consider the following diagram
$$
\xymatrix{(\Sigma B) \otimes D\ \ar@{>->}[r] \ar[d]_{\cong}& \cone_\varphi \otimes D \ar@{->>}[r] \ar[d]& A \otimes D\ar@{=}[d] \ar[r] & B \otimes D \ar@{=}[d] \\
\Sigma (B \otimes D)\ \ar@{>->}[r] & \cone_{\varphi \otimes \id_D} \ar@{->>}[r] & A \otimes D\ar[r] & B \otimes D}
$$
This diagram commutes in $\BanAlg$. The lower row is a cone triangle, and the upper row is isomorphic to it in $\EHobancat$. So the upper row is an exact triangle in $\EHobancat$. Hence $\sigma_D$ maps mapping cone triangles to exact triangles.
\end{proof}

\subsection{Connection to  stabilisation with the functor $J$}\label{Subsection:FunctorJ}

\begin{definition}\label{Definition:BanachTensorAlgebra}
Let $X$ be a Banach space. Define
$$
T_1X:= \ell^1-\bigoplus_{n\in \N} X^{\oplus_{\pi}^n}
$$
where $X^{\oplus_{\pi}^n}$ denotes the $n$-fold projective tensor product of $X$ with itself, and $\ell^1-\bigoplus$ denotes the (completed) infinite sum in the category of Banach spaces. 

Let $r>0$ and let $X_r$ denote the Banach space $X$, but with the norm $\norm{\cdot}$ of $X$ replaced with the (equivalent) norm $r \norm{\cdot}$. Define 
$$
T_rX:= T_1 X_r.
$$
\end{definition}

The space $T_rX$ is a completion of the algebraic tensor algebra of $X$, and it is a Banach algebra with the induced multiplication. There is a canonical inclusion of $X$ into $T_r X$ of norm $r$. The Banach algebra $T_rX$ has the universal property for continuous linear maps of norm at most $r$ from $X$ into some Banach algebra. 

If $0< r_1 \leq r_2$, then there is a canonical continuous inclusion of the algebra $T_{r_2} X$ into $T_{r_1}X$, prolonging the identity on the algebraic tensor algebra and of norm $\leq r_1 / r_2$. The projective limit for $r\to \infty$ over $T_rX$ is a Fr\'{e}chet algebra and has the universal property for all continuous linear maps from $X$ into some Banach algebra (or Fr\'{e}chet algebra); {\it cf.}~\cite{Cuntz:97}.

\begin{definition}
Let $A$ be a Banach algebra and $r\geq 1$. Then the identity map on $A$ induces a canonical continuous homomorphism of Banach algebras from $T_r A$ to $A$ of norm $\leq 1/r$ (note that it factors through $T_1A$) with continuous linear split of norm $\leq r$. Let 
$$
J_r A := \kernel \left(T_r A \to A \right).
$$
We hence obtain a short exact sequence 
$$
\xymatrix{J_r A \ \ar@{>->}[r]& T_r A \ar@{->>}[r]& A}
$$
for every choice of $r\geq 1$.
\end{definition}

\noindent Because $T_r A$ is contractible, Proposition \ref{Proposition:HalfexactnessLongExactSequence} implies the following observation.

\begin{proposition}[{\it cf.}~Lemma~4.1.5 of \cite{CuntzThom:06}]\label{Proposition:HalfexactnessJAndSigma}
Let $F$ be a functor on $\BanAlg$ with values in an additive category that is half-exact for all semi-split extensions and homotopy invariant. Then, for every $r\geq 1$, there is a natural isomorphism $F(\Sigma A)\cong F(J_r A)$ for all Banach algebras $A$. Hence $F(J_{r_1}A) \cong F(J_{r_2}A)$ for all $r_1, r_2 \geq 1$, naturally. 
\end{proposition}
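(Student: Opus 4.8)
The plan is to obtain the isomorphism directly from the long exact sequence attached to the semi-split extension $\xymatrix{J_r A \ \ar@{>->}[r]& T_r A \ar@{->>}[r]^-{\pi}& A}$, as indicated just above the statement. First I would record the two ingredients. This extension lies in $\mE_{\min}$, since the canonical homomorphism $\pi\colon T_r A \to A$ admits a bounded linear split (of norm $\leq r$); and $T_r A$ is contractible, hence so is $\Sigma T_r A$ (apply the functor $\Sigma$ to a contracting homotopy of $T_r A$ and use the canonical map $\Sigma(B[0,1]) \to (\Sigma B)[0,1]$), so that $F(T_r A) \cong 0$ and $F(\Sigma T_r A) \cong 0$.

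Then, assuming first that $F$ has values in an abelian category, I would apply Proposition~\ref{Proposition:HalfexactnessLongExactSequence} to this extension. The resulting long exact sequence contains the segment
$$
\xymatrix{ F(\Sigma T_r A) \ar[r] & F(\Sigma A) \ar[r]^-{\partial} & F(J_r A) \ar[r] & F(T_r A),}
$$
whose two outer terms vanish, so the connecting map $\partial$ is an isomorphism. Proposition~\ref{Proposition:HalfexactnessLongExactSequence} also identifies $\partial = F(\kappa_\pi)^{-1}\circ F(\iota(\pi))$, assembled from the mapping cone $\cone_\pi$ of $\pi$, with $F(\kappa_\pi)$ invertible by Corollary~\ref{Corollary:KappaIsomorphism:Additive}. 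Since $T_r$ is a functor and $\pi$ a natural transformation $T_r\Rightarrow\id_{\BanAlg}$, the data $\cone_\pi,\iota(\pi),\kappa_\pi$ --- and therefore $\partial$ --- are natural in $A$, which gives the asserted natural isomorphism $F(\Sigma A)\cong F(J_r A)$ in the abelian case. For a general additive target $\mC$ I would run the usual double-Yoneda trick (exactly as in Corollary~\ref{Corollary:KappaIsomorphism:Additive} and at the end of Lemma~\ref{Lemma:HalfExactTwoStabilisation}): apply the abelian case to the functors $A\mapsto\Hom(X,F(A))$ and $A\mapsto\Hom(F(A),X)$, and then specialise $X=F(J_r A)$ and $X=F(\Sigma A)$ to conclude that $F(\kappa_\pi)^{-1}\circ F(\iota(\pi))$ admits both a left and a right inverse, hence is an isomorphism, still natural in $A$. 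Composing the natural isomorphisms obtained for $r_1$ and for $r_2$ through $F(\Sigma A)$ then yields the final claim $F(J_{r_1}A)\cong F(J_{r_2}A)$.

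I do not anticipate a real obstacle: the statement is essentially a formal consequence of results already in place. The only two points requiring a little care are (i) checking that $\Sigma T_r A$ is contractible so that $F(\Sigma T_r A)$ vanishes as well --- without this one only knows that $\partial$ is surjective --- and (ii) promoting the Hom-level isomorphism to an honest isomorphism in a merely additive category, which is the standard argument rather than anything new. One could alternatively present the whole argument as a near-verbatim copy of the proof of Lemma~\ref{Lemma:HalfExactTwoStabilisation}, replacing the contractible algebra $\cone\otimes A$ in the middle of the extension there by $T_r A$ and $\Sigma\otimes A$ by $J_r A$.
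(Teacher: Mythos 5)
Your proof is correct and follows exactly the route the paper intends: the paper's entire justification is the remark that $T_r A$ is contractible so that Proposition~\ref{Proposition:HalfexactnessLongExactSequence} applied to the semi-split extension $J_r A \rightarrowtail T_r A \twoheadrightarrow A$ kills the outer terms and makes the connecting map an isomorphism, with the additive-category case handled by the same $\Hom$-functor reduction used in Corollary~\ref{Corollary:KappaIsomorphism:Additive}. Your elaboration of the naturality and of the vanishing of $F(\Sigma T_r A)$ only makes explicit what the paper leaves implicit.
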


\begin{corollary}\label{Corollary:AllJsTheSame}
For every $r\geq 1$, there is a natural isomorphism $\Sigma A \cong J_r A$ in $\EHobancat$. Moreover, $J_{r_1}A \cong J_{r_2} A$ in $\EHobancat$ for all $r_1,r_2 \geq 1$.
\end{corollary}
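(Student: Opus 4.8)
The plan is to run the semi-split extension
$$
\epsilon_{r,A}\colon \xymatrix{J_r A \ \ar@{>->}[r]& T_r A \ar@{->>}[r]& A}
$$
through the localised category $\EHobancat$. This extension lies in $\mE_{\min}$ because the quotient map $T_r A \to A$ has a bounded linear split of norm $\leq r$. Since $T_r A$ is contractible and $\EHobanfunc$ is homotopy invariant, $T_r A$ is isomorphic to a zero object in $\EHobancat$. Hence Lemma~\ref{Lemma:ExactSequencesWithKContractiveMiddleTerm}, applied to $\epsilon_{r,A}$, tells us at once that $\EHo(\epsilon_{r,A})$ (in the sense of Definition~\ref{Definition:EHobanepsilon}) is an isomorphism $\Sigma A \xrightarrow{\cong} J_r A$ in $\EHobancat$; this is the $\EHoban$-level refinement of Proposition~\ref{Proposition:HalfexactnessJAndSigma}.

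For the naturality of this isomorphism I would take a homomorphism $\varphi\colon A \to B$ of Banach algebras. Functoriality of the constructions $A \mapsto T_r A$ and $A \mapsto J_r A$ produces a commuting ladder of extensions from $\epsilon_{r,A}$ to $\epsilon_{r,B}$ with vertical maps $J_r\varphi$, $T_r\varphi$ and $\varphi$, so Lemma~\ref{Lemma:kkbanVonErweiterungenLeiterdiagramm} yields that the square
$$
\xymatrix{
\Sigma A \ar[r]^-{\EHo(\epsilon_{r,A})} \ar[d]_{\Sigma\varphi} & J_r A \ar[d]^{J_r\varphi}\\
\Sigma B \ar[r]^-{\EHo(\epsilon_{r,B})} & J_r B
}
$$
commutes in $\EHobancat$. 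Thus $\EHo(\epsilon_{r,\cdot})$ is a natural isomorphism from $\Sigma(\cdot)$ to $J_r(\cdot)$ as functors on $\BanAlg$, and since both functors factor through $\EHobancat$ the same natural isomorphism is available there.

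The second assertion then follows formally: for $r_1, r_2 \geq 1$ the composite
$$
\EHo(\epsilon_{r_2,A}) \circ \EHo(\epsilon_{r_1,A})^{-1}\colon\ J_{r_1}A \xrightarrow{\cong} \Sigma A \xrightarrow{\cong} J_{r_2}A
$$
is a natural isomorphism in $\EHobancat$ by the previous step. If one wishes to identify it concretely, one can apply Lemma~\ref{Lemma:kkbanVonErweiterungenLeiterdiagramm} to the morphism of extensions induced (for $r_1 \leq r_2$, say) by the canonical contractive inclusion $T_{r_2}A \hookrightarrow T_{r_1}A$, which restricts to $J_{r_2}A \hookrightarrow J_{r_1}A$ and is the identity over $A$, to see that the composite above coincides with the map induced by that inclusion.

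I do not expect a genuine obstacle here: the statement is a formal consequence of Lemma~\ref{Lemma:ExactSequencesWithKContractiveMiddleTerm} and Lemma~\ref{Lemma:kkbanVonErweiterungenLeiterdiagramm}. The only point worth making explicit is that $\EHobanfunc$ sends a contractible Banach algebra to a zero object: a contraction of $T_r A$ is in particular a homotopy between $\id_{T_r A}$ and the zero endomorphism, and $\EHobanfunc$, being homotopy invariant and sending $0$ to a zero object, must therefore send $T_r A$ to a zero object.
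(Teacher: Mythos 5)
Your proof is correct and follows essentially the same route as the paper: the paper derives the corollary from Proposition~\ref{Proposition:HalfexactnessJAndSigma} (contractibility of $T_rA$ plus the long exact sequence for the semi-split extension $J_rA \rightarrowtail T_rA \twoheadrightarrow A$) applied to the hom-functors of $\EHobancat$, which is exactly what Lemma~\ref{Lemma:ExactSequencesWithKContractiveMiddleTerm} packages for you. One caveat on the naturality step: for a fixed $r$ the assignment $A \mapsto T_rA$ is \emph{not} functorial for all bounded homomorphisms, since a homomorphism $\varphi$ with $\norm{\varphi}>1$ induces maps of norm $\norm{\varphi}^n$ on the $n$-th tensor powers and hence no bounded map $T_rA \to T_rB$; the universal property only yields $T_sA \to T_rB$ for $s \geq r\norm{\varphi}$, so for non-contractive $\varphi$ you must route the ladder through $\epsilon_{s,A}$ and the canonical comparison $T_sA \to T_rA$ (over $\id_A$) before applying Lemma~\ref{Lemma:kkbanVonErweiterungenLeiterdiagramm} — a routine fix, and one the paper itself leaves implicit.
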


\begin{proposition}
Let $\epsilon\colon \xymatrix{B \ \ar@{>->}[r]& D \ar@{->>}[r]& A}$ be a short exact sequence of Banach algebras with continuous linear split $\sigma \colon A \to D$. Let $r \geq 1$ and assume that $\norm{\sigma} \leq r$. Then, by the universal property of $T_r A$, the split $\sigma$ induces a continuous linear homomorphism $\hat{\sigma}$ from $T_rA$ to $D$ such that the following square commutes
$$
\xymatrix{
D \ar@{->>}[r] & A \\
T_r A \ar@{->>}[r] \ar[u]_{\hat{\sigma}}& A \ar@{=}[u]
}
$$
It can be completed to a commutative diagram
$$
\xymatrix{
B \ar@{>->}[r] &D \ar@{->>}[r] & A \\
J_rA \ar@{>->}[r] \ar[u]^{\gamma_r(\epsilon)} & T_r A \ar@{->>}[r] \ar[u]_{\hat{\sigma}}& A \ar@{=}[u]
}
$$
The map $\gamma_r(\epsilon) \colon J_r A \to B$ is called the classifying map of the extension $\xymatrix{B \ar@{>->}[r] &D \ar@{->>}[r] & A}$.
\end{proposition}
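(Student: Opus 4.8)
The plan is to read everything off the universal property of the Banach tensor algebra $T_r A$, which turns bounded linear maps of norm at most $r$ into bounded algebra homomorphisms. Write $p\colon D \to A$ for the quotient map of $\epsilon$, write $\pi_A\colon T_r A \to A$ for the canonical homomorphism induced by $\id_A$ (so $\kernel(\pi_A) = J_r A$), and let $\iota_A\colon A \to T_r A$ be the canonical linear inclusion of norm $r$, so that $\pi_A\circ\iota_A = \id_A$. First I would invoke the universal property: since $\norm{\sigma}\leq r$ by hypothesis and $\sigma$ is a bounded linear map of $A$ into the Banach algebra $D$, there is a unique bounded homomorphism $\hat{\sigma}\colon T_r A \to D$ with $\hat{\sigma}\circ\iota_A = \sigma$. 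This is the homomorphism appearing in the statement.

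Next I would show that the upper square commutes, i.e.\ that $p\circ\hat{\sigma} = \pi_A$. Both $p\circ\hat{\sigma}$ and $\pi_A$ are bounded homomorphisms from $T_r A$ to $A$, and precomposing with $\iota_A$ gives $p\circ\hat{\sigma}\circ\iota_A = p\circ\sigma = \id_A$ (because $\sigma$ is a section of $p$) as well as $\pi_A\circ\iota_A = \id_A$; by the uniqueness clause in the universal property of $T_r A$ the two homomorphisms coincide. Now restrict $\hat{\sigma}$ to $J_r A$: if $x\in J_r A = \kernel(\pi_A)$, then $p(\hat{\sigma}(x)) = \pi_A(x) = 0$, so $\hat{\sigma}(x)$ lies in $\kernel(p)$, which we identify with the closed ideal $B$. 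Hence $\hat{\sigma}$ restricts to a bounded homomorphism $\gamma_r(\epsilon)\colon J_r A \to B$ (bounded because it is the restriction of the bounded map $\hat{\sigma}$, and landing in the closed subspace $B$). The left-hand square of the asserted ladder then commutes by the very definition $\gamma_r(\epsilon) = \hat{\sigma}\restr_{J_r A}$, the right-hand square is the equality $p\circ\hat{\sigma} = \pi_A$ just established, and the two rows are the extension of $\epsilon$ and the defining extension $\xymatrix{J_rA \ \ar@{>->}[r]& T_r A \ar@{->>}[r]& A}$.

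There is essentially no obstacle: the statement is a formal consequence of the universal property of $T_r A$ together with the single diagram chase above. The only point demanding a shred of care is the bookkeeping of the norm normalisations — the inclusion $\iota_A$ has norm $r$ and the projection $\pi_A$ has norm at most $1/r$ — but since the entire discussion takes place in $\BanAlg$ with bounded (not norm-contractive) homomorphisms, these constants never obstruct anything; the hypothesis $\norm{\sigma}\leq r$ enters only to make $\sigma$ admissible for the universal property of $T_r A$.
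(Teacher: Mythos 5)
Your argument is correct and is exactly the standard diagram chase the paper implicitly relies on (the paper states this proposition without proof): existence of $\hat{\sigma}$ from the universal property of $T_rA$, the identity $p\circ\hat{\sigma}=\pi_A$ from the uniqueness clause since both homomorphisms agree on the generating copy $\iota_A(A)$, and the definition of $\gamma_r(\epsilon)$ as the restriction of $\hat{\sigma}$ to $J_rA=\kernel(\pi_A)$, which lands in $\kernel(p)=B$. Your closing remark about the norm normalisations is also accurate; nothing is missing.
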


In other words: the extension $\xymatrix{J_r A \ \ar@{>->}[r]& T_r A \ar@{->>}[r]& A}$ is universal among the extensions of Banach algebras with continuous linear split of norm at most $r$.

\begin{remark} Let $r\geq 1$ and $A$ a Banach algebra. You can identify the natural isomorphism $\Sigma A \cong J_r A$ in $\EHobancat$ of \ref{Corollary:AllJsTheSame} as the inverse of the classifying map $\Lambda_r\colon J_r A \to \Sigma A$ of the extension $\xymatrix{\Sigma A \ar@{>->}[r] &\cone A \ar@{->>}[r] & A}$ using a five-Lemma argument. Similarly you can identify the natural isomorphism $J_{r_2} A \cong J_{r_1}A$ in $\EHobancat$ for $1 \leq r_1 \leq r_2$ as the one inherited from the natural map $T_{r_2}A \to T_{r_1}A$.
\end{remark}

\begin{remark}\label{Remark:AllJsTheSame}
You could define $\EHobancat$ also by inverting the morphisms $\Lambda_r\colon J_r A \to \Sigma A$, for $r\geq 1$ and $A$ a Banach algebra, in $\SWbancat$: By arguing as in Satz 5.3 of \cite{Cuntz:97}, see also Theorem 6.63 of \cite{CMR:07}, you can show that the resulting triangulated category is the same as the one constructed here.
\end{remark}

\begin{proposition}\label{Proposition:AlternativenGleich}
Let $\epsilon\colon \xymatrix{B \ \ar@{>->}[r]& D \ar@{->>}[r]^{\pi} & A}$ be a short exact sequence of Banach algebras with continuous linear split $\sigma \colon A \to D$. Let $r \geq 1$ and assume that $\norm{\sigma} \leq r$. Then 
$$
\EHobanfunc(\epsilon) \circ \EHobanfunc(\Lambda_r) = \EHobanfunc(\gamma_r(\epsilon)) \quad \in \quad \EHobancat(J_r A, B), 
$$
i.e., if we identify $J_r A$ and $\Sigma A$ via $\Lambda_r$, then it does not matter whether we define the element of $\EHobancat$ associated to the extension $\epsilon$ via the construction of $\EHoban(\epsilon)$ given above or via the classifying map $\gamma_r(\epsilon)$. 
\end{proposition}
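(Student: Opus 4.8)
The plan is to deduce the identity from a single application of the ladder lemma for $\mE_{\min}$-extensions (Lemma~\ref{Lemma:kkbanVonErweiterungenLeiterdiagramm}) together with the identification, recorded in the remark following the construction of the classifying maps, of the natural isomorphism $\Sigma A\cong J_rA$ in $\EHobancat$ with $\EHobanfunc(\Lambda_r)^{-1}$.

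First I would take the very commutative ladder that \emph{defines} $\gamma_r(\epsilon)$: its top row is the universal extension $\epsilon_0\colon\xymatrix{J_rA\ \ar@{>->}[r]& T_rA\ar@{->>}[r]^-{\pi_0}& A}$, its bottom row is $\epsilon$, and its vertical maps are $\gamma_r(\epsilon)\colon J_rA\to B$, $\hat\sigma\colon T_rA\to D$ and $\id_A$. Both rows lie in $\mE_{\min}$: the top one because $\pi_0$ admits a continuous linear split of norm $\le r$, the bottom one by the hypothesis $\norm\sigma\le r$. So Lemma~\ref{Lemma:kkbanVonErweiterungenLeiterdiagramm} applies to this ladder, with the ``$\varphi$'' of that lemma equal to $\id_A$ and its ``$\psi$'' equal to $\gamma_r(\epsilon)$; it yields, in $\EHobancat(\Sigma A,B)$,
$$
\EHobanfunc(\epsilon)\;=\;\EHobanfunc(\gamma_r(\epsilon))\circ\EHobanfunc(\epsilon_0),
$$
where $\EHobanfunc(\epsilon_0)\in\EHobancat(\Sigma A,J_rA)$ is the class of the universal extension in the sense of Definition~\ref{Definition:EHobanepsilon}.

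Second I would identify $\EHobanfunc(\epsilon_0)$ with $\EHobanfunc(\Lambda_r)^{-1}$. By Definition~\ref{Definition:EHobanepsilon} the morphism $\EHobanfunc(\epsilon_0)$ is the composite $\EHobanfunc(\kappa_{\pi_0})^{-1}\circ\EHobanfunc(\iota(\pi_0))$; since $T_rA$ is contractible and $\EHobanfunc\colon\BanAlg\to\EHobancat$ is a triangulated homology functor (Theorem~\ref{Theorem:UniversalPropertyEHoban}), this is exactly the connecting isomorphism $\Sigma A\cong J_rA$ furnished by Proposition~\ref{Proposition:HalfexactnessLongExactSequence} for the extension $\epsilon_0$, i.e.\ the natural isomorphism of Proposition~\ref{Proposition:HalfexactnessJAndSigma} and Corollary~\ref{Corollary:AllJsTheSame}. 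The remark following the classifying-map proposition identifies this isomorphism, by a five-lemma argument, with the inverse of $\Lambda_r=\gamma_r(\text{cone extension of }A)$. Substituting $\EHobanfunc(\epsilon_0)=\EHobanfunc(\Lambda_r)^{-1}$ into the displayed relation and composing on the right with $\EHobanfunc(\Lambda_r)$ gives $\EHobanfunc(\epsilon)\circ\EHobanfunc(\Lambda_r)=\EHobanfunc(\gamma_r(\epsilon))$, which is the assertion.

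I expect no analytic content here; the only points needing care are (a) recognising that the ladder above is verbatim the diagram defining $\gamma_r(\epsilon)$ and that both its rows are in $\mE_{\min}$, so that Lemma~\ref{Lemma:kkbanVonErweiterungenLeiterdiagramm} is applicable, and (b) tracking the composition convention of that lemma so that $\gamma_r(\epsilon)$ lands on the correct side. The single genuinely non-formal ingredient is the identification $\EHobanfunc(\epsilon_0)=\EHobanfunc(\Lambda_r)^{-1}$, which is already supplied by the cited remark; should one want a self-contained argument, one can instead run Lemma~\ref{Lemma:kkbanVonErweiterungenLeiterdiagramm} a second time on the ladder comparing $\epsilon_0$ with the cone extension of $A$ (vertical maps $\Lambda_r$, the homomorphism $T_rA\to\cone A$ induced by the linear split $a\mapsto(t\mapsto(1-t)a)$, and $\id_A$) and observe that $\EHobanfunc$ sends the cone extension of $A$ to $\id_{\Sigma A}$.
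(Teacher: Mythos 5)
Your opening move is correct and is, structurally, a genuine simplification of the paper's argument: both rows of the ladder defining $\gamma_r(\epsilon)$ (the universal extension $\epsilon_0\colon J_rA\rightarrowtail T_rA\twoheadrightarrow A$ on top, $\epsilon$ below) lie in $\mE_{\min}$, so Lemma~\ref{Lemma:kkbanVonErweiterungenLeiterdiagramm} does give $\EHobanfunc(\epsilon)=\EHobanfunc(\gamma_r(\epsilon))\circ\EHobanfunc(\epsilon_0)$. Since $\gamma_r(\epsilon_0)=\id_{J_rA}$, this reduces the proposition exactly to its special case for the universal extension, namely the identity $\EHobanfunc(\epsilon_0)=\EHobanfunc(\Lambda_r)^{-1}$. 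The gap is that you then assume this special case instead of proving it. The Remark you cite \emph{is} this special case, and the paper supports it only by the phrase ``using a five-Lemma argument''; a five-lemma argument shows that $\Lambda_r$ becomes invertible in $\EHobancat$, but it does not identify the connecting isomorphism $\EHobanfunc(\epsilon_0)=\kappa_{\pi_0}^{-1}\circ\iota(\pi_0)$ with $\EHobanfunc(\Lambda_r)^{-1}$ on the nose --- and that exact identification is precisely what the proposition asserts. Invoking the Remark here is therefore circular in substance, which is presumably why the paper's own proof does not use it.

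Your fallback argument makes the gap concrete rather than closing it: after a second application of the ladder lemma it bottoms out at the assertion that $\EHobanfunc$ sends the cone extension of $A$ to $\id_{\Sigma A}$, i.e.\ that $\kappa_{\ev_0}^{-1}\circ\iota(\ev_0)=\id_{\Sigma A}$. This is not an ``observation''; it is the delicate point. The two maps $\iota(\ev_0)$ and $\kappa_{\ev_0}$ embed $\Sigma A$ into $\cone_{\ev_0}\cong \Cont_0(]-1,1[,A)$ as the two halves of the interval with \emph{opposite} orientations of the suspension coordinate, so the naive computation yields the flip $[\tau]$, which is $-\id_{\Sigma A}$ for the group structure on $[\Sigma A,\Sigma A]$ given by concatenation and reversal of paths. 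Verifying that the orientation conventions built into $\Lambda_r$, $\gamma_r$ and $\kappa_\pi$ actually cancel this sign is exactly the content of the paper's proof, which is why it passes through the comparison of $J\Sigma A$ with $\Sigma JA$ (the map $\Delta_A$), the map $f$ from Theorem~6.63 of \cite{CMR:07}, and a final de-suspension of the identity $\Sigma(\iota\circ\Lambda_A)=\Sigma(\kappa_\pi\circ\gamma(\epsilon))$. Until you give an honest proof, signs included, that $\EHobanfunc(\epsilon_0)=\EHobanfunc(\Lambda_r)^{-1}$ (equivalently, that $\EHobanfunc$ of the cone extension is $+\id_{\Sigma A}$), your argument is incomplete at its only non-formal step.
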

\begin{proof}
To streamline the notation we skip the subscript $r$ and write $J$ instead of $J_r$ etc..

Consider the commutative diagram
$$
\xymatrix{J A \ar@{>->}[r] \ar[d]_{\gamma(\epsilon)} & T A\ar[d] \ar@{->>}[r] & A \ar@{=}[d] \\ 
B \ar@{>->}[r] & D \ar@{->>}[r] & A}
$$
or rather its suspended version
$$
\xymatrix{\Sigma J A \ar@{>->}[r] \ar[d]_{\Sigma \gamma(\epsilon)} & \Sigma T A\ar[d] \ar@{->>}[r] & \Sigma A \ar@{=}[d] \\ 
\Sigma B \ar@{>->}[r] & \Sigma D \ar@{->>}[r] & \Sigma A}
$$
From this diagram and the analogue of Lemma 3.3 of \cite{Cuntz:97} for Banach algebras, we can deduce that 
$$
\gamma(\Sigma \epsilon) = \Sigma \gamma(\epsilon) \circ \Delta_A
$$
where $\Sigma \epsilon$ denotes the suspended version of the extension $\epsilon$ and $\Delta_A$ denotes the canonical homomorphism $J \Sigma A \to \Sigma J A$. Now consider the diagram in $\BanAlg/ \sim$:
$$
\xymatrix{
&JJA \ar[dl]_{J\Lambda_A} \ar[dr]^{\Lambda_{JA}}&\\
J \Sigma A\ar [rr]^{\Delta_A} \ar[d]_{J \iota} \ar[rrd]^{\gamma(\Sigma \epsilon)}&& \Sigma JA \ar[d]^{\Sigma \gamma(\epsilon)}\\
J C_{\pi} \ar[rr]_{f} \ar[rd]_{\Lambda_{C_{\pi}}}&& \Sigma B \ar[ld]^{\Sigma \kappa_{\pi}}\\
& \Sigma C_{\pi} 
}
$$
We have already analysed the right central triangle and we have shown that it is commutative. The upper triangle is commutative by the arguments given in the proof of Lemma 6.30 of \cite{CMR:07}; the morphisms $\Lambda_A$ and $\Lambda_{JA}$ are defined as in Remark \ref{Remark:AllJsTheSame}. 

The lower left triangle central needs some explanation: The morphism $\iota \colon \Sigma A \to \cone_{\pi}$ is the canonical embedding. The morphism $f$ is defined as the classifying map of a certain extension $ \xymatrix{\Sigma B \ \ar@{>->}[r]& \cone D \ar@{->>}[r] & \cone_{\pi}}$ introduced in the proof of Theorem 6.63 of \cite{CMR:07}; it is the inverse of $\kappa_{\pi} \colon B \to \cone_{\pi}$ up to suspension with $J$ and $\Sigma$ ({\it cf.}~the proof of Theorem~6.63 in \cite{CMR:07}), in particular, the lower triangle is commutative. It follows that it suffices to show that the central left triangle is commutative. But one can easily put the extension  $ \xymatrix{\Sigma B \ \ar@{>->}[r]& \cone D \ar@{->>}[r] & \cone_{\pi}}$ into a commutative diagram
$$
\xymatrix{
\Sigma B \ \ar@{>->}[r] \ar@{=}[d]& \Sigma D \ar@{->>}[r] \ar[d]& \Sigma A \ar[d]^{\iota} \\
\Sigma B \ \ar@{>->}[r]& \cone D \ar@{->>}[r] & \cone_{\pi}
}
$$
But this implies the commutativity of the central left triangle by the Banach algebra version of Lemma 3.3 of \cite{Cuntz:97}. 

So the complete diagram is commutative. 

To prove the proposition it suffices to compare the homomorphisms $\iota \circ \Lambda_A$ and $\kappa_{\pi} \circ \gamma(\epsilon)$ from $JA$ to $C_{\pi}$. The diagram shows that $\Lambda (\iota \circ \Lambda_A)$ is homotopic to $\Lambda(\kappa_{\pi} \circ \gamma(\epsilon))$ where we use $\Lambda$ notation from Section 6.3 of \cite{CMR:07}. A short calculation shows that this implies that  $\Sigma (\iota \circ \Lambda_A)$ is equal to $\Sigma (\kappa_{\pi} \circ \gamma(\epsilon))$ in $\EHobancat$.

\end{proof}

\section{A definition of $\kkban$}

\subsection{Localisation of $\SW$}

We now want to define a theory of KK-type on the Banach algebras: semi-split extensions should give long exact sequences in both variables and Morita equivalences should give isomorphisms. We hence localise $\EHoban$ (or rather $\SWban$) accordingly:

%
%
Given a Morita equivalence ${_A}E_B$, consider the morphism
$$
\iota_E \colon B \hookrightarrow \left(\begin{matrix}A & E^>\\ E^< & B\end{matrix}\right).
$$
Define 
$$
\mM_{\text{Morita}}:=\{\iota_{E}:\ {_A}E_B \text{ Morita equivalence}\}.
$$ 


\begin{definition}
Let $\kkban$ denote the triangulated category $\SWban[\mM_{\min} \cup \mM_{\text{Morita}}]$.
\end{definition}

The class $\mM_{\min}$ was defined in Subsection \ref{Subsection:DefinitionOfEHhoban}. Note that there is a commuting diagram of canonical functors
$$
\xymatrix{
\SWban \ar[d] \ar[dr]\\
\EHoban \ar[r] & \kkban
}
$$ 


\begin{question}\label{Question:InvertFullMorphisms} Is it feasible to invert the following type of morphisms between Banach algebras: $\varphi \colon A \to B$ such that $\varphi(A)$ is dense in $B$ and such that $\varphi^{-1}(B^{-1}) = A^{-1}$? 
\end{question}

The following results, concepts and isomorphisms (\ref{Deflemma:ExtensionsAreDistinguishedInkkban} to \ref{Corollary:TwoPossibleSuspensionsInkkban}) carry over from $\EHobancat$, see Section \ref{Section:EHoban} for proofs.

\begin{deflemma}\label{Deflemma:ExtensionsAreDistinguishedInkkban}
Let $\epsilon\colon \xymatrix{B \ \ar@{>->}[r]& D \ar@{->>}[r]^{\pi}& A}$ be an extension of Banach algebras in $\mE_{\min}$. Then $\kkban(\epsilon) \in \kkban(\Sigma A, B)$ is defined as the product of the canonical morphism $\Sigma A \to \cone_{\pi}$ in $\kkban(\Sigma A, \cone_{\pi})$ and the inverse of the morphism $\kappa_{\pi}\colon B \to \cone_{\pi}$ in $\kkban(B,\cone_{\pi})$.

The extension triangle
$$
\xymatrix{\Sigma A  \ar[r]^{\kkban(\epsilon)} & B \ar[r]& D \ar[r] & A}
$$
is an exact triangle in $\kkban$. In particular, every element of $\mE_{\min}$ gives long exact sequences in $\kkban$ in both variables.
\end{deflemma}


\begin{lemma}\label{Lemma:ExactSequencesWithKContractiveMiddleTerm:kkban}
Let $\epsilon\colon \xymatrix{B \ \ar@{>->}[r]& D \ar@{->>}[r]^{\pi}& A}$ be an extension of Banach algebras in $\mE_{\min}$ such that $D \cong 0$ in $\kkban$. Then $\kkban(\epsilon)$ is an isomorphism from $\Sigma A$ to $B$. 
\end{lemma}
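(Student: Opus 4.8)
The plan is to repeat, almost verbatim, the argument used for the $\EHoban$-analogue (Lemma~\ref{Lemma:ExactSequencesWithKContractiveMiddleTerm}): everything will be read off from the long exact sequences attached to the extension triangle of $\epsilon$.

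First I would invoke Definition and Lemma~\ref{Deflemma:ExtensionsAreDistinguishedInkkban}, which tells us that the extension triangle $\Sigma A \xrightarrow{\kkban(\epsilon)} B \to D \xrightarrow{\pi} A$ is an exact triangle in the triangulated category $\kkban$. Since the translation functor of $\kkban$ is the inverse suspension $\Sigma^{-1}$, the shift $X[-1]$ of an object $X$ is $\Sigma X$; rotating the triangle one step to the left therefore produces the exact triangle $\Sigma D \to \Sigma A \xrightarrow{\kkban(\epsilon)} B \to D$.

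Next, for an arbitrary object $W$ of $\kkban$, I would apply the homological functor $\kkban(W,-)$ to this triangle, obtaining the exact sequence of abelian groups
\[
\kkban(W,\Sigma D) \longrightarrow \kkban(W,\Sigma A) \xrightarrow{\ \kkban(\epsilon)_*\ } \kkban(W,B) \longrightarrow \kkban(W,D).
\]
By hypothesis $D \cong 0$ in $\kkban$, and as $\Sigma$ is an automorphism of $\kkban$ one also has $\Sigma D \cong 0$; hence the two flanking groups vanish and $\kkban(\epsilon)_*$ is an isomorphism for every $W$. By the Yoneda lemma --- take $W = B$ to produce a right inverse of $\kkban(\epsilon)$ and then $W = \Sigma A$ to see it is also a left inverse --- the morphism $\kkban(\epsilon)\colon \Sigma A \to B$ is an isomorphism. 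Alternatively, one may run the symmetric argument with the cohomological functor $\kkban(-,W)$ to get left- and right-invertibility directly, which is how the corresponding $\EHoban$-statement is phrased.

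I do not expect any genuine obstacle here; the only point to keep straight is the bookkeeping of the translation functor (it is $\Sigma^{-1}$, so $[-1] = \Sigma$), after which the statement is an immediate consequence of the exactness of $\kkban(W,-)$ on distinguished triangles together with Yoneda.
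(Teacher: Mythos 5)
Your proposal is correct and is essentially the paper's own argument: the paper proves the $\EHoban$-analogue by the one-line instruction ``use the long exact sequences in both variables'' and then notes that the statement carries over verbatim to $\kkban$, which is exactly the rotation-plus-long-exact-sequence (or Yoneda) argument you spell out. The bookkeeping of the translation functor is also handled correctly.
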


\begin{proposition} Let $r\geq 1$ and $A$ a Banach algebra. Let $\Lambda_r\colon J_r A \to \Sigma A$ denote the classifying map of the extension $\xymatrix{\Sigma A \ar@{>->}[r] &\cone A \ar@{->>}[r] & A}$. Then $\Lambda_r\colon J_r A \to \Sigma A$ is an isomorphism in $\kkban$. In particular, $J_{r_1}A \cong J_{r_2} A$ in $\kkban$ for all $r_1,r_2 \geq 1$.
\end{proposition}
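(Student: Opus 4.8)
The plan is to derive both assertions from the corresponding facts already established for $\EHobancat$, and then — for concreteness — to re-prove them inside $\kkban$ in a way that also pins down the inverse of $\Lambda_r$.

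First I would invoke Corollary~\ref{Corollary:AllJsTheSame}: for every $r \geq 1$ there is a natural isomorphism $\Sigma A \cong J_r A$ in $\EHobancat$, and by the five-lemma identification recorded in the remark that follows it, this isomorphism is exactly the inverse of the classifying map $\Lambda_r$; similarly, for $1 \leq r_1 \leq r_2$ the canonical homomorphism $T_{r_2}A \to T_{r_1}A$ restricts to an isomorphism $J_{r_2}A \to J_{r_1}A$ in $\EHobancat$. Since the canonical functor $\EHoban \to \kkban$ from the commuting triangle of canonical functors is in particular a functor, it sends these isomorphisms to isomorphisms in $\kkban$; this already gives both statements.

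For a self-contained argument inside $\kkban$ I would proceed as follows. The universal extension $\epsilon_0\colon \xymatrix{J_r A \ \ar@{>->}[r]& T_r A \ar@{->>}[r]& A}$ has a continuous linear split of norm $\leq r$, hence lies in $\mE_{\min}$, and $T_r A$ is contractible, so $T_r A \cong 0$ in $\kkban$; by Lemma~\ref{Lemma:ExactSequencesWithKContractiveMiddleTerm:kkban} the associated morphism $\kkban(\epsilon_0) \in \kkban(\Sigma A, J_r A)$ is therefore an isomorphism. On the other hand, Proposition~\ref{Proposition:AlternativenGleich}, applied to the semi-split extension $\epsilon_0$, yields $\EHobanfunc(\epsilon_0) \circ \EHobanfunc(\Lambda_r) = \EHobanfunc(\gamma_r(\epsilon_0))$ in $\EHobancat$, and $\gamma_r(\epsilon_0) = \id_{J_r A}$ because the canonical linear split of $\epsilon_0$ induces $\id_{T_r A}$ under the universal property of $T_r A$. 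Pushing this equation forward along $\EHoban \to \kkban$ gives $\kkban(\epsilon_0) \circ \kkban(\Lambda_r) = \id_{J_r A}$; composing on the left with the inverse of the isomorphism $\kkban(\epsilon_0)$ shows that $\kkban(\Lambda_r)$ is its two-sided inverse. Hence $\Lambda_r$ is an isomorphism in $\kkban$, and consequently $J_{r_1}A \cong \Sigma A \cong J_{r_2}A$ in $\kkban$ for all $r_1, r_2 \geq 1$.

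I do not expect a genuine obstacle here: the statement is essentially a corollary of the work done for $\EHobancat$. The one point that deserves attention is the identification of the abstractly produced isomorphism $\Sigma A \cong J_r A$ with the explicit classifying map $\Lambda_r$ — in the first route this is imported verbatim from Section~\ref{Section:EHoban}, and in the second it is supplied by Proposition~\ref{Proposition:AlternativenGleich} together with the elementary fact $\gamma_r(\epsilon_0) = \id_{J_r A}$.
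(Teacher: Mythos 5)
Your first argument is exactly the paper's: this proposition is listed among the results that ``carry over from $\EHobancat$'', i.e.\ one takes the isomorphism $\Sigma A \cong J_rA$ of Corollary~\ref{Corollary:AllJsTheSame}, identified with $\Lambda_r^{-1}$ in the remark following it, and pushes it through the canonical functor $\EHoban \to \kkban$, which preserves isomorphisms. Your second, self-contained computation -- $\kkban(\epsilon_0)$ is invertible by Lemma~\ref{Lemma:ExactSequencesWithKContractiveMiddleTerm:kkban} since $T_rA$ is contractible, and $\kkban(\epsilon_0)\circ\kkban(\Lambda_r)=\kkban(\gamma_r(\epsilon_0))=\id_{J_rA}$ by Proposition~\ref{Proposition:AlternativenGleich} -- is also correct and has the merit of exhibiting the inverse of $\Lambda_r$ explicitly as $\kkban(\epsilon_0)$.
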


\begin{lemma}\label{Lemma:kkbanVonErweiterungenLeiterdiagramm:kkban}
Let $\epsilon\colon \xymatrix{B \ \ar@{>->}[r]& D \ar@{->>}[r]^{\pi}& A}$ and $\epsilon'\colon \xymatrix{B' \ \ar@{>->}[r]& D' \ar@{->>}[r]^{\pi'}& A'}$ be extensions in $\mE_{\min}$ which can be put in a diagram of the form
$$
\xymatrix{B \ \ar@{>->}[r] \ar[d]_{\psi}& D \ar@{->>}[r]\ar[d]& A\ar[d]^{\varphi} \\ B' \ \ar@{>->}[r]& D' \ar@{->>}[r]& A'}
$$
Then 
$$
\kkban(\Sigma \varphi) \cdot \kkban(\epsilon') =  \kkban(\epsilon) \cdot \kkban(\psi) \ \in\ \kkban(\Sigma A, B').
$$
\end{lemma}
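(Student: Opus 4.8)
The plan is to transcribe, essentially verbatim, the proof of Lemma~\ref{Lemma:kkbanVonErweiterungenLeiterdiagramm}; the only point needing verification is that the morphisms inverted there are already invertible in $\kkban$. This is immediate: since $\kkban=\SWban[\mM_{\min}\cup\mM_{\text{Morita}}]$ and $\mM_{\min}\subseteq\mM_{\min}\cup\mM_{\text{Morita}}$, for each extension in $\mE_{\min}$ the comparison morphism $\kappa_{\pi}\colon B\to\cone_{\pi}$ becomes an isomorphism in $\kkban$, so that $\kkban(\epsilon)$ and $\kkban(\epsilon')$ are defined as in Definition and Lemma~\ref{Deflemma:ExtensionsAreDistinguishedInkkban}; writing $c_{\pi}\colon\Sigma A\to\cone_{\pi}$ for the canonical morphism occurring in the cone triangle of $\pi$, we have $\kkban(\epsilon)=c_{\pi}\cdot\kappa_{\pi}^{-1}$ and likewise $\kkban(\epsilon')=c_{\pi'}\cdot\kappa_{\pi'}^{-1}$ (morphisms composed in diagrammatic order, as in the statement of the lemma).

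First I would produce, exactly as in Lemma~\ref{Lemma:kkbanVonErweiterungenLeiterdiagramm}, the canonical homomorphism of Banach algebras $\chi\colon\cone_{\pi}\to\cone_{\pi'}$ obtained from the given ladder diagram via the universal property of the pullback defining $\cone_{\pi'}$. By construction $\chi$ fits into a commutative diagram in $\BanAlg$ with columns $\Sigma\varphi$, $\chi$, $\psi$ and rows $\Sigma A\xrightarrow{c_{\pi}}\cone_{\pi}\xleftarrow{\kappa_{\pi}}B$ and $\Sigma A'\xrightarrow{c_{\pi'}}\cone_{\pi'}\xleftarrow{\kappa_{\pi'}}B'$. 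Passing to $\kkban$, in which $\kappa_{\pi}$ and $\kappa_{\pi'}$ are isomorphisms, the left square gives $\Sigma\varphi\cdot c_{\pi'}=c_{\pi}\cdot\chi$ and the right square gives $\kappa_{\pi}^{-1}\cdot\psi=\chi\cdot\kappa_{\pi'}^{-1}$, whence
$$
\kkban(\epsilon)\cdot\kkban(\psi)=c_{\pi}\cdot\kappa_{\pi}^{-1}\cdot\psi=c_{\pi}\cdot\chi\cdot\kappa_{\pi'}^{-1}=\Sigma\varphi\cdot c_{\pi'}\cdot\kappa_{\pi'}^{-1}=\kkban(\Sigma\varphi)\cdot\kkban(\epsilon'),
$$
which is the assertion.

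A cleaner route is simply to apply the canonical exact functor $\EHoban\to\kkban$ to the identity of Lemma~\ref{Lemma:kkbanVonErweiterungenLeiterdiagramm}: this functor is compatible with the suspension and sends $\EHo(\epsilon)$, $\EHo(\psi)$, $\EHo(\Sigma\varphi)$, $\EHo(\epsilon')$ to $\kkban(\epsilon)$, $\kkban(\psi)$, $\kkban(\Sigma\varphi)$, $\kkban(\epsilon')$, respectively, because each of these is built by the same universal recipe (composing a homomorphism with the inverse of a $\kappa_{\pi}$) and a functor preserves such composites and inverses; hence the equality in $\EHo(\Sigma A,B')$ is carried to the desired equality in $\kkban(\Sigma A,B')$. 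I expect no real obstacle; the only point requiring (routine) care is the construction of $\chi$ making both squares commute in $\BanAlg$, which is the same pullback diagram chase already carried out in Lemma~\ref{Lemma:kkbanVonErweiterungenLeiterdiagramm}.
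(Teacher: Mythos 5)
Your proposal is correct and matches the paper's intended argument: the paper simply states that this lemma ``carries over'' from Lemma~\ref{Lemma:kkbanVonErweiterungenLeiterdiagramm}, whose proof is exactly your construction of the canonical map $\chi\colon\cone_{\pi}\to\cone_{\pi'}$ via the pullback and the resulting commutative ladder. Your alternative route of applying the canonical exact functor $\EHobancat\to\kkban$ is also sound and is arguably the cleanest formalisation of what ``carries over'' means here.
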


\begin{proposition}\label{Proposition:ActionOfSpacesOnkkban}
Let $A$ be a Banach algebra and $(X,X_0)$ an object in $\CW_2$. The natural homomorphism from $\Cont_0(X-X_0) \otimes A$ to $\Cont_0(X-X_0, A)$ is an isomorphism in $\kkban$.
\end{proposition}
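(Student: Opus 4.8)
The plan is to imitate the proof of Proposition~\ref{Proposition:ActionOfSpacesOnEHoban} almost verbatim, pushing all the real content onto Proposition~\ref{Proposition:ActionOfSpacesOnFunctors} and the triangulated structure of $\kkban$. Fix an object $D$ of $\kkban$ and consider, for every $k\in\Z$, the covariant functor $\mF_k\colon\BanAlg\to\Ab$, $A\mapsto\kkban(D,(A,k))$, and the contravariant functor $\mG_k\colon\BanAlg\to\Ab^{\op}$, $A\mapsto\kkban((A,k),D)$. The first step is to verify that $(\mF_k)_{k\in\Z}$ and $(\mG_k)_{k\in\Z}$ are homology theories for Banach algebras in the sense of Definition~\ref{Definition:HomologyTheory}. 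The natural isomorphism $\mF_k(\Sigma A)\cong\mF_{k+1}(A)$ comes from the natural isomorphism $(\Sigma A,0)\cong(A,1)$ in $\SWban$ (hence in $\kkban$) together with the definition of the suspension $(A,m)\mapsto(A,m+1)$ on $\kkban$; homotopy invariance of $\mF_k$ and $\mG_k$ follows from the fact that homotopic homomorphisms of Banach algebras become equal in $\kkban$; and half-exactness for semi-split extensions is a consequence of Definition and Lemma~\ref{Deflemma:ExtensionsAreDistinguishedInkkban}: every extension in $\mE_{\min}$ yields an exact triangle in $\kkban$, and applying the homological functors $\kkban(D,-)$ and $\kkban(-,D)$ to it produces the required long exact sequences (valid in all degrees, since the suspension is invertible in $\kkban$).

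With these homology theories in hand, Proposition~\ref{Proposition:ActionOfSpacesOnFunctors} applies and tells us that the natural homomorphism $\Phi\colon\Cont_0(X-X_0)\otimes A\to\Cont_0(X-X_0,A)$ induces isomorphisms
$$
\mF_k(\Cont_0(X-X_0)\otimes A)\cong\mF_k(\Cont_0(X-X_0,A)),\qquad \mG_k(\Cont_0(X-X_0)\otimes A)\cong\mG_k(\Cont_0(X-X_0,A))
$$
for all $k\in\Z$ and every object $D$. Taking $k=0$, this says precisely that $\kkban(D,\Phi)$ is a bijection for every object $D$ of $\kkban$, so by the Yoneda lemma $\Phi$ is an isomorphism in $\kkban$. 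Alternatively, one may specialise $D$ to the source and to the target of $\Phi$ and read off a two-sided inverse directly, exactly as in the proof of Corollary~\ref{Corollary:KappaIsomorphism:Additive}.

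The points needing a little care, rather than any genuine difficulty, are: (i) checking that Proposition~\ref{Proposition:ActionOfSpacesOnFunctors} is genuinely available, i.e.\ that the five-lemma induction it uses has the long exact sequences for $\mF_k$ and $\mG_k$ also in negative degrees — which is exactly what the previous paragraph arranges, since $k$ ranges over all of $\Z$; and (ii) noting that $\Cont_0(X-X_0)\otimes A$ and $\Cont_0(X-X_0,A)$ are honest Banach algebras and $\Phi$ an honest bounded homomorphism, so that $\Phi$ defines a morphism in $\kkban$ in the first place — this is standard and already implicit in the statement. I expect no real obstacle beyond this bookkeeping; the entire geometric content has been offloaded onto Proposition~\ref{Proposition:ActionOfSpacesOnFunctors}, exactly as in the $\EHoban$ case.
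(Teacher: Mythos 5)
Your proposal is correct and is exactly the paper's argument: the paper proves the $\EHoban$ version by applying Proposition~\ref{Proposition:ActionOfSpacesOnFunctors} to the representable functors $A\mapsto\EHo(D,(A,k))$ and $A\mapsto\EHo((A,k),D)$, and then notes that the $\kkban$ statement carries over verbatim. Your verification that these representables form homology theories (via the exact triangles of Definition and Lemma~\ref{Deflemma:ExtensionsAreDistinguishedInkkban}) and the concluding Yoneda step are just the details the paper leaves implicit.
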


\begin{corollary}\label{Corollary:TwoPossibleSuspensionsInkkban} For every Banach algebra $A$, the natural homomorphism $\Sigma \otimes_{\pi} A \to \Sigma A$ is an isomorphism in $\kkban$.
\end{corollary}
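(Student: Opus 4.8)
The plan is to obtain this as a direct specialisation of Proposition~\ref{Proposition:ActionOfSpacesOnkkban}. First I would choose the pair of finite CW-complexes $(X,X_0):=\bigl([0,1],\{0,1\}\bigr)$, where $[0,1]$ carries the CW-structure with $0$-cells $\{0\}$ and $\{1\}$ and a single $1$-cell, so that $\{0,1\}$ is a subcomplex and $(X,X_0)$ is an object of $\CW_2$. Since $X-X_0$ is homeomorphic (indeed equal, as a subspace) to the open interval $]0,1[$, we get canonical isometric identifications $\Cont_0(X-X_0)\cong\Cont_0(]0,1[)=\Sigma$ and $\Cont_0(X-X_0,A)\cong\Cont_0(]0,1[,A)=A]0,1[=\Sigma A$, and hence $\Cont_0(X-X_0)\otimes A\cong\Sigma\otimes A$, the tensor product being the completed projective one throughout this article (which is what the subscript $\pi$ in the statement records).

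The second step is to check that, under these identifications, the ``natural homomorphism'' $\Cont_0(X-X_0)\otimes A\to\Cont_0(X-X_0,A)$ of Proposition~\ref{Proposition:ActionOfSpacesOnkkban} is precisely the canonical homomorphism $\Sigma\otimes_\pi A\to\Sigma A$ of the corollary. This is immediate from the definition of both maps: an elementary tensor $f\otimes a$ is sent in either case to the function $t\mapsto f(t)\,a$, and two bounded homomorphisms agreeing on the dense subspace of elementary tensors agree everywhere. Proposition~\ref{Proposition:ActionOfSpacesOnkkban} then asserts that this homomorphism is an isomorphism in $\kkban$, which is exactly the claim.

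I do not anticipate a genuine obstacle: the corollary is a pure instance of the preceding proposition, and the only points to verify are the entirely routine facts that $\bigl([0,1],\{0,1\}\bigr)$ is an object of $\CW_2$, that $[0,1]\setminus\{0,1\}$ is homeomorphic to $]0,1[$, and that the two ``natural'' maps correspond under this homeomorphism. Should one wish to avoid quoting Proposition~\ref{Proposition:ActionOfSpacesOnkkban} at all, an equivalent route is to apply Lemma~\ref{Lemma:HalfExactTwoStabilisation} (or Proposition~\ref{Proposition:ActionOfSpacesOnFunctors}) to the additive functors $A\mapsto\kkban(D,(A,k))$ and $A\mapsto\kkban((A,k),D)$, for every object $D$ of $\kkban$ and every $k\in\Z$, exactly as in the proof of Proposition~\ref{Proposition:ActionOfSpacesOnkkban}; but the one-line specialisation above is the cleanest.
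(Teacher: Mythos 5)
Your proposal is correct and follows essentially the paper's route: the corollary is exactly the specialisation of Proposition~\ref{Proposition:ActionOfSpacesOnkkban} to a pair of finite CW-complexes whose difference is the open interval, under the definitional identifications $\Sigma A = A]0,1[$ and $\Sigma = \Cont_0(]0,1[)$. Your remark that one could instead invoke Lemma~\ref{Lemma:HalfExactTwoStabilisation} on the representable functors $\kkban(D,(\cdot,k))$ and $\kkban((\cdot,k),D)$ is also consistent with how the paper handles the analogous statement elsewhere (e.g.\ in the proof of Theorem~\ref{Theorem:BottPeriodicity:Functor}).
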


\begin{theorem}
The category $\kkban$ satisfies Bott periodicity: There is a natural isomorphism $\Sigma^2 A \cong A$ in $\kkban$ for all Banach algebras $A$.
\end{theorem}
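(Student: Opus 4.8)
The plan is to obtain this as an instance of the abstract Bott periodicity result of Theorem~\ref{Theorem:BottPeriodicity:Functor}, applied to the canonical functor $\kkbanfunc\colon \BanAlg \to \kkban$, $A\mapsto (A,0)$. Since $\kkban$ is triangulated it is in particular an additive category, so it suffices to verify that $\kkbanfunc$ is homotopy invariant, half-exact for semi-split extensions, and Morita invariant; Theorem~\ref{Theorem:BottPeriodicity:Functor} then produces a natural isomorphism $\kkbanfunc(\Sigma^2 A)\cong\kkbanfunc(A)$, that is, a natural isomorphism $\Sigma^2 A\cong A$ in $\kkban$.

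I would check the three hypotheses as follows. Homotopy invariance is immediate, because $\kkbanfunc$ factors through $\BanAlg/\!\!\sim$. Half-exactness for semi-split extensions follows from Deflemma~\ref{Deflemma:ExtensionsAreDistinguishedInkkban}: every extension in $\mE_{\min}$ yields an exact triangle in $\kkban$, and an exact triangle in a triangulated category induces a long exact sequence of $\Hom$-groups in each variable; hence $A\mapsto \kkban(X,(A,0))$ and $A\mapsto \kkban((A,0),X)$ are half-exact for every object $X$, which is exactly half-exactness for a functor into an additive category. Morita invariance holds essentially by construction of $\kkban = \SWban[\mM_{\min}\cup\mM_{\text{Morita}}]$: if ${}_AE_B$ is a Morita equivalence with linking algebra $L$, then $\iota_E\in \mM_{\text{Morita}}$, so $\kkbanfunc(\iota_E)\colon (B,0)\to (L,0)$ is an isomorphism in $\kkban$.

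The closest thing to an obstacle here is simply making sure these hypotheses are genuinely met by $\kkbanfunc$ and not merely by $\EHobanfunc$ — the whole point of localising additionally at $\mM_{\text{Morita}}$ is to buy the Morita invariance that Theorem~\ref{Theorem:BottPeriodicity:Functor} needs. (Recall that the proof of that theorem passes to the functor $\mF(\,\cdot\,\otimes A)$, restricts it to C$^*$-algebras, and invokes C$^*$-algebraic Bott periodicity; one checks along the way that $E\otimes A$ is again a Morita equivalence, with linking algebra $L\otimes A$, so that $\mF(\,\cdot\,\otimes A)$ inherits homotopy invariance, half-exactness and Morita invariance from $\mF$.) One should also keep the bookkeeping straight between the suspension functor $\Sigma$ on Banach algebras and the shift $\Sigma^{-1}$ of the triangulated category $\kkban$: the object written $\Sigma^2 A$ in the statement is $(\Sigma^2 A,0)\cong (A,2)$, and the asserted isomorphism $(A,2)\cong (A,0)$ says precisely that $\kkban$ is $2$-periodic.
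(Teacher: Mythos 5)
Your proposal is correct and is exactly the paper's argument: the paper's proof consists of the single line ``This follows from Theorem~\ref{Theorem:BottPeriodicity:Functor},'' i.e.\ one applies the abstract Bott periodicity theorem to the canonical functor $\kkbanfunc\colon \BanAlg \to \kkban$. Your verification of the three hypotheses (homotopy invariance, half-exactness via the exact triangles of Deflemma~\ref{Deflemma:ExtensionsAreDistinguishedInkkban}, and Morita invariance by construction of the localisation) just makes explicit what the paper leaves implicit.
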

\begin{proof}
This follows from Theorem \ref{Theorem:BottPeriodicity:Functor}.
\end{proof}

The following lemma follows directly from Lemma \ref{Lemma:OrthogonalHomomorphisms:SigmaHo}.

\begin{lemma}\label{Lemma:OrthogonalHomomorphisms:kkban}
Let $\varphi, \psi \colon A \to B$ be continuous orthogonal homomorphisms of Banach algebras. 
$$
\kkbanfunc(\varphi + \psi) = \kkbanfunc(\varphi) + \kkbanfunc(\psi) \quad \in \quad \kkban(A,B).
$$ 
\end{lemma}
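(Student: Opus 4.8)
The plan is to transport the corresponding identity from $\SWban$ along the canonical localisation functor. Recall from Lemma~\ref{Lemma:OrthogonalHomomorphisms:SigmaHo} that, for continuous orthogonal homomorphisms $\varphi,\psi\colon A\to B$, one has
$$
[\varphi+\psi] = [\varphi] + [\psi] \quad\in\quad \SWban(A,B),
$$
where $\SWban(A,B)$ abbreviates $\SWban((A,0),(B,0))$ and the sum is the one induced by the co-group structure on $(S^1,1)$; by the results recalled when $\SWban$ was introduced, this morphism set is an abelian group and composition is bilinear.

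Next I would invoke the definition $\kkban = \SWban[\mM_{\min}\cup\mM_{\text{Morita}}]$: this is obtained from the triangulated category $\SWban$ by the Verdier localisation construction, so $\kkban$ is again a triangulated, in particular additive, category, and the canonical functor $\SWban \to \kkban$ (which factors through $\EHoban$) is exact, hence additive. Therefore it induces, for the fixed pair $(A,B)$, a homomorphism of abelian groups $\SWban(A,B) \to \kkban(A,B)$, and by construction this homomorphism sends $[\chi]$ to $\kkbanfunc(\chi)$ for every continuous homomorphism $\chi\colon A\to B$. Applying this group homomorphism to the displayed equation yields
$$
\kkbanfunc(\varphi+\psi) = \kkbanfunc(\varphi) + \kkbanfunc(\psi) \quad\in\quad \kkban(A,B),
$$
which is the assertion.

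There is no substantial obstacle: the entire content is the passage from the ad hoc concatenation-of-paths sum on $[\,\cdot\,,\Sigma\,\cdot\,]$ to the categorical addition in $\kkban$, and the only thing one really needs is that the localisation functor $\SWban\to\kkban$ is additive on morphism groups, which is part of the general theory of localisation of triangulated categories already used for $\EHoban$ in Theorem~\ref{Theorem:UniversalPropertyEHoban} and carried over to $\kkban$. Hence the proof is a one-line application of Lemma~\ref{Lemma:OrthogonalHomomorphisms:SigmaHo}, exactly as the surrounding text asserts.
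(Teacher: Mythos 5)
Your proof is correct and follows exactly the route the paper intends: the paper simply states that the lemma "follows directly from Lemma~\ref{Lemma:OrthogonalHomomorphisms:SigmaHo}", and your argument supplies the (only) missing detail, namely that the localisation functor $\SWban \to \kkban$ is additive on morphism groups and sends $[\chi]$ to $\kkbanfunc(\chi)$.
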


As a consequence of this Lemma or Lemma \ref{Lemma:SummeInSWban} we have the following alternative expression for the sum of two homomorphisms in $\kkban$.

\begin{lemma}\label{Lemma:SummeInkkban}
Let $\varphi, \psi \colon A \to B$ be bounded homomorphisms of Banach algebras. Define 
$$
\varphi \oplus \psi \colon A \to \Mat_2(B), a \mapsto \left(\begin{matrix}\varphi(a) & 0 \\ 0 & \psi(a) \end{matrix}\right). 
$$
Let $\iota$ be the inclusion of $B$ into $\Mat_2(B)$ as the upper left-hand corner. Then
$$
[\iota] \circ ([\varphi] + [\psi]) = [\varphi \oplus \psi]
$$
in $\kkban(A, \Mat_2(B))$. In other words: If we identify $B$ and $\Mat_2(B)$ as objects of $\kkban$, then $\varphi \oplus \psi$ realises the sum of $\varphi$ and $\psi$ in $\kkban(A,B)$. 
\end{lemma}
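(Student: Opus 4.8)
The plan is to reduce the statement about $\kkban$ to the corresponding statement in $\SW$, which is precisely Lemma~\ref{Lemma:SummeInSWban}, and then push everything forward along the canonical functor $\kkbanfunc \colon \SWban \to \kkban$. Concretely, Lemma~\ref{Lemma:SummeInSWban} gives
$$
[\iota] \circ ([\varphi] + [\psi]) = [\varphi \oplus \psi] \quad \in \quad \SW(A, \Mat_2(B)),
$$
and since $\kkbanfunc$ is an exact (in particular additive) functor that sends the class of a homomorphism to the class of the same homomorphism, applying $\kkbanfunc$ to this identity yields the first displayed equation of the lemma in $\kkban(A,\Mat_2(B))$. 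Here I use that $\kkbanfunc$ is additive on morphism groups, so that $\kkbanfunc([\varphi]+[\psi]) = \kkbanfunc([\varphi]) + \kkbanfunc([\psi])$; this is part of the structure of $\kkban$ as an additive category together with the fact that $\kkbanfunc$ is exact. Alternatively, one can invoke Lemma~\ref{Lemma:OrthogonalHomomorphisms:kkban} directly, exactly as in the proof of Lemma~\ref{Lemma:SummeInSWban}: the homomorphisms $\iota \circ \varphi = \varphi \oplus 0$ and $0 \oplus \psi$ are orthogonal, their sum is $\varphi \oplus \psi$, and $0 \oplus \psi$ is homotopic to $\iota \circ \psi = \psi \oplus 0$ by the standard rotation in $\Mat_2(B)$; homotopy invariance of $\kkbanfunc$ and Lemma~\ref{Lemma:OrthogonalHomomorphisms:kkban} then give the claim.

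For the ``in other words'' clause I need to know that $\iota$ is an isomorphism in $\kkban$, so that composing with $[\iota]$ is a bijection and the first equation can be solved for $[\varphi]+[\psi]$. This is exactly Morita invariance: $\Mat_2(B)$ is the linking algebra of the Morita equivalence $(B^{1\times 2}, B^{2\times 1})$ between $\Mat_2(B)$ and $B$ (see the first item of the Example after Definition~\ref{Definition:MoritaEquivalence:degenerate}), and the inclusion of $B$ as the lower right-hand corner is the canonical map $\iota_E$ into the linking algebra, hence lies in $\mM_{\text{Morita}}$ and is inverted in $\kkban$ by construction. The upper left-hand corner inclusion $\iota$ is homotopic to the lower right-hand corner inclusion by the same standard rotation, so by homotopy invariance $[\iota]$ is also an isomorphism in $\kkban$. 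Therefore, after identifying $B$ and $\Mat_2(B)$ in $\kkban$ via $[\iota]$, the element $\varphi \oplus \psi$ represents $[\varphi]+[\psi]$.

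There is essentially no obstacle here: the lemma is a routine transport of Lemma~\ref{Lemma:SummeInSWban} across the localisation functor, combined with the already-established Morita invariance of $\kkban$. The only point requiring a word of care is making sure that the rotation homotopy used to identify the two corner inclusions of $B$ in $\Mat_2(B)$ is available at the level of $\kkban$ — but this is immediate from homotopy invariance of $\kkbanfunc$, which holds by construction of $\SWban$ and is preserved under localisation. Since this is the argument spelled out already for $\SWban$, we simply note that the proof of Lemma~\ref{Lemma:SummeInSWban} applies verbatim with $[\cdot]$ now denoting classes in $\kkban$, using Lemma~\ref{Lemma:OrthogonalHomomorphisms:kkban} in place of Lemma~\ref{Lemma:OrthogonalHomomorphisms:SigmaHo}, and appending the observation above that $[\iota]$ is invertible in $\kkban$.
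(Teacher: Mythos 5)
Your proposal is correct and follows the paper's own route: the paper derives this lemma precisely as a consequence of Lemma~\ref{Lemma:OrthogonalHomomorphisms:kkban} (or, equivalently, by pushing Lemma~\ref{Lemma:SummeInSWban} forward along the canonical additive functor $\SWban \to \kkban$), exactly as you do. One small correction to your justification of the invertibility of $[\iota]$: the linking algebra of the Morita equivalence $(B^{1\times 2}, B^{2\times 1})$ between $\Mat_2(B)$ and $B$ is $\Mat_3(B)$, not $\Mat_2(B)$; the corner inclusion $B \hookrightarrow \Mat_2(B)$ lies in $\mM_{\text{Morita}}$ because $\Mat_2(B)$ is the linking algebra of the \emph{trivial} self-equivalence $(B,B)$ of $B$ --- after which your rotation argument identifying the two corner inclusions goes through unchanged.
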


\begin{theorem}[Universal property of $\kkban$ I] The category $\kkban$ is triangulated and the canonical functor 
$$
\kkban\colon \SWbancat \to \kkban
$$
is exact. Every extension in $\mE_{\min}$ gives a distinguished triangle in $\kkban$. The functor $\kkban\colon \BanAlg \to \kkban$ is a Morita invariant triangulated homlogy functor and split exact.


There is a canonical bijection between Morita invariant triangulated homology functors for Banach algebras and exact functors on $\kkban$.

%
%
\end{theorem}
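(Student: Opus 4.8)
The plan is to read off every assertion from the fact that $\kkban$ has been \emph{defined} as a Verdier localisation, $\kkban=\SWban[\mM_{\min}\cup\mM_{\text{Morita}}]$, together with the universal property of $\SWban$ (Theorem~\ref{Theorem:UniversalPropertyOfSpanierWhitehead}) and the excision machinery of Section~\ref{Section:EHoban}. First I would note that the Verdier quotient of a triangulated category by the thick subcategory generated by an arbitrary class of morphisms is again triangulated and the quotient functor is exact; applied to $\SWban$ this gives that $\kkban$ is triangulated and $\SWbancat\to\kkban$ is exact, exactly as for $\EHoban$. Composing with $\BanAlg\to\SWban$, the functor $\kkban\colon\BanAlg\to\kkban$ is homotopy invariant and sends mapping cone triangles to exact triangles, since $\BanAlg\to\SWban$ has these properties and an exact functor preserves them. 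That every extension in $\mE_{\min}$ gives a distinguished triangle, and hence (applying $\Hom(X,-)$ and $\Hom(-,X)$ to the extension triangle) that $\kkban\colon\BanAlg\to\kkban$ is half-exact, is Deflemma~\ref{Deflemma:ExtensionsAreDistinguishedInkkban}; so $\kkban\colon\BanAlg\to\kkban$ is a triangulated homology functor, and it is split-exact by Corollary~\ref{Corollary:HalfexactnessLongExactSequence:Additive}. Morita invariance is built in: for a Morita equivalence ${_A}E_B$ with linking algebra $L$ the comparison map $\iota_E\colon B\hookrightarrow L$ lies in $\mM_{\text{Morita}}$, hence is invertible in $\kkban$, which is precisely the required condition.

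For the bijection, let $F\colon\BanAlg\to\mT$ be a Morita invariant triangulated homology functor. By Theorem~\ref{Theorem:UniversalPropertyOfSpanierWhitehead} it lifts uniquely to an exact functor $\widetilde F\colon\SWban\to\mT$ with $F=\widetilde F\circ\can$. Since $F$ is homotopy invariant and half-exact, Corollary~\ref{Corollary:KappaIsomorphism:Additive} shows $\widetilde F(\kappa_\pi)$ is an isomorphism for every extension in $\mE_{\min}$, i.e.\ $\widetilde F$ inverts $\mM_{\min}$. Since $F$ is Morita invariant and $\widetilde F$ is exact, $\widetilde F$ inverts each $\iota_E$ (equivalently, $\widetilde F(\cone_{\iota_E})\cong 0$, using that $F$ takes the mapping cone triangle of $\iota_E$ to an exact triangle whose last map $\widetilde F(B)\to\widetilde F(L)$ is an isomorphism). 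Therefore $\widetilde F$ sends the thick subcategory generated by the cones of $\mM_{\min}\cup\mM_{\text{Morita}}$ to zero, and by the universal property of the Verdier quotient there is a unique exact functor $\overline F\colon\kkban\to\mT$ with $\widetilde F=\overline F\circ(\SWban\to\kkban)$, hence $F=\overline F\circ\kkban$. Conversely, any exact $G\colon\kkban\to\mT$ yields, by precomposition with $\kkban\colon\BanAlg\to\kkban$, a homotopy invariant functor sending mapping cone triangles to exact triangles, half-exact because extension triangles remain exact under $G$, and Morita invariant because $\iota_E$ is already invertible in $\kkban$; thus it is a Morita invariant triangulated homology functor. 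These two assignments are mutually inverse (uniqueness on each side coming from the uniqueness clauses in the universal properties of $\SWban$ and of the localisation), which is the claimed canonical bijection. Equivalently, one may factor $F$ first through $\EHoban$ by Theorem~\ref{Theorem:UniversalPropertyEHoban} and then observe $\kkban\cong\EHobancat[\mM_{\text{Morita}}]$ by iterated localisation.

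I expect none of the homological bookkeeping (inheritance of homotopy invariance, half-exactness, preservation of cone triangles) to be hard; the one point that needs care is the identification of the class at which one localises: one must be sure that ``Morita invariant'' in the sense of the inclusion $B\hookrightarrow L$ becoming invertible matches exactly the generators $\mM_{\text{Morita}}$, and that no larger and less controllable class intervenes (cf.\ the discussion preceding Question~\ref{Question:InvertFullMorphisms}), together with the standard but slightly delicate fact that localising $\SWban$ at $\mM_{\min}\cup\mM_{\text{Morita}}$ agrees with the iterated localisation through $\EHoban$. Once these identifications are pinned down, the universal property is a formal consequence of the universal properties already proved for $\SWban$ and $\EHoban$.
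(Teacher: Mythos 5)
Your argument is correct and is essentially the paper's own: the paper proves this theorem by the single remark that it goes ``just as or using'' Theorem~\ref{Theorem:UniversalPropertyEHoban}, and your proposal is precisely that argument spelled out, with the extra class $\mM_{\text{Morita}}$ handled by observing that Morita invariance of $F$ is, by definition, the invertibility of $F(\iota_E)$ for the generators of that class. The details you fill in (lifting via Theorem~\ref{Theorem:UniversalPropertyOfSpanierWhitehead}, inverting $\mM_{\min}$ via Corollary~\ref{Corollary:KappaIsomorphism:Additive}, and the universal property of the Verdier quotient) match the intended proof.
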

\begin{proof}
This is proved just as or using Theorem \ref{Theorem:UniversalPropertyEHoban}.
\end{proof}

\begin{lemma}\label{Lemma:SigmaProduktInkkban}
Let $D$ be a Banach algebra. Let $\sigma_D$ be the functor from $\BanAlg$ to $\kkban$ defined by $B \mapsto B\otimes D$ as in Lemma \ref{Lemma:SigmaProduktInEHoban}. Then $\sigma_D$ is a Morita invariant triangulated homology functor and thus factors through $\kkban$. The resulting functor from $\kkban$ to itself will also be called $\sigma_D$.
\end{lemma}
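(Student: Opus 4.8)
The plan is to apply the universal property of $\kkban$: it suffices to show that $\sigma_D$, regarded as a functor from $\BanAlg$ to the triangulated category $\kkban$, is a Morita invariant triangulated homology functor, for then there is a unique exact endofunctor of $\kkban$ through which it factors, and this is the desired $\sigma_D\colon\kkban\to\kkban$.

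That $\sigma_D\colon\BanAlg\to\kkban$ is a triangulated homology functor is checked exactly as in the proof of Lemma~\ref{Lemma:SigmaProduktInEHoban}, now with $\EHoban$ replaced by $\kkban$. In brief: $\sigma_D$ carries a homotopy of homomorphisms of Banach algebras to a homotopy, hence is homotopy invariant in $\kkban$; the canonical homomorphism $(\Sigma B)\otimes D\to\Sigma(B\otimes D)$ has vanishing cokernel and contractible kernel $(\cone B)\otimes D$, so by Lemma~\ref{Lemma:ExactSequencesWithKContractiveMiddleTerm:kkban} it is an isomorphism in $\kkban$, providing the natural isomorphism $\sigma_D(\Sigma B)\cong\Sigma\sigma_D(B)$; $\sigma_D$ sends the mapping cone triangle of a homomorphism $\varphi$ to a triangle which is isomorphic in $\kkban$ to the mapping cone triangle of $\varphi\otimes\id_D$, hence to an exact triangle; and $\sigma_D$ sends a semi-split extension of Banach algebras to a semi-split extension, which is an exact triangle in $\kkban$ (see~\ref{Deflemma:ExtensionsAreDistinguishedInkkban}), whence half-exactness.

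The genuinely new point is Morita invariance, which rests on the observation that tensoring with $D$ takes Morita equivalences to Morita equivalences. Given a Morita equivalence $E=({}_BE^<_A,{}_AE^>_B)$ between $A$ and $B$ in the sense of Definition~\ref{Definition:MoritaEquivalence:degenerate}, put $E\otimes D:=(E^<\otimes D,E^>\otimes D)$, equipped with the module actions and brackets obtained by tensoring those of $E$ with the multiplication of $D$ (for instance $\langle e^<\otimes d,e^>\otimes d'\rangle_{B\otimes D}:=\langle e^<,e^>\rangle_B\otimes dd'$, and symmetrically for the $A\otimes D$-valued bracket). Conditions 1., 2. and 3. of Definition~\ref{Definition:MoritaEquivalence:nondegenerate} are inherited: the algebraic identities hold on elementary tensors and extend by continuity, and the norm estimates persist because the projective tensor norm is a cross norm. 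For condition 4.', if $k,l\in\N$ satisfy $A^k\subseteq{}_A\langle E^>,E^<\rangle$ and $B^l\subseteq\langle E^<,E^>\rangle_B$, then, using $A^m\subseteq A^k$ for $m\geq k$ and $D^m\subseteq D^2$ for $m\geq 2$ together with the fact that the multiplication of $A\otimes D$ is determined on elementary tensors, one verifies for sufficiently large $m\in\N$ that $(A\otimes D)^m\subseteq{}_{A\otimes D}\langle E^>\otimes D,E^<\otimes D\rangle$ and $(B\otimes D)^m\subseteq\langle E^<\otimes D,E^>\otimes D\rangle_{B\otimes D}$. So $E\otimes D$ is a Morita equivalence between $A\otimes D$ and $B\otimes D$. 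Since the projective tensor product distributes over finite direct sums and is bilinear on products, the linking algebra of $E\otimes D$ is canonically $L\otimes D$, where $L$ is the linking algebra of $E$, and under this identification $\iota_E\otimes\id_D=\iota_{E\otimes D}\in\mM_{\text{Morita}}$. As $\iota_{E\otimes D}$ is inverted in $\kkban$ by construction, $\sigma_D(\iota_E)$ is an isomorphism in $\kkban$; that is, $\sigma_D\colon\BanAlg\to\kkban$ is Morita invariant.

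Hence $\sigma_D\colon\BanAlg\to\kkban$ is a Morita invariant triangulated homology functor, and the universal property of $\kkban$ supplies the unique exact endofunctor of $\kkban$ extending it. I expect the only mildly delicate step to be the verification of condition 4.' for $E\otimes D$ --- pinning down which closed subspace of $A\otimes D$ the bracket ${}_{A\otimes D}\langle\cdot,\cdot\rangle$ spans and showing that some power of $A\otimes D$ lies inside it; the remaining assertions are a direct transcription of Lemma~\ref{Lemma:SigmaProduktInEHoban} along the canonical exact functor $\EHoban\to\kkban$.
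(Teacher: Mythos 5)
Your proposal is correct and follows essentially the same route as the paper: the triangulated-homology-functor properties are transported from Lemma~\ref{Lemma:SigmaProduktInEHoban}, and Morita invariance is obtained by observing that $E\otimes D$ is a Morita equivalence between $A\otimes D$ and $B\otimes D$ with linking algebra $L\otimes D$, so that $\sigma_D(\iota_E)=\iota_{E\otimes D}$ lies in $\mM_{\text{Morita}}$ and is therefore inverted in $\kkban$. Your verification of condition 4.' (which the paper leaves as ``easy to see'') is sound, since elements of $(A\otimes D)^m$ for $m\geq\max(k,2)$ are limits of sums of elementary tensors $a\otimes d$ with $a\in A^k$ and $d\in D^2$, hence lie in the closed span of the brackets.
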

\begin{proof}
%
Lemma \ref{Lemma:SigmaProduktInEHoban} implies that $\sigma_D$ is a triangulated homology functor. We check that it is Morita invariant: 

Let $L$ be the linking algebra of a Morita equivalence between two Banach algebras $A$ and $B$. Then it is easy to see that $L \otimes D$ is the linking algebra for the corresponding Morita equivalence between $A \otimes D$ and $B\otimes D$; moreover, $\sigma_D$ of the canonical injection of $A$ into $L$ is the canonical injection of $A \otimes D$ into $L\otimes D$. So $\sigma_D$ is Morita invariant.
\end{proof}

\begin{theorem}[Universal property of $\kkban$ II, {\it cf.}~Theorem~7.26 of \cite{CMR:07}] 

Let $F$ be any functor from the category $\BanAlg$ to an additive category that is homotopy invariant, Morita invariant, and half-exact for semi-split extensions. Then $F$ factors uniquely through $\kkbanfunc$.  


Let $F$ and $F'$ be functors with the above properties, so that they descend to functors $\overline{F}$ and $\overline{F}'$ on $\kkban$. If $\Phi\colon F \to F'$ is a natural transformation, then $\Phi$ remains natural with respect to morphisms in $\kkban$, that is $\Phi$ is a natural transformation $\overline{F} \to \overline{F}'$.
\end{theorem}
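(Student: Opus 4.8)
The plan is to first settle the case of functors with values in $\Ab$ (both the factorisation and the statement about transformations), and then bootstrap to arbitrary additive targets by a Yoneda argument.

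\emph{The abelian-valued case.} Assume the target is $\Ab$. Since $F$ is homotopy invariant, half-exact for semi-split extensions and Morita invariant, Theorem \ref{Theorem:BottPeriodicity:Functor} supplies a natural isomorphism $F(\Sigma^2-)\cong F(-)$, so that $F_k(A):=F(\Sigma^kA)$ for $k\ge 0$, extended by $F_k:=F_{k+2}$ for $k<0$, is a homology theory for Banach algebras in the sense of Definition \ref{Definition:HomologyTheory} (the Bott isomorphism providing the structure isomorphisms $F_{-1}(\Sigma A)\cong F_0(A)$, and so on). By the last paragraph of Theorem \ref{Theorem:UniversalPropertyEHoban} this homology theory is the same datum as a homological functor $\overline F\colon\EHobancat\to\Ab$ with $\overline F(A,k)=F_k(A)$, hence $\overline F\circ\EHobanfunc=F$. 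Morita invariance of $F$ says precisely that $\overline F(\iota_E)$ is an isomorphism for every Morita equivalence $E$; feeding the mapping-cone triangle of $\iota_E$ into the long exact sequence attached to the homological functor $\overline F$ then forces $\overline F(\cone_{\iota_E})=0$, so $\overline F$ vanishes on the thick triangulated subcategory generated by the cones $\cone_{\iota_E}$ and therefore factors uniquely through the Verdier localisation $\EHobancat\to\kkban$; this yields $\overline F\colon\kkban\to\Ab$ with $\overline F\circ\kkbanfunc=F$, the uniqueness coming from the uniqueness clauses in Theorems \ref{Theorem:UniversalPropertyOfSpanierWhitehead} and \ref{Theorem:UniversalPropertyEHoban} and of the localisation. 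For the transformation statement over $\Ab$, I would extend $\Phi$ to a family $\overline\Phi$ on all objects of $\kkban$ by $\overline\Phi_{(A,m)}:=\Phi_{\Sigma^mA}$ for $m\ge 0$ and by transport along the Bott isomorphism for $m<0$ (consistent because $\Phi$, being natural against the honest homomorphisms that build the Toeplitz extension and the Morita equivalences underlying Bott periodicity, is natural against the Bott isomorphism). Then the class of morphisms $x\colon U\to V$ of $\kkban$ for which the square
\[
\xymatrix{
\overline F(U) \ar[r]^{\overline\Phi_U} \ar[d]_{\overline F(x)} & \overline F'(U) \ar[d]^{\overline F'(x)} \\
\overline F(V) \ar[r]^{\overline\Phi_V} & \overline F'(V)
}
\]
commutes is closed under composition, under addition (as $\overline F,\overline F'$ are additive and composition is bilinear), under the translation functor of $\kkban$, and under forming the inverse of any invertible member; and it contains $\kkbanfunc(\varphi)$ for every homomorphism $\varphi$ of Banach algebras, which is just naturality of $\Phi$ on $\BanAlg$. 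Since these generators and operations exhaust $\kkban$, the square commutes for all $x$.

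\emph{The general case.} For $F$ with values in an arbitrary additive category $\mathcal C$, I would compose with the Yoneda embedding $y\colon\mathcal C\hookrightarrow\mathrm{Fun}(\mathcal C^{\mathrm{op}},\Ab)$. Each $h_X:=\Hom_{\mathcal C}(X,F(-))\colon\BanAlg\to\Ab$ is homotopy invariant, Morita invariant and half-exact for semi-split extensions (the last being the very definition of half-exactness for an additive-valued functor), hence factors uniquely as $\overline{h_X}\circ\kkbanfunc$ by the abelian case; naturality of these factorisations in $X$ follows from the abelian transformation statement applied to $h_{X'}\Rightarrow h_X$, so $X\mapsto\overline{h_X}$ is a functor $\kkban\to\mathrm{Fun}(\mathcal C^{\mathrm{op}},\Ab)$ lifting $yF$. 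Checking on objects — here using Bott periodicity in $\kkban$ to make sense of negative suspensions — shows this lift takes representable values, so it equals $y\overline F$ for a unique $\overline F\colon\kkban\to\mathcal C$ with $\overline F\circ\kkbanfunc=F$ (uniqueness and faithful detection of the factorisation using that $y$ is fully faithful). The very same reduction, applied to a natural transformation $\Phi$ through the functors $h_X$, promotes $\Phi$ to a natural transformation $\overline F\to\overline F'$ on $\kkban$.

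\emph{Main obstacle.} The delicate point — already flagged in the remark after the definition of $\EHoban$ — is the descent through the Verdier quotient: inverting only the explicit generators in $\mM_{\text{Morita}}$ must force every morphism that becomes invertible in $\kkban$ to be sent to an isomorphism. For the homological functor $\overline F$ this is the statement that it vanishes on the entire thick subcategory generated by the cones $\cone_{\iota_E}$, which I reduce to the single vanishing $\overline F(\cone_{\iota_E})=0$ via the long exact sequence of the mapping-cone triangle together with the fact that $\overline F(\iota_E)$ is invertible in all the relevant degrees (Bott periodicity once more). The remaining verifications — that $h_X$ inherits the three hypotheses, that $(F_k)_k$ really is a homology theory, that $\Phi$ is natural against the honest comparison maps occurring in the constructions, and that the ``good'' morphisms above actually generate $\kkban$ — are routine, following the pattern of the corresponding arguments for $\EHoban$.
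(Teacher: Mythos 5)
Your proof is correct and follows essentially the same route as the paper: assemble the $F_k=F(\Sigma^k\cdot)$ into a homology theory via Bott periodicity (Theorem \ref{Theorem:BottPeriodicity:Functor}), factor through $\EHobancat$ by Theorem \ref{Theorem:UniversalPropertyEHoban}, descend through the Verdier localisation using Morita invariance, and prove naturality of $\Phi$ by reducing every $\kkban$-morphism to composites of homomorphisms and inverses of homomorphisms. You are somewhat more explicit than the paper in two places where it is terse --- the vanishing of $\overline F$ on the whole thick subcategory generated by the cones $\cone_{\iota_E}$ (the paper simply cites Proposition 2.1.24 of \cite{Neeman:01}) and the reduction from a general additive target to $\Ab$ via the Yoneda embedding --- and both of these elaborations are sound.
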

\begin{proof}
Note that $F$ satisfies Bott periodicity by Theorem \ref{Theorem:BottPeriodicity:Functor}. Define functors
$$
F_k(A) := F(\Sigma^k A),
$$
where $A$ is a Banach algebra and $k\in \N_0$. We can extend this definition to $k\in \Z$ by periodicity and we have long exact sequences for $(F_k)_{k\in\N}$ and semi-split extensions by Proposition \ref{Proposition:HalfexactnessLongExactSequence}. So $(F_k)_{k\in \N}$ is a homology theory in the sense of Definition \ref{Definition:HomologyTheory}.

Now the universal property of $\EHoban$, i.e., Theorem \ref{Theorem:UniversalPropertyEHoban}, implies that $(F_k)_{k\in \Z}$ factors uniquely through $\EHoban$. Now $F=F_0$ is Morita invariant, so it factors uniquely through $\kkban$, see Proposition 2.1.24 of \cite{Neeman:01}.

To see that $\Phi\colon F \to F'$ is also natural for morphisms in $\kkban$ observe that every such morphism can be represented by compositions of continuous homomorphisms of Banach algebras and inverses of such homomorphisms. More precisely, a morphism $f\in \kkban(A,B)$ can be represented by a diagram 
$$
\xymatrix{\Sigma ^k A \ar[r]^{f_1} & D & \Sigma ^k B \ar[l]_{f_2}^{\cong}}
$$
where $f_1$ and $f_2$ are homomorphisms of Banach algebras. 
Without loss of generality you can assume that $k$ is even. The isomorphism $\Sigma^k A \cong A$ in $\kkban$ is also given by the comoposition of (inverses) of homomorphisms of Banach algebras, and similarly for $B$. 
\end{proof}

\begin{corollary}
The $\KTh$-functor on $\BanAlg$ factors uniquely through $\kkban$.
\end{corollary}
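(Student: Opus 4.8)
The plan is to obtain this as an immediate consequence of the second universal property of $\kkban$ (the theorem labelled ``Universal property of $\kkban$~II'' above). That result asserts that any functor $F$ from $\BanAlg$ to an additive category which is homotopy invariant, Morita invariant and half-exact for semi-split extensions factors uniquely through $\kkbanfunc \colon \BanAlg \to \kkban$. I would apply it to $F = \KTh_0 \colon \BanAlg \to \Ab$; the target $\Ab$ is additive.

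So the work reduces to checking the three hypotheses for $\KTh_0$. Homotopy invariance of topological $\KTh$-theory on Banach algebras is classical. Morita invariance is exactly Theorem~\ref{Theorem:KTheoryMoritaInvariant}, which was established above in precisely the generality (possibly degenerate Banach algebras) needed here. Half-exactness for semi-split extensions follows from the six-term exact sequence in topological $\KTh$-theory: for a semi-split extension $B \to D \to A$ of Banach algebras, the sequence $\KTh_0(B) \to \KTh_0(D) \to \KTh_0(A)$ is exact in the middle. Alternatively, one may invoke the identification $\KKban(\C, \cdot) \cong \KTh_0(\cdot)$ recalled in the proof of Theorem~\ref{Theorem:KTheoryMoritaInvariant}, together with the long exact sequences of $\KKban(\C, \cdot)$ in the second variable for semi-split extensions.

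Granting these, the universal property produces a unique exact functor $\overline{\KTh_0} \colon \kkban \to \Ab$ with $\KTh_0 = \overline{\KTh_0} \circ \kkbanfunc$, and this factorisation is unique by the uniqueness clause of that theorem. The full $\Z$-graded $\KTh$-theory is then recovered inside $\kkban$ automatically: as in the proof of the universal property one sets $\KTh_k(A) := \KTh_0(\Sigma^k A)$, and Bott periodicity in $\kkban$ makes $\overline{\KTh_0}((A,k)) \cong \KTh_k(A)$ for all $k \in \Z$.

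I do not anticipate a genuine obstacle here: the only non-formal ingredient is Morita invariance, and that has already been dealt with. The two points meriting a little care are purely a matter of formulation. First, what ``the $\KTh$-functor'' means: I would take it to be the single functor $\KTh_0$, the whole graded theory being reconstructed as above. Second, one only needs half-exactness for \emph{semi-split} extensions, which is harmless, rather than full excision, which is more delicate for general Banach algebras.
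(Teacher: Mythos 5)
Your proposal is correct and is exactly the argument the paper intends: the corollary is placed immediately after the ``Universal property of $\kkban$ II'' theorem and follows by applying it to $F=\KTh_0$, whose homotopy invariance is classical, whose Morita invariance is Theorem~\ref{Theorem:KTheoryMoritaInvariant}, and whose half-exactness for semi-split extensions comes from the standard exact sequence in topological $\KTh$-theory. Your remarks on interpreting ``the $\KTh$-functor'' as $\KTh_0$ and recovering the graded theory via suspension and Bott periodicity match what the paper does in its subsequent subsection on the action on $\KTh$-theory.
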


\subsection{What is $\kkban(\C, B)$?}

\begin{theorem} There is a natural isomorphism
$$
\KTh_0(B) \cong \kkban(\C, B)
$$
for every Banach algebra $B$. It maps the class of an idempotent $e \in B$ to the $\kkban$-class of the homomorphism $\C \to B, \lambda \mapsto \lambda e$.
\end{theorem}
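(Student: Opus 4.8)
The plan is to exhibit mutually inverse natural transformations $\alpha_B\colon\KTh_0(B)\to\kkban(\C,B)$ and $\beta_B\colon\kkban(\C,B)\to\KTh_0(B)$. For $\alpha$, I would apply the construction of Section~\ref{SectionRelationOfFToKKTheory} to the functor $\mF=\kkbanfunc\colon\BanAlg\to\kkban$, which is split-exact, homotopy invariant and Morita invariant by the universal property of $\kkban$; this produces a natural, additive transformation $\overline{\mF}\colon\KKban(A,B)\to\kkban(A,B)$. Composing with the natural isomorphism $\KKban(\C,B)\cong\KTh_0(B)$ established in the proof of Theorem~\ref{Theorem:KTheoryMoritaInvariant} gives $\alpha_B$. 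For an idempotent $e\in B$, the class $[e]\in\KTh_0(B)$ corresponds under that isomorphism to the $\KKban$-class of the homomorphism $\varphi_e\colon\C\to B$, $\lambda\mapsto\lambda e$; by the last paragraph of Remark~\ref{Remark:kkbanTrafoOfMoritaCycles}, $\overline{\mF}$ sends this class to $\kkbanfunc(\varphi_e)$, so $\alpha_B$ has the asserted effect on idempotents.

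For $\beta$, I would use that $\KTh_0$ is homotopy invariant, Morita invariant (Theorem~\ref{Theorem:KTheoryMoritaInvariant}) and half-exact for semi-split extensions, so by the universal property of $\kkban$ it factors uniquely as $\KTh_0=\overline{\KTh_0}\circ\kkbanfunc$ for a functor $\overline{\KTh_0}\colon\kkban\to\Ab$. Using $\overline{\KTh_0}(\C)=\KTh_0(\C)\cong\Z$, set $\beta_B(f):=\overline{\KTh_0}(f)([1_\C])$. Both $\alpha$ and $\beta$ are natural on $\BanAlg$, and by the second part of the universal property of $\kkban$ --- that natural transformations between homotopy invariant, Morita invariant, half-exact functors remain natural with respect to morphisms in $\kkban$ --- they descend to natural transformations of the functors $B\mapsto\kkban(\C,B)$ and $B\mapsto\KTh_0(B)=\overline{\KTh_0}(B)$ on the category $\kkban$ itself.

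It then remains to check that the two composites are identities. Since $\overline{\KTh_0}\circ\kkbanfunc=\KTh_0$, for every idempotent $e\in B$ one has $\beta_B(\alpha_B[e])=\KTh_0(\varphi_e)([1_\C])=[e]$; in particular $(\beta\circ\alpha)_\C$ fixes the generator $[1_\C]$ of $\KTh_0(\C)$, and then naturality of $\beta\circ\alpha$ with respect to the homomorphisms $\C\to\Mat_n(R)$ classifying idempotents over unital Banach algebras $R$, together with the standard presentation $\KTh_0(B)=\ker(\KTh_0(\unital{B})\to\KTh_0(\C))$, forces $\beta\circ\alpha=\id_{\KTh_0}$. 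For the other composite, $\alpha\circ\beta$ is a natural endomorphism of the representable functor $B\mapsto\kkban(\C,B)$ on $\kkban$, so by the Yoneda lemma it is determined by $(\alpha\circ\beta)_\C(\id_\C)=\alpha_\C([1_\C])=\kkbanfunc(\id_\C)=\id_\C$, whence $\alpha\circ\beta=\id$. Thus $\alpha_B$ is an isomorphism, natural in $B$, with the stated formula on idempotents.

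The main difficulty here is bookkeeping rather than a single computation: one must verify carefully that $B\mapsto\kkban(\C,B)$ and $\KTh_0$ both satisfy the hypotheses needed to push $\alpha$ and $\beta$ down to $\kkban$ (so that the Yoneda argument is legitimate there), and that the input isomorphism $\KKban(\C,B)\cong\KTh_0(B)$ is genuinely available for possibly degenerate $B$, as sketched in the proof of Theorem~\ref{Theorem:KTheoryMoritaInvariant}. As an alternative to invoking $\KKban$ at all, one can define $\alpha_B$ directly on classes of idempotents over matrix stabilisations and extend by the Grothendieck-group property, using Lemma~\ref{Lemma:SummeInkkban} for additivity and Morita invariance of $\kkbanfunc$ for the identification $\Mat_n(B)\cong B$ in $\kkban$; that this description agrees with the one above is then immediate from Remark~\ref{Remark:kkbanTrafoOfMoritaCycles}.
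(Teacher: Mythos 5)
Your proposal is correct, and its overall architecture --- two natural transformations $\alpha$ and $\beta$, the Yoneda lemma to see $\alpha\circ\beta=\id$ on the representable functor $B\mapsto\kkban(\C,B)$, and a direct idempotent computation plus split-exactness for $\beta\circ\alpha=\id$ --- is exactly the paper's. The one genuine difference is how you build $\alpha$: you route it through the natural transformation $\overline{\mF}\colon\KKban(A,B)\to\mC(\mF(A),\mF(B))$ of Section~\ref{SectionRelationOfFToKKTheory} (with $\mF=\kkbanfunc$) composed with the isomorphism $\KKban(\C,B)\cong\KTh_0(B)$, whereas the paper constructs $\alpha_B$ directly from idempotents in matrix algebras, using $\Mat_*$-stability and Lemma~\ref{Lemma:SummeInkkban} for additivity, and extends to non-unital $B$ by split-exactness --- i.e., precisely the ``alternative'' you sketch in your last paragraph. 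Your primary route is slicker once the machinery is in place, but it leans on the identification $\KKban(\C,B)\cong\KTh_0(B)$ for possibly degenerate $B$, which the paper only sketches in the proof of Theorem~\ref{Theorem:KTheoryMoritaInvariant}; the paper's direct construction avoids invoking $\KKban$ altogether and is therefore more self-contained, at the modest cost of checking well-definedness (similar idempotents, additivity, naturality) by hand. Since you supply the direct construction as a fallback and correctly observe that the two agree by Remark~\ref{Remark:kkbanTrafoOfMoritaCycles}, there is no gap.
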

\begin{proof}
First consider the case that the algebra $B$ is unital. Then any class in $\KTh_0(B)$ comes from an idempotent in some $\Mat_n(B)$ and hence gives rise to a bounded homomorphism $\C \to \Mat_n(B)$. Homotopy stability of $\kkban$ implies that similar idempotents in $\Mat_n(B)$ give rise to the same class in $\kkban(\C, \Mat_{2n}(B))$. Because $\kkban$ is $\Mat_{*}$-stable, we obtain a well-defined class in $\kkban(\C,B)$; and Lemma \ref{Lemma:SummeInkkban} implies that we get a well-defined natural map $\alpha_B \colon \KTh_0(B) \to \kkban(\C,B)$. Using split-exactness of $\KTh_0$ and $\kkban$ for the extension  $0 \to B \to \unital{B} \to \C \to 0$ we extend this natural transformation $\alpha_B$ to non-unital algebras. 

The map $\alpha_B\colon \KTh_0(B) \to \kkban(\C, B)$ is natural for bounded algebra homomorphisms: Let $\varphi\colon B \to B'$ be a such a homomorphism. Let $e$ be an idempotent in $B$. Then it is represented in $\kkban(\C, B)$ by the homomorphism $\lambda \mapsto \lambda e$. Composing this homomorphism with $\varphi$ gives the homomorphism $\lambda \mapsto \lambda \varphi(e)$ which happens to represent the idempotent $\varphi(e)$ in $B'$. The obvious extension of these considerations to matrix algebras and formal differences of idempotents gives naturality.

By the universal property of $\kkban$ this transformation is natural with respect to morphisms in $\kkban$ as well.

We construct a map in the converse direction; note that from the Yoneda Lemma this just means that we have to pick an element of $\KTh_0(\C)$: Let $e_{\C}\in \KTh_0(\C)$ be the class of any rank-one idempotent. The functor $\KTh_0$ is half-exact, Morita invariant and homotopy invariant. By the universal property of $\kkban$, the functor $\KTh_0$ factors through $\kkbanfunc$. Thus, we can define a map
$$
\beta_B \colon \kkban(\C,B) \to \KTh_0(B), f \mapsto \beta_B(f):=f_*(e_{\C}).
$$

Note that we have shown that $\alpha \circ \beta$ is a natural transformation from the functor $B \mapsto \kkban(\C,B)$ on $\kkban$ to itself. We can hence use the Yoneda Lemma to identify it with an element of $\kkban(\C,\C)$. To this end, note that $\beta_{\C} ([\id_{\C}]) = e_{\C}$ and $\alpha_{\C}(e_{\C})=[\id_{\C}]$, so $\alpha \circ \beta$ is given by $f \mapsto f^*([\id_{\C}]) = f$, that is, $\alpha_B \circ \beta_B$ is the identity map on $\kkban(\C,B)$.
%

Conversely, if $e \in B$ is an idempotent, let $\varphi\colon \C \to B$ be the induced homomorphism. Then 
$$
\varphi_*([e_{\C}]) = [\varphi(e_{\C})] = [\varphi(1)] = [e] \in \KTh_0(B);
$$
note that we can take $e_{\C}$ to be $1\in \C$.  So $\beta_{B}(\alpha_B([e])) = [e] \in \KTh_0(B)$. Similarly, if $e\in \Mat_n(B)$ is an idempotent, then $\beta_{B}(\alpha_B([e])) = [e] \in \KTh_0(B)$. This settles the claim for unital Banach algebras. By spilt-exactness, it is also true for general Banach algebras. 
\end{proof}

\begin{corollary}
We have $\kk^{\ban}_1(\C,\C) =0$ and $\kk^{\ban}_0(\C, \C) \cong \Z$ with generator $[\id_{\C}]$.
\end{corollary}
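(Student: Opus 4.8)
The plan is to deduce both statements directly from the theorem proved just above, combined with Bott periodicity in $\kkban$ (which is available as the Bott periodicity theorem already established).

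\textbf{The even case.} By definition $\kk^{\ban}_0(\C,\C) = \kkban(\C,\C)$, and the theorem supplies a natural isomorphism $\kkban(\C,\C) \cong \KTh_0(\C)$. Since $\KTh_0(\C) \cong \Z$ is generated by the class $[1]$ of the unit $1 \in \C$ (a rank-one idempotent), and since the isomorphism of the theorem sends the class of an idempotent $e$ to the $\kkban$-class of the homomorphism $\lambda \mapsto \lambda e$, the generator $[1]$ is carried to the class of $\lambda \mapsto \lambda\cdot 1 = \lambda$, that is, to $[\id_{\C}]$. Hence $\kk^{\ban}_0(\C,\C) \cong \Z$ with generator $[\id_{\C}]$.

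\textbf{The odd case.} Here $\kk^{\ban}_1(\C,\C) = \kkban(\Sigma\C, \C)$. Applying the translation auto-equivalence $\Sigma^{-1}$ of the triangulated category $\kkban$ gives $\kkban(\Sigma\C, \C) \cong \kkban(\C, \Sigma^{-1}\C)$. Bott periodicity provides an isomorphism $\Sigma^2\C \cong \C$ in $\kkban$, hence $\Sigma^{-1}\C \cong \Sigma\C$ in $\kkban$, and therefore $\kkban(\C, \Sigma^{-1}\C) \cong \kkban(\C, \Sigma\C)$. By the theorem this last group is $\KTh_0(\Sigma\C) = \KTh_0(\Cont_0(\,]0,1[\,))$, which is the standard vanishing group ($\KTh_0$ of the suspension of $\C$, i.e.\ $\KTh_1(\C) = 0$). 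Thus $\kk^{\ban}_1(\C,\C) = 0$.

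I do not expect a genuine obstacle: both computations are formal consequences of results already in place. The only points deserving (minor) care are keeping track of the degree shift in the odd case and invoking the \emph{explicit} description of the isomorphism $\KTh_0(\C) \cong \kkban(\C,\C)$ in order to pin down the generator as $[\id_{\C}]$ rather than merely as an abstract generator of an infinite cyclic group.
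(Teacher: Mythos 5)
Your proposal is correct and follows exactly the route the paper intends: the corollary is stated without proof as an immediate consequence of the preceding theorem (applied to $B=\C$ for the even case, with the explicit idempotent-to-homomorphism description identifying the generator as $[\id_{\C}]$, and to $B=\Sigma\C$ after a Bott-periodicity shift for the odd case, using $\KTh_0(\Sigma\C)=\KTh_1(\C)=0$). No gaps.
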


\subsection{Action on $\KTh$-theory}

The fact that $\KTh$-theory factors through $\kkban$ means in particular that $\kkban$ acts on $\KTh$-theory and that this action extends the usual functoriality, i.e., the homomorphism $\varphi_*\colon \KTh_0(A) \to \KTh_0(B)$ induced by a continuous homomorphism $\varphi\colon A\to B$ can be interpreted as the action of $\kkbanfunc(\varphi)\in \kkban(A,B)$. The fact that we have a natural isomorphism $\KTh_0(B) \cong \kkbanfunc(\C, B)$ implies that the action of $\varphi\colon A\to B$ on $\KTh$-theory can be represented as multiplication on the right by $\kkbanfunc(\varphi)$. 

Because this observation is important, we give the steps of the construction in some detail for those who do not like to employ as much of the machinery  as above.

Consider the $\KTh$-functor being defined on the category $\BanAlg$ of Banach algebras and homomorphisms. Because $\KTh$-theory is homotopy stable, we obtain an induced functor, which we also call $\KTh$, on the homotopy category $\BanAlg/\!\!\sim$. It can be extended to the category $\SigmaHo$ as follows: If $A$ is a Banach algebra and $m\in \Z$, then define 
$$
\KTh(A,m):= \KTh_{m}(A).
$$ 
If $[\varphi] \in  [\Sigma^{m+k} A, \ \Sigma^{n+k} B]$ represents a morphism from $(A,m)$ to $(B,n)$, with $m+k, n+k\geq 0$, then define
$$
\KTh([\varphi])\colon \KTh_{m}(A) \to \KTh_{n}(B)
$$
by the composition of homomorphisms
$$
\KTh_m(A) \cong \KTh_{-k}(\Sigma^{m+k}A) \stackrel{\KTh_{-k}(\varphi)}{\to}  \KTh_{-k}(\Sigma^{n+k}B) \cong \KTh_{n}(B).
$$
Note that the following diagram commutes
$$
\xymatrix{
&\KTh_{-k}(\Sigma^{m+k}A) \ar[rr]^{\KTh_{-k}(\varphi)} \ar[dd]_{\cong}&& \KTh_{-k}(\Sigma^{n+k}B) \ar[dr]^{\cong} \ar[dd]_{\cong}&\\
 \KTh_m(A) \ar[ru]^{\cong} \ar[rd]^{\cong}&&&& \KTh_{n}(B) \\
 & \KTh_{-k-1}(\Sigma^{m+k+1}A) \ar[rr]^{\KTh_{-k-1}(\Sigma \varphi)} && \KTh_{-k-1}(\Sigma^{n+k+1}B) \ar[ur]^{\cong}
}
$$
Hence, $\KTh$-theory gives a well-defined map on the set $\SigmaHo((A,m), (B,n))$. It is then clear that it actually gives a functor on $\SigmaHo$. 

The morphisms that you invert when moving from $\SigmaHo$ to $\kkban$ are isomorphisms in $\KTh$-theory, so we obtain a $\KTh$-functor on the category $\kkban$ which makes the following diagram commutative:
$$
\xymatrix{\BanAlg \ar[rrdd]^{\KTh}\ar[d]&&\\
\BanAlg/\!\!\sim \ar[rrd]^{\KTh}\ar[d]&& \\
\SWban \ar[rr]^{\KTh}\ar[d]&& \text{Abelian groups}\\
\EHoban \ar[rru]^{\KTh}\ar[d]&& \\
\kkban \ar[rruu]_ {\KTh}&& 
}
$$
In particular, isomorphisms in $\BanAlg$ descend to isomorphisms in $\kkban$ which give isomorphisms in $\KTh$-theory.

\subsection{Relation to $\KK$-theory and action of Morita morphisms}

\begin{theorem}
The functor $\kkban \colon \BanAlg \to \kkban$, being Morita invariant and homotopy invariant, factors uniquely through the Morita category $\Moritabancat$. So to every Morita cycle $E \in \Mban(A,B)$ we can assign a $\kkban$-element $\kkban(E) \in \kkban(A,B)$ in a canonical way, and this assignment is functorial. 
\end{theorem}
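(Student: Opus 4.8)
The plan is to deduce the statement directly from the universal property of the Morita category, Theorem~\ref{Theorem:LiftToMoritaban}. First I would check that the functor $\kkbanfunc\colon\BanAlg\to\kkban$ satisfies the two hypotheses of that theorem. Homotopy invariance is immediate: by the first universal property of $\kkban$, $\kkbanfunc$ is a triangulated homology functor for Banach algebras, and every such functor is homotopy invariant by definition. Morita invariance holds by construction: $\kkban$ was defined as the localisation $\SWban[\mM_{\min}\cup\mM_{\text{Morita}}]$, and for a Morita equivalence ${}_AE_B$ with linking algebra $L$ the inclusion $\iota_E\colon B\hookrightarrow L$ lies in $\mM_{\text{Morita}}$, hence becomes invertible in $\kkban$; this is precisely the defining condition for $\kkbanfunc$ to be Morita invariant in the sense used in that theorem.

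Next I would apply Theorem~\ref{Theorem:LiftToMoritaban} with $\mC=\kkban$ and $\mF=\kkbanfunc$. It yields a unique functor $\overline{\kkbanfunc}\colon\Moritabancat\to\kkban$ with $\kkbanfunc=\overline{\kkbanfunc}\circ\Moritaban$; this is exactly the asserted unique factorisation through $\Moritabancat$.

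Finally I would make the assignment on cycles explicit. Given a Morita cycle $E\in\Mban(A,B)$, denote by $[E]\in\Moritaban(A,B)$ its homotopy class, regarded as a morphism of $\Moritabancat$, and set $\kkban(E):=\overline{\kkbanfunc}([E])\in\kkban(A,B)$. This depends only on the homotopy class of $E$, since $\overline{\kkbanfunc}$ is defined on homotopy classes of cycles; it agrees with $\kkbanfunc(\varphi)$ when $E=\Mban(\varphi)$ for a continuous homomorphism $\varphi$, because $\overline{\kkbanfunc}\circ\Moritaban=\kkbanfunc$ and $[\Mban(\varphi)]=\Moritaban(\varphi)$; and it is functorial, $\kkban(E\otimes_B E')=\kkban(E')\circ\kkban(E)$ in the composition order dictated by the flipped tensor product in $\Moritabancat$, because $\overline{\kkbanfunc}$ is a functor. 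In particular, if $E$ is a Morita equivalence then $[E]=\Moritaban(E)$ is an isomorphism in $\Moritabancat$, so $\kkban(E)$ is an isomorphism in $\kkban$.

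There is no essential difficulty here; the real content has already been carried out in Theorem~\ref{Theorem:LiftToMoritaban}. The only points requiring a little care are confirming that $\kkbanfunc$, viewed as a functor landing in the triangulated category $\kkban$, genuinely satisfies the categorical (rather than merely additive) notion of Morita invariance required by that theorem, and keeping track of the flipped composition convention on $\Moritabancat$ so that the functoriality identity for the tensor product of cycles comes out with the correct order of factors.
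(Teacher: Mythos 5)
Your proposal is correct and follows exactly the route the paper takes: the paper's entire proof is the observation that this is a special case of Theorem~\ref{Theorem:LiftToMoritaban}, and your argument is simply a careful unwinding of that — verifying the hypotheses (homotopy and Morita invariance of $\kkbanfunc$, the latter by construction of the localisation at $\mM_{\text{Morita}}$) and then reading off the factorisation and its functoriality. Nothing further is needed.
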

\begin{proof}
This is a special case of Theorem~\ref{Theorem:LiftToMoritaban}.
\end{proof}

\begin{theorem}
There is a natural transformation $\kkban$ from the bifunctor $(A,B) \to \KKban(A,B)$  to the bifunctor $(A,B) \mapsto \kkban(A,B)$ that takes $[\varphi]$ to $\kkban(\varphi)$ and $[E]$ to $\kkban(E)$ for all homomorphisms of Banach algebras $\varphi$ and for all Morita morphisms $E$. Moreover, the transformation is compatible with the action of the Morita category on $\KKban$ in the second variable.
\end{theorem}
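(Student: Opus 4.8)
The plan is to specialise the general construction of Section~\ref{SectionRelationOfFToKKTheory} to the functor $\mF := \kkbanfunc \colon \BanAlg \to \kkban$. First I would verify that $\kkbanfunc$ satisfies the running hypotheses of that section: it is homotopy invariant and Morita invariant by the universal property of $\kkban$, and it is half-exact for semi-split extensions, hence split-exact by Corollary~\ref{Corollary:HalfexactnessLongExactSequence:Additive}. Identifying $\mC(\kkbanfunc(A),\kkbanfunc(B))$ with $\kkban(A,B)$, the additivity-and-naturality theorem of that section then produces an additive transformation $\overline{\kkbanfunc}\colon \KKban(A,B)\to \kkban(A,B)$, natural in both variables; this is the transformation the statement overloads as $\kkban$.

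It then remains to pin down its values and the compatibility clause. For a continuous homomorphism $\varphi\colon A\to B$, the class $[\varphi]\in\KKban(A,B)$ is represented by the cycle $(E,T)$ with $T=0$, $E_0=(B,B)$, $E_1=0$ and left action $\varphi$; by the last paragraph of Remark~\ref{Remark:kkbanTrafoOfMoritaCycles} (which itself uses Lemma~\ref{Lemma:OverlineFLiftsF}) one gets $\overline{\kkbanfunc}([\varphi])=\kkbanfunc(\varphi)=\kkban(\varphi)$. For a Morita morphism $E\in\Moritaban(A,B)$, view it as the cycle $(E,0)$; the first part of Remark~\ref{Remark:kkbanTrafoOfMoritaCycles} gives $\overline{\kkbanfunc}(E,0)=\overline{\kkbanfunc}(E,\varphi)$, where the right-hand side is the value of the lift of $\kkbanfunc$ to $\Moritabancat$ furnished by Theorem~\ref{Theorem:LiftToMoritaban}. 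By the theorem immediately preceding the present one, that lift is precisely $E\mapsto\kkban(E)$, so $\overline{\kkbanfunc}$ sends $[E]$ to $\kkban(E)$. Finally, compatibility with the action of the Morita category on $\KKban$ in the second variable is exactly the proposition of Section~\ref{SectionRelationOfFToKKTheory} stating that $\overline{\mF}$ respects the action of Morita morphisms in the second component, read off for $\mF=\kkbanfunc$; for Morita equivalences this may also be combined with Lemma~\ref{Lemma:NaturalTrafoNatural}.

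The only genuinely delicate point is the coherence of the several ``lifts'' involved: one must make sure that the map $\overline{\kkbanfunc}$ built abstractly from split-exactness, the morphism $\kkbanfunc(\varphi)$ coming from functoriality, and the morphism $\kkban(E)$ coming from the factorisation through $\Moritabancat$ are literally the same, not merely ``morally'' so. This reduces to the identities recorded in Remark~\ref{Remark:kkbanTrafoOfMoritaCycles} together with the well-definedness and functoriality of the lift in Theorem~\ref{Theorem:LiftToMoritaban}, so it is bookkeeping rather than a new idea; the place where care is needed is checking that the left-action datum $\varphi$ appearing in $\overline{\mF}(E,\varphi)$ is the same one used to make $\Moritaban$ a functor. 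If one also wants the odd version, one invokes instead the remark following the Morita-compatibility proposition of Section~\ref{SectionRelationOfFToKKTheory}, valid since $\kkbanfunc$ is half-exact for semi-split extensions, and identifies $\mC(\kkbanfunc(\Sigma A),\kkbanfunc(B))=\kkban(\Sigma A,B)$ with $\kk^{\ban}_1(A,B)$ via Bott periodicity, Theorem~\ref{Theorem:BottPeriodicity:Functor}.
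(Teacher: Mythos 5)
Your proposal is correct and follows essentially the same route as the paper, which simply observes that $\kkbanfunc$ is homotopy invariant, Morita invariant and half-exact (hence split-exact) and invokes the constructions of Section~\ref{SectionRelationOfFToKKTheory}. You merely spell out in more detail the identifications via Remark~\ref{Remark:kkbanTrafoOfMoritaCycles}, Lemma~\ref{Lemma:OverlineFLiftsF} and Theorem~\ref{Theorem:LiftToMoritaban} that the paper leaves implicit.
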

\begin{proof}
Note that the functor $\kkbanfunc$ is Morita invariant, homotopy invariant and half-exact. So, in particular, it is split exact and the constructions of Section~\ref{SectionRelationOfFToKKTheory} apply. 
\end{proof}

\begin{theorem}
There is a unique functor from the category of separable C$^*$-algebras and $\KK$-elements as morphisms to the category $\kkban$ that lifts the functor from the category of separable C$^*$-algebras and $*$-homomorphisms that sends a morphism $\varphi\colon A \to B$ to $\kkbanfunc(\varphi)$. It factors through the natural transformation from $\KKban$ to $\kkban$ described above.
\end{theorem}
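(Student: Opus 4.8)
The plan is to recognise this statement as an assembly of results from Section~\ref{SectionRelationOfFToKKTheory}, not as something requiring a genuinely new argument. The first step is to feed the functor $\mF := \kkbanfunc\colon \BanAlg \to \kkban$ --- with the triangulated category $\kkban$ regarded merely as an additive category --- into the machinery of that section. One checks that $\mF$ satisfies the running hypotheses there: it is homotopy invariant and Morita invariant by the universal property of $\kkban$, and it is half-exact for semi-split extensions (being a triangulated homology functor), hence split-exact by Corollary~\ref{Corollary:HalfexactnessLongExactSequence:Additive}. Section~\ref{SectionRelationOfFToKKTheory} then yields a natural transformation of bifunctors $\overline{\mF}\colon \KK(A,B) \to \kkban(A,B)$ on separable C$^*$-algebras; by Lemma~\ref{Lemma:mFRespctsStarHomomorphisms} it sends $[\varphi]$ to $\kkbanfunc(\varphi)$ for every $*$-homomorphism $\varphi$, and by Theorem~\ref{Theorem:NaturalTrafoMultiplicative} it is multiplicative, hence defines a functor $\overline{\mF}\colon \KK \to \kkban$ extending $\varphi \mapsto \kkbanfunc(\varphi)$.

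Next I would derive existence and uniqueness of the asserted functor from the universal property of Kasparov's $\KK$-theory. The restriction of $\kkbanfunc$ to the separable C$^*$-algebras and $*$-homomorphisms is homotopy invariant, $C^*$-stable (this is exactly Morita invariance restricted to C$^*$-algebras, realised via linking algebras) and split-exact, so it factors uniquely through the canonical functor from the separable C$^*$-algebras to the $\KK$-category. The functor $\overline{\mF}$ constructed above is such a factorisation, hence \emph{the} unique one; this settles existence and uniqueness simultaneously.

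Finally, the factorisation statement is obtained by unwinding definitions. The natural transformation $\KKban \to \kkban$ of the preceding theorem is, by its very construction, the map $\overline{\mF}\colon \KKban(A,B)\to\kkban(A,B)$ produced by Section~\ref{SectionRelationOfFToKKTheory} for $\mF = \kkbanfunc$; and, as spelled out in the subsection ``Relation to Kasparov's $\KK$-theory'', the transformation $\overline{\mF}$ on $\KK$ is by definition the composite of $\overline{\mF}$ on $\KKban$ with the canonical map that regards a Kasparov cycle in $\E(A,B)$ as a Banach cycle in $\Eban(A,B)$, i.e.\ with the canonical natural transformation $\KK \to \KKban$. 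Hence $\overline{\mF}\colon\KK\to\kkban$ coincides, on each morphism set, with the composition $\KK \to \KKban \to \kkban$, which is precisely the claimed factorisation.

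I expect no real obstacle here: the one point that needs care is the identification of the functor supplied abstractly by the universal property of $\KK$ with the concretely constructed transformation $\overline{\mF}$, but this is exactly the content of Theorem~\ref{Theorem:NaturalTrafoMultiplicative} (proved via the Lafforgue--Skandalis factorisation trick of Lemma~\ref{Lemma:LafforgueSkandalisFactorisationTrick}). So the whole proof is bookkeeping with facts already in hand.
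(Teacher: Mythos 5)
Your proposal is correct and follows the paper's own route: the paper's entire proof is the citation ``This follows from Theorem~\ref{Theorem:NaturalTrafoMultiplicative}'', and you have simply unwound that citation, correctly identifying the application of Section~\ref{SectionRelationOfFToKKTheory} to $\mF = \kkbanfunc$, the role of Lemma~\ref{Lemma:mFRespctsStarHomomorphisms}, and the uniqueness via the universal property of $\KK$ as the content being invoked.
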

\begin{proof}
This follows from Theorem~\ref{Theorem:NaturalTrafoMultiplicative}.
\end{proof}

\end{document}